\numberwithin{equation}{section}
\numberwithin{figure}{section}
\newtheorem{theorem}{Theorem}[section]
\newtheorem{corollary}[theorem]{Corollary}
\newtheorem{proposition}[theorem]{Proposition}
\newtheorem{lemma}[theorem]{Lemma}
\theoremstyle{definition}
\newtheorem{definition}[theorem]{Definition}
\newtheorem{remark}[theorem]{Remark}
\newcommand*{\supp}{\ensuremath{\mathrm{supp\,}}}
\newcommand*{\N}{\ensuremath{\mathbb{N}}}
\newcommand*{\Z}{\ensuremath{\mathbb{Z}}}
\newcommand*{\R}{\ensuremath{\mathbb{R}}}
\newcommand*{\Zd}{\ensuremath{\mathbb{Z}^d}}
\newcommand*{\Rd}{\ensuremath{\mathbb{R}^d}}
\newcommand{\eps}{\varepsilon}
\renewcommand*{\tilde}{\widetilde}
\newcommand{\g}{\mathbf{g}}
\newcommand{\ep}{\eps}
\DeclareMathOperator{\dist}{dist}
\newcommand{\E}{\mathbb{E}}
\DeclareSymbolFont{boldoperators}{OT1}{cmr}{bx}{n}
\edef\bar{\unexpanded{\protect\mathaccentV{bar}}\number\symboldoperators16}
\renewcommand{\a}{\mathbf{a}}
\newcommand{\ahom}{\bar{\a}}
\definecolor{labelkey}{rgb}{0,0,1}
\newcommand{\indc}{{\boldsymbol{1}}}
\newcommand{\addperiod}[1]{#1.}
\titleformat*{\subsection}{\bfseries}
\titleformat{\subsubsection}[runin]
  {\normalfont\bfseries}
  {\thesubsubsection.}
  {0.5em}
  {\addperiod}
\titleformat*{\subsubsection}{\bfseries}
\titleformat*{\paragraph}{\bfseries}
\titleformat*{\subparagraph}{\large\bfseries}
\title{Quantitative hydrodynamic limits of the Langevin dynamics for gradient interface models}
\author{Scott Armstrong
\thanks{Courant Institute of Mathematical Sciences, New York University.
{\footnotesize scotta@cims.nyu.edu.}
}
\and 
Paul Dario
\thanks{LAMA, Universit\'e Paris-Est Cr\'eteil, Cr\'eteil, France.
{\footnotesize paul.dario@u-pec.fr.}
}
}
\date{ }
\begin{document}

\maketitle

\begin{abstract}
We study the Langevin dynamics corresponding to the $\nabla\phi$ (or Ginzburg-Landau) interface model with a uniformly convex interaction potential. We interpret these Langevin dynamics as a nonlinear parabolic equation forced by white noise, which turns the problem into a nonlinear homogenization problem. Using quantitative homogenization methods, we prove a quantitative hydrodynamic limit, obtain the $C^2$ regularity of the surface tension, prove a large-scale $C^{1 , \alpha}$-type estimate for the trajectories of the dynamics, and show that the fluctuation-dissipation relation can be seen as a commutativity of homogenization and linearization. Finally, we explain why we believe our techniques can be adapted to the setting of degenerate (non-uniformly) convex interaction potentials.
\end{abstract}

\setcounter{tocdepth}{1}
\tableofcontents

\section{Introduction}

\subsection{Motivation and informal summary of main results}

Random surfaces in statistical mechanics are used to model the interface separating two pure thermodynamic phases. In classical effective interface models of this type, the interface is represented by a function $\phi : \Zd \to \R$ to which one associates an energy applied to the discrete gradient of the field and defined as follows. On a finite set $\Lambda \subset \Z^d$ and with a prescribed tilt $p \in \Rd$, each surface $\phi:\Lambda \to \R$ satisfying the Dirichlet boundary conditions $\phi = 0$ on $\partial \Lambda$ is assigned the energy
\begin{equation} \label{03031932}
    H_{\Lambda , p} \left( \phi \right) = \sum_{ \substack{x , y \in \Lambda^+ \\ |x - y| = 1} } V(p \cdot (y - x) +  \phi(y) - \phi(x) ),
\end{equation}
where $\partial\Lambda$ is the external vertex boundary of $\Lambda$, $\Lambda^+$ is the set $\Lambda\cup\partial\Lambda$, $|\cdot|$ denotes the Euclidean norm, $V:\R \to \R$ is an interaction potential.

In this paper, we assume that the interaction potential is symmetric, uniformly convex and~$C^{1,1}$, that is,~$V'$ is Lipschitz and there exist two constants $c_-, c_+\in (0, \infty),$ such that the second derivative $V''$ satisfies
\begin{equation} \label{eq.unifconvV}
     c_- \leq V''(x)\leq c_+ ~~~~\mbox{for almost every}~ x \in \R.
\end{equation}
The law of the random surface is then given by
\begin{equation} \label{eq:defmuL}
    \mu_{\Lambda , p} (d \phi) := \frac{1}{Z_{\Lambda , p}}\exp \left( - H_{{\Lambda , p}} (\phi) \right) \prod_{v \in \Lambda} d \phi(v),
\end{equation}
where $d \phi(v)$ denotes the Lebesgue measure on $\R$ and $Z_\Lambda$ is the constant which makes $\mu_\Lambda$ a probability measure. Under the assumption~\eqref{eq.unifconvV}, it is known that, in any dimension $d \geq 1$~\cite{FS}, the measures $\mu_{\Lambda , p}$ converges as $\Lambda \to \Zd$ to a unique infinite-volume translation-invariant and ergodic Gibbs measure on the space of gradient fields (or configurations on $\Zd$ modulo constant), which we denote by $\mu_{\infty , p}$.
In dimension $d \geq 3$, the infinite-volume measures can be defined on configurations of $\Zd$ and not only gradient fields; we will denote them by $\mu_{\infty , p}$.

The model~\eqref{eq:defmuL} is known as the (uniformly convex) 
\emph{$\nabla \phi$-model} or 
\emph{discrete Ginzburg-Landau model} and has been extensively studied under the assumption~\eqref{eq.unifconvV}. We refer to~\cite{F05,Sh,V06} or Section~\ref{section.background} below for an overview of its literature.

The Gibbs measure $\mu_{\infty , p}$ is naturally associated with the following Langevin dynamic
\begin{equation} \label{Langevin.dyn}
    d \phi(t, x) 
     = \sum_{\substack{y \in \Zd \\ |y - x| = 1}} V'( p \cdot (y - x) + \phi(t, y) - \phi(t, x)) \, dt + \sqrt{2} dB_t(x), 
\end{equation}
where $\left\{ B_t(x) \, : \, x \in \Zd, t \in \R \right\}$ is a family of independent Brownian motions, and the notation $x \in e$ means that the vertex $x$ is one of the endpoints of the edge $e$.
Specifically, the measure $\mu_{\infty , p}$ is stationary, reversible and ergodic with respect to the dynamic~\eqref{Langevin.dyn}. We denote by $\phi(\cdot , \cdot ; p) : \R \times \Zd \to \R$ a stationary solution of the Langevin dynamic~\eqref{Langevin.dyn} (defined modulo constant in dimension $d = 2$), and refer to~\cite{FS, GOS} for a proof of its existence.

One generally wishes to understand the large-scale, macroscopic, statistical behavior of the dynamic $\phi(\cdot , \cdot ; p)$ and its discrete gradient. In this direction, two questions are of particular importance:
\begin{itemize}
    \item \textit{The hydrodynamic limit.} The first one aims at establishing that the large-scale behavior of the stochastic dynamic is governed by the solution of a suitable, deterministic partial differential equation. In the case of the uniformly-convex $\nabla \phi$-model, the hydrodynamic limit was established by Funaki and Spohn~\cite{FS}. Their result asserts that the macroscopic behavior of the dynamic is governed by the deterministic, nonlinear parabolic equation
    \begin{equation*}
        \partial_t h - \nabla \cdot D_p \bar \sigma ( \nabla h) = 0,
    \end{equation*}
    where the function $\bar \sigma : \Rd \to \R$ is $C^{1,1}$ and uniformly convex, intrinsic to the model, called \emph{the surface tension}. It is defined in~\eqref{def.surfacetension} below and $D_p \bar \sigma$ denotes its gradient.

    \item \textit{The scaling limit.} A second important problem is to understand the fluctuations of the Langevin dynamic around the deterministic profile. The question was settled by Naddaf-Spencer~\cite{NS} and by Giacomin-Olla-Spohn~\cite{GOS}. Specifically, it is established in~\cite{GOS} that the fluctuations of the space time dynamic $\phi(\cdot, \cdot ; p)$ are described by an infinite-dimensional Ornstein-Uhlenbeck process of the form
    \begin{equation*}
        d \Phi_t = \nabla \cdot \bar \a(p) \nabla \Phi dt +  \sqrt{2} \dot W,
    \end{equation*}
    where $\bar \a(p)$ is a deterministic uniformly elliptic matrix and $\dot W$ is a normalized space-time white noise.
    The article~\cite{GOS} (see also~\cite[Problems 5.1 and 11.1]{F05}) additionally conjectures that the surface tension $\bar \sigma$ is twice continuously differentiable and that the coefficient $\bar \a(p)$ is related to the surface tension via to the formula:
    \begin{equation} \label{2eq.flucdiss}
        \bar \a(p) = D^2_p \bar \sigma (p).
    \end{equation}
    The identity~\eqref{2eq.flucdiss} is known as \emph{the fluctuation-dissipation relation}, and was recently established by Armstrong and Wu~\cite{AW} under the assumption that the second derivative of the potential $V$ is H\"{o}lder continuous. 
\end{itemize}

The strategies developed in the articles~\cite{FS} and~\cite{NS, GOS} rely on different sets of tools. The proof of Funaki and Spohn is based on the techniques developed in the setting of the Ginzburg-Landau equation with a conserved order of parameter (see~\cite{guo1988nonlinear}). They establish by an entropy argument that the local space-time averaging of the law of the dynamic is a mixture of infinite-volume shift-invariant and ergodic gradient Gibbs measures. They then classify these measures by proving that, for any prescribed tilt $p \in \Rd$, there exists a unique shift-invariant and ergodic gradient Gibbs measure (the measure $\mu_{\infty, p}$).

The proof of the scaling limit of Naddaf-Spencer~\cite{NS} relies on the observation that the correlation structure of the scaling limit can be identified by homogenizing an infinite-dimensional PDE, called the \emph{Helffer-Sj\"{o}strand PDE} based on the work of Helffer and Sj\"{o}strand~\cite{HS, Sj}. This analytic technique successfully identified the scaling limit of the $\nabla \phi$-model, and was reworked probabilistically and extended by Giacomin-Olla-Spohn~\cite{GOS} who reformulated the question of the homogenization of an infinite-dimensional PDE into the proof of an invariance principle for a random walk evolving in a dynamic, random environment. The question admits a third, equivalent, reformulation: the identification of the correlation structure of the scaling limit boils down to establishing homogenization for the discrete parabolic equation (see~\eqref{eq:discdynlaplacian})
\begin{equation} \label{parabolic.eqlinearized}
    \partial_t u - \nabla \cdot \a \nabla u = 0 ~\mbox{in}~\Zd,
\end{equation}
with the random environment $\a := V''(\nabla \phi(\cdot , \cdot ; p))$.

\smallskip

In this article, we propose to view the Langevin dynamic~\eqref{Langevin.dyn} as a nonlinear parabolic equation forced by white noise, allowing us to apply recently developed methods in quantitative stochastic homogenization~\cite{AS,AFK1, AFK2, FN19, CG21}. In this way we circumvent the need to analyze the infinite-dimensional Helffer-Sj\"ostrand equation, whose role is played instead by the linearized Langevin dynamics---which turns out to be the linear, uniformly parabolic equation~\eqref{parabolic.eqlinearized}. This interpretation of the model, which is explained in more detail in Section~\ref{sectionlangevinaspdewithnoise} below, allows to reformulate previous results for the~$\nabla\phi$-model in terms of homogenization. For instance:
\begin{itemize}
    \item The hydrodynamic limit of Funaki and Spohn~\cite{FS} is a homogenization theorem, and the limit can therefore be quantified using homogenization methods. This is the subject of our first main result, Theorem~\ref{Th.quantitativehydr} below.
    
    \item The parabolic equation~\eqref{parabolic.eqlinearized} corresponds to the \emph{linearized equation} in~\cite{AFK1, AFK2, FN19, CG21}, that is, the Langevin dynamics linearized around the trajectories. The identification of the correlation structure and proof of the scaling limit for the Gibbs measure turns out to be equivalent to a homogenization statement for this linearized equation. Since this equation is uniformly parabolic, quantitative homogenization estimates for it are essentially known (see~\cite{ABM}) but we will not present them here.  
    
    \item The surface tension $\bar \sigma$ corresponds to the effective Lagrangian (see~\eqref{eq:18270903}). Note that the regularity of the effective Lagrangian in the setting of stochastic homogenization has been studied in detail in~\cite{AFK2}, and so we should expect that similar methods are applicable here. The fluctuation-dissipation relation in this terminology is known as \emph{the commutativity of homogenization and linearization}, because it essentially says that the homogenized coefficient for the linearized equation is equal to the coefficient for the  linearization of the homogenized equation (see~\cite{AFK1,AFK2,FN19}). The identity~\eqref{2eq.flucdiss} was established in~\cite{AW} under the assumption that $V^{\prime\prime}$ is H\"older continuous, based on a different approach which relied on a quantification of the homogenization of the infinite-dimensional Helffer-Sj\"{o}strand PDE.
    
\end{itemize}
Our motivation to develop this approach is threefold:

\begin{itemize}
    \item[(i)] We use the connection between Langevin dynamics and stochastic homogenization to strengthen some results known in the field and establish new ones. In this direction, we establish two theorems: in Theorem~\ref{Th.quantitativehydr}, we obtain a quantitative version of the hydrodynamic limit of Funaki and Spohn~\cite{FS}, quantified both over the rate of convergence and the stochastic integrability. In Theorem~\ref{t.regsurfacetension}, we prove the $C^2$-regularity of the surface tension $\bar \sigma$ under the assumption that the potential is $C^{1,1}(\R)$, generalizing the result~\cite{AW} where the regularity is proved for potentials $V$ whose second derivative is H\"{o}lder continuous, and solving the conjecture of~\cite{GOS} and~\cite[Problem 5.1]{F05} in full generality.
    
    \item[(ii)] We show that a quantitative version of the hydrodynamic can be used to develop a \emph{large-scale regularity theory} for the model (following~\cite{AL1, AL2, AS, GNO14}). In this direction, we prove a Lipschitz and $C^{1, \alpha}$ regularity estimate for the dynamic in Theorem~\ref{theoremlargescale}. Large-scale regularity is fundamental in stochastic homogenization, and we expect it to play a similarly important role in this setting. 
    
     \item[(iii)] We believe that the approach developed in this article can be extended to a class of non-uniformly convex potentials; for instance, degenerate potentials of the form $V(x) = |x|^{p}$ for $p > 2$ (see~\cite{magazinov2020concentration} for recent progress on these models), or more generally to potentials satisfying $c_- |x|^{p-2} \leq V''(x) \leq  c_+ |x|^{p-2}$ for some $c_- , c_+ \in (0, \infty)$ and $p > 2$. These models correspond to homogenization problems involving equations with degenerate coefficients. However,  quantitative stochastic homogenization methods have been developed which are quite robust and able to handle such degeneracies (see for instance~\cite{AD}). 
\end{itemize}

We refer to Section~\ref{sec.discussionlargescale} for a more detailed discussion regarding the question of degenerate potentials and the prospective applications of the large-scale regularity theory in this framework.

\subsection{The main results} \label{section:mainresult}

\smallskip

The first main result of this paper is a quantification of the hydrodynamic limit proved in~\cite{FS}. Our approach is different from the one of~\cite{FS} and is based on the quantitative stochastic homogenization methods introduced in~\cite{GO1, GO2, AKMbook}, extended to the parabolic setting in~\cite{ABM}.

In order to state the result formally, we first introduce a few notation. We let $D \subseteq \Rd$ be a bounded $C^{1,1}$ domain, let $I = (-1 , 0)$ and define the parabolic cylinder $Q = I \times D$. For $\ep > 0$, we discretize the sets $D$ and $Q$ at scale $\ep$ by setting $D^\ep := \ep \Zd \cap D$, $Q^\ep := I \times D^\ep$, we also denote by $\partial D^\ep$ the external vertex boundary of the set $D^\ep$ and by $\partial_{\mathrm{par}} Q^\ep := (\{-1\} \times D^\ep) \cup (I \times \partial D^\ep)$ the parabolic boundary of the cylinder $Q^\ep$. Given a function $v : Q^\ep \to \R$, and a point $(t , x) \in Q^\ep$, we denote by
\begin{equation} \label{nonlinearellipticop}
    \nabla^\ep \cdot V'(\nabla^\ep v) (t , x)  = \frac{1}{\ep} \sum_{\substack{y \in D^\ep \\ |x - y| = \ep}} V'\left( \frac{v(t , y) - v(t , x)}{\ep} \right).
\end{equation}
We also define the $L^2(Q^\ep)$-norm of the function $v$ according to the formula
\begin{equation*}
    \left\|  v\right\|_{L^2(Q^\ep)} = \ep^d \int_I  \sum_{x \in D^\ep} \left| v(t,x)\right|^2 \, dt.
\end{equation*}
We denote by $H^2(Q)$ the standard Sobolev space on the space-time cylinder $Q$ and discretize a function $f \in H^2(Q)$ at scale $\ep$ by setting $\tilde f_\ep(t , x) := (2\ep)^d \int_{[-\ep , \ep]^d} f(t , x + y) \, dy$. We fix through the article a collection of Brownian motions $\left\{ B_t(x) \, : \, x \in \Zd, t\in \R \right\}$ (see~\eqref{eq:14350901}) and measure the stochastic integrability of a random variable $X$ as follows: given an exponent $s > 0$ and a constant $K > 0$, we denote by
\begin{equation*}
    X \leq \mathcal{O}_s(K) ~~\mbox{if and only if}~~  \mathbb{E} \left[ \exp \left( \biggl( \frac{X}{K} \biggr)^{\!\!s} \right) \right] \leq 2.
\end{equation*}
The first result of this article is a quantitative version of the hydrodynamic limit of Funaki and Spohn~\cite{FS}.

\begin{theorem}[Quantitative hydrodynamic limit] \label{Th.quantitativehydr}
Let $f \in H^{2}(Q)$. Fix $\ep \in (0,1) $ and let $u^\ep : Q^\ep \to \R$ be the solution of the system of stochastic differential equations
\begin{equation} \label{eq:defuLthmhydro}
    \left\{ \begin{aligned}
    d u^\ep(t , x) & = \nabla^\ep \cdot V'(\nabla^\ep u^\ep) (t , x) dt + \sqrt{2} \ep dB_{\frac{t}{\ep^2}}\left( \frac{x}{\ep} \right) &~\mbox{for} &~(t,x) \in  Q^\ep,   \\
    u^\ep & = \tilde f_\ep &~\mbox{on} &~ \partial_{\mathrm{par}} Q^\ep,
    \end{aligned} \right.
\end{equation}
and let $\bar u : Q \to \R$ be the solution of the continuous nonlinear parabolic equation
\begin{equation} \label{eq:defubarthmhydro}
    \left\{ \begin{aligned}
    \partial_t \bar u - \nabla \cdot D_p \bar \sigma(\nabla \bar u) & = 0 &~\mbox{in} &~ Q, \\
    \bar u &= f &~\mbox{on} &~ \partial_{\mathrm{par}} Q. \\
    \end{aligned} \right.
\end{equation}
Then, there exists a constant $C_f < \infty$ depending on $d , c_+ , c_-, \left\| f \right\|_{H^2(Q)}$ and $D$ such that
\begin{equation*}
    \left\| u - \bar u \right\|_{L^2 \left( Q^\ep \right)} \leq \mathcal{O}_2 \left(C_f \ep^{\frac12} \bigl(1 + | \log \ep |^{\frac12} \indc_{\{ d = 2\}}\bigr) \right).
\end{equation*}
\end{theorem}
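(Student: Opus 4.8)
\emph{Proof strategy.}
The plan is to reduce the estimate, via the quantitative stochastic homogenization machinery of \cite{GO1,GO2,AKMbook} in its parabolic form \cite{ABM}, to a deterministic comparison carried out through a two-scale expansion of $\bar u$ whose correctors are the stationary solutions of the Langevin dynamics themselves. The structural point that makes this possible is a scaling identity: writing $\psi(t,x;p) := p\cdot x + \phi(t,x;p)$ for the full stationary trajectory at slope $p$ (defined modulo a constant when $d=2$), its parabolic rescaling $\psi^\ep(t,x;p) := \ep\,\psi\bigl(\tfrac{t}{\ep^2},\tfrac{x}{\ep};p\bigr)$ solves, in the interior, exactly the equation in \eqref{eq:defuLthmhydro} --- driven by the \emph{same} Brownian family $\{B_t(x)\}$ --- but with affine boundary data $x\mapsto p\cdot x$ in place of $\tilde f_\ep$. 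Thus the white-noise forcing is carried entirely by the correctors. Fixing a partition of unity $\{\eta_k\}$ subordinate to a cover of $Q$ by mesoscopic parabolic cylinders $Q_k$ of size $R$, $\ep\ll R\ll1$, on which $\nabla\bar u$ is close to a constant $p_k$, one forms the two-scale expansion
\begin{equation*}
    w^\ep(t,x) \;:=\; \bar u(t,x) \;+\; \ep \sum_k \eta_k(t,x)\, \phi\bigl(\tfrac{t}{\ep^2},\tfrac{x}{\ep};p_k\bigr),
\end{equation*}
modified by an extra cutoff removing the corrector term in an $O(R)$ parabolic boundary layer, so that $w^\ep = \tilde f_\ep$ on $\partial_{\mathrm{par}} Q^\ep$. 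Since $\sum_k\eta_k\equiv1$ away from the layer, the noise in $w^\ep$ agrees with that of $u^\ep$ up to boundary-layer contributions, so that $u^\ep - w^\ep$ satisfies, for each realization of $B$, a \emph{deterministic} discrete parabolic equation whose right-hand side is the classical homogenization flux discrepancy together with commutators produced by the variation of the $p_k$ and by the cutoffs.

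For that equation one runs the deterministic parabolic energy estimate: subtracting the two systems and testing the difference with $u^\ep - w^\ep$, the uniform convexity \eqref{eq.unifconvV} makes $v\mapsto -\nabla^\ep\cdot V'(\nabla^\ep v)$ strongly monotone with constant $c_-$, so a Gronwall argument in $t$ bounds $\bigl\|u^\ep - w^\ep\bigr\|_{L^2(Q^\ep)}$ by a dual (negative Sobolev) norm of the right-hand side plus the boundary mismatch. Estimating the latter needs: (i) regularity bounds for $\bar u$ in terms of $\|f\|_{H^2(Q)}$, available for the uniformly parabolic quasilinear equation \eqref{eq:defubarthmhydro} once $D_p\bar\sigma\in C^{0,1}$, i.e.\ once $\bar\sigma\in C^2$ --- this is Theorem~\ref{t.regsurfacetension}; (ii) closeness of the homogenized flux $D_p\bar\sigma(p)$ to the finite-volume fluxes read off from $\phi(\cdot,\cdot;p)$ on cylinders of size $R/\ep$; and (iii) smallness of the rescaled corrector $\ep\,\phi\bigl(\tfrac{\cdot}{\ep^2},\tfrac{\cdot}{\ep};p\bigr)$, which is of order $\ep$ when $d\ge3$ (where the stationary field has bounded variance) and of order $\ep|\log\ep|^{1/2}$ when $d=2$ (where, as for the Gaussian free field, the variance of $\phi(t,x;p)-\phi(t,0;p)$ diverges logarithmically). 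Balancing these contributions and optimizing the mesoscopic scale $R$ produces the rate $\ep^{1/2}\bigl(1+|\log\ep|^{1/2}\indc_{\{d=2\}}\bigr)$, the logarithm in $d=2$ coming precisely from the corrector growth.

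The main obstacle is the \emph{quantitative}, \emph{stochastically integrable} control of the inputs (ii) and (iii) at the sub-Gaussian ($\mathcal{O}_2$) level claimed in the statement. Here the renormalization/multiscale scheme of \cite{GO1,GO2,AKMbook,ABM} must be carried out with the ``coefficient field'' replaced by the stationary Langevin dynamics --- equivalently, by the Brownian family $\{B_t(x)\}$ --- rather than a classical i.i.d.\ environment: one attaches to parabolic cylinders subadditive energy quantities whose additive limit is the surface tension $\bar\sigma$ (the effective Lagrangian of the variational problem), establishes their convergence at an algebraic rate in the cylinder size by a multiscale iteration, and upgrades this to sub-Gaussian concentration using the reversibility of $\mu_{\infty,p}$ together with a concentration inequality valid under \eqref{eq.unifconvV} (Brascamp-Lieb, or a logarithmic Sobolev inequality for the dynamics). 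The uniform convexity \eqref{eq.unifconvV} is exactly what makes the linearized operator $V''(\nabla\phi(\cdot,\cdot;p))$ uniformly parabolic and the scheme applicable; the genuinely new difficulties, compared with the classical setting, are the infinite-dimensionality of the environment, the fact that the correctors for distinct slopes $p_k$ are distinct nonlinear functionals of one and the same noise --- so the gluing in $w^\ep$ must be controlled through a Lipschitz-in-$p$ estimate for the correctors --- and the two-dimensional logarithmic divergence, which has to be tracked through every step.
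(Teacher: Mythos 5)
Your overall architecture is right --- interpret the Langevin correctors as the two-scale expansion ansatz, exploit the cancellation of the white-noise forcing to reduce $u^\ep-w^\ep$ to a deterministic linear parabolic equation, and close with an energy estimate --- and this matches the paper. But two of the key technical inputs you invoke are not those the paper uses, and one of them would actually fall short of the stated rate.

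First, for the quantitative control of the correctors and their fluxes (your items (ii) and (iii)), you propose the subadditive-quantity/multiscale-iteration scheme (attach convex energy densities to parabolic cylinders, prove an algebraic rate by a renormalization argument, then upgrade to concentration via Brascamp--Lieb or a log-Sobolev inequality). The paper deliberately \emph{avoids} this route: it discretizes the driving Brownian motions into i.i.d.\ Gaussian increments $X^n_l(y)$, notes that $\partial \phi^n / \partial X^n_l(y)$ solves a linear parabolic equation in the frozen environment $V''(\nabla\phi^n)$, and estimates this derivative pointwise by the Nash--Aronson heat-kernel bound, after which the sub-Gaussian $\mathcal{O}_2$ concentration follows directly from the Gaussian concentration inequality (Propositions~\ref{prop.2.2BL}--\ref{prop.prop3.5techni}). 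This is essential here: the direct approach yields sharp $\mathcal{O}_2$ integrability for genuinely nonlinear space-time observables of the field (the averaged flux $V'(\nabla\phi)$ on parabolic cylinders) and, crucially, applies to correctors with a \emph{time-dependent} slope, which Brascamp--Lieb does not deliver.

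Second, and this is the more serious gap: your mesoscopic partition is by \emph{parabolic} cylinders of size $R$ with a constant slope $p_k$ on each. Because $\sum_k\eta_k\equiv 1$ forces $\sum_k\partial_t\eta_k=0$, the temporal transitions between slabs generate a commutator $\ep\sum_k(\partial_t\eta_k)\bigl(\phi(\cdot;p_k)-\phi(\cdot;p_{k_0})\bigr)$ of size $\ep R^{-2}\cdot R=\ep/R$, and balancing this against the $O(R)$ slope-approximation error yields $R=\ep^{1/2}$ and an overall rate of only $\ep^{1/3}$ --- exactly the loss pointed out in Remark~\ref{remark3.2225}. The paper circumvents this by making the mesoscopic partition purely spatial and letting the slope of the corrector vary in time ($\xi_y(t)$ piecewise constant at scale $\kappa^2$), so that the corrector is run as one continuous dynamic over the whole time interval and no temporal commutator appears. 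This is the device that produces the stated $\ep^{1/2}$ rate and is absent from your proposal. Finally, a smaller point: you list the $C^2$-regularity of $\bar\sigma$ (Theorem~\ref{t.regsurfacetension}) as a prerequisite for the $H^2$-bound on $\bar u$. This is unnecessary (and would create a misleading dependency between the two theorems): the $H^2$ estimate of Proposition~\ref{prop:H2regdiscfct} only requires $D_p\bar\sigma$ to be Lipschitz with uniform ellipticity bounds, i.e.\ $\bar\sigma\in C^{1,1}$, which follows from the uniform convexity of the $\sigma_L$ already established by Funaki--Spohn.
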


\begin{remark} \label{remark1.2}
\begin{enumerate}
    \item The proof gives the following estimate for the gradient of the function $u^\ep$ (using the notation of Section~\ref{Sectionmicroscopic})
\begin{equation*}
    \left\| \nabla^\ep u - \nabla^\ep w^\ep \right\|_{L^2 \left( Q^\ep \right)} \leq \mathcal{O}_2 \left(C_f \ep^{\frac12} \bigl(1 + | \log \ep |^{\frac12} \indc_{\{ d = 2\}}\bigr) \right),
\end{equation*}
where $w^\ep$ is the two-scale expansion defined in~\eqref{def.wL}.
    \item In the notation of the previous theorem, the result of Funaki and Spohn~\cite{FS} reads as follows. If the domain $D$ is the torus $\mathbb{T}^d$ (i.e., the result is established with periodic boundary conditions instead of Dirichlet), the initial condition $f(0 , \cdot)$ is assumed to be $L^2(\mathbb{T}^d )$, then, for any time $t \geq 0$,
\begin{equation} \label{eq:14000103}
    \E \left[ \left\| u(t , \cdot) - \bar u(t , \cdot) \right\|_{L^2\left( \mathbb{T}^d \right)}^2 \right] \underset{\ep \to 0}{\longrightarrow} 0.
\end{equation}
The convergence~\eqref{eq:14000103} was extended from periodic to Dirichlet boundary conditions by Nishikawa~\cite{nishikawa2003hydrodynamic}. We mention that the result~\eqref{eq:14000103} obtained by Funaki and Spohn could be obtained using the tools developed in this article, and quantified if more regularity is assumed on the initial condition.
    \item The result is optimal regarding stochastic integrability. Regarding the rate of convergence, we obtain half of the exponent of the optimal rate. Optimality could be reached at the cost of more technicalities (see Section~\ref{sec1.5.2} where the question is discussed).
\end{enumerate}
\end{remark}

The second result of this article establishes the $C^2$-regularity of the surface tension $\bar \sigma$. It provides a second proof of the result of~\cite{AW} and removes the H\"older regularity assumption on~$V^{\prime\prime}$, thus fully resolving the conjecture of~\cite{GOS} and~\cite[Problem 5.1]{F05}. In fact, we show that $\sigma \in C^{2}(\Rd)$ even if $V$ is uniformly convex and~$C^{1,1}(\R)$---that is, the surface tension may have more regularity than the interaction potential. This effect is specific to the Langevin dynamics, and is due to the presence of the Brownian motions~$B_t(x)$ in~\eqref{Langevin.dyn}. It is in particular not observed in homogenization of nonlinear equations~\cite{AFK1, AFK2}, in general. 

\smallskip

The argument is based on Lusin's theorem applied to~$V''$ and provides an estimate on the modulus of continuity of the Hessian~$D^2\bar{\sigma}$ depending on measure-theoretic information about the second derivative~$V^{\prime\prime}$. In order to state the result and show on which quantity the modulus of continuity of $D^2\bar{\sigma}$ depends, we need to quantify Lusin's theorem. To this end, it is both natural and convenient to use a mollification, and we let $\{ \eta_\kappa\}_{\kappa>0}$ be the standard mollifier, that is, $\eta_\kappa := \kappa^{-1} \eta\left(\cdot/\kappa\right)$ where $\eta : \R \to \R$ is a smooth, nonnegative function satisfying $\supp \eta \subseteq [-1,1]$ and $\int_\R \eta = 1$. 
We next define
\begin{equation*}
    V_\kappa := V \star \eta_\kappa.
\end{equation*}
Since $V\in C^{1,1}(\R)$, the second derivative of $V_\kappa$ satisfies the following bound, for a constant $C < \infty$ depending on $c_+ , c_-,$ and $\eta$,
\begin{equation} \label{eq:VrgLIP}
     \left| V_\kappa''(x) - V_\kappa''(y) \right| \leq C\kappa^{-1} |x - y|,
     \qquad x,y\in\Rd\,.
\end{equation}
Moreover, by the Lebesgue differentiation theorem,
\begin{equation} \label{eq:14152109}
\sup_{S\geq 1}
\lim_{\kappa \to 0}
\int_{-S}^S | V_\kappa''(x) - V''(x) | \, dx = 0.
\end{equation}
Consequently, the set 
\begin{equation*}
    A_{S , \kappa}(\ep) := \Bigl\{ x \in [-S , S] \, : \, \left| V''(x) - V_\kappa''(x) \right| \geq \ep \Bigr\}
\end{equation*}
satisfies
\begin{equation} \label{conv.regularityV''}
\sup_{(S,\ep) \in [1,\infty) \times (0,1]}
  \limsup_{\kappa\to 0} \left| A_{S , \kappa}(\ep)  \right| =0.
\end{equation}
Our estimate for the modulus of continuity of $D^2\bar{\sigma}$ depends only on the rate of the limit in~\eqref{conv.regularityV''} over all choices of parameters $(S,\ep)\in [1,\infty) \times (0,1]$.

\begin{theorem}[$C^2$-regularity of the surface tension]
\label{t.regsurfacetension}
Under the assumption that the potential $V$ is $C^{1 , 1}(\R)$ and uniformly convex, the surface tension $\bar \sigma$ belongs to the space $C^2 (\Rd)$.
Moreover, for each $R\geq 1$, there exists a continuous function $\chi_R:[0,\infty) \to [0,\infty)$, which depends only on $c_-,c_+,R$ 
and the rate of the limit in~\eqref{conv.regularityV''} over all parameters $(S,\ep)\in [1,\infty]\times (0,1]$, such that
\begin{equation}
\label{e.modululs.estimate}
    \bigl| D^2_p \bar{\sigma} (p) - D^2_p\bar{\sigma}(q) \bigr| 
    \leq \chi_R(|p-q|), \quad \forall p,q\in B_R.
\end{equation}
\end{theorem}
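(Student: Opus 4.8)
The plan is to reduce the $C^2$ regularity of $\bar\sigma$ to a stability/perturbation statement for the homogenization of the nonlinear problem under mollification of the potential. First I would recall (from the variational/subadditivity machinery set up earlier in the paper) that $\bar\sigma$ is the effective Lagrangian associated to the model, and that it can be characterized either through the surface tension formula~\eqref{def.surfacetension} or equivalently through a cell-problem / finite-volume minimization that is amenable to quantitative analysis. For the mollified potential $V_\kappa$, which by~\eqref{eq:VrgLIP} has a Lipschitz second derivative, the result of~\cite{AW} applies and yields $\bar\sigma_\kappa\in C^2(\Rd)$, with a Hessian $D^2_p\bar\sigma_\kappa(p)$ that coincides with the homogenized matrix $\bar\a_\kappa(p)$ of the linearized equation~\eqref{parabolic.eqlinearized} (the fluctuation-dissipation relation, i.e.\ the commutativity of homogenization and linearization, for the mollified model). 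So the strategy is: (i) show $\bar\sigma_\kappa\to\bar\sigma$ locally uniformly together with $D_p\bar\sigma_\kappa\to D_p\bar\sigma$; (ii) show that the family $\{D^2_p\bar\sigma_\kappa\}_{\kappa>0}$ is equicontinuous on each ball $B_R$, with a modulus controlled only by $c_-,c_+,R$ and the rate of the limit in~\eqref{conv.regularityV''}; then $D^2_p\bar\sigma$ exists, is continuous, and inherits that modulus, giving~\eqref{e.modululs.estimate} via Arzelà–Ascoli.

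The heart of the argument is step (ii), and this is where I expect the main obstacle to lie. The point is that the naive bound from~\eqref{eq:VrgLIP} on the modulus of continuity of $D^2_p\bar\sigma_\kappa$ blows up like $\kappa^{-1}$ as $\kappa\to0$, so one cannot simply pass to the limit. The key observation — this is the ``Lusin'' input advertised before the theorem — is that $V''$ and $V_\kappa''$ differ, on any bounded interval $[-S,S]$, only on a set $A_{S,\kappa}(\ep)$ of small measure (outside of which they are within $\ep$), and the relevant gradients $\nabla\phi(\cdot,\cdot;p)$ under the Gibbs measure put only a small, quantitatively controlled amount of mass near such a small set (here uniform convexity and the resulting Gaussian-type concentration/energy bounds for the field are essential). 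Concretely, I would estimate $\bar\a_\kappa(p)-\bar\a_\kappa(q)$, for $p,q\in B_R$, by testing the defining (sub)additive quantities for the linearized operator with coefficient $V_\kappa''(\nabla\phi(\cdot,\cdot;p))$ against those with $p$ replaced by $q$: the difference splits into a ``good'' part where both $V_\kappa''$ is close to $V''$ and $|\nabla\phi(\cdot;p)-\nabla\phi(\cdot;q)|$ is small — controlled by the large-scale $C^{1,\alpha}$ regularity of the dynamics (Theorem~\ref{theoremlargescale}) and the continuity of the corrector in $p$ — and a ``bad'' part supported on the exceptional set, whose contribution is bounded by the $L^1$-type control of the occupation measure of $\nabla\phi$ near $A_{S,\kappa}(\ep)$. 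Choosing $S$ (a large-deviation cutoff on the size of $|\nabla\phi|$) and $\ep$ appropriately and then sending $\kappa\to0$ produces a modulus depending only on the rate in~\eqref{conv.regularityV''} and on $c_-,c_+,R$, uniformly in $\kappa$.

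For step (i), local uniform convergence $\bar\sigma_\kappa\to\bar\sigma$ follows from the continuity of the surface tension (equivalently the subadditive quantities) with respect to the potential in, say, the $L^1_{\mathrm{loc}}$ topology of $V''$, which is immediate from the monotonicity and convexity structure together with~\eqref{eq:14152109}; convergence of the first derivatives $D_p\bar\sigma_\kappa\to D_p\bar\sigma$ then follows because $\{\bar\sigma_\kappa\}$ are uniformly convex and uniformly $C^{1,1}$ (the bounds $c_-\le V_\kappa''\le c_+$ are preserved by mollification), so convex-function convergence upgrades automatically to $C^1_{\mathrm{loc}}$ convergence of gradients. Finally, assembling (i) and (ii): the uniform modulus of continuity from (ii) lets one pass to a subsequential limit of $D^2_p\bar\sigma_\kappa$ that must equal $D_p(D_p\bar\sigma)=D^2_p\bar\sigma$ by (i), so the full family converges, $\bar\sigma\in C^2(\Rd)$, and~\eqref{e.modululs.estimate} holds with $\chi_R$ the (explicit) modulus produced in step (ii). The one technical subtlety to handle carefully is making the occupation-measure estimate for $\nabla\phi$ near $A_{S,\kappa}(\ep)$ genuinely quantitative and uniform in the tilt $p\in B_R$ and in $\kappa$; this is exactly where the quantitative homogenization estimates (Theorem~\ref{Th.quantitativehydr} and the large-scale regularity of Theorem~\ref{theoremlargescale}) for the nonlinear dynamics are used in place of the Helffer--Sjöstrand analysis of~\cite{AW}.
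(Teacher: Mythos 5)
Your overall architecture — obtain a modulus of continuity for a family of Hessian approximants, uniform in the approximation parameter, and pass to the limit by Arzel\`a--Ascoli — matches the paper's, but your parametrization differs: you index by the mollification scale~$\kappa$, invoking~\cite{AW} to make sense of $D^2_p\bar\sigma_\kappa$ for each fixed~$\kappa$, whereas the paper indexes by the finite-volume scale~$L$ with the \emph{unmollified}~$V$ and never invokes~\cite{AW}. Concretely, the paper sets $\tau_L(p) := \E\bigl[\bigl(V'(p+\nabla\phi_L(\cdot;p))\bigr)_{Q_{L/2}}\bigr]$, proves $\tau_L\to D_p\bar\sigma$ with a rate (Proposition~\ref{prop.surfacetesnionapprox}), shows $\tau_L$ is differentiable with an explicit formula in terms of the linearized corrector $w_{L,p,e_i}$, and then shows $\{D_p\tau_L\}_L$ is equicontinuous. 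The mollified potentials $V_\kappa$ appear only \emph{inside} the proof of that equicontinuity (Lemma~\ref{lemma3.92209}), as a Lusin-type approximation, never as a separate family of homogenization problems. Your route would have to reconstruct essentially the same finite-volume representation anyway, since $\bar\a_\kappa(p)$ is itself only defined as a large-scale limit, but now carrying the extra index~$\kappa$ and a second mollification scale $\kappa'\geq\kappa$ to make the $V_\kappa''$-vs.-$V_{\kappa'}''$ triangle inequality close; nothing is gained, and the argument becomes less self-contained.

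The more substantive issue is that you have misidentified what drives the occupation-measure estimate — the step you yourself single out as the subtle one. You attribute it to the quantitative hydrodynamic limit (Theorem~\ref{Th.quantitativehydr}) and the large-scale regularity (Theorem~\ref{theoremlargescale}), and to ``Gaussian-type concentration\ldots under the Gibbs measure.'' Neither is what is needed. Theorems~\ref{Th.quantitativehydr} and~\ref{theoremlargescale} control spatial averages of the dynamics across scales; they say nothing about how much time a single coordinate process $t\mapsto\nabla v_L(t,e;p)$ spends in a Borel set of small Lebesgue measure. Brascamp--Lieb or marginal-density bounds under the Gibbs measure give, at best, a bound on the \emph{expectation} of the occupation time (and only once stationarity is arranged), whereas the proof requires an $\mathcal{O}_2$-type stochastic integrability estimate, since the occupation time is summed over all edges and times in the $\underline{L}^1(Q_L)$-norm and must concentrate. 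What actually does the job is a short, entirely local SDE argument: Proposition~\ref{prop:diffOrnstein-Uhlenbeck} and Corollary~\ref{cor:diffOrnstein-Uhlenbeck}, proved via It\^o's formula and Dubins--Schwarz, show that any $dX_t = h_t\,dt + dB_t$ with an adapted drift satisfying $\|h\|_{L^2(0,1)}\leq\mathcal{O}_2(K)$ obeys $\int_0^1\indc_{\{X_t\in A\}}\,dt\leq\mathcal{O}_2(C|A|)$ for every Borel~$A$. This is applied to $X_t = s\,\nabla v_L(t,e;q)+(1-s)\,\nabla v_L(t,e;p)$, with the hypothesis on the drift supplied by the pointwise gradient estimate~\eqref{eq:20291910grad} of Proposition~\ref{prop.2.2BL}. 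The paper's Section~\ref{sec.section4} is otherwise independent of Sections~\ref{sec.section2}--\ref{eq:quanthydrolim2sc}; in particular neither Theorem~\ref{Th.quantitativehydr} nor Theorem~\ref{theoremlargescale} enters. Without the It\^o occupation-time estimate (or an equivalent), your step~(ii) does not close.
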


The surface tension $\bar \sigma$ is classically defined as a limit, as $L$ tends to infinity, of the finite-volume surface tensions $\sigma_L$ (see~\cite{FS} or~\eqref{def.surfacetension}). Our proof also yields an optimal rate of convergence for the gradient $D_p \sigma_L$ to $D_p \bar \sigma$.

\begin{remark}
An investigation of the proof of Theorem~\ref{t.regsurfacetension} reveals that, under the stronger assumption that the potential $V$ belongs to $C^{2 , \alpha}(\R)$, for some $\alpha > 0$, then the surface tension belongs to $C^{2 , \beta}(\R^d)$ for some~$\beta < \alpha$, with an appropriate estimate. We therefore recover the statement proved in~\cite{AW}.
\end{remark}

Our third main result is a \emph{large-scale regularity estimate} for the Langevin dynamics. Specifically, we show that the random surface is nearly $C^{1 , \alpha}$, and behaves like a $C^{1 , \alpha}$ function on ``large scales'' (i.e., scales which are a large multiple of the discrete scale). 
Our proof follows the approach of~\cite{AS}: by Theorem~\ref{Th.quantitativehydr}, we know that a solution of the Langevin dynamics is well-approximated by a solution of the equation~\eqref{eq:defubarthmhydro} which possesses good regularity properties, we are then able to transfer the regularity of the solution of the equation~\eqref{eq:defubarthmhydro} to the solution of the Langevin dynamics over large scales where homogenization occurs, using the quantitative homogenization estimates. 

\smallskip

In the following statement, we will denote by $\Lambda_L := \{ - L, \cdots, L \}^d$ and $Q_L := (-L^2 , 0) \times \Lambda_L \subseteq \R \times \Zd$ the discrete box and parabolic cylinder of size $L$, and denote by $\left| Q_L\right| := L^2 (2L+1)^d$ the volume of the cylinder $Q_L$. Given a function $u : Q_L \to \R$, we will denote by
\begin{equation*}
    \left( u \right)_{Q_L} := \frac{1}{\left| Q_L \right|} \int_{-L^2}^0 \sum_{x \in \Lambda_L} u(t , x) \, dt \hspace{5mm} \mbox{and} \hspace{5mm}  \left\| u \right\|_{\underline{L}^2(Q_L)}^2 := \frac{1}{\left| Q_L \right|} \int_{-L^2}^0 \sum_{x \in \Lambda_L} |u(t , x)|^2 \, dt,
\end{equation*}
the average value and averaged $L^2$-norm of the function $u$ over the parabolic cylinder $Q_L$. We also denote by $\nabla \cdot V'(\nabla v)$ the discrete elliptic operator~\eqref{nonlinearellipticop} with $\ep = 1$, and by $\mathcal{P}_1$ the set of affine functions in $\Rd$, i.e.,
\begin{equation*}
    \mathcal{P}_1 := \left\{ \ell : \Rd \to \R \, : \, \exists p \in \Rd, \, c \in \R, ~ \ell(x) = p \cdot x + c \right\}.
\end{equation*}

\begin{theorem}[Large-scale $C^{0,1}$ and $C^{1, \alpha}$ estimates] \label{theoremlargescale}
Fix $s \in (0, d)$ and $M < \infty$. There exist a constant $C < \infty$ and an exponent $\beta >0$ depending on $s , c_+, c_-, d$, a constant $K < \infty$ and a nonnegative random variable $\mathcal{M}_{\mathrm{reg}}^s$ depending on $s, M , c_+, c_-, d$ such that
\begin{equation} \label{eq:thmlargescale1234}
    \mathcal{M}_{\mathrm{reg}}^s \leq \mathcal{O}_s (K),
\end{equation}
and such that the following holds. For any $L \geq 2 \mathcal{M}_{\mathrm{reg}}^s$, every $u : Q_L \to \R$ solution of the Langevin dynamics
\begin{equation} \label{def.ulargescale}
    \left\{ \begin{aligned}
    d u(t , x) = \nabla \cdot V'(\nabla u) &(t , x) dt + \sqrt{2} dB_{t}\left( x \right)~\mbox{for}~(t,x) \in  Q_L, \\
    \frac{1}{L} \left\| u - (u)_{Q_L} \right\|_{\underline{L}^2(Q_L)} &\leq M,
    \end{aligned} \right.
\end{equation}
and every $l \in [\mathcal{M}_{\mathrm{reg}}^s , L]$, one has the estimates
\begin{equation} \label{eq:10040203}
    \frac{1}{l} \left\| u -  (u)_{Q_l} \right\|_{\underline{L}^2(Q_l)} \leq \frac{C}{L} \left\| u - (u)_{Q_L} \right\|_{\underline{L}^2 (Q_L)} + C,
\end{equation}
and 
\begin{equation} \label{eq:10050203}
    \inf_{\ell \in \mathcal{P}_1}\left\| u - \ell \right\|_{\underline{L}^2(Q_l)} \leq C \left( \frac{l}{L} \right)^{1+\alpha} \inf_{\ell \in \mathcal{P}_1}\left\| u - \ell \right\|_{\underline{L}^2(Q_L)} + C l^{-\beta} (M+1).
\end{equation} 
\end{theorem}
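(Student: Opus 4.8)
The plan is to follow the scheme of deducing large-scale regularity from quantitative homogenization developed in~\cite{AS} (see also~\cite{AL1,AL2,GNO14}), using Theorem~\ref{Th.quantitativehydr} as the quantitative input and the classical interior regularity theory for the homogenized equation~\eqref{eq:defubarthmhydro} as the limiting, ``constant-coefficient'' regularity. I would first build a random \emph{minimal scale} $\mathcal{M}_{\mathrm{reg}}^s$ above which the hydrodynamic limit of Theorem~\ref{Th.quantitativehydr} is quantitatively effective in every mesoscopic sub-cylinder, and then prove the Lipschitz estimate~\eqref{eq:10040203} and the $C^{1,\alpha}$ estimate~\eqref{eq:10050203} together, by a dyadic excess-decay iteration run between scales $L$ and $l$. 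The two estimates are naturally intertwined: at every dyadic scale $r$ one must keep track both of the best affine approximant $\ell_r\in\mathcal{P}_1$ of $u$ on $Q_r$, whose slope $p_r$ must remain of size $O(M+1)$ for the harmonic-approximation step to apply (this is what~\eqref{eq:10040203} will provide), and of the $C^{1,\alpha}$-type excess $\inf_{\ell\in\mathcal{P}_1}\|u-\ell\|_{\underline{L}^2(Q_r)}$, whose decay is~\eqref{eq:10050203}; the increment $|p_{\theta r}-p_r|$ is in turn controlled by the current excess, so both quantities can be propagated down the scales simultaneously.

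For the minimal scale, I would fix a small deterministic threshold $\delta_0=\delta_0(c_-,c_+,d)>0$ and, for each dyadic scale $3^m$, rescale $Q_{3^m}$ to the unit cylinder $Q$ with $\ep=3^{-m}$, so that Theorem~\ref{Th.quantitativehydr} (in the averaged, tilted form, and in the localized version discussed below) bounds the normalized $\underline{L}^2(Q_{3^m})$-distance between a solution of the Langevin dynamics in $Q_{3^m}$ with prescribed boundary data and tilt and the solution of~\eqref{eq:defubarthmhydro} with matching data by $\mathcal{O}_2\!\left(C3^{-m/2}(1+|\log 3^m|^{1/2}\indc_{\{d=2\}})\right)$ times the relative size of the data. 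Letting $E(m)$ be this error maximized over a $3^{-m}$-net of admissible boundary data and tilts of size $O(M+1)$, I set $\mathcal{M}_{\mathrm{reg}}^s:=\sup\{3^m:E(m)>\delta_0\}$. Summing the $\mathcal{O}_2$ (hence stretched-exponential) tails of $E(m)$ over $m$, together with a union bound over the net whose cardinality grows polynomially in $3^m$ --- which is precisely what caps the resulting stochastic integrability exponent strictly below the number $d$ of degrees of freedom per unit volume --- yields $\mathcal{M}_{\mathrm{reg}}^s\le\mathcal{O}_s(K)$ with $K$ depending on $s,M,c_-,c_+,d$; the $M$-dependence enters through the size of the admissible data. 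This gives~\eqref{eq:thmlargescale1234}.

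For the one-step improvement, fix $l\in[\mathcal{M}_{\mathrm{reg}}^s,L]$, a solution $u$ of the Langevin dynamics in $Q_l$, let $p=p_l$ be the slope of its best affine approximant on $Q_l$, and let $\bar u$ solve~\eqref{eq:defubarthmhydro} in a slightly smaller concentric copy of $Q_l$ with the (suitably mollified) boundary data of $u-\ell_l$, so that $\bar u$ solves the homogenized equation with tilt $p$. Since $l\ge\mathcal{M}_{\mathrm{reg}}^s$ and $|p|\lesssim M+1$, the localized form of Theorem~\ref{Th.quantitativehydr} gives $\|(u-\ell_l)-\bar u\|_{\underline{L}^2(Q_{l/2})}\le\delta_0\inf_{\ell\in\mathcal{P}_1}\|u-\ell\|_{\underline{L}^2(Q_l)}+(\text{lower-order error})$, where the lower-order error decays as a power of $l$ times $(M+1)$. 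On the other hand, since $\bar\sigma$ is uniformly convex and, by Theorem~\ref{t.regsurfacetension}, $C^2$, the equation~\eqref{eq:defubarthmhydro} is uniformly parabolic; differentiating it, each partial derivative of $\bar u$ solves a linear uniformly parabolic equation in divergence form with bounded measurable coefficients, so De Giorgi--Nash--Moser theory yields, for some $\alpha=\alpha(c_-,c_+,d)>0$, the interior estimates $\|\nabla\bar u\|_{L^\infty(Q_{l/4})}\le\tfrac Cl\|\bar u-(\bar u)_{Q_{l/2}}\|_{\underline{L}^2(Q_{l/2})}$ and $[\nabla\bar u]_{C^\alpha_{\mathrm{par}}(Q_{l/4})}\le\tfrac{C}{l^{1+\alpha}}\inf_{\ell\in\mathcal{P}_1}\|\bar u-\ell\|_{\underline{L}^2(Q_{l/2})}$. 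Combining the second interior estimate with the homogenization error and choosing $\theta=\theta(c_-,c_+,d)$ small enough that $C\theta^{1+\alpha}\le\tfrac12\theta$ gives the one-step excess decay $\inf_{\ell\in\mathcal{P}_1}\|u-\ell\|_{\underline{L}^2(Q_{\theta l})}\le C\theta^{1+\alpha}\inf_{\ell\in\mathcal{P}_1}\|u-\ell\|_{\underline{L}^2(Q_l)}+(\text{lower-order error})$; iterating down the dyadic scales, using the first interior estimate to control $|p_{\theta l}-p_l|$ by the current excess so that all tilts stay $O(M+1)$, and summing the two geometric series --- one for the decaying excess, one for the summable sequence of homogenization errors --- produces~\eqref{eq:10050203}. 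The Lipschitz estimate~\eqref{eq:10040203} then follows by writing $\tfrac1l\|u-(u)_{Q_l}\|_{\underline{L}^2(Q_l)}\le\tfrac1l\inf_{\ell\in\mathcal{P}_1}\|u-\ell\|_{\underline{L}^2(Q_l)}+|p_l|$ and using the telescoping bound $|p_l|\le|p_L|+\sum_k|p_{\theta^{k+1}L}-p_{\theta^kL}|\lesssim\tfrac1L\|u-(u)_{Q_L}\|_{\underline{L}^2(Q_L)}+C$ just established.

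The principal obstacle is the adaptation of Theorem~\ref{Th.quantitativehydr} --- stated for smooth data $f\in H^2(Q)$ on a fixed $C^{1,1}$ domain --- to the local comparison inside an arbitrary cylinder $Q_l$ against the a priori merely-$L^2$ boundary trace of $u$. Two difficulties must be resolved: one must run the hydrodynamic limit with rough (after a single discrete-scale mollification, Lipschitz) boundary data and a nonzero tilt, losing at most a controlled power of $l$ and exploiting that the comparison function $\bar u$, while not smooth up to $\partial_{\mathrm{par}}Q_l$, is $C^{1,\alpha}$ in the interior --- which suffices since the iteration concerns interior sub-cylinders; and one must avoid a circular dependence, since a direct application of Theorem~\ref{Th.quantitativehydr} appears to require an $H^2$-type a priori bound on $u$ of the very strength being proven. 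I expect the resolution to be the standard one: first establish a preliminary, non-optimal Lipschitz-type bound --- enough to keep the tilts bounded --- using a form of the comparison valid for $L^2$ data, consistent with the fact (noted in Remark~\ref{remark1.2}) that the Funaki--Spohn convergence already holds for $L^2$ initial data, and then mollify at the discrete scale to recover the control needed for the sharp $C^{1,\alpha}$ step. The remaining points --- absorbing the white-noise contribution, which enters $u-\bar u$ but is of size $o(l)$ in $\underline{L}^2(Q_l)$ in every dimension, and the bookkeeping of constants across the net of boundary data and tilts --- I expect to be routine.
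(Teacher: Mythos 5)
The overall scheme you propose --- a minimal scale above which quantitative homogenization applies, followed by a harmonic-approximation and excess-decay iteration in the style of Armstrong--Smart --- is indeed the architecture of the paper's proof (Sections~\ref{Optimizingstochint} and~\ref{sec.Section5}). You also correctly identify the two genuine obstacles: the rough, a priori only $L^2$, boundary trace of $u$, and the risk of circularity. Unfortunately, the resolutions you sketch do not work, and each misses a non-routine device of the paper.

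\emph{Stochastic integrability.} You plan to build $\mathcal{M}_{\mathrm{reg}}^s$ by a union bound, over dyadic scales and a net of data and tilts, of the events that Theorem~\ref{Th.quantitativehydr} fails. But Theorem~\ref{Th.quantitativehydr} gives the combined error $\mathcal{O}_2(C\ep^{1/2})$; rescaled to a cylinder of sidelength $L=\ep^{-1}$ this is $\mathcal{O}_2(CL^{1/2})$, so the probability that it exceeds $\delta_0 L$ decays like $\exp(-c\delta_0^2 L)$, and summing over scales gives only $\mathcal{M}_{\mathrm{reg}}^s \leq \mathcal{O}_1(K)$, far short of the claimed $\mathcal{O}_s(K)$ for $s$ up to $d$. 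Your explanation that the polynomial-cardinality net is what ``caps'' the exponent at $d$ is also not right: a union bound over polynomially many events costs nothing against stretched-exponential tails; the cap comes from the $\mathcal{O}_2(\ell^{-d/2})$ fluctuations of the corrector and flux (Propositions~\ref{prop.spataveflux} and~\ref{prop.concentration}). The paper's resolution (Section~\ref{Optimizingstochint}, Proposition~\ref{prop4.4}) is to \emph{split} the error into a deterministic part $C(M+1)\ep^{\beta_s}$, with a small suboptimal exponent, plus a stochastic part $\mathcal{O}_2(C\ep^s)$ with $s$ arbitrarily close to $d/2$. Above a deterministic threshold $L_0$ the deterministic term is absorbed, the failure tail is $\exp(-cL^{2s})$, and summing over scales yields $\mathcal{O}_{2s}(K)$ with $2s$ arbitrarily close to $d$. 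This decomposition rests on Propositions~\ref{prop.concentration} and~\ref{prop.concentrationunif} and a boundary layer where the corrector is assigned slope zero; none of this is in your sketch.

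\emph{Rough boundary data.} You propose to mollify the trace of $u$ at the discrete scale. After rescaling $Q_l$ to the unit cylinder, however, the normalized $\underline{W}^{1,2+\gamma_0}_{\mathrm{par}}$-norm of the trace still carries an $O(1)$ contribution from the Brownian noise: $du$ contains $\sqrt{2}\,dB_t(x)$, and no deterministic mollification at the discrete scale removes it. The crucial trick in the paper (Step~1 of the proof of Proposition~\ref{lemma3.17}) is to \emph{cancel} the noise rather than smooth it: one constructs auxiliary solutions $\varphi_y$ of the Langevin dynamic on mesoscopic boundary-layer cylinders driven by the \emph{same} Brownian motions, sets $\varphi:=\sum_y \chi_y \varphi_y$ using a partition of unity, and works with $w:=u-\varphi$. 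Each $u-\varphi_y$ solves a deterministic uniformly parabolic equation, so $w$ has the required $W^{1,2+\gamma_0}_{\mathrm{par}}$ regularity by the Meyers estimate, while $\varphi$ is small (of order $\ell^{1-\beta_0}$) by corrector sublinearity. Without this, the quantitative hydrodynamic limit cannot even be applied at a single scale and the iteration never starts. The remaining bookkeeping you outline (interior $C^{1,\alpha}$ for the homogenized equation, propagation of slopes and excesses, telescoping to get the Lipschitz bound) does match Section~\ref{section.proofoftheoremlargescale}.
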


\begin{remark}
\begin{enumerate}
    \item The inequality~\eqref{eq:10040203} implies the following bound on the discrete gradient of the map $u$: For every $l \in [\mathcal{M}_{\mathrm{reg}}^s , L]$,
    \begin{equation} \label{eq:10180303}
    \left\| \nabla u \right\|_{\underline{L}^2(Q_l)} \leq \frac{C}{L} \left\| u - (u)_{Q_L} \right\|_{\underline{L}^2 (Q_L)} + C.
    \end{equation}
    Section~\ref{sec.discussionlargescale} discusses why inequalities of the type of~\eqref{eq:10180303}, providing a pointwise control on the gradient of a solution of the Langevin dynamics, can be useful in the context of random surfaces with degenerate potentials.
    \item Using the large-scale regularity theory and an interpolation argument, the quantitative hydrodynamic limit can be upgraded from a convergence in $L^2(Q)$ to a convergence in the space $C^{0,1-}(Q)$.
    \item Based on comparisons with stochastic homogenization, the optimal stochastic integrability exponent for the minimal scale~\eqref{eq:thmlargescale1234} should be $s = d$, the result is thus nearly optimal.
    \item The $C^2$ regularity of the surface tension stated in Theorem~\ref{t.regsurfacetension} implies that the solutions of the equation~\eqref{eq:defubarthmhydro} are $C^{1 , 1-}$. Hence it would be possible to transfer more regularity from the solutions of~\eqref{eq:defubarthmhydro} to the solutions of the Langevin dynamics and upgrade the $C^{1 , \alpha}$-large scale regularity of~\eqref{eq:10050203} to a $C^{1 , 1-}$-large scale regularity, at the cost of more technicalities.
\end{enumerate}
\end{remark}

In the following section, we discuss further the parallel between stochastic homogenization and the Langevin dynamics as well as extensions to related models and degenerate potentials.

\subsection{The Langevin dynamics as a stochastic homogenization problem} \label{sectionlangevinaspdewithnoise}

The standard problem of stochastic homogenization of nonlinear elliptic equations, following~\cite{AS, AFK1, AFK2, FN19, CG21}, is defined as follows. We consider a Lagrangian $L : (x , p) \mapsto L(x , p)$ with $x , p \in \Rd$ and assume that for any $x \in \Rd$, the map $p \mapsto L(x , p)$ is uniformly convex. We additionally assume that the Lagrangian~$L$ is random and that its law is stationary and ergodic with respect to the spatial translations. One is then interested in studying the large-scale behavior of the solutions of the nonlinear elliptic equation
\begin{equation} \label{eq:20370303}
    \nabla \cdot D_p L(x , \nabla u) = 0 ~\mbox{in}~ \Rd,
\end{equation}
where the notation $D_p L$ refers to the gradient with respect to the variable $p$ of the Lagrangian.

The starting point of our analysis is the observation that the Langevin dynamics~\eqref{Langevin.dyn} can be viewed as a (discrete) nonlinear parabolic equation with noise. Indeed, we should mentally make the replacement 
\begin{align}
\label{e.Langevin.to.PDE}
\sum_{e \ni x} V^\prime(\nabla \phi_t(e))
\quad
\leftrightsquigarrow 
\quad
\nabla \cdot D_p L(\nabla u),
\end{align}
where $p \mapsto L(p)$ is a uniformly convex Lagrangian, and think of~\eqref{Langevin.dyn} as being similar to the equation
\begin{align}
\label{e.Langevin.cartoon}
\partial_t u - \nabla \cdot D_p L (\nabla u) = \mbox{``noise''},
\end{align}
where the noise corresponds to the term involving the Brownian motions~\eqref{Langevin.dyn} and can be thought of as a discretized version (with respect to the space variable) of a space-time white noise. The equation~\eqref{e.Langevin.cartoon} can then be interpreted as a discrete and parabolic version of the equation~\eqref{eq:20370303} where the randomness is not encoded in the Lagrangian but externally through a random noise. A similar observation was made by Cardaliaguet, Dirr and Souganidis~\cite{cardaliaguet2022scaling}, who proved a qualitative homogenization result for a continuum version of the Langevin dynamics. 

The three following sections discuss further this analogy and some of the consequences which can deduced from it. They are summarized in Figure~\ref{fig:figure1}.

\subsubsection{The hydrodynamic limit as a homogenization theorem}
\label{subsec12430303}

The standard homogenization theorem for nonlinear elliptic equations~\cite{DM1,DM2} asserts that there exists a deterministic uniformly convex map $p \mapsto \bar L(p)$, called the \emph{homogeneous or effective Lagrangian}, such that any solution of~\eqref{eq:20370303} is well approximated over large scales by a solution $\bar u$ of the equation
\begin{equation} \label{eq:18270903}
    \nabla \cdot D_p \bar L(\nabla \bar u) = 0 ~\mbox{in}~ \Rd.
\end{equation}
In comparison to the previous statement, the \emph{hydrodynamic limit} for the $\nabla \phi$ model~\cite{FS} states that the solutions of the Langevin dynamics~\eqref{Langevin.dyn} are well-approximated over large-scales by the solution of the deterministic equation
\begin{align}
\label{e.hydrodynamic}
\partial_t \bar u - \nabla \cdot D_p \bar \sigma (\nabla \bar u) = 0.
\end{align}
One can thus view the \emph{hydrodynamic limit} as constituting a \emph{homogenization theorem}, where the surface tension $\bar \sigma$ has the same role as the effective Lagrangian. It turns out that this comparison can be further extended and that the hydrodynamic limit can be proved with the same technique as the one used to prove homogenization theorems, namely the two-scale expansion (see Section~\ref{sec1.5.2}).

An important ingredient in the implementation of a two-scale expansion is the first-order corrector, defined, for the model~\eqref{eq:20370303} and for any prescribed slope $p \in \Rd$, to be the unique function $x \mapsto \chi_p(x)$ (defined up to additive constant) such that the gradient $\nabla  \chi_p$ is spatially stationary and solution of the equation
\begin{equation*}
    \nabla \cdot D_p L(\cdot , p + \nabla \chi_p) = 0 ~\mbox{in}~ \Rd.
\end{equation*}
For the Langevin dynamic~\eqref{Langevin.dyn}, the first-order corrector corresponds to the stationary process $\phi(\cdot, \cdot ; p)$. Two properties are important on the first-order corrector to implement a two-scale expansion: (quantitative) sublinearity estimates on its fluctuations and on the weak-norm of its flux. These estimates are obtained in this article through concentration inequalities and deterministic regularity estimates (essentially the Nash-Aronson estimate and the De Giorgi-Nash-Moser regularity). Obtaining similar estimates in a degenerate setting would be an important ingredient to establish a quantitative hydrodynamic limit for dynamics with a degenerate potential (recent progress in this direction have been obtained by Peled and Magazinov in~\cite{magazinov2020concentration}).

We mention that, in the analogy between random surfaces and homogenization, bounds on the fluctuations of the first-order corrector corresponds to bounds on the fluctuations of the dynamic $\phi(\cdot, \cdot; p)$ which are closely related to estimating the fluctuations of a random surface distributed according to the Gibbs measure~\eqref{eq:defmuL}; in particular establishing \emph{localization or delocalization} for the random surface is equivalent to proving that the first-order corrector has \emph{bounded or unbounded} fluctuations.

We complete this section by discussing the extension of the results to some degenerate potentials such as the ones satisfying $c_- |x|^{p-2} \leq V(x) \leq c_+ |x|^{p-2}$, for some $c_- , c_+ > 0$ and $p > 2$. A critical difficulty to apply homogenization to nonlinear elliptic equation~\eqref{eq:20370303} with degenerate Lagrangian is that the solutions of the homogenized operator~\eqref{eq:18270903} may not possess good regularity properties. This lack of regularity for solutions is a consequence of the degenerate structure of the homogenized Lagrangian, which is itself related to the critical set of the first-order-corrector (i.e., the set of $x \in \Rd$ such that $p + \nabla \chi_p(x) =0$). This latter quantity is difficult to control which is an obstruction to the extension of the theory. 

We believe that, in the setting of the Langevin dynamics~\eqref{Langevin.dyn}, the situation is different and this difficulty can be dealt with. Indeed, the fact that the randomness is encoded outside the elliptic operator through the Brownian motions has a smoothing effect on the system. This phenomenon has been previously observed by Cotar, Deuschel and M\"{u}ller~\cite{CD12} who established the strict convexity of the surface tension for some non-uniformly convex potentials, and is also visible in Theorem~\ref{t.regsurfacetension} where we prove that the surface tension can be more regular that the potential $V$. We should therefore expect that the surface tension is (locally) uniformly convex for a large class of degenerate potentials (which are convex but not strictly convex), including the ones mentioned above. This would then imply, through standard parabolic regularity estimates, that the solutions of the limiting operator~\eqref{e.hydrodynamic} have some regularity, allowing the quantitative homogenization program to proceed. 

\subsubsection{The scaling limit via the linearized dynamics}

To understand the covariance structure of the field $\phi$ (resp. its discrete gradient~$\nabla \phi$) under $\mu_{\infty,p}$ (the field $\phi$ can only be considered in infinite volume in dimension $d \geq 3$) and to prove for instance a scaling limit, it is necessary to analyze the fluctuations of linear observables, that is, random variables of the form
\begin{align}
\label{e.linear.statistic}
\sum_{x \in \Zd} F(x) \phi(x) \hspace{10mm} \left( \mbox{resp.} \sum_{e\in E(\Zd)} F(e)\cdot \nabla \phi(e) \right)
\end{align}
where $F$ is a function (resp. vector field) of finite support. The Helffer-Sj\"{o}strand representation formula~\cite{NS, GOS} states that the variance of the linear observable~\eqref{e.linear.statistic} can be represented as follows
\begin{equation} \label{e.representation}
    \mathrm{Var}_{\mu_{\infty,p}} \left[\sum_{x \in \Zd} F(x) \phi(x)  \right] = \E \left[ \int_0^\infty   \sum_{x , x' \in \Zd} F(x)  P_\a (t , x ; x') F(x') \, dt \right],
\end{equation}
where $\a = V''(\nabla \phi(\cdot, \cdot; p))$, the symbol $\E$ denotes the expectation with respect to the dynamic and $P_\a$ is the heat-kernel under the environment $\a$, i.e., the solution of the parabolic equation
\begin{equation} \label{eq:15260503}
   \left\{ \begin{aligned}
    \partial_t P_\a(\cdot ; x) - \nabla \cdot \a \nabla P_\a (\cdot ; x) & = 0 &~\mbox{in}&~ (0 , \infty) \times \Zd, \\
     P_\a (0 , \cdot ; x) & = \delta_x  &~\mbox{in}&~\Zd,
    \end{aligned} \right.
\end{equation}
where $\delta_x$ denotes the Dirac at the vertex $x \in \Zd$.
Naddaf and Spencer~\cite{NS} (based on Helffer and Sj\"{o}strand~\cite{HS, Sj}) were the first to introduce a representation for the variance of a linear observable which is similar to~\eqref{e.representation} but is written instead with an elliptic (rather than parabolic) equation in ``infinitely many'' dimensions involving the so-called Witten Laplacian and known as the \emph{Helffer-Sj\"ostrand representation}. The scaling limit for the~$\nabla \phi$ field was first obtained in~\cite{NS} as a consequence of the homogenization of this equation (which has a corresponding homogenized matrix~$\ahom(p)$ which is evidently the same as the one above for~\eqref{eq:15260503}). Their technique was then reinterpreted probabilistically and extended by Giacomin, Olla and Spohn~\cite{GOS} who established the scaling limit of the space-time dynamics.

A quantitative version of this result was recently proved in~\cite{AW}, which lead to the identification of~$\ahom$ as the Hessian of the surface tension:
\begin{align}
\label{e.FDR}
\ahom(p) = D^2_p \bar \sigma(p)\,,
\end{align}
as well as the result on the~$C^2$ regularity of the surface tension~$\bar \sigma$ already mentioned above, under the assumption that $V \in C^{2 , \alpha}(\R)$. 
The identity~\eqref{e.FDR} is called the \emph{fluctuation-dissipation relation} for the $\nabla\phi$ model. It was conjectured in~\cite{GOS}, the main obstacle being the question of regularity of the surface tension. 

From the perspective of stochastic homogenization, the parabolic equation~\eqref{eq:15260503} corresponds to the \emph{linearized equation} associated with the Langevin dynamics, and can be homogenized quantitatively once a large-scale regularity has been established: indeed, it has been observed in~\cite{AFK1, AFK2} that a $C^{1 , \alpha}$ large-scale regularity, can be used to \emph{localize} the coefficient field $\a$, i.e., show that it is well-approximated by an environment with strong decorrelation properties, so that it fits in the framework of quantitative homogenization of parabolic equations with finite-range dependence~\cite{ABM}. The identity~\eqref{e.FDR} then states that the homogenized matrix for the linearized equation is the matrix for the linearization of the homogenized equation. In other words, \emph{homogenization and linearization commute}---a fact which has been recently observed in a very closely related context in~\cite{AFK1,AFK2}.

\subsubsection{A large-scale regularity theory for the Langevin dynamics and applications} \label{sec.discussionlargescale}
Establishing a quantitative version of the hydrodynamic limit has an important consequence (at least from the perspective of stochastic homogenization) as it allows to develop a large-scale regularity for the model. As a prospective application, we discuss how a large-scale regularity theory (which would itself be a consequence of the establishment of a quantitative hydrodynamic limit) in the setting of non-uniformly convex potentials can lead to localization estimates on random surfaces (in which case deterministic regularity does not apply).

To simplify the exposition, we will only explain the strategy on a formal level, consider a twice continuously differentiable convex potential of the form $V(x) = |x|^p$, for some $p > 2$, and let $\mu_{\Lambda_L, \mathbf{0}}$ be the finite-volume Gibbs measure~\eqref{eq:defmuL} in the box $\Lambda_L$ with slope $p = 0$. By the Helffer-Sj\"{o}strand representation formula, one has the identity
\begin{equation} \label{eq:19500503}
    \mathrm{Var}_{\mu_{\Lambda_L, \mathbf{0}}} \left[\phi(0)  \right] = \E \left[ \int_0^\infty P_{\a} (t , 0 )  \, dt \right],
\end{equation}
where $P_{\a}$ is the solution of the degenerate finite-volume parabolic equation 
\begin{equation*}
   \left\{ \begin{aligned}
    \partial_t P_{\a} - \nabla \cdot \a \nabla P_{\a} & = 0 &~\mbox{in}&~ (0 , \infty) \times \Lambda_L, \\
      P_{\a} & = \delta_0  &~\mbox{on}&~ \{ 0 \} \times \Lambda_L, \\
      P_{\a} &= 0  &~\mbox{on}&~ (0 , \infty) \times \partial \Lambda_L .
    \end{aligned} \right.
\end{equation*}
under the environment $\a(t , e) = p(p-1) |\nabla \phi(t , e)|^{p-2}$, and $\phi$ is the solution of the Langevin dynamic started at the time $t = 0$ from an initial profile sampled according to the Gibbs measure $\mu_{\Lambda_L, \mathbf{0}}$ independently of the Brownian motions. Assuming that a large-scale regularity can be established on the model, one obtains pointwise bounds on the discrete gradients $\nabla \phi(t , e)$ taking the following form: for any edge $e \in E(\Lambda_L)$, and any interval $I \subseteq (0 , \infty)$,
\begin{equation} \label{eq:18550503}
    \frac{1}{|I|}\int_I \left|\nabla \phi(t , e) \right|^{p-2} \, dt \leq \mathcal{O}_{\frac{1}{p-2}} (C).
\end{equation}
The estimate~\eqref{eq:18550503} is sufficient, as is explained in Section~\ref{OutlineC2reg} below (see also Proposition~\ref{prop:diffOrnstein-Uhlenbeck}), to prove that the dynamic $t \mapsto \nabla \phi(t , e)$ cannot spend (with high probability) more that a fraction $\epsilon$ of its time in an interval of size $\epsilon$. The previous statement implies that the integral $\int_I \a(t , e) \, dt = \int_I  |\nabla \phi(t , e)|^{p-2} \, dt$ is bounded away from zero with high probability. Building upon the techniques developed by Mourrat and Otto~\cite{mourrat2016anchored}, one can prove that the previous property implies upper bounds on the heat-kernel, which would then imply bounds on the variance of the height of the surface by the identity~\eqref{eq:19500503}. In the same direction, the results of Biskup and Rodriguez~\cite{biskup2018limit} would identify the correlation structure of the scaling limit of the model.

We mention that another approach to localization of degenerate random surfaces has been recently developed by Magazinov and Peled~\cite{magazinov2020concentration} and is based on reflection positivity (following~\cite{FSS}); they obtain sharp localization and delocalization estimates for periodic systems with non-uniformly convex potentials, as well as super-Gaussian stochastic integrability for potentials of the form $V(x) = x^2 + |x|^p$ with $p > 2$.

\begin{figure}
\adjustbox{scale=0.83,center}{%
\begin{tikzcd}[column sep=small]
\begin{tabular}{c}
 \mbox{Sublinearity of the corrector} \\
 \mbox{and its flux}
 \end{tabular}
 \arrow[d, Rightarrow] \arrow[rr, leftrightarrow]
& & \begin{tabular}{c}
 \mbox{Sublinearity for the random surface} \\
 \mbox{and its flux}
 \end{tabular} \arrow[d, Rightarrow] \\
 \mbox{Quantitative Homogenization} \arrow[dr, Rightarrow] \arrow[rr, leftrightarrow]  & & \mbox{Quantitative Hydrodynamic limit} \arrow[ld, Rightarrow] \\
& \mbox{Large-scale regularity}  \arrow[d, Rightarrow] & \\
&
\begin{tabular}{c}
\mbox{Localization for the linearized environment} \\
$\a = V''(\nabla \phi)$
\end{tabular}
\arrow[ld, Rightarrow] \arrow[rd, Rightarrow] & \\
\begin{tabular}{c}
 \mbox{Quantitative homogenization} \\
 \mbox{for the linearized equation}
 \end{tabular}
 \arrow[rr, leftrightarrow] &  & \mbox{Quantitative scaling limit} .
\end{tikzcd}
}
\caption{This figure summarizes the analogy between the Langevin dynamics and stochastic homogenization as well as the various implications described in Section~\ref{sec.discussionlargescale}. The double arrow $\leftrightarrow$ refers to results playing the same role in the two theories.} 
\label{fig:figure1}
\end{figure}
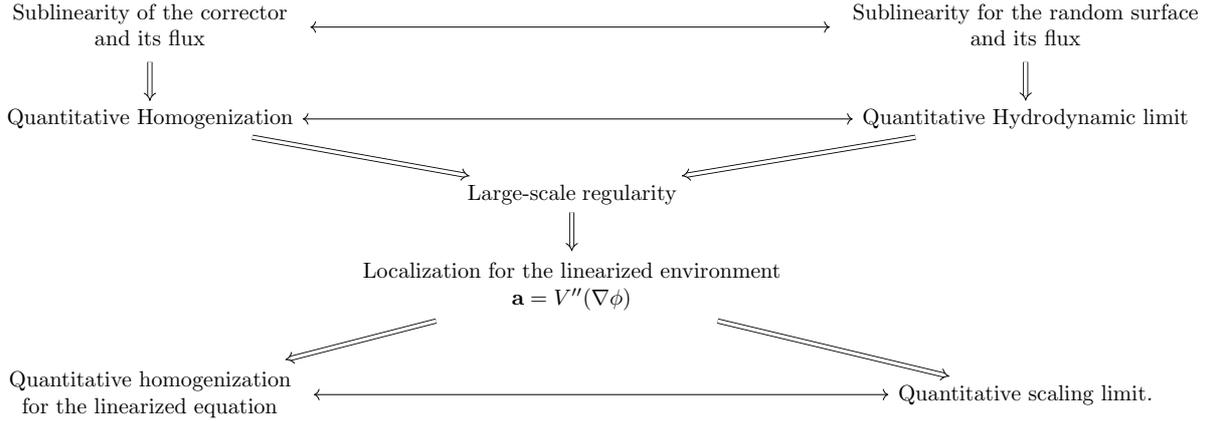

\subsubsection{Disordered random surfaces}

The analogy and techniques of proofs can be extended to other models of statistical mechanics; in particular to some models of random surfaces in the presence of a random disorder which have been studied in the literature~\cite{KO06, KO08, VK08, CK12, CK15}, and are described below:
\begin{itemize}
    \item \emph{The random conductance model}: In this model, we consider a collection of i.i.d. of uniformly convex potentials $(V_e)_{e \in E(\Zd)}$ indexed over the edges of the lattice. For each realization of the potentials, we consider the Gibbs measure
    \begin{equation*}
        \mu^{(V_e)}_\Lambda := \frac{1}{Z} \exp \Biggl( - \sum_{e \in E(\Lambda^+)} V_e ( \nabla \phi (e) ) \Biggr) \prod_{x \in \Lambda_L} d\phi(x),
    \end{equation*}
    and the corresponding Langevin dynamic
    \begin{equation} \label{Langevin.dyndisord1}
    d \phi(t, x) = \nabla \cdot V'_e( \nabla \phi)(t,x) \, dt + \sqrt{2} dB_t(x).
\end{equation}
This model has been studied by Cotar and K\"{u}lske in~\cite{CK12, CK15} where they establish existence and uniqueness of translation-covariant Gibbs states.
The dynamic~\eqref{Langevin.dyndisord1} is a combination of the model~\eqref{eq:20370303} and~\eqref{e.Langevin.cartoon}, in the sense that the randomness is encoded both inside the potential and externally through the Brownian motion. In particular, a quantitative version of the hydrodynamic limit should be accessible by adapting techniques developed in this article with the tools of stochastic homogenization~\cite[Chapter 10]{AKMbook} (though the proofs should be more technical due to the fact that the randomness is partly encoded in the potential).
    \item \emph{The random-field random surface model}: We consider a collection of i.i.d. Gaussian random variables $(\eta(x))_{x \in \Zd}$ indexed over the vertices of the lattice, called the \emph{random field}. For each realization of the random field, we consider the Gibbs measure
    \begin{equation*}
        \mu^\eta_{\Lambda} := \frac{1}{Z} \exp \Biggl( - \sum_{e \in E(\Lambda^+)} V ( \nabla \phi (e) ) + \sum_{x \in \Lambda} \eta(x) \phi(x)\Biggr) \prod_{x \in \Lambda_L} d\phi(x),
    \end{equation*}
    and the corresponding Langevin dynamic
    \begin{equation*}
    d \phi(t,x) =\nabla \cdot V'( \nabla \phi)(t , x) \, dt + \eta(x) \, dt +  \sqrt{2} dB_t(x).
\end{equation*}
Various aspects of the models have been studied in the contributions~\cite{KO06, KO08, VK08, CK12, CK15, DHP1, dario2021convergence} regarding localization and delocalization of the surface as well as existence and uniqueness of infinite-volume translation-covariant gradient Gibbs measures. The model of random-field random surfaces is similar to the one considered in this article and fits in the category~\eqref{e.Langevin.cartoon}. It can in fact be treated with the same techniques as the ones used in this article and we believe that a quantitative version of the hydrodynamic limit as well as a large-scale regularity and the $C^2$-regularity of its surface tension could be obtained with a (mostly notational) modification of the arguments in dimension $d \geq 3$ (the result does not apply in dimension $d = 1,2$ as the random surface is known to have super-linear fluctuations in this case, see~\cite[Theorem 2]{DHP1}). We mention that the large-scale regularity can be combined with the results of~\cite[Theorem 2]{DHP1} to deduce the following pointwise estimate on the discrete gradient of the random surface: in dimension $d \geq 3$ there exists an exponent $s := s(d, c_+,c_-) >0$ such that, for any $L \in \N$ and any edge $e \in E \left(\Lambda_{L/2}\right),$
\begin{equation} \label{eq:10441703}
    \E_\eta \left[ \left\langle \exp \left( \left|\nabla \phi(e)\right|^s \right) \right\rangle_{\mu_{\Lambda_L}^\eta} \right] \leq 
    C 
\end{equation}
where the $\left\langle \cdot \right\rangle_{\mu_{\Lambda_L}^\eta}$ denotes the expectation with respect to $\mu_{\Lambda_L}^\eta$, and $\E_\eta$ refers to the expectation with respect to the random field. This result would improve the spatial $L^2$-estimate obtained in~\cite[Theorem 1]{DHP1}, as well as the results of~\cite{dario2021convergence}, where the bound~\eqref{eq:10441703} is essentially proved in dimension $d \geq 4$ using the De Giorgi-Nash-Moser regularity. In dimension $d = 3$, the estimate~\eqref{eq:10441703} cannot be deduced from deterministic regularity estimates, and requires to use the large-scale regularity.
\end{itemize}

\subsection{Background} \label{section.background}

\subsubsection{Random surfaces}

The study of random surfaces was initiated in the 1970s by Brascamp, Lieb and Lebowitz~\cite{BLL75} who obtained sharp localization and delocalization estimates for uniformly convex potentials. The question of the scaling limit of the model was first addressed by Brydges and Yau~\cite{BY} in a perturbative setting based on a renormalization group approach. After the groundbreaking works of Funaki, Spohn~\cite{FS}, Naddaf, Spencer~\cite{NS} and Giacomin, Olla, Spohn~\cite{GOS}, large deviation estimates and concentration inequalities were established by Deuschel, Giacomin and Ioffe~\cite{DGI00}, and sharp decorrelation estimates for the discrete gradient of the field were obtained by Delmotte and Deuschel~\cite{DD05}. The scaling limit of the field in finite-volume was established by Miller~\cite{Mi}. More recently, Armstrong and Wu applied quantitative homogenization to the Helffer-Sj\"{o}strand PDE of~\cite{NS} to prove the $C^2$ regularity of the surface tension and the fluctuation-dissipation relation (see also the recent subsequent work of Wu~\cite{wu2022local}), and Deuschel and Rodriguez~\cite{deuschel2022isomorphism} identified the scaling limit of the square of the gradient field (hereby extending the result of Naddaf-Spencer~\cite{NS}) and established (among other results) an isomorphism theorem for the model (see~\cite[Theorem 4.3]{deuschel2022isomorphism}).

The case of non-uniformly convex potentials was studied in the high temperature regime by Cotar, Deuschel and M\"{u}ller~\cite{CDM09}, who established the strict convexity of the surface tension, and by Cotar and Deuschel~\cite{CD12} who proved the uniqueness of ergodic Gibbs measures, obtained sharp estimates on the decay of covariance and identified the scaling limit of the model in this framework (see also~\cite{DNV19} where the hydrodynamic limit is established).
The strict convexity of the surface tension in the low temperature regime was established by Adams, Koteck{\'y} and M\"{u}ller~\cite{AKM16} through a renormalization group argument. This renormalization group approach was further developed in~\cite{adams2019cauchy} to obtain a (form of) verification of the Cauchy-Born rule for these models. In~\cite{BK07}, Biskup and Koteck{\'y} showed the possible non-uniqueness of infinite-volume, shift-ergodic gradient Gibbs measures for some nonconvex interaction potentials, and Biskup and Spohn~\cite{BS11} proved that, for a general category of nonconvex potentials, the scaling limit of the model is a Gaussian free field. We finally mention the contribution of Magazinov and Peled~\cite{magazinov2020concentration}, who established sharp localization and delocalization estimates for a class of convex but highly degenerate potential $V$, and the one of Andres and Taylor~\cite{AT21} who identified the scaling limit of the field for a class of convex, degenerate potentials satisfying the assumption $\inf V'' \geq \lambda > 0.$

\subsubsection{Quantitative stochastic homogenization}
The theory of stochastic homogenization was initially developed qualitatively in the 80's, in the works of Kozlov~\cite{K1}, Papanicolaou and Varadhan~\cite{PV1}, and Yurinski\u\i~\cite{Y1}. The first quantitative results are due to Yurinski\u\i~\cite{Y1} and Naddaf, Spencer~\cite{NS1}. Major progress was achieved by Gloria and Otto in~\cite{GO1, GO2}, where, building upon the ideas of~\cite{NS1}, they used spectral gap inequalities (or concentration inequalities) to develop a quantitative theory for the first time. These results were then further developed by Gloria, Marahrens, Neukamm and Otto~\cite{GO15, GO115, GNO, GNO14}. The technique used in this article to obtain bounds on the first-order corrector is based on correlation inequalities (see Section~\ref{sec.1.5.1}), and is closely related to their approach.

Another approach was initiated by Armstrong and Smart in~\cite{AS}, who extended the techniques of Avellaneda and Lin~\cite{AL1, AL2}, the ones of Dal Maso and Modica~\cite{DM1, DM2} and obtained an algebraic rate of convergence for the homogenization error the nonlinear equation~\eqref{eq:20370303} and a large-scale regularity for the model. These results were subsequently improved in the linear setting in~\cite{AKM1, AKM, AKMbook} to obtain optimal rates of convergence.

The homogenization of nonlinear monotone operator was first addressed qualitatively in~\cite{DM1, DM2}, and extended to a class of nonconvex Lagrangians with polynomial growth in~\cite{MM94} (see also~\cite[Chapter 15]{JKO}). 
The first quantitative results were obtained in~\cite{AS}, and the theory was substantially extended recently by Armstrong, Ferguson and Kuusi in~\cite{AFK1, AFK2} where quantitative homogenization and a large-scale regularity are proved for the nonlinear equation~\eqref{eq:20370303} and the corresponding linearized equation (see also the subsequent work of Fischer and Neukamm~\cite{FN19}).

In the parabolic framework, qualitative homogenization was established by Zhikov, Kozlov, and Ole\u{\i}nik in~\cite{ZKO82}. From a probabilistic perspective, quenched invariance principles were proved for random walks evolving in a dynamic, degenerate and ergodic environment by Andres, Chiarini, Deuschel and Slowik~\cite{ACDS18} and a quenched local limit theorem was established by Andres, Chiarini and Slowik in~\cite{andres2021quenched}, Biskup and Rodriguez~\cite{biskup2018limit}, and a local limit theorem was proved by Andres and Taylor~\cite{AT21}. Optimal quantitative estimates on the heat-kernel and the corrector were obtained by Gloria, Neukamm and Otto in~\cite{GNO}. Quantitative homogenization and large-scale regularity was established by Armstrong, Bordas and Mourrat~\cite{ABM}, following the techniques in the elliptic setting of~\cite{AKMbook}.

\subsection{Outline  of the proof and additional comments}

In this subsection, we present a sketch of the proofs of the results presented in this article. The arguments follow a standard strategy in stochastic homogenization: in Section~\ref{sec.section2}, we define and study the first-order corrector for the Langevin dynamic, and obtain optimal scaling estimates for the corrector and its flux (these results are related to the Brascamp-Lieb concentration inequality~\cite{BL75, BL76} as discussed in Section~\ref{sec.1.5.1} below). In Section~\ref{eq:quanthydrolim2sc}, we use the results established on the first-order corrector in Section~\ref{sec.section2} and perform a two-scale expansion on the nonlinear parabolic equation with a random noise to obtain the quantitative hydrodynamic limit stated in Theorem~\ref{Th.quantitativehydr}. Section~\ref{sec.section4} is devoted to the proof of the $C^2$-regularity of the surface tension and is (mostly) independent of Section~\ref{sec.section2} and Section~\ref{eq:quanthydrolim2sc}. Finally, Section~\ref{sec.Section5} is devoted to the proof of Theorem~\ref{theoremlargescale} based on the results of the four previous sections. In the next four subsections, we present a more detailed outline of the main arguments developed in Sections~\ref{sec.section2},~\ref{eq:quanthydrolim2sc},~\ref{sec.section4} and Section~\ref{sec.Section5} respectively.

\subsubsection{Scaling estimates on the first-order corrector of the Langevin dynamic} \label{sec.1.5.1}

The main tool used to prove Theorem~\ref{Th.quantitativehydr} is the first-order corrector for the Langevin dynamic defined in Definition~\ref{def.firstordercorrfinvol}. In Section~\ref{sec.section2}, we establish some properties of the first-order corrector based on tools of elliptic regularity and concentration inequalities (following the strategy of Gloria and Otto~\cite{GO1, GO2}): bounds on the fluctuations of the corrector and its flux, Lipschitz bounds on the gradient of the corrector, estimates on the $L^2$-norm of the difference of gradient of the correctors with different slopes etc.

The proofs rely on two main techniques:
\begin{enumerate}
    \item If we let $Q$ be a parabolic cylinder, and let $\phi$ and $\psi$ be two solutions of the Langevin dynamic coupled with the same Brownian motions
    \begin{equation*}
        d \phi(t , x )  = \nabla \cdot V'(\nabla \phi) (t ,x) dt + \sqrt{2} dB_t(x)~~\mbox{in}~Q,
    \end{equation*}
    and
    \begin{equation*}
        d \psi(t , x ) = \nabla \cdot V'(\nabla \psi) (t ,x) dt + \sqrt{2} dB_t(x) ~~\mbox{in}~Q,
    \end{equation*}
    then the difference $w = \phi - \psi$ solves the \emph{linear parabolic equation}
    \begin{equation} \label{eq:14001310}
        \partial_t w - \nabla \cdot \a \nabla w = 0 ~\mbox{in}~Q,
    \end{equation}
    where the environment is defined by the formula
    \begin{equation*}
        \a(t , e) := \int_0^1 V''(s \nabla \psi(t , e ) + (1-s) \nabla \phi(t , e )) \, ds.
    \end{equation*}
We may thus use the classical theory of regularity for solutions of parabolic equations to study the behavior of the difference of solutions; this observation was originally used by Funaki and Spohn~\cite{FS} to prove the uniqueness of infinite volume shift-invariant and ergodic gradient Gibbs measure with prescribed slope.
    \item Given an integer $L\in \N$, let us define $\phi_{L} : [0, \infty) \times \Lambda_L \to \R$ be the solution of the system of stochastic equations
\begin{equation} \label{Langevin.def.corr}
    \left\{ \begin{aligned}
   & d \phi_{L}(t , x ) = \nabla \cdot V'(\nabla \phi_{L}) (t ,x) dt + \sqrt{2} dB_t(x) & \mbox{for} & \ (t,x) \in [0, \infty) \times \Lambda_L,  \\
  &  \phi_{L}(0 , \cdot ) = 0 & \mbox{in} & \ \Lambda_{L},
    \end{aligned} \right.
\end{equation}
    with periodic boundary conditions. The function $\phi_L$ is the finite-volume first-order corrector with slope $p = 0$ used in the article, and one would like to estimate the size of its fluctuations. This is achieved based on techniques of elliptic regularity and concentration inequalities as explained below. We first approximate map $\phi_L$ using a discretization of the Brownian motion in the right-hand side of~\eqref{Langevin.def.corr}: for a large integer $n \in \N$ and $k \in \N$, denote by $X_k^n(x)$ the increments of the Brownian motion $B_\cdot(x)$ between the times $k/n$ and $(k+1)/n$, i.e.,
\begin{equation*}
    X_k^n(x) :=  \sqrt{n}\left(B_{\frac{k+1}{n}}(x) - B_{\frac{k}{n}}(x) \right) \hspace{5mm} \mbox{and} \hspace{5mm} X^n(t , x) = \sum_{k} X_k^n(x) \indc_{\{k/n \leq t \leq (k+1)/n\}},
\end{equation*}
and consider an approximation $\phi_L^n$ of the map $\phi_L$ defined to be the solution of the parabolic equation
\begin{equation*}
    \left\{ \begin{aligned}
    \partial_t \phi_{L}^n - \nabla \cdot V'(\nabla \phi_{L}) & =   \sqrt{2} n^{-\frac12} X^n &~\mbox{in} ~~&~  [0, \infty) \times \Lambda_L, \\
    \phi_{L}(0 , \cdot ) &= 0 &~\mbox{in}~~&~ \Lambda_{L},
    \end{aligned} \right.
\end{equation*}
with periodic boundary conditions in the box $\Lambda_L$.
The argument then relies on the two following observations: as the mesh size $n$ tends to infinity the map $\phi_{L}^n$ converges to the map $\phi_{L}$; and the function $\phi_{L}^n$ depends only on the increments $X_k^n(x)$. Using the Gaussian concentration inequality (see Proposition~\ref{propEfronstein}) and the Nash-Aronson estimate (see Proposition~\ref{prop.NashAronson}), one can estimate the fluctuations of $\phi_{L}^n$ by measuring the influence of each one of the increments $X_k^n(x)$.
\end{enumerate}
The two previous tools allow to obtain optimal scaling estimates on the corrector. The technique can be used to study other observables: we obtain, by adapting the argument, optimal scaling estimates on the flux of the corrector in Section~\ref{sec.fluxcorr} and Section~\ref{section4.4}. These properties are, as it is well-known in homogenization, crucial to implement a two-scale expansion (see Section~\ref{sec1.5.2} below) and obtain the quantitative hydrodynamic limit stated in Theorem~\ref{Th.quantitativehydr}.

We complete this section by making a few remarks and connections with results already known in the literature. First, it is known that the Langevin dynamic~\eqref{Langevin.def.corr} is ergodic and reversible with respect to the Gibbs measure
\begin{equation} \label{measure.randomsurface}
    \mu(d\phi) := \frac{1}{Z} \exp \Biggl( - \sum_{e \in \vec{E}_+(\Lambda^+)} V ( \nabla \phi (e) ) \Biggr) \prod_{x \in \Lambda_L} d\phi(x),
\end{equation}
where the measure is considered over the set of periodic functions $\phi : \Lambda_L \to \R$ with spatial average equal to $0$.
The Brascamp-Lieb concentration inequality~\cite{BL75, BL76} estimates the variance of general functionals of a random field distributed according to the measure $\mu$. In the present setting, it can be stated as follows: given a continuously differentiable function $f : \R^{\Lambda_L} \to \R$, one has
\begin{equation*}
\mathrm{var}_\mu \left[ f \right] \leq \sum_{x , y \in \Lambda_L} \left\langle \partial_x f(\phi) G_{\Lambda_L} (x , y) \partial_y f(\phi) \right\rangle_\mu,
\end{equation*}
where $G_{\Lambda_L}$ denotes the periodic Green's function in the box $\Lambda_L$.
Various refinements of this inequality exist in the literature: Deuschel, Giacomin and Ioffe~\cite{DGI00} and Delmotte Deuschel~\cite{DD05} obtained an exponential version of the result, a Gaussian version of it (similar to Proposition~\ref{prop.2.2BL} below) can be found in~\cite{fontaine1983non} (see~\cite[Section 4.2]{F05} for additional discussion and references). The result is of course closely related to the one presented in this section: by the ergodicity of the dynamic, we know that the distribution of the random field $\phi_L(t , \cdot)$ converges to $\mu$ as the time $t$ tends to infinity. Since the bound obtained in Proposition~\ref{prop.2.2BL} does not depend on the time $t$, we may take the limit $t \to \infty$ to obtain an alternative proof of the Brascamp-Lieb inequality, based on the Gaussian concentration inequality and elliptic regularity. Using this technique instead of relying directly on the Brascamp-Lieb inequality will be useful to us in different ways: it yields sharp concentration estimates for nonlinear observables of the random field with optimal stochastic integrability, allows to study correlations for the dynamics in both the space and time variables, and allows to define a corrector with a slope which depends on the time variable (see Definition~\ref{def.firstordercorrfinvol} and Remark~\ref{remark3.2225}).

\subsubsection{Quantitative hydrodynamic limit for the Langevin dynamic}
\label{sec1.5.2}
Once precise estimates have been obtained on the first-order-corrector and its flux, Theorem~\ref{Th.quantitativehydr} can be deduced by performing a two-scale expansion on the model. On a formal level and for any $\ep > 0$, we introduce the two-scale expansion $w^\ep$ according to the formula
\begin{equation} \label{eq:opttwoscaleexp}
    w^\ep := \bar u^\ep + \ep \phi_{1/\ep} \left( \frac{\cdot}{\ep^2} , \frac{\cdot}{\ep} ; \nabla   \bar u^\ep \right),
\end{equation}
where $\phi_{1/\ep} \left( \cdot , \cdot ; \nabla   \bar u^\ep \right)$ denotes the corrector with slope $p$ (see Definition~\ref{def.firstordercorrfinvol}) in a box of radius $1/\ep$. We then show, through an explicit computation using the properties of the corrector established in Section~\ref{sec.section2}, that \begin{equation*}
    \partial_t \left(  w^\ep  - B_t \right) + \nabla \cdot V'(\nabla w^\ep ) = \mathcal{E},
\end{equation*}
where $\mathcal{E}$ is an error term which is small in the suitable functional space (the norm $\underline{H}^{-1}_{\mathrm{par}}$ introduced in Section~\ref{sectionholdernorms}).

As stated, this strategy runs into a difficulty. First, as it has been recently observed in the setting of nonlinear elliptic homogenization in~\cite{AFK1, AFK2, FN19, CG21}, in order to implement the two-scale expansion~\eqref{eq:opttwoscaleexp}, and obtain the optimal rate of convergence, one would need to have good control over the corrector associated with the \emph{linearized equation}, defined, for a pair of slopes $p , q \in \Rd$, to be the map $\psi_{p , q} : (0 , \infty) \times \Zd \to \R$ solution of the parabolic equation
\begin{equation} \label{eq:15151310}
    \partial_t \psi_{p , q} + \nabla \cdot \a_p \left( q + \nabla \psi_{p , q} \right)  = 0 \hspace{3mm} \mbox{in} \hspace{3mm} (0 , \infty) \times \Zd,
\end{equation}
where the environment $\a_p$ is given by the formula
\begin{equation} \label{eq:17172109}
    \a_p(t, e) := V'' ( p + \nabla \phi (t , e ; p) ).
\end{equation}
Such estimates are not established in this article and, in order to bypass this difficulty, we introduce a mesoscopic scale of size $\ep^{\frac 12}(1 + \left| \ln \ep \right|^{\frac12} \indc_{\{ d = 2\}})$ in the argument. This is responsible for the loss of half of the exponent compared to the optimal result. We nevertheless mention that optimal scaling estimates on the corrector associated with the linearized equation~\eqref{eq:15151310} (and thus and optimal rate of convergence for the hydrodynamic limit) could be obtained by the following strategy (see~\cite{FN19, CG21}):
\begin{enumerate}
    \item Using the De Giorgi-Nash-Moser regularity, one can prove that the environment defined in~\eqref{eq:17172109} satisfies a quantitative ergodicity assumption, and deduce from it a quantitative homogenization theorem for the linearized equation;
    \item One then establishes a large-scale regularity theory for the solutions of the linearized equation;
    \item Applying concentration estimates to the linearized equation~\eqref{eq:15151310} (following a similar, though more involved, argument to the one outlined in Section~\ref{sec.1.5.1}) and combining them with the large-scale regularity for the linearized equation, one obtains optimal scaling estimates for the corrector $\psi_{p , q}$ and its flux. Once equipped with these estimates, the two-scale expansion~\eqref{eq:opttwoscaleexp} can be used to deduce the hydrodynamic limit with an optimal rate of convergence.
\end{enumerate}

\subsubsection{$C^2$-regularity of the surface tension} \label{OutlineC2reg}
The proof of the $C^2$-regularity of the surface tension is (essentially) independent of the results established in Sections~\ref{sec.section2} and~\ref{eq:quanthydrolim2sc}, and follows the insight of~\cite[Section 2.3]{AFK1}. The proof relies on the introduction of a finite-volume approximation of the gradient of the surface tension denoted by $\tau_L$ in the proof below and defined by the formula (using the notation~\eqref{def.QL} and~\eqref{eq:defavrageoverQ})
\begin{equation} \label{def.tauL0901}
   \tau_L (p) := \mathbb{E} \left[ (V'( p + \nabla \phi_{L}(\cdot ; p) ))_{Q_{L/2}} \right].
\end{equation}
We then establish that the map $p \mapsto \tau_L (p)$ converges pointwise to the function $p \mapsto D_p \bar \sigma(p)$ as $L$ tends to infinity, that it is continuously differentiable and that the collection of functions $\left\{ p \mapsto D_p\tau_L(p) \,:\, L\in\N \right\}$ is equicontinuous. Theorem~\ref{t.regsurfacetension} is then obtained by applying the Arzel\`a-Ascoli Theorem.

To present a few more details of the proof, the derivative of the function $\tau_L$ is explicit and is given by the formula, for any $p \in \Rd$ and any $i \in \{1 , \ldots, d \}$,
    \begin{equation*}
        \partial_{i} \tau_L(p) = \mathbb{E} \left[ (V''( p + \nabla \phi_{L}(\cdot ; p)) (e_i + \nabla w_{p, e_i}) )_{Q_{L/2}} \right],
    \end{equation*}
where $w_{p , e_i}$ is defined as the solution of a parabolic equation (see~\eqref{def.linearizedcorr}). The continuity of the map $\partial_{i} \tau_L$, uniform over the parameter $L$, can be established by proving that the quantities
\begin{equation*}
    \mathbb{E} \left[ \left\| V''( p + \nabla \phi_{L}(\cdot ; p) - V''( q + \nabla \phi_{L}(\cdot ; q)) \right\|_{\underline{L}^1(Q_L)} \right] \hspace{4mm} \mbox{and} \hspace{4mm} \mathbb{E} \left[ \left\| \nabla w_{p, e_i} - \nabla w_{q, e_i} \right\|_{\underline{L}^2(Q_L)} \right]
\end{equation*}
tend to $0$ as $q$ tends to $p$ (uniformly in $L$). By arguments of elliptic regularity, the treatment of the second term can be reduced to the first one, which essentially boils down to proving the following convergence
\begin{equation} \label{13071510}
    \E \Bigl[ \left\|  V'' ( p + \nabla \phi_L(\cdot ; p) + \ep ) - V'' ( p + \nabla \phi_L(\cdot ; p) ) \right\|_{\underline{L}^1(Q_L)}\Bigr] \underset{\ep \to 0}{\longrightarrow} 0 \quad \mbox{uniformly in} \ L \in\N.
\end{equation}
In the case when the potential $V$ is assumed to be H\"{o}lder continuous, the proof of~\eqref{13071510} is immediate, yielding and algebraic rate of convergence in the parameter $\ep$ which can be transferred back to the surface tension, establishing the H\"{o}lder continuity of its second derivative.

In the case of $C^{1,1}$ potentials, the strategy is to appeal to Lusin's theorem to approximate the map $V''$ by a Lipschitz function, which we denote by $V_\kappa''$, on a set of large measure. We then prove the uniform convergence~\eqref{13071510} with the function $V''_\kappa$ instead of $V''$, and show that this approximation procedure only generates a small error (uniformly in $L$) in the convergence~\eqref{13071510}.

To implement this strategy, we need to prove that the map $\nabla \phi_L $ does not spend a large amount of time in the set of small measure where the function $V''$ is poorly approximated by the function $V''_\kappa$. This pathological behavior is ruled out by noting that, for each $x \in \Lambda_L$, the map $t \mapsto \phi_L (t , x)$ solves the stochastic differential equation
\begin{equation} \label{eq:16121410}
    d \phi_L (t , x) := \underset{\mathrm{drift \, term}}{\underbrace{\nabla \cdot V' \left(\nabla \phi_L \right) (t , x) \, dt}} +  \underset{\mathrm{noise}}{\underbrace{\sqrt{2} dB_t(x)}}.
\end{equation}
Using Proposition~\ref{prop.2.2BL} and specifically the bound~\eqref{eq:20291910grad} on the gradient of the field established in Section~\ref{sec.section2} (which is the only, but crucial, input from Sections~\ref{sec.section2} and~\ref{eq:quanthydrolim2sc}), we obtain that the elliptic operator appearing in the drift term of~\eqref{eq:16121410} is (essentially) bounded. One may thus rewrite the identity~\eqref{eq:16121410} as follows: for any $x \in \Lambda_L$ and any pair of times $t_+ , t_- \geq 0,$
\begin{equation*}
    \phi_L (t_+ , x ) - \phi_L (t_- , x ) := \underset{\mathrm{Lipschitz \, function}}{\underbrace{\int_{t_-}^{t_+} h_s ds}} +  \underset{\mathrm{Brownian \, motion}}{\underbrace{ \sqrt 2 (B_{t_+} - B_{t_-})}},
\end{equation*}
where the map $h_s := \nabla \cdot V' \left(\nabla \phi_L \right)(t , x) $ is (essentially) bounded. Using that the Brownian motion is a rough process, and in particular much less regular than a Lipschitz function, we are able to deduce that, for any $\delta \in [0,1]$, the process $\phi_L$ cannot spend more than a fraction $\delta$ of its time in a set of Lebesgue measure less than $\delta$ (see Lemma~\ref{section5.1}). Applying this result to the set of small Lebesgue measure where the map $V''$ is poorly approximated by the map $V''_\kappa$, we are able to rule out the pathological behavior mentioned above, and thus establish the uniform convergence~\eqref{13071510}.


\subsubsection{Large-scale regularity theory for the Langevin dynamic} \label{outlinelargescale}
Section~\ref{sec.Section5} is devoted to the proof of Theorem~\ref{theoremlargescale}. Based on the ideas of~\cite{AS}, we can use the quantitative hydrodynamic limit to show that any solution of the Langevin dynamic is well-approximated, over large scales where homogenization occurs, by a solution of the equation~\eqref{eq:defubarthmhydro} which possesses good regularity properties. We are then able to transfer the regularity of the solutions of~\eqref{eq:defubarthmhydro} to the solution of the Langevin dynamic. 

We mention the following technical point in the argument: in order to optimize the stochastic integrability of the minimal scale~\eqref{eq:thmlargescale1234}, Theorem~\ref{Th.quantitativehydr} cannot be applied directly, and we have to write a second version of it, optimizing the stochastic error at the cost of a deterministic term. This step is the subject of Section~\ref{Optimizingstochint}.

\subsection{Convention for constants} Throughout this article, the symbols $C$ and $c$ denote positive constants which may vary from line to line, with $C$ increasing and $c$ decreasing. These constants and exponents may depend only on the dimension $d$, and the ellipticity constants $c_+$ and $c_-$. We specify the dependency of the constants and exponents by writing, for instance, $C := C(d , , c_- c_+)$ to mean that the constant $C$ depends on the parameters $d$, $c_-$ and $c_+$.

\medskip

\section{Notation and preliminary results} \label{Section2prelimres}

We must unfortunately introduce quite a bit of notation, particularly since we are working with parabolic equations which require us to define
various function spaces, and since we are working in both the discrete and continuous settings which require to introduce different definitions for the differential calculus. We also need to collect some preliminary results pertaining to elliptic regularity (e.g., Nash-Aronson estimate on the heat kernel and De Giorgi-Nash-Moser regularity), concentration inequality (the Gaussian concentration inequality), and stochastic homogenization (the multiscale Poincar\'e inequality); most of them are standard in the literature. We thus encourage the reader to skim and consult this section as a reference.

\subsection{Notation} \label{sectionnotation}

\subsubsection{General notation}
Consider the hypercubic lattice $\Zd$ and the real vector space $\Rd$ in dimension $d \geq 2$. We let $\vec{E}(\Zd)$ and $E(\Zd)$ be the sets of directed and undirected edges of the lattice. We denote by $(e_1 , \ldots , e_d)$ the canonical basis of $\Rd$, and, for $x , y \in \Rd$ (or $\R^{2d}$), we use the notation $x \cdot y$ to refer to the Euclidean scalar product on the spaces $\Rd$ (or $\R^{2d}$). We denote by $\left|\cdot \right|$ the standard Euclidean norm on $\Rd$ (we sometimes abuse notation and use it to refer to the Euclidean norm on $\R^{2d}$). We write $\left|\cdot \right|_+ := \max \left( \left|\cdot \right| , 1 \right)$. Given two real numbers $a , b$, we denote by $a \wedge b = \min(a , b)$ and by $a \vee b = \max(a , b)$, and by $\lfloor a \rfloor$ the floor of $a$. We denote by $\indc_A$ the indicator function of a set $A$ ,and by $\mathcal{B}(\R)$ the Borel $\sigma$-algebra of $\R$.

Throughout the article, we fix two collections of independent Brownian motions \newline $\left\{ B_t^0(x) \, : \, t \geq 0, \, x \in \Zd \right\}$ and $\{ B_t^1(x) \, : \, t \geq 0, \, x \in \Zd \}$ and denote by, for $t \in \R$ and $x \in \Zd$, 
\begin{equation} \label{eq:14350901}
    B_t(x) := B_t^0(x) \indc_{\{ t \geq 0 \}} + B_{-t}^1(x) \indc_{\{ t < 0 \}}.
\end{equation}
This definition gives a (simple) notion of Brownian motion defined on the full line $\R$.

\subsubsection{Sets} \label{subSectionsets}

Given a subset $U \subseteq \Zd$, we let ${\vec E}(U)$ (resp. $E(U)$) be the set of directed (resp. undirected) edges of $U$. We write $x\sim y$ to denote that $\{x,y\}\in E(\Zd)$. We let $\partial U $ be the \emph{external vertex boundary} of $U$,
\begin{equation*}
    \partial U := \left\{ x \in \Zd \setminus U \, \colon \, \exists y \in U, y \sim x  \right\}.
\end{equation*}
A box $\Lambda\subseteq \Zd$ is a subset of the form $x + \{-L, \ldots, L\}^d$ for $x \in \Zd$ and $L \in \N$. For any integer $L \in \N$, we let 
\begin{equation*}
    \Lambda_L := \{-L, \ldots, L\}^d
    \subseteq \Z^d
\end{equation*}
be the box centered at $0$ of side length $(2L+1)$. We extend the notation to real-valued $L \geq 0$ by setting $\Lambda_L := \Lambda_{\lfloor L \rfloor}$. For a box $\Lambda := x + \Lambda_L$ and an integer $n\in \N$, we denote by $n\Lambda := x + \Lambda_{nL}$ the rescaled box (the center of the box remains unchanged but its sidelength is multiplied by $n$). If $I := (s_- , s_+) \subseteq \R$ is a bounded interval of $\R$, then we denote by $n I := (s_- - (n-1)(s_+ - s_-), s_+)$ (the right end of the interval remains unchanged and its length is multiplied by $n$).

A parabolic cylinder is a set of the form $Q := I \times \Lambda \subseteq \R \times \Zd$ where $I := (s_- , s_+)$ is a bounded interval of $\R$ and $\Lambda$ is a subset of $\Zd$ (frequently chosen to be a box). For $L \geq 0$, we denote by $Q_L$ the parabolic cylinder
\begin{equation} \label{def.QL}
    Q_L := (-L^2 , 0) \times \Lambda_L.
\end{equation}
We denote by $\partial_{\mathrm{par}} Q$ the parabolic boundary of the cylinder $Q$ defined by the formula
\begin{equation*}
    \partial_{\mathrm{par}} Q := \left( \{ - s_- \} \times \Lambda \right) \cup \left( (-s_- , s_+) \times \partial \Lambda \right).                                        
\end{equation*}
Given a parabolic cylinder $Q = I \times \Lambda$ and an integer $n \in \N$, we denote by $nQ := nI \times n \Lambda$, using the convention above.
As it will be useful for us to partition parabolic cylinders, we introduce the following notation: for each $m \leq n$,
\begin{equation*}
    \mathcal{Z}_{m,n} := (3^{2m} \Z \times 3^m \Zd) \cap Q_{3^n}.
\end{equation*}
Note that the collection $\left( z + Q_{3^m} \right)_{z \in \mathcal{Z}_{m   ,n}}$ forms a partition of the parabolic cylinder $Q_{3^n}$, i.e.,
\begin{equation*}
    Q_{3^n} := \cup_{z \in \mathcal{Z}_{m ,n}} \left( z + Q_{3^n} \right)
\end{equation*}
and
\begin{equation*}
\forall z , z' \in \mathcal{Z}_{m,n},~ z \neq z', ~\left( z + Q_{3^m} \right) \cap \left( z' + Q_{3^m} \right) = \emptyset.
\end{equation*}
We denote by $|I|$ the Lebesgue measure of $I$ and by $|\Lambda|$ the cardinality of $\Lambda$, and by $\left| Q \right| := \left| I \right| \times \left| \Lambda \right|$.

\subsubsection{Functions} \label{subsecfunctions}
Given a box $\Lambda \subseteq \Zd$, we denote by $\Omega_{\Lambda , \mathrm{per}}$ the set of periodic functions defined on the box $\Lambda$ and valued in $\R$, and by $\Omega_{\Lambda , \mathrm{per}}^\circ$ the subset of functions of $\Omega_{\Lambda , \mathrm{per}}$ satisfying $\sum_{x \in \Lambda} \phi(x) = 0$

Fix a parabolic cylinder $Q = I \times \Lambda \subseteq \R \times \Zd$ and a function $u : Q  \to \R$. For each $t \in I$ and each directed edge $e = (x,y) \in {\vec E}(\Lambda)$, we define the discrete gradient 
\begin{equation}\label{eq:discrete gradient}
\nabla u(t,e) := u(t,y) - u(t,x).
\end{equation}
In expressions which do not depend on the orientation of the edge, such as $|\nabla u(t, e)|^2$ or $\nabla u(t, e) \nabla v(t, e)$, we allow the edge $e$ to be undirected. It will be useful to us to have a notion \emph{vector-valued} discrete gradient, mimicking the definition in the continuum. We thus define, for $i \in \{ 1 , \ldots , d \}$,
\begin{equation*}
    \nabla_i u (t , x) := u (t , x + e_i) - u(t , x),
\end{equation*}
and
\begin{equation*}
    \vec{\nabla} u (t , x) := \left( \nabla_1 u (t , x) , \ldots, \nabla_d u (t , x)  \right) \in \Rd.
\end{equation*}
We consider the second derivative of a map $u$, defined as the matrix-value function $\vec{\nabla}^{ 2} u(t , x) := ( \nabla_i \nabla_j u(t , x) )_{1 \leq i , j \leq d}$ (mimicking the definition of the continuum), and denote by $\partial_t u$ the derivative of the function~$u$.
We define the average value of a function $u : Q \to \R$ according to the formula
\begin{equation} \label{eq:defavrageoverQ}
    \left( u \right)_Q := \frac{1}{|Q|} \int_{I} \sum_{x \in \Lambda} u(t , x) \, dt.
\end{equation}

A \emph{vector field} is a function $\g :I \times {\vec E}(\Lambda) \to\R$ satisfying $\g(t , (x,y)) = -\g(t , (y,x))$. For a vector $p \in \Rd$, we abuse notation and denote by $p$ the constant vector field defined by
\begin{equation*}
    p(e) := p \cdot (y - x) ~~\mbox{for every}~ e = (x , y) \in {\vec E}(\Zd).
\end{equation*}
For a vector field $\mathbf{g} : I \times \vec{E}(\Lambda) \to \R$, we define the average value $\left( \mathbf{g} \right)_Q:= (\left( \mathbf{g} \right)_{Q,1}, \ldots, \left( \mathbf{g} \right)_{Q, d}) \in \Rd$ according to the formula, for each $i \in \{1 , \ldots, d \}$,
\begin{equation} \label{defaveragevector}
    \left( \mathbf{g} \right)_{Q,i} := \frac{1}{\left| Q\right|} \sum_{x \in \Lambda} \int_I \mathbf{g}(t, (x , x+e_i)) \, dt.
\end{equation}
For each $(t,x) \in Q$, we define the divergence of a vector field $\g : I \times {\vec E}(\Lambda) \to \R$ according to the formula
\begin{equation*}
    \nabla \cdot \g (t , x) := \sum_{y \sim x} \g (t , (x, y)).
\end{equation*}

\subsubsection{Functional spaces} \label{sectionholdernorms}
In this subsection, we introduce the various functional spaces used in the article. We let $Q := I \times \Lambda$ be a parabolic cylinder and denote by $L$ the sidelength of the box $\Lambda$.

\smallskip

\textbf{(i) $L^p$-norms.} For $p \in [1 , \infty)$, we define the $L^p$ and normalized $\underline{L}^p$-norms of a function $u : Q \to \R$ and a vector field $\g : I \times \vec{E} \left( \Lambda\right) \to \R$ by the formulae
\begin{equation*}
    \left\| u \right\|_{L^p\left( Q \right)}^p := \int_I \sum_{x \in \Lambda} \left| u(t,x)\right|^p \, dt, \quad \left\| u \right\|_{\underline{L}^p \left( Q \right)}^p :=  \frac{1}{\left| Q \right|} \int_I \sum_{x \in \Lambda} \left| u(t,x)\right|^p \, dt ,
\end{equation*}
and
\begin{equation*}
    \left\| \g \right\|_{L^p\left( Q \right)}^p :=  \int_{I} \sum_{e \in E(\Lambda)} \left| \g(t, e)\right|^p \, dt, \quad \left\| \g \right\|_{\underline{L}^p \left( Q \right)}^p :=  \frac{1}{\left| Q \right|}\int_I \sum_{e \in E(\Lambda)} \left| \g(t ,e)\right|^p \, dt.
\end{equation*}
We define the $L^\infty$-norm of the function $u$ (resp. the vector field $\g$) to be the essential supremum of $u$ (resp. $\g$) over the set $Q$ (resp. $I \times \vec{E} \left( \Lambda\right)$) and denote it by $\left\| u \right\|_{L^\infty(Q)}$ (resp. $\left\| \g \right\|_{L^\infty(Q)}$).
We similarly define the $L^p(\Lambda)$-norm and normalized $\underline{L}^p(\Lambda)$-norm of functions (resp. vector fields) defined on the box $\Lambda$ (resp. the edge set $\vec{E} \left( \Lambda \right)$) and valued in $\R$.

\medskip

\textbf{(ii) Sobolev spaces.}
For $p \in [1 , \infty)$ and a function $u : Q \to \R$ (resp. $v : \Lambda \to \R$), we introduce the Sobolev norms
\begin{equation} \label{eq:17081801}
    \left\| u \right\|_{W^{1 , p}(Q)} := \left\| u \right\|_{L^p (Q)} + \left\| \nabla u \right\|_{L^p (Q)} + \left\| \partial_t u \right\|_{L^p (Q)} 
\end{equation}
and
\begin{equation*}
    \left\| v \right\|_{W^{1 , p}(\Lambda)} :=  \left\| v \right\|_{L^p (\Lambda)} + \left\| \nabla v \right\|_{L^p (\Lambda)}.
\end{equation*}
We denote by $W^{1 , p}_0(Q)$ (resp. $W^{1 , p}_0(\Lambda)$) the completion of the space of smooth compactly supported functions defined on the cylinder $Q$ (resp. the set of functions equal to $0$ on the external boundary $\partial \Lambda$) with respect to the norm~\eqref{eq:17081801}. In the case $p = 2$, we denote by $H^1(Q) := W^{1 , 2}(Q)$ and $H^1_0(Q) := W^{1 , 2}_0(Q)$ (and use similar notation with the box $\Lambda$). We introduce the Sobolev space $H^{2}(\Lambda)$ according to the formula
\begin{equation*}
    \left\| u \right\|_{H^2(\Lambda)} :=  \left\| u \right\|_{L^2 (\Lambda)} + \left\| \nabla u \right\|_{L^2 (\Lambda)} +  \left\| \nabla^2 u \right\|_{L^2 (\Lambda)}.
\end{equation*}
We similarly introduce the space $H^2(Q)$ taking into account the derivatives with respect to the time variable. We additionally define the norm
\begin{equation*}
    \left\| u \right\|_{L^p \left( I , W^{1 , p}\left( \Lambda \right) \right)}^p := \int_I \left\| u (t , \cdot) \right\|_{W^{1 , p}(\Lambda)}^p \, dt,
\end{equation*}
and define normalized version of these norms according to the formulae
\begin{equation*}
     \left\| v \right\|_{\underline{W}^{1 , p}(\Lambda)} :=  \frac{1}{L}  \left\| v \right\|_{\underline{L}^p (\Lambda)} + \left\| \nabla v \right\|_{\underline{L}^p (\Lambda)}
\end{equation*}
and
\begin{equation*}
    \left\| u \right\|_{\underline{L}^p \left( I , \underline{W}^{1 , p}\left( \Lambda \right) \right)}^p := \frac{1}{|I|}\int_I \left\| u (t , \cdot) \right\|_{\underline{W}^{1 , p}(\Lambda)}^p \, dt.
\end{equation*}
Let us denote by $q := p/(p-1)$ the conjugate exponent of $p$. We introduce the dual Sobolev norm
\begin{equation*}
    \left\| u \right\|_{W^{-1,p}(\Lambda)} := \sup \left\{ \sum_{x \in \Lambda} u(x) v(x) \, : \, v \in W^{1 , q}_0(\Lambda), \, \left\| v \right\|_{W^{1 , q}  \left( \Lambda \right)} \leq 1 \right\},
\end{equation*}
as well as its normalized version
\begin{equation*}
   \left\| u \right\|_{\underline{W}^{-1,p}(\Lambda)} := \sup \left\{ \frac{1}{\left| \Lambda\right|}\sum_{x \in \Lambda} u(x) v(x) \, : \, v \in W^{1 , q}_0(\Lambda), \, \left\| v \right\|_{\underline{W}^{1 , q} \left( \Lambda \right)} \leq 1 \right\},
\end{equation*}
and denote by $H^{-1}(\Lambda) = W^{-1 , 2}(\Lambda)$ and $\underline{H}^{-1}(\Lambda) = \underline{W}^{-1 , 2}(\Lambda)$.
We define the $\underline{L}^2 \left( I , \underline{W}^{-1,p}(\Lambda)\right)$-norm according to the formula
\begin{equation*}
    \left\| u \right\|_{\underline{L}^p \left( I ,  \underline{W}^{-1,p}(\Lambda) \right)}^p = \frac{1}{\left| I \right|} \int_I \left\| u(t , \cdot) \right\|_{\underline{W}^{-1,p}(\Lambda)}^p \, dt.
\end{equation*}
The following norm is used to study solutions of parabolic equations: given a function $u : Q \to \R$, we set
\begin{equation*}
    \left\| u \right\|_{\underline{W}^{1 , p}_{\mathrm{par}} (Q)} :=  \frac{1}{L} \left\| u \right\|_{\underline{L}^p (Q)} +\left\| \nabla u \right\|_{\underline{L}^p (Q)} + \left\| \partial_t u \right\|_{\underline{L}^p \left( I , \underline{W}^{-1 , p}\left( \Lambda \right) \right)}.
\end{equation*}
We denote by $W^{1 , p}_{\mathrm{par} , \sqcup}$ the completion of the set of smooth compactly supported functions in $(s_- , s_+] \times \Lambda$ with respect to the norm $\underline{W}^{1 , p}_{\mathrm{par}} (Q)$.
We then define the parabolic $\underline{W}^{-1,p}_{\mathrm{par}}(Q)$-norm of a map $u$ by
\begin{equation*}
    \left\| u \right\|_{\underline{W}_{\mathrm{par}}^{-1,p}(Q)} := \sup \left\{ \frac{1}{\left| Q \right|} \int_I \sum_{x \in \Lambda} u(t , x) v(t , x) \, dt \, : \, v \in W^{1 , p}_{\mathrm{par} , \sqcup}(Q), \, \left\| v \right\|_{\underline{W}^{1 , p}_{\mathrm{par}}(Q)} \leq 1 \right\}
\end{equation*}
and set $\underline{H}^{1}_{\mathrm{par}}(Q) = \underline{W}_{\mathrm{par}}^{1,2}(Q)$ and $\underline{H}^{-1}_{\mathrm{par}}(Q) = \underline{W}_{\mathrm{par}}^{-1,2}(Q)$.

\bigskip

\textbf{(iii) H\"{o}lder spaces.} For any exponent $\alpha \in [0,1]$, any parabolic cylinder $Q \subseteq \R \times \Zd$, and any function $u : Q \to \R$, we define the $C^{0 , \alpha}$-seminorm of the map~$u$ by
\begin{equation*}
    \left[ u \right]_{C^{0, \alpha}(Q)} := \sup_{(t , x) , (s , y) \in Q} \frac{\left| u(t , x) - u(s , y)\right|}{|t - s|^{\alpha/2} + |x -y|^{\alpha}}.
\end{equation*}
The $C^{1 , \alpha}$-seminorm of the map $u$ is then defined by
\begin{equation*}
    \left[ u \right]_{C^{1, \alpha}(Q)} :=  [ \vec{\nabla} u ]_{C^{0, \alpha}(Q)}.
\end{equation*}

We note that, while all the definitions above are given in the discrete setting, we may use them in the continuum to refer to either the boundary conditions $f$ to the solution of the homogenized equation (i.e., the map $\bar u$ in~\eqref{eq:defubarthmhydro}) in the proofs of Theorem~\ref{Th.quantitativehydr} or Theorem~\ref{theoremlargescale}.

\subsection{Microscopic notation} \label{Sectionmicroscopic}
In the statement and proof of Theorem~\ref{Th.quantitativehydr}, we work on the rescaled lattice $\ep \Zd$; this convention is standard in homogenization and is also the one used in the proof of the hydrodynamic limit by Funaki and Spohn~\cite{FS}. Working in this framework requires to introduce notation which are adapted to the rescaled lattice $\ep \Zd$ but essentially matches the ones of the previous subsections.

\subsubsection{Sets and functions}
As mentioned in Section~\ref{section:mainresult}, we let $D \subseteq \Rd$ be a bounded $C^{1,1}$ domain, let $I := [-1 , 0]$ and set $Q := I \times D$ (as in the statement of Theorem~\ref{Th.quantitativehydr}). We denote by $D^\ep := \ep \Zd \cap D$, by $Q^\ep := I \times D^\ep$. We denote by $\partial D^\ep$ the external vertex boundary of $D^\ep$ and by $\partial_{\mathrm{par}} Q^\ep := \left( \{ -1 \} \times D^\ep \right) \cup \left( I \times \partial D^\ep \right)$.
For $\kappa \in (0 , 1 )$, we denote by $\Lambda^\ep_\kappa := [- \kappa, \kappa]^d \cap \ep \Zd$ and by $Q^\ep_\kappa := (- \kappa^2 , 0) \times \Lambda^\ep_\kappa$.
We denote by $\left| \Lambda_\kappa^\ep \right|$ (resp. $\left| D^\ep \right|$) the cardinality of the box $\Lambda_\kappa^\ep$ (resp. the set $D^\ep$), and by $\left| Q^\ep_\kappa \right| = \kappa^2 \left| \Lambda_\kappa^\ep \right|$ (resp. $\left| Q^\ep \right| = \left|I\right| \left| D^\ep \right|$).

Given a function $u^\ep : D^\ep \to \R$ and an edge $e = ( x , y) \in \vec{E} \left( D^\ep \right)$, we denote its discrete gradient by $\nabla^\ep u(t,e) := \ep^{-1} \left( u(t,y) - u(t,x) \right).$ Similarly, for $i \in \{ 1 , \ldots, d\}$, we define $$\nabla_i^\ep u (t , x) := \ep^{-1} \left( u(t,x+ e_i) - u(t,x) \right) ~~\mbox{and}~~ \vec{\nabla}^\ep u (t , x) := \left( \nabla_1^\ep u(t , x) , \ldots,  \nabla_d^\ep u(t , x) \right).$$ We will have to consider the second derivative of a map $u$, defined as the matrix-valued function $$\nabla^{\ep, 2} u(t , x) := ( \nabla_i^\ep \nabla_j^\ep u(t , x) )_{1 \leq i , j \leq d}.$$
We define the average value of a function $u$ over the set $Q^\ep_\kappa$ according to the formula
\begin{equation*}
    \left( u \right)_{Q^\ep_\kappa} := \frac{1}{\left| Q^\ep_\kappa \right|} \int_{-\kappa^2}^0  \sum_{x \in \Lambda^\ep_\kappa} u(t , x) \, dt,
\end{equation*}
and similarly define the average value of a vector field by rescaling the definition~\eqref{defaveragevector}.

\subsubsection{Functional spaces}

We also define the suitably rescaled version of the $L^p$-norms by the formulae (which will be used either on the set $Q^\ep$ or on cylinders of the form $z + Q^\ep_\kappa$)
\begin{equation} \label{rescaledLp1}
    \left\| u \right\|_{L^p (Q^\ep)}^p:= \ep^d \int_{I}  \sum_{x \in D^\ep} \left| u(t , x)\right|^p \, dt ~\mbox{and}~  \left\| u \right\|_{L^p (Q^\ep_\kappa)}^p :=  \ep^d \int_{-\kappa^2}^0  \sum_{x \in \Lambda^\ep_\kappa} \left| u(t , x)\right|^p \, dt,
\end{equation}
as well as the averaged $L^p$-norm
\begin{equation} \label{rescaledLp2}
\left\| u \right\|_{\underline{L}^p (Q^\ep_\kappa)}^p :=  \frac{1}{\left| Q^\ep_\kappa \right|}  \int_{-\kappa^2}^0  \sum_{x \in \Lambda^\ep_\kappa} \left| u(t , x)\right|^p \, dt.
\end{equation}
We similarly define the $L^\infty$-norm by considering the essential supremum.
Given a time interval $I \subseteq \R$ and a box $\Lambda ^\ep \subseteq \ep \Zd$ of sidelength $\kappa$ (i.e., of the form $x + \Lambda^\ep_\kappa$ for $x \in \ep \Zd$), we extend the definition of the norms $W^{1 , p}(I \times \Lambda^\ep), L^p(I , W^{1 , p}(\Lambda^\ep)), W^{-1 , p} (\Lambda^\ep)$, $W^{1 , p}_{\mathrm{par}}(I \times \Lambda^\ep)$ and $W^{-1 , p}_{\mathrm{par}}(I \times \Lambda^\ep)$ by replacing the discrete gradient $\nabla$ by $\nabla^\ep$, the parameter $L$ by $\kappa$ and the $L^p$-norms by the ones defined in~\eqref{rescaledLp1} and~\eqref{rescaledLp2}.

Finally, we extend the definition of the norm $\underline{H}^{-1}_{\mathrm{par}}$ to the set $Q^\ep := I \times D^\ep$ by setting 
\begin{equation} \label{eq:rescalingH-1par}
\left\| u \right\|_{\underline{H}_{\mathrm{par}}^{-1}(Q^\ep)} := \sup \left\{ \frac{1}{\left| I \right| \left| D^\ep \right|} \int_I \sum_{x \in D^\ep} u(t , x) v(t , x) \, dt \, : \, v \in H^{1}_{\mathrm{par} , \sqcup}(Q^\ep), \, \left\| v \right\|_{\underline{H}^{1 }_{\mathrm{par}}(Q^\ep)} \leq 1 \right\}.
\end{equation}
For later use, we record that the $\underline{H}^{-1}_{\mathrm{par}}$ satisfies the following scaling identity: for any $L \in \N$, any function $u : Q_L \to \R$, if we let $u^\ep : Q^{\ep}_{\ep L} \to \R$ be the rescaled map defined by the formula $u^\ep (t, x) = u (t/\ep^2 , x/\ep)$, then we have
\begin{equation} \label{eq:29.5}
    \left\| u^\ep \right\|_{\underline{H}^{-1}_{\mathrm{par}}\left(Q^{\ep}_{\ep L} \right)} = \ep \left\| u \right\|_{\underline{H}^{-1}_{\mathrm{par}}\left(Q_{L} \right)}.
\end{equation}

\subsection{Stochastic integrability} \label{section.stochint}
Fix an exponent $s \in (0 , \infty)$ and a constant $K > 0$. Given a random variable $X$, we write
\begin{equation*}
    X \leq \mathcal{O}_s (K) \Leftrightarrow \mathbb{E} \left[ \exp \left( \left( \frac{|X|}{K} \right)^{\!\!s} \, \right) \right] \leq 2.
\end{equation*}
We record from~\cite[Appendix A]{AKM} some properties satisfied by this notation:
\begin{itemize}
\item \textit{Summation:} For each exponent $s > 0$, there exists a constant $C_s$ such that, if let $X_1 , \ldots, X_n$ be nonnegative random variables and $K_1, \ldots, K_n$ be positive constants, then
\begin{equation} \label{sum.Onotation}
        X_1 \leq \mathcal{O}_s(K_1), \, \ldots \,, X_n \leq \mathcal{O}_s(K_n) \implies \sum_{i = 1}^n X_i \leq \mathcal{O}_s \left( C_s \sum_{i = 1}^n K_i \right);
\end{equation}
\item \textit{Powers:} For each pair of exponents $s , s_0 > 0$, and each constant $K > 0$, one has the estimate
\begin{equation} \label{Onotation.power}
    X \leq \mathcal{O}_s (K) \implies X^{s_0} \leq \mathcal{O}_{s/s_0} \left(K^{s_0} \right);
\end{equation}
\item \textit{Integration:} For each exponent $s > 0$, there exists a constant $C_s$ such that the following holds. Let $(X_t)_{t \geq 0}$ be a collection of random variables, $(K_t)_{t \geq 0}$ be a nonnegative function and $I \subseteq (0 , \infty)$ be an interval, then
\begin{equation}  \label{int.Onotation}
    \forall t \geq 0, X_t \leq \mathcal{O}_s(K_t) \implies \int_I X_t \leq \mathcal{O}_s \left( C_s \int_I K_t \, dt \right);
\end{equation}
\item \textit{Expectation:} For each exponent $s > 0$, there exists a constant $C_s$ such that
\begin{equation*}
    X \leq \mathcal{O}_s(K) \implies \mathbb{E} \left[ X \right] \leq C_s K;
\end{equation*}
\item \textit{Multiplication:} For each exponent $s > 0$, and each $\lambda > 0$,
\begin{equation*}
    X \leq \mathcal{O}_s(K) \implies \lambda X \leq \mathcal{O}_s(\lambda K);
\end{equation*}
\item \textit{Maximum:} For each exponent $s > 0$, there exists a constant $C_s$ such that, if let $X_1 , \ldots, X_n$ be nonnegative random variables and $K$ be a positive constant, then
\begin{equation} \label{sum.Omaximum}
        X_1 \leq \mathcal{O}_s(K), \, \ldots \,, X_n \leq \mathcal{O}_s(K) \implies \max_{i = 1, \ldots, n} X_i \leq \mathcal{O}_s \left( C_s \left( \log  n\right)^{\frac 1s} K \right).
\end{equation}
\end{itemize}

\subsection{Parabolic equations and regularity}
In this section, we introduce the definition of discrete parabolic operators and record some of the main properties of the solutions of parabolic equations used in the article: the Caccioppoli and Meyers inequalities, and the Nash-Aronson estimate for the heat kernel. We recall that we fix a parabolic cylinder $Q = I \times \Lambda \subseteq \R \times \Zd$.

\subsubsection{Environment and parabolic equations}
An environment $\a$ defined on the parabolic cylinder $Q$ is a measurable map $\a : I \times E(\Lambda) \to [0 , \infty]$. Given an environment $\a$, we denote by $\nabla \cdot \a \nabla$ the time-dependent elliptic operator defined by the formula: for any map $u : Q \to \R$ and any $(t , x) \in Q$,
\begin{equation} \label{eq:discdynlaplacian}
    \nabla \cdot \a \nabla u (t , x) = \sum_{y \sim x} \a (t , \{ x , y \}) \left( u(t , y) - u(t , x) \right).
\end{equation}
The discrete Laplacian on the lattice is the elliptic operator $\Delta = \nabla \cdot \a \nabla$ when $\a \equiv 1$.
Given a parabolic cylinder $Q \subseteq \R \times \Zd$ and an environment $\a$, a function $u : Q \to \R$ is called $\a$-caloric if it solves the parabolic equation
\begin{equation} \label{def.paraboliceqautionheat-kernel}
    \partial_t u - \nabla \cdot \a \nabla u = 0 ~\mbox{in}~ Q.
\end{equation}
In the rest of this section (and of the article), we will assume that all the environments satisfy the uniform ellipticity condition $0 < c_- \leq \a \leq c_+ < \infty$.

\subsubsection{Caccioppoli and Meyers inequalities}
In this section, we state the parabolic versions of the Caccioppoli and Meyers inequalities. For the Caccioppoli inequality, we refer to~\cite[Lemma B.3]{ABM} (among other possible sources)

\begin{proposition}[Parabolic Caccioppoli inequality] \label{prop.CAccioppolipara}
There exists a constant $C < \infty$  depending on $d , c_+ , c_-$such that, for every sidelength $L \in \N$, every uniformly elliptic environment $\a$ defined on $Q_{2L}$, and every solution of the parabolic equation
\begin{equation*}
    \partial_t u - \nabla \cdot \a \nabla u = \nabla \cdot F ~\mbox{in}~ Q_{2L},
\end{equation*}
one has the upper bound
\begin{equation*}
    \left\| \nabla u \right\|_{\underline{L}^2(Q_{L})} \leq \frac{C}{L} \left\|u \right\|_{\underline{L}^2(Q_{2L})} + C \left\| F \right\|_{\underline{L}^2(Q_{2L})}.
\end{equation*}
\end{proposition}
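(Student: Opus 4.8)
\emph{Proof sketch.} The estimate is a discrete parabolic Caccioppoli (reverse Poincaré) inequality, and the plan is to prove it by the standard energy method: test the equation against $\zeta^2 u$ for a well-chosen space-time cutoff $\zeta$. Concretely, I would fix $\zeta(t,x) = \eta(t)\chi(x)$, where $\chi:\Lambda_{2L}\to[0,1]$ equals $1$ on $\Lambda_L$, vanishes outside a box well inside $\Lambda_{2L}$ (so that all the summations-by-parts below involve only values of $u$, $\a$ and $F$ on $Q_{2L}$), and satisfies $|\nabla\chi|\le C/L$; and where $\eta:(-4L^2,0]\to[0,1]$ equals $1$ on $(-L^2,0]$, vanishes near $t=-4L^2$, and satisfies $|\eta'|\le C/L^2$. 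Multiplying $\partial_t u-\nabla\cdot\a\nabla u=\nabla\cdot F$ by $\zeta^2 u$, summing over $x$ and integrating in $t$, and using summation by parts in space (no boundary terms, since $\zeta$ is spatially compactly supported), one reaches the identity
\[
\tfrac12\int_I\sum_x \partial_t(u^2)\,\zeta^2\,dt \;+\; \int_I\sum_e \a\,\nabla u\,\nabla(\zeta^2 u)\,dt \;=\; -\int_I\sum_e F\,\nabla(\zeta^2 u)\,dt .
\]

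The next step is to estimate the three terms. For the time term, $\int_I\sum_x\partial_t(u^2)\zeta^2\,dt=\sum_x u^2(0,\cdot)\zeta^2(0,\cdot)-\int_I\sum_x u^2\,\partial_t(\zeta^2)\,dt$: the boundary contribution at $t=0$ is nonnegative and is simply discarded, while $|\partial_t(\zeta^2)|\le C/L^2$, so this term is controlled by $CL^{-2}\|u\|_{L^2(Q_{2L})}^2$. For the elliptic term I would apply the discrete Leibniz rule $\nabla(\zeta^2 u)(e)=\overline{\zeta^2}(e)\,\nabla u(e)+\overline{u}(e)\,\nabla(\zeta^2)(e)$, where $\overline f(e)$ denotes the average of $f$ over the two endpoints of $e$; this splits it into the coercive piece $\int_I\sum_e\a\,\overline{\zeta^2}\,|\nabla u|^2\ge c_-\int_I\sum_e\overline{\zeta^2}|\nabla u|^2$ and a cross piece. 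Since $|\nabla(\zeta^2)(e)|\le (C/L)\sqrt{\overline{\zeta^2}(e)}$, the cross piece is absorbed by Young's inequality into $\tfrac{c_-}{2}\int_I\sum_e\overline{\zeta^2}|\nabla u|^2+CL^{-2}\|u\|_{L^2(Q_{2L})}^2$ (with the factor $c_+^2/c_-$ swallowed into $C$). The right-hand-side term is handled identically after one further summation by parts, contributing $\tfrac{c_-}{4}\int_I\sum_e\overline{\zeta^2}|\nabla u|^2+C\|F\|_{L^2(Q_{2L})}^2+CL^{-2}\|u\|_{L^2(Q_{2L})}^2$.

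Collecting these bounds and absorbing all gradient contributions into the coercive term yields $\int_I\sum_e\overline{\zeta^2}|\nabla u|^2\,dt\le C\|F\|_{L^2(Q_{2L})}^2+CL^{-2}\|u\|_{L^2(Q_{2L})}^2$; since $\overline{\zeta^2}\equiv 1$ on the edges of $\Lambda_L$ for $t\in(-L^2,0)$, the left side dominates $\|\nabla u\|_{L^2(Q_L)}^2$. Dividing through by $|Q_L|$, using $|Q_{2L}|\le C(d)\,|Q_L|$ to turn the $L^2$-norms into normalized $\underline L^2$-norms, and taking square roots gives precisely the claimed inequality. I expect no genuine obstacle here: the only care needed is the routine bookkeeping — choosing the cutoff to vanish strictly inside $\Lambda_{2L}$ so that the summations by parts stay within $Q_{2L}$, and checking that $|Q_{2L}|/|Q_L|$ is bounded by a constant depending only on $d$ — the argument being the textbook parabolic energy estimate transcribed to the discrete setting.
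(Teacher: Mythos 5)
Your argument is correct and is precisely the standard discrete parabolic energy estimate; the paper itself does not prove this proposition but cites~\cite[Lemma B.3]{ABM}, and your proof is essentially the one given there. The two technical points you flag are exactly the ones that require care in the discrete setting and you handle both: the exact discrete Leibniz rule $\nabla(fg)(e)=\bar f(e)\,\nabla g(e)+\bar g(e)\,\nabla f(e)$ with $\bar f$ the edge average (which one can verify by direct expansion), and the pointwise bound $|\nabla(\zeta^2)(e)|\le (C/L)\sqrt{\overline{\zeta^2}(e)}$, which follows from $\zeta^2(y)-\zeta^2(x)=(\zeta(x)+\zeta(y))(\zeta(y)-\zeta(x))$ together with $|\zeta(x)+\zeta(y)|\le 2\sqrt{\overline{\zeta^2}(e)}$ and $|\nabla\zeta|\le C/L$; this is what allows the cross term to be absorbed by the coercive term with a degenerate weight. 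The choice of spatial cutoff supported strictly inside $\Lambda_{2L}$ makes both summations by parts boundary-term-free, and discarding the nonnegative trace at $t=0$ is the standard move. Nothing is missing.
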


The second proposition of this section is the interior parabolic Meyers estimate which provides a $W^{1 , 2+\gamma_0}$ regularity estimate for solutions of parabolic equations. In the elliptic case, the Meyers estimate is due to~\cite{meyers1963p}, in the parabolic case they were first proved in~\cite{giaquinta1982partial}, and the global version was considered in~\cite{parviainen2009global} (see also~\cite[Appendix B]{ABM}).

\begin{proposition}[Interior parabolic Meyers inequality, Theorem 2.1 of~\cite{giaquinta1982partial}] \label{interiorparabolicMEyers}
There exist a constant $C < \infty$ and an exponent $\gamma_0 > 0$ depending on $d , c_+ , c_-$ such that, for every sidelength $L \in \N$, every uniformly elliptic environment $\a$ defined on $Q_{2L}$, and every solution of the parabolic equation
\begin{equation*}
    \partial_t u - \nabla \cdot \a \nabla u = \nabla \cdot F ~\mbox{in}~ Q_{2L},
\end{equation*}
one has the upper bound
\begin{equation*}
    \left\| \nabla u \right\|_{\underline{L}^{2 + \gamma_0}(Q_{L})} \leq C \left\| \nabla u \right\|_{\underline{L}^2(Q_{2L})} + C \left\| F \right\|_{\underline{L}^{2 + \gamma_0}(Q_{2L})}.
\end{equation*}
\end{proposition}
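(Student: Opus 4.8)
This is the discrete parabolic analogue of the classical $L^p$-regularity theory of Meyers, and I would prove it by the reverse-H\"older / Gehring self-improvement argument of Giaquinta and Modica, run on the parabolic cylinders of $\Z\times\Zd$; in the continuum it is \cite[Theorem 2.1]{giaquinta1982partial}, and the discrete parabolic implementation follows the template of \cite[Appendix B]{ABM}, so the plan is to assemble the lattice ingredients. The argument has two parts: (i) derive an interior reverse-H\"older inequality for $|\nabla u|$ at an exponent strictly below $2$, and (ii) upgrade it to $\underline{L}^{2+\gamma_0}$ by Gehring's lemma.

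For part (i), fix a subcylinder $z+Q_r$ with $z+Q_{2r}\subseteq Q_{2L}$. Since constants are $\a$-caloric, applying the Caccioppoli inequality (Proposition~\ref{prop.CAccioppolipara}) to $u-c$ gives
\begin{equation*}
  \|\nabla u\|_{\underline{L}^2(z+Q_r)}\les\frac1r\|u-c\|_{\underline{L}^2(z+Q_{2r})}+\|F\|_{\underline{L}^2(z+Q_{2r})}
\end{equation*}
for every constant $c$. The first term on the right is then controlled by a parabolic Sobolev--Poincar\'e inequality: the equation forces $\partial_t u=\nabla\cdot(\a\nabla u+F)$, so $\partial_t u$ is bounded in the negative norm $\underline{L}^2(I,\underline{W}^{-1,2})$ by $\|\nabla u\|_{\underline{L}^2}+\|F\|_{\underline{L}^2}$, and combining this control of the time derivative with the discrete Sobolev inequality on $\Zd$ and parabolic interpolation yields, for a suitable time-sliced average $c$ and an exponent $\theta=\theta(d)\in(0,1)$,
\begin{equation*}
  \frac1r\|u-c\|_{\underline{L}^2(z+Q_{2r})}\les\Bigl(\fint_{z+Q_{4r}}|\nabla u|^{2\theta}\Bigr)^{\!1/(2\theta)}+\|F\|_{\underline{L}^2(z+Q_{4r})}.
\end{equation*}
Squaring and combining produces the reverse-H\"older inequality
\begin{equation*}
  \fint_{z+Q_r}|\nabla u|^2\les\Bigl(\fint_{z+Q_{4r}}|\nabla u|^{2\theta}\Bigr)^{\!1/\theta}+\fint_{z+Q_{4r}}|F|^2,
\end{equation*}
valid for all such subcylinders, with constants depending only on $d,c_+,c_-$.

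For part (ii), put $w:=|\nabla u|^{2\theta}$, $p:=1/\theta>1$, $v:=|F|^{2\theta}$; the last display is exactly a Gehring-type reverse-H\"older bound $\fint_{z+Q_r}w^p\les(\fint_{z+Q_{4r}}w)^p+\fint_{z+Q_{4r}}v^p$ on the parabolic cylinders of $\Z\times\Zd$. Gehring's lemma (proved by a Calder\'on--Zygmund / Vitali stopping-time decomposition adapted to parabolic cubes, as in \cite[Appendix B]{ABM}) then yields $\delta>0$ with $w\in\underline{L}^{p+\delta}_{\mathrm{loc}}$ and the quantitative estimate $\|w\|_{\underline{L}^{p+\delta}(Q_L)}\les\|w\|_{\underline{L}^p(Q_{2L})}+\|v\|_{\underline{L}^{p+\delta}(Q_{2L})}$ whenever $v\in\underline{L}^{p+\delta}$; setting $\gamma_0:=2\theta\delta$ (so that the hypothesis $F\in\underline{L}^{2+\gamma_0}$ is precisely $v\in\underline{L}^{p+\delta}$) and translating back through $w=|\nabla u|^{2\theta}$, $v=|F|^{2\theta}$ gives the claimed inequality, possibly after shrinking $\gamma_0$. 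The main obstacle is part (i): the sub-quadratic parabolic Sobolev--Poincar\'e inequality, which is where the parabolic structure genuinely enters --- it fails for general functions and must exploit the equation to control $\partial_t u$ in a negative spatial norm --- and which has to be set up within the discrete differential calculus of Section~\ref{sectionnotation}; once it is available, the Gehring step and the covering geometry of parabolic cylinders are routine.
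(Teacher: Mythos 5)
The paper does not prove this proposition; it is quoted as a known result with a citation to Theorem~2.1 of~\cite{giaquinta1982partial} (and the Gehring machinery is deferred to~\cite[Appendix B]{ABM}). Your reconstruction — Caccioppoli with a constant subtracted, a sub-quadratic parabolic Sobolev--Poincar\'e inequality that exploits the equation to control $\partial_t u$ in $\underline{L}^2(I,\underline{W}^{-1,2})$ so as to propagate a time-sliced average, the resulting reverse-H\"older inequality at exponent $2\theta<2$, and then Gehring's self-improvement on parabolic cylinders — is exactly the Giaquinta--Struwe argument that the cited reference gives, transposed to the discrete parabolic setting; you have correctly identified the Sobolev--Poincar\'e step as the only place where the equation (as opposed to generic energy estimates) genuinely enters, and the discrete implementation of the covering/stopping-time part is indeed the content of~\cite[Appendix B]{ABM}. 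So the proposal is correct and matches the source that the paper relies on.
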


We finally collect a global version of the Meyers estimate.

\begin{proposition}[Global parabolic Meyers inequality, Proposition B.2 of~\cite{ABM}] \label{propMeyers}
There exist a constant $C <\infty$ and an exponent $\gamma_0 >0$ depending on the parameters $d , c_+ ,c_-$ such that for every $L \in \N$, every environment $\a : Q_L \to [c_- , c_+]$, and every $F : Q_L \to \R$, if we let $u$ be the solution of the parabolic equation
\begin{equation*}
    \left\{ \begin{aligned}
    \partial_t u - \nabla \cdot \a \nabla u & = \nabla \cdot F & ~\mbox{in} ~ Q_L, \\
    u & = 0 & ~\mbox{in}~  \Lambda_L, \\
   u (t , \cdot) &\in \Omega_{\Lambda_L , \mathrm{per}} &~\mbox{for}~t \in I_L,
    \end{aligned} \right.
\end{equation*}
then one has the estimate
\begin{equation*}
     \left\| \nabla u \right\|_{\underline{L}^{2+\gamma_0}(Q_{L})} \leq C \left\| F \right\|_{\underline{L}^{2 + \gamma_0} (Q_{L})}.
\end{equation*}
\end{proposition}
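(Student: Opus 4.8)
The plan is to deduce Proposition~\ref{propMeyers} from two ingredients. The first is a \emph{global} energy estimate, which exploits the vanishing initial datum and the spatial periodicity. The second is the \emph{interior} higher-integrability estimate of Proposition~\ref{interiorparabolicMEyers}, which I will apply on a bounded family of translated parabolic subcylinders and then reassemble by a covering argument. The key structural observation is that, with periodic-in-space and zero-initial conditions, there is effectively no spatial boundary ($\Lambda_L$ being a torus) and only the initial time slice $\{t=-L^2\}$ to deal with.

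\emph{Step 1 (global energy estimate).} Since $u$ vanishes at the initial time and is spatially periodic, it is an admissible test function for its own equation. Testing $\partial_t u-\nabla\cdot\a\nabla u=\nabla\cdot F$ against $u$, summing by parts in space (no boundary contribution, by periodicity), using the uniform ellipticity $c_-\le\a\le c_+$ and Young's inequality to absorb the cross term, and integrating in time from $-L^2$ to $0$ with $u(-L^2,\cdot)\equiv 0$, one obtains
\[
\|\nabla u\|_{\underline{L}^2(Q_L)}\le C\|F\|_{\underline{L}^2(Q_L)}.
\]

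\emph{Step 2 (localization and reassembly).} Extend $u$, $F$, and $\a$ to times $t<-L^2$ by $0$, $0$, and $c_-$ respectively. Because $u$ is continuous across $\{t=-L^2\}$ with value $0$, no surface mass is created in $\partial_t u$, so the extended function still solves $\partial_t u-\nabla\cdot\a\nabla u=\nabla\cdot F$ on $(-2L^2,0)\times\Lambda_L$. Cover $Q_L$ by a bounded number $N=N(d)$ of translated parabolic cylinders $z_i+Q_{L/2}$, with bounded overlap, arranged so that each doubled cylinder $z_i+Q_{L}$ sits inside $(-2L^2,0)\times\Lambda_L$ (using periodicity to wrap around in space and the extension to reach below $-L^2$). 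Applying Proposition~\ref{interiorparabolicMEyers} — which is translation- and scale-invariant, with constants depending only on $d,c_+,c_-$ — on each $z_i+Q_{L/2}\subseteq z_i+Q_L$, then summing the $(2+\gamma_0)$-th powers over $i$ and using the bounded overlap together with the fact that $\nabla u$ and $F$ vanish for $t<-L^2$ after extension, yields
\[
\|\nabla u\|_{\underline{L}^{2+\gamma_0}(Q_L)}\le C\|\nabla u\|_{\underline{L}^2(Q_L)}+C\|F\|_{\underline{L}^{2+\gamma_0}(Q_L)}.
\]
Combining this with Step~1 and Jensen's inequality $\|F\|_{\underline{L}^2(Q_L)}\le\|F\|_{\underline{L}^{2+\gamma_0}(Q_L)}$ gives the claim. (For parabolic cylinders of size comparable to the lattice spacing the estimate is trivial, all $\underline{L}^q$-norms on a bounded set being equivalent, so the discreteness causes no difficulty.)

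\emph{Where the work is.} With Proposition~\ref{interiorparabolicMEyers} available, the global statement is essentially bookkeeping, and the one point requiring genuine care is the extension across the initial slice: one must check that $u$ extended by zero remains a weak solution, i.e.\ that its time derivative acquires no surface measure on $\{t=-L^2\}$, which is exactly what the homogeneous initial datum guarantees. The substantive content is hidden inside Proposition~\ref{interiorparabolicMEyers} itself: it follows, via Gehring's lemma in its parabolic (Giaquinta--Modica) form, from a reverse Hölder inequality on parabolic cylinders obtained by combining the Caccioppoli inequality of Proposition~\ref{prop.CAccioppolipara} with a parabolic Sobolev--Poincaré inequality. The delicate step there — and the place where I would expect to spend the most effort if I had to reprove the interior estimate — is precisely the parabolic Sobolev--Poincaré inequality: unlike in the elliptic case, the space--time oscillation of $u$ cannot be controlled by $\nabla u$ alone, and one must first use the equation to bound $\partial_t u$ in a negative Sobolev norm (by $\a\nabla u+F$) and then feed this into a parabolic embedding, which is what produces the sub-$L^2$ gradient exponent in the reverse Hölder inequality.
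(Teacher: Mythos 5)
The paper itself gives no proof of Proposition~\ref{propMeyers}; it is cited directly from~\cite{ABM} (and the same is true of the interior estimate, Proposition~\ref{interiorparabolicMEyers}, cited from~\cite{giaquinta1982partial}). So there is no ``paper proof'' to compare against, only the question of whether your derivation is sound. It is.

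Your reduction of the global statement to the interior one is correct, and it is essentially the standard argument. Step~1 is a routine energy estimate: testing against $u$, the periodicity kills the spatial boundary term and the zero initial datum makes the time boundary term $\tfrac12\|u(0,\cdot)\|_{L^2}^2\geq 0$, which one discards; Young's inequality with the ellipticity constant $c_-$ gives $\|\nabla u\|_{\underline{L}^2(Q_L)}\leq c_-^{-1}\|F\|_{\underline{L}^2(Q_L)}$. Step~2 correctly identifies the two boundary effects that the interior estimate cannot see and dispatches both for free: periodicity in space lets the doubled cylinders wrap around $\Lambda_L$, and the zero initial condition lets you extend $u$ and $F$ by zero (and $\a$ by any constant in $[c_-,c_+]$) to $(-2L^2,0)\times\Lambda_L$ while remaining a weak solution, since $u$ is continuous across $\{t=-L^2\}$ with value zero so no surface measure appears in $\partial_t u$. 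The covering by translates of $Q_{L/2}$ whose doublings fit inside the extended cylinder is achievable with $O_d(1)$ pieces and bounded overlap, and after raising the interior estimate to the power $2+\gamma_0$, summing, and bounding the localized $\underline{L}^2$-averages of $\nabla u$ on $z_i+Q_L$ by the global one (up to fixed volume ratios), you land on $\|\nabla u\|_{\underline{L}^{2+\gamma_0}(Q_L)}\leq C\|\nabla u\|_{\underline{L}^2(Q_L)}+C\|F\|_{\underline{L}^{2+\gamma_0}(Q_L)}$, which together with Step~1 and the Jensen inequality $\|F\|_{\underline{L}^2}\leq\|F\|_{\underline{L}^{2+\gamma_0}}$ gives the claim. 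Your remark that the genuine analytic content (reverse H\"older via parabolic Caccioppoli and Sobolev--Poincar\'e, then parabolic Gehring) is buried in the interior estimate rather than in this globalization step is also accurate; the paper treats that layer as a black box too, so there is no expectation that it be reproved here.
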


\subsubsection{De Giorgi-Nash-Moser regularity}

In this section, we record the $C^{0 , \alpha}$-regularity for solutions of parabolic equations with uniformly elliptic coefficient. The proof of this result in the continuous setting can be found in~\cite{nash1958continuity, moser1961harnack, DG1} (see also~\cite[Chapter 8]{GT01} and~\cite[Chapter 3]{han2011elliptic}). In the discrete setting considered here, we refer to~\cite[Appendix B]{GOS} and \cite[Section 3]{DD05}.

\begin{proposition}[$L^\infty$ and De Giorgi-Nash-Moser regularity] \label{paraNash}
Fix an integer $L \in \N$ and let $\a$ be a uniformly elliptic environment defined in $Q_{2L}$. There exist an exponent $\alpha > 0$ and a constant $C< \infty$ depending only on the parameters $d , c_+ , c_-$ such that, for any solution $u : Q_{2L} \to \R$ of the parabolic equation
\begin{equation*}
    \partial_t u - \nabla \cdot \a \nabla u = 0 \hspace{2mm} \mbox{in} \hspace{2mm}  Q_{2L},
\end{equation*}
one has the estimate
\begin{equation} \label{NashParaest}
     \left\|  u  \right\|_{L^\infty(Q_L)}  + L^\alpha \left[ u \right]_{C^{0, \alpha}(Q_{L})} \leq C \left\|  u \right\|_{\underline{L}^2 \left( Q_{2L}\right)}.
\end{equation}
\end{proposition}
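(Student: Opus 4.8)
The statement is the classical parabolic De Giorgi--Nash--Moser theorem on the discrete lattice, so the plan is not to reprove it from scratch but to reduce it to the references cited in the excerpt (\cite[Appendix B]{GOS} and \cite[Section 3]{DD05}) and to the standard continuum theory \cite{nash1958continuity, moser1961harnack, DG1}. First I would normalize: by the linearity of the equation and the scaling invariance of parabolic cylinders under $(t,x)\mapsto (L^2 t, Lx)$ (which is already built into the definition~\eqref{def.QL} of $Q_L$), it suffices to prove the estimate on $Q_2$ with the right-hand side $\left\| u\right\|_{\underline L^2(Q_4)}$, or simply to invoke the scale-invariant form directly. The discrete diffusive scaling is the reason the seminorm is weighted by $L^\alpha$ and the time increments enter as $|t-s|^{\alpha/2}$ in the $C^{0,\alpha}$-seminorm defined in Section~\ref{sectionholdernorms}.

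The two assertions in~\eqref{NashParaest} are proved by the two standard halves of the theory. For the $L^\infty$ bound $\left\| u\right\|_{L^\infty(Q_L)}\le C\left\| u\right\|_{\underline L^2(Q_{2L})}$, I would run a Moser iteration: starting from the Caccioppoli inequality (Proposition~\ref{prop.CAccioppolipara} with $F\equiv 0$) together with the discrete parabolic Sobolev inequality, one obtains a reverse-H\"older-type gain in integrability on shrinking cylinders, and iterating over a geometric sequence of radii converging to $L$ yields the sup bound with a constant depending only on $d,c_+,c_-$. On the discrete lattice the Sobolev inequality holds with the continuum exponent in dimension $d\ge 2$ (and trivially in $d=1$), so no dimensional restriction is needed. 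For the oscillation decay giving the $C^{0,\alpha}$-seminorm, I would use the De Giorgi oscillation lemma: a solution on $Q_{2r}$ satisfies $\osc_{Q_r} u \le (1-\theta)\,\osc_{Q_{2r}} u$ for some $\theta=\theta(d,c_+,c_-)\in(0,1)$, proved via the density estimate for level sets (the De Giorgi isoperimetric/measure-shrinking argument) combined again with Caccioppoli. Iterating this across dyadic scales between the microscopic scale $1$ and $L$ produces $[u]_{C^{0,\alpha}(Q_L)}\le C L^{-\alpha}\left\| u\right\|_{L^\infty(Q_{2L})}$ with $\alpha=-\log_2(1-\theta)$, and combining with the already-established $L^\infty$ bound gives~\eqref{NashParaest}. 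A cleaner packaging is simply to cite \cite[Appendix B]{GOS}, where precisely this discrete parabolic statement is established, and note that~\eqref{NashParaest} is its scale-invariant rephrasing.

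The one genuinely non-cosmetic point, and the place I would be most careful, is the \emph{discreteness}: the continuum proofs use the chain rule and smooth cutoffs freely, whereas on $\Zd$ one only has a discrete Leibniz rule with error terms, and the Moser iteration cannot be continued below the lattice scale. The first issue is handled as in \cite{DD05, GOS} by working with the discrete energy identities directly (the relevant algebraic inequalities, e.g. $(\nabla(u\wedge k))^2\le \nabla u\cdot \nabla((u-k)_+)$-type bounds, survive on the lattice), and the second is precisely why the estimate is stated on large scales $L\ge 1$ with the weight $L^\alpha$ rather than as pointwise H\"older continuity --- the iteration is only run for dyadic scales in $[1,L]$, which is exactly enough. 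Everything else --- the normalization, the geometric summation of the oscillation decay, the passage between the two forms of the seminorm --- is routine. Thus the proof reduces to: (i) state the scaling normalization; (ii) invoke Caccioppoli (Proposition~\ref{prop.CAccioppolipara}) plus discrete parabolic Sobolev for the Moser iteration giving the $L^\infty$ bound; (iii) invoke the De Giorgi oscillation lemma and sum over dyadic scales in $[1,L]$ for the $C^{0,\alpha}$ bound; (iv) combine, citing \cite[Appendix B]{GOS} and \cite[Section 3]{DD05} for the discrete implementation.
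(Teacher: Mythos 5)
The paper itself gives no proof of Proposition~\ref{paraNash}; it simply states the result and refers to the standard continuum references and to \cite[Appendix B]{GOS}, \cite[Section 3]{DD05} for the discrete implementation. Your proposal does exactly the same thing, with a correct and somewhat more detailed sketch of the two halves (Moser iteration for the $L^\infty$ bound, De Giorgi oscillation decay for the $C^{0,\alpha}$ seminorm) and a sensible account of where the discreteness matters, so it is consistent with the paper's treatment.
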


\subsubsection{Heat kernel, Nash-Aronson estimate and Duhamel principle}
Let $Q := I \times \Lambda$ be a parabolic cylinder denote by $I = [s , s_+]$, and assume that $\Lambda$ is a box of sidelength $L$. Fix a point $y \in \Lambda$. Given a uniformly environment~$\a$ defined on $Q$, we consider the heat kernel in the box $\Lambda$ with periodic boundary conditions defined by
\begin{equation} \label{def.heatkernelperiodic}
    \left\{ \begin{aligned}
    \partial_t P_{\a}(\cdot , \cdot , s , y) -\nabla \cdot \a \nabla P_{\a} &=  0&~\mbox{in}~& Q, \\
    P_{\a} (s , \cdot, s , y) &=  \delta_y - \frac{1}{\left| \Lambda \right|}&~\mbox{in}~& \Lambda, \\
    P_{\a} (t , \cdot, s , y) &\in ~\Omega_{\Lambda , \mathrm{per}} & ~\mbox{for}~& t \in I,
    \end{aligned} \right.
\end{equation}
where $\delta_y$ denotes the discrete Dirac function taking the value $1$ at $y$ and $0$ everywhere else; the term $1/\left| \Lambda\right|$ ensures that, for any time $t \geq s$, the spatial average of the heat kernel over the box $\Lambda$ is equal to~$0$. We extend the definition to the times $t < s$ by setting $P_\a \left(t , x ; s, y \right) = 0$ in that case. For each constant $C > 0$, we denote by $\Phi_{C, L} : (0 , \infty) \times \Rd \to \R$ the function
\begin{equation} \label{def.phiCL}
    \Phi_{C, L}(t , x) := C (t \vee 1)^{-\frac d2} \exp \left( - \frac{|x - y|}{C t^{\frac12}}\right) \exp \left( - \frac{t}{C L^2} \right). 
\end{equation}
The Nash-Aronson estimate, originally established in the continuous setting in~\cite{Ar}, provides an upper bound on the heat-kernel $P_\a$ in terms of the function $\Phi_{C,L}$. The result stated below in the discrete setting can essentially be deduced from~\cite[Proposition B.3]{GOS} (see Remark~\ref{remark2.6}).

\begin{proposition}[Nash-Aronson estimate] \label{prop.NashAronson}
Let $\a$ be a uniformly elliptic environment defined on $Q$, and let $P_\a$ be the heat kernel with periodic boundary conditions defined in~\eqref{def.heatkernelperiodic}. There exists a constant $C := C(d , c_- , c_+) < \infty$ such that, if $t - s \leq L^2$,
\begin{equation} \label{eq:prop.NashAronson}
    0 \leq P_{\a} (t , x, s , y) + \frac{1}{\left| \Lambda \right|} \leq \Phi_{C, L}(t - s , x - y),
\end{equation}
and, if $t-s \geq L^2$,
\begin{equation*}
    \left| P_{\a} (t , x, s , y) \right| \leq \Phi_{C, L}(t - s , x - y).
\end{equation*}
Additionally, there exists an exponent $\alpha := \alpha (d , c_+, c_-) > 0$ such that
\begin{equation} \label{eq:prop.NashAronsongrad}
   | \vec{\nabla} P_{\a} (t , x, s , y)| \leq (t \vee 1)^{-\alpha} \Phi_{C, L}(t - s , x - y).
\end{equation}
\end{proposition}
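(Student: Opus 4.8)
\emph{Approach.} The plan is to deduce the torus estimate from the classical whole-space Nash--Aronson estimate by periodization, and to produce the exponential-in-time factor from the spectral gap of the discrete torus. Since $\Lambda$ is a discrete torus, the environment $\a$ is canonically a $(2L+1)$-periodic uniformly elliptic environment $\tilde\a$ on all of $\Zd$. Writing $p_{\tilde\a}$ for the associated whole-space heat kernel (existence and uniqueness of $P_\a$ itself being a linear ODE in the finite-dimensional space $\Omega_{\Lambda,\mathrm{per}}^\circ$), one checks by uniqueness that
\begin{equation*}
    P_\a(t,x;s,y) + \tfrac{1}{|\Lambda|} = \sum_{z \in (2L+1)\Zd} p_{\tilde\a}(t,x;s,y+z),
\end{equation*}
which in particular yields the lower bound $P_\a + \frac1{|\Lambda|} \ge 0$. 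It therefore suffices to bound this sum of periodic images.

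\emph{Short times $t-s\le L^2$.} I would invoke the classical whole-space estimate --- the on-diagonal bound $p_{\tilde\a}(t,x;s,w)\le C((t-s)\vee1)^{-d/2}$ from Nash's inequality and Moser iteration, upgraded to the Gaussian off-diagonal factor by Davies' exponential-perturbation method, together with the gradient bound $|\vec{\nabla}_x p_{\tilde\a}(t,x;s,w)|\le C((t-s)\vee1)^{-\alpha}((t-s)\vee1)^{-d/2}\exp(-|x-w|/(C((t-s)\vee1)^{1/2}))$ for some $\alpha(d,c_-,c_+)>0$; in the discrete setting all of this is contained in \cite[Appendix~B]{GOS}. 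When $t-s\le L^2$ one has $(t-s)^{1/2}\le L$, so in the sum over images the nearest image contributes $\Phi_{C,L}(t-s,x-y)$ (with $x-y$ read as the minimal representative), while the remaining images lie at distance $\gtrsim L\ge (t-s)^{1/2}$ and their contributions form a convergent geometric series over $(2L+1)\Zd$ dominated by a constant multiple of the same quantity; the missing factor $e^{-(t-s)/(CL^2)}\ge e^{-1/C}$ is then reinserted for free. The identical summation applied to the gradient kernel gives the gradient estimate in this range.

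\emph{Large times $t-s\ge L^2$.} Here the decay $e^{-(t-s)/(CL^2)}$ comes from the spectral gap. Since $r\mapsto P_\a(r,\cdot;s,y)$ has spatial average zero, the Poincar\'e inequality on the torus, $\|f-(f)_\Lambda\|_{L^2(\Lambda)}\le CL\|\nabla f\|_{L^2(\Lambda)}$, combined with uniform ellipticity and the energy identity $\frac{d}{dr}\|P_\a(r,\cdot;s,y)\|_{L^2}^2=-2\langle\a\nabla P_\a,\nabla P_\a\rangle$, gives $\|P_\a(r,\cdot;s,y)\|_{L^2(\Lambda)}\le e^{-(r-s)/(CL^2)}$, using $\|\delta_y-\frac1{|\Lambda|}\|_{L^2}\le1$. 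Taking $r=t-L^2$, the semigroup identity $P_\a(t,x;s,y)=\sum_w (P_\a+\frac1{|\Lambda|})(t,x;r,w)\,P_\a(r,w;s,y)$ together with Cauchy--Schwarz and the short-time on-diagonal bound over the last window of length $L^2$ (which gives $\|(P_\a+\frac1{|\Lambda|})(t,x;r,\cdot)\|_{L^2(\Lambda)}\le CL^{-d/2}$) yields $|P_\a(t,x;s,y)|\le CL^{-d/2}e^{-(t-s)/(CL^2)}$; the elementary inequality $u^{d/2}e^{-u}\le C_d e^{-u/2}$ with $u=(t-s)/(CL^2)$ turns $L^{-d/2}$ into $((t-s)\vee1)^{-d/2}$, and the spatial Gaussian factor is bounded below by a constant because the torus diameter is $\lesssim L\le (t-s)^{1/2}$. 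For the gradient at times $t-s\gtrsim1$, one transfers the pointwise bound just obtained into a gradient bound by interior regularity: $P_\a$ is $\a$-caloric on $(s,\infty)\times\Lambda$, so Proposition~\ref{paraNash} applied on a parabolic cylinder of radius $\rho\asymp\min((t-s)^{1/2},L)$ with top near $(t,x)$ --- admissible because the torus has no boundary, provided $\rho$ lies between the lattice scale $1$ and $\sim L$ --- bounds $|\vec{\nabla} P_\a(t,x;s,y)|\le C\rho^{-\alpha}\|P_\a\|_{L^\infty}$ over that cylinder, into which one inserts the kernel bounds above (noting that there $t'-s\asymp t-s$ and $|x'-y|\ge|x-y|-O(\rho)$).

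\emph{Main obstacle.} No single step is deep; the work is concentrated in the large-time regime, namely in marrying the spectral-gap $L^2$-decay with the ultracontractive $L^2\to L^\infty$ smoothing so as to land on the precise stated form (polynomial prefactor and exponential factor simultaneously), and in checking that the regularity cylinder in the gradient estimate fits between the lattice scale and the torus scale. Alternatively, the statement may simply be quoted, with only notational changes, from the finite-volume/torus version in \cite[Proposition~B.3]{GOS} --- the shortcut referred to in Remark~\ref{remark2.6}.
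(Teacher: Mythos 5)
The paper does not actually prove this proposition: as Remark~\ref{remark2.6} says, it is quoted from~\cite[Proposition B.3]{GOS} with the claim that the passage from the infinite-volume heat kernel there to the periodic one here ``can be deduced through standard arguments.'' Your proposal therefore does more than the paper — it spells out what those standard arguments are — and the route is sound: periodize the environment to~$\Zd$, identify $P_\a + |\Lambda|^{-1}$ with the sum of images of the whole-space kernel (which yields the nonnegativity and, after summing a geometric series over $(2L+1)\Zd$, the short-time bound and its gradient version), then for $t-s\geq L^2$ combine the Poincar\'e spectral gap with $L^1\times L^\infty$ interpolation of the short-time on-diagonal bound, and finally transfer the pointwise bound to the gradient by applying Proposition~\ref{paraNash} on an interior cylinder of radius~$\rho \asymp L$. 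Two small points to tighten. First, your closing sentence calls \cite[Proposition B.3]{GOS} ``the finite-volume/torus version,'' but Remark~\ref{remark2.6} says the opposite — that result is stated in infinite volume — and your periodization argument is precisely the bridge needed, not an alternative shortcut. Second, in the large-time gradient step the cylinder radius is $\rho\asymp L \leq (t-s)^{1/2}$, so De Giorgi--Nash--Moser delivers only the prefactor $L^{-\alpha}$, which is \emph{larger} than the stated $((t-s)\vee 1)^{-\alpha}$; one recovers the stated form by the same absorption trick you use for the pointwise bound (trading $((t-s)/L^2)^{\alpha/2}$ against the factor $\exp(-(t-s)/(CL^2))$ after enlarging $C$), at the cost of replacing the H\"older exponent by half its value — worth stating explicitly since it is the only arithmetic in the argument that could trip you up.
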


\begin{remark}
The term $1/\left| \Lambda \right|$ in the left-hand side of~\eqref{eq:prop.NashAronson} takes into account the additive factor $1/\left| \Lambda \right|$ in the initial condition of~\eqref{eq:prop.NashAronson}, which itself is added to the definition to ensure that the spatial average of the heat kernel over the box $\Lambda$ is equal to $0$.
\end{remark}

\begin{remark} \label{remark2.6}
The result of~\cite[Proposition B.3]{GOS} is stated for the heat kernel defined in infinite volume (compared to the one considered here which is defined in a box $\Lambda$ and with periodic boundary conditions). The result of Proposition~\ref{prop.NashAronson} can be deduced from the one of~\cite[Proposition B.3]{GOS} through standard arguments.
\end{remark}

The heat-kernel plays a fundamental role in this article, as it can be used to solve parabolic equations with a non-zero right-hand side. The formula is known as Duhamel's principle and is stated below.

\begin{proposition}[Duhamel's principle] \label{propDuhamel}
    Let $f : Q \to \R$ be a piecewise continuous function such that, for any $t \in I$, $\sum_{x \in \Lambda} f(t , x) = 0$. Let $u : Q \to \R$ be the (periodic) solution of the parabolic equation
    \begin{equation} \label{def.functionuperiodic}
    \left\{ \begin{aligned}
    \partial_t u -\nabla \cdot \a \nabla u &=  f &~\mbox{in}~& Q, \\
    u(s , \cdot) &=  0 &~\mbox{in}~& \Lambda, \\
    u &\in ~\Omega_{\Lambda , \mathrm{per}} & ~\mbox{for}~& t \in I,
    \end{aligned} \right.
\end{equation}
then we have the identity, for any $(t , x) \in Q$,
\begin{equation*}
    u(t , x) = \sum_{y \in \Lambda} \int_s^t P_\a(t , x ; s' , y) f(s' , y) \, ds'.
\end{equation*}
\end{proposition}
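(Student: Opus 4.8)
\medskip
\noindent\emph{Idea of proof.} The plan is the classical variation-of-constants argument: take the right-hand side of the claimed identity as a candidate function $v$, check directly that $v$ solves the same initial/boundary value problem as $u$, and then conclude $u=v$ by uniqueness. Since $\Lambda$ is a finite box, \eqref{def.functionuperiodic} is a finite linear ODE system with coefficients measurable in $t$ and piecewise-continuous forcing, so there is a unique periodic solution and it suffices to produce one given by the formula.

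First I would set
\[
v(t,x) := \sum_{y \in \Lambda} \int_s^t P_\a(t,x;s',y)\, f(s',y)\, ds',\qquad (t,x)\in Q,
\]
and record the easy facts: $v(s,\cdot)\equiv 0$; $v(t,\cdot)\in\Omega_{\Lambda,\mathrm{per}}$ for all $t\in I$, since each $P_\a(t,\cdot;s',y)$ is periodic; and $\sum_{x\in\Lambda}v(t,x)=0$, since $\sum_x P_\a(t,x;s',y)=0$. The main computation is then the $t$-derivative. By the Leibniz rule,
\[
\partial_t v(t,x) = \sum_{y\in\Lambda} P_\a(t,x;t,y)\,f(t,y) \;+\; \sum_{y\in\Lambda}\int_s^t \partial_t P_\a(t,x;s',y)\, f(s',y)\, ds',
\]
for a.e.\ $t\in I$ (equivalently, at continuity points of $t\mapsto f(t,\cdot)$). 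The boundary term is handled by the initial condition in \eqref{def.heatkernelperiodic}, which gives $P_\a(t,x;t,y)=\delta_y(x)-|\Lambda|^{-1}$, so that term equals $f(t,x)-|\Lambda|^{-1}\sum_{y}f(t,y)=f(t,x)$; note this is exactly where the hypothesis $\sum_{x\in\Lambda}f(t,x)=0$ enters. In the remaining term I would use that $P_\a$ solves \eqref{def.heatkernelperiodic} in its first pair of variables, that is $\partial_t P_\a(t,\cdot;s',y)=\nabla\cdot\a\nabla P_\a(t,\cdot;s',y)$, and that the finite-range linear operator $\nabla\cdot\a\nabla$ in the $x$-variable commutes with $\sum_y\int_s^t$; this identifies the term with $\nabla\cdot\a\nabla v(t,x)$. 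Hence $\partial_t v-\nabla\cdot\a\nabla v=f$ in $Q$, so $v$ solves \eqref{def.functionuperiodic}.

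Finally, to conclude $u=v$ I would invoke uniqueness for \eqref{def.functionuperiodic}, which follows from a one-line energy estimate: the difference $w:=u-v$ is $\a$-caloric with $w(s,\cdot)=0$ and $w(t,\cdot)$ periodic, and discrete summation by parts (no boundary contribution, by periodicity) gives $\tfrac{d}{dt}\sum_{x\in\Lambda}w(t,x)^2=-2\sum_{e\in E(\Lambda)}\a(t,e)\,|\nabla w(t,e)|^2\le 0$, so $\sum_x w(t,x)^2\equiv 0$ for $t\ge s$. The only point requiring care is the justification of differentiation under the integral and of the Leibniz rule above, given that $f$ is merely piecewise continuous and $\a$ merely measurable in $t$; I expect this to be the main (though minor) technical obstacle, and it is resolved using the Nash--Aronson bound of Proposition~\ref{prop.NashAronson}, which dominates $P_\a$ and, via \eqref{eq:prop.NashAronsongrad} together with the equation, also $\partial_t P_\a$, uniformly on the compact time interval $I$ (in particular the $((t-s')\vee 1)^{-d/2}$ prefactor in $\Phi_{C,L}$ stays bounded as $s'\to t$, so there is no integrability issue near the diagonal).
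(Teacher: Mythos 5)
Your proposal is correct and follows essentially the same variation-of-constants argument as the paper: define the candidate function, differentiate under the integral, identify the boundary term with $f(t,x)$ via the heat-kernel initial condition and the mean-zero hypothesis on $f$, identify the integral term with $\nabla\cdot\a\nabla$ of the candidate, and conclude by uniqueness. The only differences are cosmetic elaborations the paper leaves implicit (the explicit energy estimate for uniqueness, and the Nash--Aronson justification of the Leibniz rule for merely piecewise-continuous $f$).
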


\begin{proof}
    For $(t , x) \in Q$, we denote by $w(t , x) := \sum_{y \in \Lambda} \int_s^t P_\a(t , x ; s' , y) f(s' , y) \, ds'$. We will prove that the map $w$ solves the equation~\eqref{def.functionuperiodic}. Differentiating the function $w$ with respect to time, we obtain
    \begin{equation} \label{eq:14390510}
        \partial_t w(t , x) = \sum_{y \in \Lambda} P_\a(t , x ; t , y) f(t , y) +  \sum_{y \in \Lambda} \int_s^t \partial_t P_\a(t , x ; s' , y) f(s' , y) \, ds'.
    \end{equation}
    For the first term in the right-hand side, we use the definition~\eqref{def.heatkernelperiodic} of the heat kernel and write
    \begin{equation*}
       \sum_{y \in \Lambda} P_\a(t , x ; t , y) f(t , y) = f(t , x) - \sum_{y \in \Lambda} \frac{f(t, y)}{\left| \Lambda_L \right|} =  f(t , x).
    \end{equation*}
    For the second term in the right-hand side of~\eqref{eq:14390510}, we use the definition~\eqref{def.heatkernelperiodic} a second time and write
    \begin{align*}
         \sum_{y \in \Lambda} \int_s^t \partial_t P_\a(t , x ; s' , y) f(s' , y) \, ds' & = \sum_{y \in \Lambda} \int_s^t \nabla \cdot \a \nabla P_\a(t , x ; s' , y) f(s' , y) \, ds'  \\
         & =  \nabla \cdot \a \nabla \left( \sum_{y \in \Lambda}  \int_s^t  P_\a(t , x ; s' , y) f(s' , y) \, ds' \right) \\
         & = \nabla \cdot \a \nabla w(t , x).
    \end{align*}
    A combination of the three previous displays implies that the map $w$ solves the parabolic equation
    \begin{equation*}
        \partial_t w - \nabla \cdot \a \nabla w = f ~~\mbox{in}~~Q.
    \end{equation*}
    Additionally, the definition of $w$ implies that $w(s , \cdot) = 0$. These two observations imply that $u$ and $w$ are both solutions of the parabolic equation~\eqref{def.functionuperiodic}, and are thus equal.
\end{proof}

\subsection{Gaussian concentration inequality}

In this section, we record Gaussian concentration inequality~\cite{borell1975brunn, sudakov1978extremal} (see also~\cite[Theorem 5.6]{boucheron2013concentration}). The result is used to estimate the fluctuations of the first-order corrector in Section~\ref{section.BLEF} and to estimate the $\underline{H}^{-1}_{\mathrm{par}}$-norm of the flux of the corrector in Section~\ref{sec.fluxcorr}.

\begin{proposition}[Gaussian concentration inequality~\cite{borell1975brunn, sudakov1978extremal}] \label{propEfronstein}
Let $X_1 , \ldots, X_n$ be independent Gaussian random variables with expectation $0$ and variance $1$. Let $F : \R^n \to \R$ be a 1-Lipschitz function (i.e., $|F(x) - F(y)| \leq |x - y|$ for all $x , y \in \R^n$ where we used the Euclidian metric on $\R^n$), then there exists an absolute constant $C < \infty$ such that
\begin{equation*}
    \left|  F - \E \left[ F \right] \right| \leq \mathcal{O}_2(C).
\end{equation*}
\end{proposition}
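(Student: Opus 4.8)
The statement is the classical Gaussian concentration inequality, and the plan is to prove it by the Maurey--Pisier interpolation (smoothing) argument, which is self-contained and yields an explicit absolute constant. We first reduce to the case of a smooth function: given a $1$-Lipschitz $F$, replace it by $F_\delta := F \star \gamma_\delta$ where $\gamma_\delta$ is an isotropic Gaussian mollifier on $\R^n$ of width $\delta$. Then $F_\delta$ is smooth, still satisfies $|\nabla F_\delta| \le 1$ pointwise, and $F_\delta \to F$ uniformly on compact sets with $\E[F_\delta] \to \E[F]$ (all Gaussian moments of $|X_1|+\cdots+|X_n|$ are finite, so these quantities are well defined and the convergences are justified by dominated convergence); the final Orlicz-norm bound then transfers to $F$ by Fatou's lemma. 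So from now on we may assume $F$ is smooth with $\|\nabla F\|_{L^\infty} \le 1$.

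Let $X, Y$ be two independent standard Gaussian vectors in $\R^n$, and for $\theta \in [0, \pi/2]$ set $X_\theta := X\sin\theta + Y\cos\theta$ and $X_\theta' := \tfrac{d}{d\theta} X_\theta = X\cos\theta - Y\sin\theta$. For each $\theta$ the vector $X_\theta$ is standard Gaussian, and, conditionally on $X_\theta$, the vector $X_\theta'$ is standard Gaussian and independent of $X_\theta$. Writing
\begin{equation*}
    F(X) - F(Y) = \int_0^{\pi/2} \nabla F(X_\theta) \cdot X_\theta' \, d\theta
\end{equation*}
and applying Jensen's inequality with respect to the probability measure $\tfrac{2}{\pi}\,d\theta$ on $[0,\pi/2]$, one gets, for every $\lambda \in \R$,
\begin{equation*}
    \E\!\left[ e^{\lambda(F(X)-F(Y))}\right]
    \le \frac{2}{\pi}\int_0^{\pi/2} \E\!\left[ \exp\!\left( \tfrac{\pi\lambda}{2}\, \nabla F(X_\theta) \cdot X_\theta' \right) \right] d\theta.
\end{equation*}
Conditioning on $X_\theta$, the quantity $\nabla F(X_\theta)\cdot X_\theta'$ is a centered Gaussian with variance $|\nabla F(X_\theta)|^2 \le 1$, so the inner expectation is bounded by $e^{\pi^2\lambda^2/8}$; hence $\E[e^{\lambda(F(X)-F(Y))}] \le e^{\pi^2\lambda^2/8}$. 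By Jensen's inequality conditionally on $X$ (using $\E[F(Y)] = \E[F]$ and convexity of $y\mapsto e^{-\lambda y}$), this gives $\E[e^{\lambda(F-\E F)}] \le e^{\pi^2\lambda^2/8}$ for all $\lambda \in \R$. A Chernoff bound optimized over $\lambda$ then yields $\mathbb{P}(F - \E F \ge t) \le e^{-2t^2/\pi^2}$, and applying the same to $-F$ gives the sub-Gaussian tail
\begin{equation*}
    \mathbb{P}\bigl(|F - \E F| \ge t\bigr) \le 2\,e^{-2t^2/\pi^2}, \qquad t \ge 0.
\end{equation*}

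Finally, I would convert this tail bound into the required Orlicz estimate. For a nonnegative random variable $Z$ with $\mathbb{P}(Z \ge t) \le 2e^{-\beta t^2}$ and any $K$ with $K^2\beta > 1$, the layer-cake formula gives
\begin{equation*}
    \E\!\left[ e^{Z^2/K^2}\right] = 1 + \int_0^\infty \frac{2t}{K^2}\, e^{t^2/K^2}\, \mathbb{P}(Z \ge t)\, dt \le 1 + \frac{4}{K^2}\int_0^\infty t\, e^{-(\beta - 1/K^2)t^2}\, dt = 1 + \frac{2}{K^2\beta - 1},
\end{equation*}
which is at most $2$ as soon as $K^2\beta \ge 3$. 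Taking $Z = |F - \E F|$, $\beta = 2/\pi^2$, and $C := \pi\sqrt{3/2}$ (an absolute constant) yields $\E[\exp((|F-\E F|/C)^2)] \le 2$, i.e. $|F - \E F| \le \mathcal{O}_2(C)$, as claimed. The only step requiring genuine care is the interpolation computation above — in particular the joint-Gaussianity and conditional independence of $(X_\theta, X_\theta')$, which is what makes the inner Gaussian Laplace-transform bound clean; the reduction to smooth $F$ and the tail-to-Orlicz conversion are routine.
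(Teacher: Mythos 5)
Your proof is correct, and it is worth noting that the paper does not actually prove Proposition~\ref{propEfronstein}: it records the Gaussian concentration inequality as a classical result with citations to Borell, Sudakov--Tsirelson, and~\cite[Theorem~5.6]{boucheron2013concentration}. Your argument is the Maurey--Pisier ``smart path'' interpolation proof, which is self-contained and elementary: the joint Gaussianity of $(X_\theta, X_\theta')$ together with their pointwise independence is exactly what linearizes the conditional Laplace transform, and the chain through the $\frac{2}{\pi}\,d\theta$--Jensen step, the Chernoff bound with constant $2/\pi^2$, and the layer-cake conversion to the $\mathcal{O}_2$ Orlicz bound with $C=\pi\sqrt{3/2}$ is all carried out correctly (including the smoothing reduction, where $|\nabla F_\delta|\le 1$ pointwise and $F_\delta\to F$ uniformly justify the passage by Fatou). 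Compared with the isoperimetric route in the cited references, which yields the sharp Gaussian tail $\mathbb{P}(F-\E F\ge t)\le e^{-t^2/2}$, the interpolation argument gives a slightly worse constant ($e^{-2t^2/\pi^2}$) but avoids Gaussian isoperimetry entirely; since the paper only needs the Orlicz estimate $\mathcal{O}_2(C)$ with an unspecified absolute constant, your version is more than sufficient for every use of the proposition in Sections~\ref{sec.section2} and~\ref{eq:quanthydrolim2sc}.
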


\subsection{Multiscale Poincar\'e inequality}

The multiscale Poincar\'e inequality will be used to estimate the weak $\underline{H}^{-1}_{\mathrm{par}}$-norm of a function in terms of its space-time averages over different scales. We refer to~\cite{AKM} for the result in the elliptic setting. In the parabolic setting, the proof can be found in~\cite[Proposition 3.6]{ABM}.

\begin{proposition}[Multiscale Poincar\'e inequality, Proposition 3.6 of~\cite{ABM}] \label{prop:multscPoinc}
There exists a constant $C < \infty$ depending on $d$ such that, for every $m \geq 1$ and every function $f \in L^2 \left( Q_{3^m}\right)$,
\begin{equation*}
    \left\| f \right\|_{\underline{H}^{-1}_{\mathrm{par}} (Q_{3^m})} \leq C \left\| f \right\|_{\underline{L}^{2} (Q_{3^m})} + C \sum_{k = 0}^m 3^{k}  \left( \frac{1}{|\mathcal{Z}_{k , m}|} \sum_{z \in \mathcal{Z}_{k, m}} \left| (f)_{z + Q_{3^k}} \right|^2 \right)^{\frac 12}.
\end{equation*} 
\end{proposition}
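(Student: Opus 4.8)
Since $\|\cdot\|_{\underline{H}^{-1}_{\mathrm{par}}(Q_{3^m})}$ is defined by duality, it suffices to estimate $\tfrac{1}{|Q_{3^m}|}\langle f,g\rangle$, where $\langle f,g\rangle:=\int_I\sum_x f(t,x)\,g(t,x)\,dt$ and $I$ is the time-interval of $Q_{3^m}$, uniformly over $g\in W^{1,2}_{\mathrm{par},\sqcup}(Q_{3^m})$ with $\|g\|_{\underline{H}^1_{\mathrm{par}}(Q_{3^m})}\le 1$. The plan is to separate the spatial and temporal roughness of $g$, reducing the spatial part to the elliptic multiscale Poincar\'e inequality of~\cite{AKMbook} and handling the temporal part by a telescoping over the parabolic time-scales $3^{2j}$. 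To this end, decompose $g=g_1+g_2$, where $g_1(x):=\fint_I g(t,x)\,dt$ is a function of $x$ alone and $g_2:=g-g_1$ has vanishing time-average at every $x$. By Jensen's inequality in $t$ one has $g_1\in W^{1,2}_0(\Lambda_{3^m})$ with $\|g_1\|_{\underline{W}^{1,2}(\Lambda_{3^m})}\le C\|g\|_{\underline{H}^1_{\mathrm{par}}}\le C$, while $g_2$ still satisfies $\|\nabla g_2\|_{\underline{L}^2(Q_{3^m})}+\tfrac1{3^m}\|g_2\|_{\underline{L}^2(Q_{3^m})}+\|\partial_t g_2\|_{\underline{L}^2(I;\underline{W}^{-1}(\Lambda_{3^m}))}\le C$.

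The pairing of $f$ against $g_1$ equals $\tfrac{1}{|\Lambda_{3^m}|}\sum_x\overline f(x)\,g_1(x)$ with $\overline f(x):=\fint_I f(t,x)\,dt$, hence is at most $C\,\|\overline f\|_{\underline{H}^{-1}(\Lambda_{3^m})}$. Applying the elliptic multiscale Poincar\'e inequality of~\cite{AKMbook} to $\overline f$ on $\Lambda_{3^m}$ bounds $\|\overline f\|_{\underline{H}^{-1}(\Lambda_{3^m})}$ by $C\|\overline f\|_{\underline{L}^2(\Lambda_{3^m})}$ plus $C\sum_{k=0}^m 3^k$ times the quadratic average, over the triadic spatial sub-boxes $B$ of side $3^k$, of $|(\overline f)_B|^2$. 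Since $\|\overline f\|_{\underline{L}^2(\Lambda_{3^m})}\le\|f\|_{\underline{L}^2(Q_{3^m})}$, and since by Jensen in $t$ the spatial average $|(\overline f)_B|^2$ is at most the average of $|(f)_{z+Q_{3^k}}|^2$ over the parabolic cells $z\in\mathcal{Z}_{k,m}$ with spatial footprint $B$, the $k$-th term is controlled by $3^k\bigl(\tfrac{1}{|\mathcal{Z}_{k,m}|}\sum_{z\in\mathcal{Z}_{k,m}}|(f)_{z+Q_{3^k}}|^2\bigr)^{1/2}$, so the whole $g_1$-contribution is dominated by the right-hand side of the statement.

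The term $\langle f,g_2\rangle$ carries the genuine difficulty. The idea is to telescope over the parabolic time-scales: letting $[g_2]^{(j)}$ be the approximation of $g_2$ that is constant in time on each length-$3^{2j}$ interval, equal there to the corresponding time-average of $g_2$ (the spatial variable being untouched), one has $[g_2]^{(m)}=0$, whence
\begin{equation*}
g_2=\bigl(g_2-[g_2]^{(0)}\bigr)+\sum_{j=0}^{m-1}\bigl([g_2]^{(j)}-[g_2]^{(j+1)}\bigr).
\end{equation*}
Pairing each piece against $f$ turns the pairing into one of a difference of time-averages of $g_2$ against the corresponding time-average $\fint_J f(t,\cdot)\,dt$ of $f$ over an interval $J$; one controls the size of the $g_2$-pieces by first invoking the one-dimensional Poincar\'e inequality on time-intervals in the $\underline{W}^{-1}(\Lambda_{3^m})$-valued sense---so that the cost is only $\|\partial_t g_2\|_{\underline{L}^2(I;\underline{W}^{-1}(\Lambda_{3^m}))}\le C$---and then the interpolation estimate $\|h\|_{\underline{L}^2(\Lambda)}\le C\|h\|_{\underline{H}^{-1}(\Lambda)}^{1/2}\|h\|_{\underline{H}^1(\Lambda)}^{1/2}$, which converts the weak-in-space control of $\partial_t g_2$ into genuine $\underline{L}^2$ control (the strong factor being absorbed by the bounds on $\nabla g_2$ and on $g_2$). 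Decomposing in turn the time-averages of $f$ over the spatial triadic scales via the elliptic multiscale Poincar\'e inequality expresses everything through averages of $f$ over space-time cylinders. The remaining---and, I expect, principal---technical point is then to organize the resulting double sum over temporal and spatial scales so that it collapses onto the single sum over the parabolic scales $3^k$ in the statement without a logarithmic loss, the off-diagonal (mismatched-scale) contributions being summed by means of the fact that the average of $f$ over a cylinder is dominated by the average of $|(f)_{z+Q}|^2$ over the parabolic cells $z$ at the smaller of the cylinder's two scales. By contrast, the more naive approach---telescoping directly over the parabolic cells $z+Q_{3^k}$ and applying a localized parabolic Poincar\'e inequality---does not close, because the spatially dual norm $\|\partial_t g\|_{\underline{L}^2(I;\underline{W}^{-1})}$ is not subadditive under refinement of the partition (localizing a negative-Sobolev norm can increase it); performing the temporal averaging \emph{before} any spatial localization is precisely what circumvents this obstruction.
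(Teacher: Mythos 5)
First, note that the paper does not prove this proposition at all: it is quoted verbatim from~\cite[Proposition 3.6]{ABM}, so there is no in-paper argument to compare against. Judged on its own terms, your proposal contains a genuine gap, and you have in fact flagged it yourself: the "principal technical point" of collapsing the double sum over temporal scales $3^{2j}$ and spatial scales $3^k$ onto the single sum over parabolic scales is not carried out, and it is not merely bookkeeping. Tracking the natural bounds, the piece of $g_2$ at temporal scale $j$ satisfies (after the vector-valued Poincar\'e and interpolation steps you describe) $\bigl(\sum_J \tfrac{|J|}{|I|}\|h_J\|_{\underline{L}^2(\Lambda_{3^m})}^2\bigr)^{1/2}\lesssim 3^{j}$ and $\bigl(\sum_J \tfrac{|J|}{|I|}\|\nabla h_J\|_{\underline{L}^2}^2\bigr)^{1/2}\lesssim 1$, while the spatial-scale-$k$ average of $\fint_J f$ is controlled only by the parabolic-cell averages at scale $\min(j,k)$. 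Pairing these, every pair $(j,k)$ with $\min(j,k)=l$ contributes a quantity of order $3^{l}\bigl(\tfrac{1}{|\mathcal{Z}_{l,m}|}\sum_z|(f)_{z+Q_{3^l}}|^2\bigr)^{1/2}$ with no geometric decay in $|j-k|$, so the double sum produces $\sum_l (m-l)\,3^l(\cdots)^{1/2}$ rather than $\sum_l 3^l(\cdots)^{1/2}$ (and similarly the zeroth-order term $\|f\|_{\underline{L}^2}$ is picked up once per temporal scale, giving $m\,\|f\|_{\underline{L}^2}$). A logarithmic loss of this kind is not what the proposition asserts, and nothing in your sketch supplies the off-diagonal decay needed to remove it.

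Moreover, the obstruction that motivates your detour is not actually an obstruction. You reject the direct telescoping over parabolic cells on the grounds that $\|\partial_t g\|_{\underline{L}^2(I;\underline{W}^{-1}(\Lambda_{3^m}))}$ does not localize subadditively. The standard remedy---and the one used in~\cite{ABM}---is to first represent $\partial_t g=\nabla\cdot\mathbf{G}+g^*$ with $\mathbf{G}\in L^2(Q_{3^m})$ and $\|\mathbf{G}\|_{\underline{L}^2}\lesssim\|\partial_t g\|_{\underline{L}^2(I;\underline{W}^{-1})}$ (this is exactly the identification recorded in Lemma~\ref{lem.idenH-1par}). Once this is done, the variation in time of the spatial average of $g$ over a box of side $3^k$ during a time interval of length $3^{2k}$ is bounded, via a cutoff with gradient of order $3^{-k}$, by $3^{k}$ times a \emph{local} $L^2$ average of $\mathbf{G}$, and these local quantities are square-summable over the partition $\mathcal{Z}_{k,m}$ because $\mathbf{G}$ is a genuine $L^2$ vector field. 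The difference $(g)_{z+Q_{3^k}}-(g)_{z'+Q_{3^{k+1}}}$ is then controlled at the correct rate $3^k$ with square-summable constants, and the single telescoping over parabolic cells closes without any double sum. I would recommend either reproducing that argument or simply citing~\cite[Proposition 3.6]{ABM}, as the paper does.
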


The multiscale Poincar\'e inequality can be combined with the following statement, for which we refer to~\cite[Proposition 3.7]{ABM} (the periodic version stated below can be deduced from their statement).

\begin{proposition}[Proposition 3.7 of~\cite{ABM}] \label{multscalepoincrealpar}
There exists a constant $C < \infty$ depending on $d$ such that, for every $L \in \N$, every $u \in H^{1}(Q_{L})$ with $u(t , \cdot) \in \Omega_{\Lambda_L, \mathrm{per}}$ for any $t \in I_L$ and satisfying $(u)_{Q_L} = 0$, and for every $\g \in L^2(Q_{L})$ satisfying $\partial_t u = \nabla \cdot \g$ , one has the estimate
\begin{equation*}
    \left\| u \right\|_{\underline{L}^2 \left( Q_L \right)} \leq C \left(  \left\| \nabla u \right\|_{\underline{H}^{-1}_{\mathrm{par}} \left( Q_{L} \right)}  +  \left\| \g \right\|_{\underline{H}^{-1}_{\mathrm{par}}(Q_{L}) }  \right).
\end{equation*}
\end{proposition}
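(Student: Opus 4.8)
The plan is to combine the multiscale Poincar\'e inequality (Proposition~\ref{prop:multscPoinc}) with a duality argument that exploits the structure $\partial_t u = \nabla \cdot \g$. The key observation is that, since $(u)_{Q_L} = 0$, one can test the equation for $u$ against itself: for any $v \in H^1_{\mathrm{par},\sqcup}(Q_L)$, an integration by parts in space and time against $\partial_t u = \nabla \cdot \g$ expresses $\frac{1}{|Q_L|}\int_{I_L}\sum_{x\in\Lambda_L} u(t,x)v(t,x)\,dt$ in terms of $\nabla u$, $\g$, and $\nabla v$, $\partial_t v$. This is how one controls the $\underline{H}^{-1}_{\mathrm{par}}$-type quantities; but to get an $\underline{L}^2$ bound I would instead directly estimate $\|u\|_{\underline{L}^2(Q_L)}$ by applying Proposition~\ref{prop:multscPoinc} with $m$ chosen so that $Q_{3^m}\approx Q_L$ (handling the cosmetic mismatch between $\Lambda_L$ and $\Lambda_{3^m}$ and the periodic setting by a covering/rescaling argument), which reduces the task to bounding the scale-$3^k$ spatial-time averages $(u)_{z+Q_{3^k}}$.

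First I would reduce to controlling, for each scale $k$ and each $z\in\mathcal{Z}_{k,m}$, the local average $(u)_{z+Q_{3^k}}$. Here is where $\partial_t u = \nabla \cdot \g$ enters: one writes $(u)_{z+Q_{3^k}} = (u)_{z+Q_{3^k}} - (u)_{Q_L}$ and controls the increment of averages between successive scales by a telescoping sum. The difference between the average of $u$ over a cube and over its parent cube can be bounded, via a Poincar\'e inequality on the larger cube, by $3^k \|\nabla u\|_{\underline{L}^2}$-type quantities at scale $3^k$ — but this just reproduces a gradient bound and is not enough. Instead, the sharper route is: the function $u$ solves a parabolic-type relation, so one should integrate against a suitable test function to relate spatial averages of $u$ directly to weak norms of $\nabla u$ and $\g$. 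Concretely, for a fixed cube $z+Q_{3^k}$, choose a test function $v$ supported near $z+Q_{3^k}$, equal to a constant in the bulk, with $\|\nabla v\|_{\underline{L}^2} \lesssim 3^{-k}$ and $\|\partial_t v\|_{\underline{W}^{-1,2}}$ comparably small; testing $\partial_t u = \nabla\cdot\g$ against $v$ yields $|(u)_{z+Q_{3^k}}| \lesssim 3^{k}\big(\|\nabla u\|_{\text{(weak norm on }3(z+Q_{3^k}))} + \|\g\|_{\text{(weak norm)}}\big)$. Summing the squares over $z\in\mathcal{Z}_{k,m}$ and using that the $\underline{H}^{-1}_{\mathrm{par}}$-norm dominates the $\ell^2$-aggregate of local averages of $\nabla u$ and $\g$ at each scale (this is essentially the dual form of Proposition~\ref{prop:multscPoinc} applied to $\nabla u$ and to $\g$), the geometric sum $\sum_k 3^{-k}\cdot 3^{k} = \sum_k 1$ over the $O(\log L)$ scales would be too lossy, so one must instead keep the scale-dependent weights and recognize the resulting double sum as precisely $\|\nabla u\|_{\underline{H}^{-1}_{\mathrm{par}}(Q_L)} + \|\g\|_{\underline{H}^{-1}_{\mathrm{par}}(Q_L)}$, up to the constant $C$.

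The cleanest implementation, which I would ultimately adopt, is: apply Proposition~\ref{prop:multscPoinc} to $u$ itself; for the local-average term, note $(u)_{z+Q_{3^k}}$ is itself a spatial-and-temporal average, and use the identity $\partial_t u = \nabla\cdot\g$ together with the fact that averaging $u$ against the indicator of $z+Q_{3^k}$ can be rewritten, after solving an auxiliary cell problem $-\Delta \zeta = \indc_{z+\Lambda_{3^k}} - \text{const}$ in space, as $\langle \nabla u, \nabla\zeta\rangle$ plus a boundary-in-time term handled by $\g$; this produces exactly the weak-norm quantities whose multiscale aggregation is $\|\nabla u\|_{\underline{H}^{-1}_{\mathrm{par}}} + \|\g\|_{\underline{H}^{-1}_{\mathrm{par}}}$. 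The main obstacle I anticipate is the bookkeeping of the parabolic weak norm: the time derivative $\partial_t u$ is only controlled in the negative-order space $\underline{W}^{-1,2}$, so the test functions used at each scale must be chosen with matched parabolic scaling (time $\sim 3^{2k}$, space $\sim 3^k$), and one must verify that the auxiliary potentials $\zeta$ and the cutoffs have the correct $\underline{W}^{1,2}_{\mathrm{par}}$-norms so that the duality pairing genuinely lands in $\underline{H}^{-1}_{\mathrm{par}}(Q_L)$ rather than a larger norm; ensuring the constant stays dimension-dependent only (and uniform in $L$), and that the periodic boundary conditions on $u(t,\cdot)$ cause no loss, is the delicate part. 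Everything else — the Caccioppoli-type estimates, the geometric summation, the reduction $m\leftrightarrow L$ — is routine given the tools already assembled in this section.
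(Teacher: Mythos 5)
Your central problem is the direction of Proposition~\ref{prop:multscPoinc}. That inequality bounds the weak norm $\left\| f \right\|_{\underline{H}^{-1}_{\mathrm{par}}(Q_{3^m})}$ from \emph{above} by $\left\| f \right\|_{\underline{L}^2(Q_{3^m})}$ together with multiscale averages of $f$; the $\underline{L}^2$-norm therefore lives on the larger side of that inequality. Applying it to $u$ yields something of the form $\left\| u \right\|_{\underline{H}^{-1}_{\mathrm{par}}} \lesssim \left\| u \right\|_{\underline{L}^2} + \cdots$, which cannot be rearranged to produce an upper bound on $\left\| u \right\|_{\underline{L}^2(Q_L)}$: the quantity you are trying to control is already sitting on the right-hand side. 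Your assertion that applying Proposition~\ref{prop:multscPoinc} ``reduces the task to bounding the scale-$3^k$ spatial-time averages $(u)_{z+Q_{3^k}}$'' is therefore incorrect, and the gap is not cosmetic. Even in principle, local averages of $u$ over parabolic cubes at all dyadic scales do not control $\left\| u \right\|_{\underline{L}^2}$: a highly oscillatory mean-zero $u$ has essentially vanishing local averages at every scale larger than the oscillation length while $\left\| u \right\|_{\underline{L}^2}$ is of order one. No weighted aggregation of local averages of $u$ alone recovers the $L^2$-norm, so the multiscale outer loop is simply the wrong architecture for this estimate.

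The usable fragments of your plan are the duality pairing and the auxiliary elliptic problem, but they must be deployed globally rather than scale by scale, and you need a fact you never invoke: since $\partial_t u = \nabla \cdot \g$ on the periodic box, summing in $x$ gives $\frac{d}{dt}\sum_{x \in \Lambda_L} u(t,x) = 0$, so $(u)_{Q_L} = 0$ forces $u(t,\cdot)$ to have vanishing spatial mean for every $t \in I_L$, not merely on average. One can then solve, for each $t$, the periodic elliptic problem $-\Delta \phi(t,\cdot) = u(t,\cdot)$ with $\phi(t,\cdot) \in \Omega_{\Lambda_L,\mathrm{per}}^\circ$ and use the global identity
\begin{equation*}
\left\| u \right\|_{L^2(Q_L)}^2 = \int_{I_L} \sum_{e \in E(\Lambda_L)} \nabla u(t,e)\, \nabla \phi(t,e)\, dt\,.
\end{equation*}
The entire content of the proof is then to verify that a suitable modification of $\nabla \phi$ (accounting for the vanishing-at-initial-time constraint built into $W^{1,2}_{\mathrm{par},\sqcup}(Q_L)$, a constraint your plan never addresses) lies in $W^{1,2}_{\mathrm{par},\sqcup}(Q_L)$ with $\underline{W}^{1,2}_{\mathrm{par}}$-norm controlled by $\left\| u \right\|_{\underline{L}^2}$ plus a term involving $\g$: the spatial parts follow from elliptic regularity and give a factor of $\left\| u \right\|_{\underline{L}^2}$ that can be absorbed, while the temporal part uses $-\Delta \partial_t \phi = \nabla \cdot \g$ and is exactly where the $\g$-contribution appears. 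Your proposal contains pieces of this (the ``auxiliary cell problem,'' the ``boundary-in-time term handled by $\g$'') but embeds them in the wrong outer structure, so as written the argument does not close.
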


The final statement of this section describes more explicitly the structure of the set $\underline{H}^{-1}_{\mathrm{par}} \left( Q_L \right)$. The proof of the result can be found in~\cite[Lemma 3.11]{ABM}.

\begin{lemma}[Identification of $\underline{H}^{-1}_{\mathrm{par}}(Q_L)$] \label{lem.idenH-1par}
There exists a constant $C := C(d) < \infty$ such that, for any $f \in L^{2}(Q_L) $, there exists $(w , w^*) \in L^2(I_L ; H^1_0(\Lambda_L)) \times L^2(I_L ; H^{-1}(\Lambda_L))$ such that
\begin{equation*}
    f = \partial_t w  + w^*,
\end{equation*}
with the upper bound
\begin{equation*}
    \left\| w \right\|_{\underline{L}^2(I_L ; \underline{H}^1(\Lambda_L))} + \left\| w^* \right\|_{\underline{L}^2(I_L ; \underline{H}^{-1}(\Lambda_L))} \leq C \left\| f \right\|_{\underline{H}^{-1}_{\mathrm{par}}(Q_L)}.
\end{equation*}
We can additionally assume that, for any $x \in \Lambda_L$, the map $t \mapsto w(t , x)$ is continuous in the closed interval $[-L^2 , 0]$ and satisfies $w(-L^2, x) = w(0 , x) = 0$ for any $x \in \Lambda_L$. 
\end{lemma}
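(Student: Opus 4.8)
The plan is to realize $\underline{H}^{-1}_{\mathrm{par}}(Q_L)$ as a dual Hilbert space and invoke the Riesz representation theorem. First, note that $\| \cdot \|_{\underline{W}^{1,2}_{\mathrm{par}}(Q_L)}$ is equivalent, with constants depending only on $d$, to the Hilbertian norm on $\mathcal{H} := W^{1,2}_{\mathrm{par},\sqcup}(Q_L)$ induced by the scalar product
\begin{equation*}
    ( u , v )_{\mathcal{H}} := \frac{1}{L^2}\,( u , v )_{\underline{L}^2(Q_L)} + ( \nabla u , \nabla v )_{\underline{L}^2(Q_L)} + \frac{1}{|I_L|}\int_{I_L} \bigl( \partial_t u(t , \cdot) , \partial_t v(t , \cdot) \bigr)_{\underline{H}^{-1}(\Lambda_L)} \, dt ,
\end{equation*}
where $( \cdot , \cdot )_{\underline{H}^{-1}(\Lambda_L)}$ is the scalar product generating the $\underline{H}^{-1}(\Lambda_L)$-norm; write $\mathcal{R}_L : H^{-1}(\Lambda_L) \to H^1_0(\Lambda_L)$ for the symmetric positive operator on the finite box realizing it (essentially an inverse discrete Laplacian with Dirichlet data), so that $( g , h )_{\underline{H}^{-1}(\Lambda_L)} = |\Lambda_L|^{-1}\sum_{x}g(x)(\mathcal{R}_L h)(x)$ and $\| \mathcal{R}_L g \|_{\underline{H}^1(\Lambda_L)} \simeq \| g \|_{\underline{H}^{-1}(\Lambda_L)}$. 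By the very definition of $\| \cdot \|_{\underline{H}^{-1}_{\mathrm{par}}(Q_L)}$, the linear functional $v \mapsto |Q_L|^{-1}\int_{I_L}\sum_{x}f v\,dt$ is bounded on $\mathcal H$ with norm $\| f \|_{\underline{H}^{-1}_{\mathrm{par}}(Q_L)}$; I would let $u \in \mathcal H$ be its Riesz representative, so that $\| u \|_{\underline{W}^{1,2}_{\mathrm{par}}(Q_L)} \lesssim \| f \|_{\underline{H}^{-1}_{\mathrm{par}}(Q_L)}$ and $( u , v )_{\mathcal H} = |Q_L|^{-1}\int_{I_L}\sum_x f v\,dt$ for all $v \in \mathcal H$.

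The decomposition is then read off from $u$: set $w^* := L^{-2}u - \Delta u$, an element of $L^2(I_L ; H^{-1}(\Lambda_L))$, and $w_0 := -\mathcal{R}_L(\partial_t u) = -\partial_t(\mathcal{R}_L u) \in L^2(I_L ; H^1_0(\Lambda_L))$; the three terms of $( \cdot , \cdot )_{\mathcal H}$ give at once $\| w^* \|_{\underline{L}^2(I_L ; \underline{H}^{-1}(\Lambda_L))} + \| w_0 \|_{\underline{L}^2(I_L ; \underline{H}^1(\Lambda_L))} \lesssim \| f \|_{\underline{H}^{-1}_{\mathrm{par}}(Q_L)}$. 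To see that $f = w^* + \partial_t w_0$, I would test the Riesz identity against $v \in C^\infty_c\bigl((-L^2,0)\times\Lambda_L\bigr)$: expanding $( u , v )_{\mathcal H}$, summing the discrete Laplacian by parts in space (licit since $v$ vanishes near $\partial\Lambda_L$), and rewriting the time term as $|I_L|^{-1}\int_{I_L}( \mathcal R_L\partial_t u , \partial_t v )_{\underline{L}^2(\Lambda_L)}\,dt = -|I_L|^{-1}\int_{I_L}( w_0 , \partial_t v )_{\underline{L}^2(\Lambda_L)}\,dt$, one obtains $\langle w^* + \partial_t w_0 , v \rangle = \langle f , v \rangle$. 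Since such $v$ are dense, $f = w^* + \partial_t w_0$ as distributions on $Q_L$, hence as an identity in $L^2(I_L ; H^{-1}(\Lambda_L))$ because $f \in L^2(Q_L)$.

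It remains to arrange the continuity and the endpoint values. Since $\partial_t w_0 = f - w^*$ lies in $L^2(I_L ; H^{-1}(\Lambda_L))$, which on the finite box embeds into $\R^{\Lambda_L}$, the map $t \mapsto w_0(t,x)$ is absolutely continuous on $[-L^2,0]$ for each $x$, and $w_0(t,\cdot) \in H^1_0(\Lambda_L)$ for a.e.\ $t$; hence the traces $w_0(-L^2,\cdot)$ and $w_0(0,\cdot)$ are well-defined elements of $H^1_0(\Lambda_L)$. I would then replace $w_0$ by $w(t,x) := w_0(t,x) - \ell(t)\,w_0(-L^2,x) - r(t)\,w_0(0,x)$, where $\ell, r : [-L^2,0]\to[0,1]$ are fixed smooth functions with $\ell(-L^2)=r(0)=1$, $\ell(0)=r(-L^2)=0$ and $\|\ell'\|_{L^\infty}+\|r'\|_{L^\infty}\lesssim L^{-2}$, and correspondingly replace $w^*$ by $w^* + \ell'\,w_0(-L^2,\cdot) + r'\,w_0(0,\cdot)$; then $w(-L^2,\cdot)=w(0,\cdot)=0$, the identity $f = \partial_t w + w^*$ and the membership of $w$, $w^*$ in the stated spaces are preserved, and $t\mapsto w(t,x)$ is continuous by construction. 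The error terms are controlled by $\| \ell'\,w_0(-L^2,\cdot) \|_{\underline{L}^2(I_L;\underline{H}^{-1}(\Lambda_L))} \lesssim L^{-2}\,\| w_0(-L^2,\cdot)\|_{\underline{H}^{-1}(\Lambda_L)}$ (and likewise at $t=0$), which must in turn be bounded by $\| f \|_{\underline{H}^{-1}_{\mathrm{par}}(Q_L)}$; this is where one needs a quantitative parabolic trace estimate controlling $\sup_{t\in I_L}\| w_0(t,\cdot) \|_{\underline{H}^{-1}(\Lambda_L)}$ — or directly the endpoint values of $w_0$ in the normalized $\underline{H}^{-1}(\Lambda_L)$-norm — by $\| f \|_{\underline{H}^{-1}_{\mathrm{par}}(Q_L)}$, keeping careful track of the powers of $L$ introduced by the normalized norms.

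The conceptual backbone (Riesz representation in the parabolic Hilbert space, with $w^*$ the spatial elliptic part and $w_0$ the inverse Laplacian of the time derivative) is routine; the part I expect to require genuine care is the last step — establishing the time-continuity of $w_0$ together with the quantitative trace bound on its endpoint values in the normalized $\underline{H}^{-1}(\Lambda_L)$-norm — since a crude estimate would only bound these quantities in terms of $\| f \|_{\underline{L}^2(Q_L)}$, which is not controlled by $\| f \|_{\underline{H}^{-1}_{\mathrm{par}}(Q_L)}$.
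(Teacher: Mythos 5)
The paper does not prove this lemma; it cites it as~\cite[Lemma~3.11]{ABM}, so there is no in-text proof to compare against. I will therefore evaluate your argument on its own merits.

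Your conceptual backbone — realize $W^{1,2}_{\mathrm{par},\sqcup}(Q_L)$ as a Hilbert space with the scalar product you wrote, apply Riesz, and read off $w^* = L^{-2}u - \Delta u$ and $w_0 = -\mathcal{R}_L\partial_t u$ by unwinding the inner product against compactly supported test functions — is sound and is the standard route for this kind of duality statement. The derivation of the distributional identity $f = w^* + \partial_t w_0$, the norm bounds on $w_0$ and $w^*$, and the remark that the discrete setting trivializes the time-regularity of $w_0$ are all correct.

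You do, however, miss a free simplification. Test the Riesz identity against $v \in W^{1,2}_{\mathrm{par},\sqcup}$ which do \emph{not} vanish at $t = 0$. After integrating by parts in time (the term at $t = -L^2$ drops because $v(-L^2,\cdot) = 0$), and after cancelling the interior contribution using $f = w^* + \partial_t w_0$ (which already holds as an equality in $L^2(I_L; H^{-1}(\Lambda_L))$), the Riesz identity reduces to $\sum_x w_0(0,x)\,v(0,x) = 0$ for all $v$. Since $v(0,\cdot)$ ranges over all of $H^1_0(\Lambda_L)$ and $w_0(0,\cdot) = -\mathcal{R}_L\partial_t u(0,\cdot) \in H^1_0(\Lambda_L)$, this forces $w_0(0,\cdot) = 0$ with no further work. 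So the cutoff $r(t)$ in your construction is unnecessary.

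The genuine gap is exactly where you flag it, at the remaining endpoint $t = -L^2$. Your cutoff produces an error term $\ell'(t)\,w_0(-L^2,\cdot)$ in $w^*$ and $\ell(t)\,w_0(-L^2,\cdot)$ in $w$, which require $\| w_0(-L^2,\cdot)\|_{\underline{H}^1(\Lambda_L)} \lesssim \| f\|_{\underline{H}^{-1}_{\mathrm{par}}(Q_L)}$. You correctly observe that naive estimates fail: writing $w_0(-L^2,\cdot) = -\int_{-L^2}^0(f - w^*)\,dt$ (using $w_0(0,\cdot)=0$), the $\int f\,dt$ piece is only controlled in $L^2(\Lambda_L)$ by $\|f\|_{L^2(Q_L)}$, not by $\|f\|_{\underline{H}^{-1}_{\mathrm{par}}}$, and the $\underline{H}^1$-norm of $\int w^*\,dt$ is likewise out of reach from $w^* \in L^2(I_L; H^{-1}(\Lambda_L))$. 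An energy argument on the Euler--Lagrange system for $u$ reduces the desired bound to $\int_{I_L}\langle f, \partial_t u\rangle\, dt \lesssim \|f\|_{\underline{H}^{-1}_{\mathrm{par}}}^2$, but $\partial_t u$ lies only in $L^2(I_L; H^{-1}(\Lambda_L))$, so it is not an admissible test function for the $\underline{H}^{-1}_{\mathrm{par}}$ pairing, and the $\underline{L}^2$ pairing against $f$ is not controlled. You acknowledge this as the delicate step; you should be explicit that, as written, your argument does not establish it, and a different mechanism is needed to zero out the $t = -L^2$ endpoint (or one must show directly that the Riesz representative has extra time-regularity at $t = -L^2$, which does not follow from the variational principle alone since the boundary condition there is Dirichlet on $u$, not on $\partial_t u$). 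Until that trace bound is supplied, the proof is incomplete.
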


\section{First-order corrector for the Langevin dynamic} \label{sec.section2}

This section is devoted to the first-order corrector for the Langevin dynamic and is structured as follows. In Section~\ref{section3.1}, we introduce the corrector for the Langevin dynamic. In Section~\ref{section.BLEF}, we estimate the fluctuations of the corrector following the outline presented in Section~\ref{sec.1.5.1}. In Section~\ref{sec.fluxcorr}, we obtain estimates on the flux of the corrector. Finally in Section~\ref{sec.regslope}, we obtain some estimates on the $L^2$-norm of the difference of the first-order correctors with two different slopes.

\subsection{Definition} \label{section3.1}
In this section, we introduce the first-order corrector for the Langevin dynamic. We let $Q := I \times \Lambda$ be  a parabolic cylinder, denote by $I = [s_- , s_+]$, and let $L$ be the sidelength of the box $\Lambda$. We assume that $\left| I \right| = s_+ - s_- \geq L^2$.

\begin{definition}[First-order corrector for the Langevin dynamic] \label{def.firstordercorrfinvol}
Fix a time-dependent slope $q : I \to \Rd$. We define $\varphi_{Q}(\cdot ; q) : Q \to \R$ to be the solution of the Langevin dynamic with periodic boundary conditions
\begin{equation} \label{Langevin.def.corrQ}
    \left\{ \begin{aligned}
    d \varphi_{Q}(t , x ; q) & = \nabla \cdot V'(q + \nabla \varphi_{Q}(\cdot, \cdot ; q)) (t ,x) dt + \sqrt{2} dB_t(x) &~\mbox{for}&~(t,x) \in Q,  \\
    \varphi_{Q}(s , \cdot ; q) &= 0 &~\mbox{for}&~ x \in \Lambda, \\
    \varphi_{Q} (t , \cdot ; q) &\in \Omega_{\Lambda , \mathrm{per}} & \mbox{for}&~ t \in I.
    \end{aligned} \right.
\end{equation}
We then define the first-order corrector $\phi_{Q}(\cdot , \cdot ; q)$ by subtracting a spatially constant term from the map $\varphi$ and write, for any $t \in I$ and $x \in \Lambda$,
\begin{equation} \label{eq:14311610}
    \phi_{Q}(t , x ; q) := \varphi_{Q}(t , x ; q) - \frac{1}{\left| \Lambda \right|}\sum_{y \in \Lambda}\varphi_{Q}(t , y ; q) = \varphi_{Q}(t , x ; q) - \frac{\sqrt{2}}{\left| \Lambda \right|}\sum_{y \in \Lambda}B_t(y).
\end{equation}
The corrector can be equivalently defined as the solution of the system of stochastic differential equations
\begin{equation} \label{eq:14311610+1}
    \left\{ \begin{aligned}
    d \phi_{Q}(t , x ; q) & = \nabla \cdot V'(q + \nabla \phi_{Q}(\cdot, \cdot ; q)) (t ,x) dt + \sqrt{2} dB_t(x) - \frac{\sqrt{2}}{\left| \Lambda \right|} \sum_{y \in \Lambda}  dB_t(y) &~\mbox{for}&~(t,x) \in Q,  \\
    \phi_{Q}(s , \cdot ; q) &= 0 &~\mbox{for}&~ x \in \Lambda, \\
    \phi_{Q} (t , \cdot ; q) &\in \Omega_{\Lambda , \mathrm{per}} & \mbox{for}&~ t \in I,
    \end{aligned} \right.
\end{equation}
The \emph{flux of the corrector} is then the vector field $V'(q + \nabla \phi_{Q}(\cdot, \cdot ; q)).$
\end{definition}

\begin{remark} \label{remark3.2225} 
Contrary to the usual convention in stochastic homogenization, we allow the slope $q$ to vary in the time variable. The reason motivating this choice is twofold:
\begin{enumerate}
    \item First, the technique used to estimate the corrector and its flux covers this case without modifying the argument;
    \item Second, allowing the corrector to have a time-dependent slope is useful to optimize the rate of convergence in Theorem~\ref{Th.quantitativehydr}. As mentioned in Section~\ref{sec1.5.2}, the proof of Theorem~\ref{Th.quantitativehydr} relies on a two-scale expansion and requires to introduce a mesoscopic scale. A natural strategy would be to introduce a mesoscopic scale with respect to both the space and time variables, but it turns out that this causes a deterioration of the rate of convergence, and leads to a rate of the form $\ep^{\frac 13} \bigl(1 + | \log \ep |^{\frac12} \indc_{\{ d = 2\}}\bigr)$. Having access to a corrector with a time-dependent slope allows to define the mesoscopic scale only with respect to the spatial variable and yields the rate of convergence stated in Theorem~\ref{Th.quantitativehydr}.
\end{enumerate}
\end{remark}

\subsection{Scaling estimates for the first-order corrector} \label{section.BLEF}

 The following proposition estimates the size of the first-order corrector of the Langevin dynamic following the outline presented in Section~\ref{sec.1.5.1}. We fix a parabolic cylinder $Q := I \times \Lambda$, denote by $I = [s_- , s_+]$, assume that $\Lambda$ is a box whose sidelength is denoted by $L$ and that $s_+ - s_- \geq L^2$. We also fix a time-dependent slope $q : I \to \Rd$.

\begin{proposition}[Fluctuation estimates for the first-order corrector] \label{prop.2.2BL}
There exists a constant $C := C(d , c_+ , c_-) < \infty$ such that, for any $z \in Q$,
\begin{equation} \label{eq:20291910}
    \left| \phi_{Q} (z ; q) \right| \leq
    \mathcal{O}_2 \left(C \bigl( 1 + (\log L)^{\frac12} \indc_{\{ d = 2\}}\bigr)\right).
\end{equation}
Additionally, for any $z \in Q$,
\begin{equation} \label{eq:20291910grad}
    | \vec{\nabla}\phi_{Q} (z ; q) | \leq
    \mathcal{O}_2 \left(C \right).
\end{equation}
\end{proposition}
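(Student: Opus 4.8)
The plan is to follow the strategy outlined in Section~\ref{sec.1.5.1}: approximate the Langevin dynamics by a version driven by a finite-dimensional Gaussian vector, apply the Gaussian concentration inequality (Proposition~\ref{propEfronstein}) to that approximation, and pass to the limit. Fix the parabolic cylinder $Q = I \times \Lambda$ and the slope $q$. For a large integer $n$, let $X^n_k(x) := \sqrt{n}(B_{(k+1)/n}(x) - B_{k/n}(x))$ be the (rescaled, i.i.d.\ standard Gaussian) increments of the driving Brownian motions, and let $\varphi^n_Q(\cdot;q)$ solve the ODE system obtained by replacing $\sqrt 2\, dB_t(x)$ in~\eqref{Langevin.def.corrQ} by $\sqrt 2\, n^{-1/2} X^n(t,x)\,dt$. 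As $n \to \infty$ one has $\varphi^n_Q \to \varphi_Q$ and hence $\phi^n_Q \to \phi_Q$ (locally uniformly, with a standard stability estimate for the SDE/ODE; since both $\mathcal O_2$ bounds are closed under such limits, it suffices to prove them for $\phi^n_Q$ with a constant independent of $n$). The key point is that $\phi^n_Q(z;q)$ is a deterministic function of the finite family $\{X^n_k(x)\}$, so we just need to bound its Lipschitz constant as a map from $\R^{(\#\text{increments})}$ (with the Euclidean norm) into $\R$.

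To compute this Lipschitz constant, fix $z = (t,x) \in Q$ and differentiate $\phi^n_Q(z;q)$ with respect to one increment $X^n_k(y)$. Perturbing a single increment perturbs the forcing in~\eqref{Langevin.def.corrQ} by $\sqrt 2\, n^{-1/2}$ on the time interval $[k/n,(k+1)/n]$ at the single vertex $y$; the derivative $D_k^y := \partial \varphi^n_Q(t,x;q)/\partial X^n_k(y)$ therefore solves the linearized equation $\partial_s D - \nabla\cdot \a^n \nabla D = \sqrt2\, n^{-1/2}\,\delta_y\,\indc_{[k/n,(k+1)/n]}(s)$ with zero initial data, where $\a^n(s,e) := V''(q + \nabla\varphi^n_Q(s,e;q))$ is uniformly elliptic by~\eqref{eq.unifconvV}. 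By Duhamel's principle (Proposition~\ref{propDuhamel}, applied after subtracting spatial averages to account for the $-\tfrac{1}{|\Lambda|}\sum$ term, which is exactly what distinguishes $\phi$ from $\varphi$), this gives $\partial \phi^n_Q(t,x;q)/\partial X^n_k(y)$ as an integral of the periodic heat kernel $P_{\a^n}(t,x;s',y) $ over $s' \in [k/n,(k+1)/n]$, times $\sqrt2\, n^{-1/2}$. Summing the squares over all $k$ with $k/n \le t$ and all $y \in \Lambda$, the Lipschitz constant squared is bounded by $\frac{2}{n}\sum_{k,y}\bigl(\int_{k/n}^{(k+1)/n} |P_{\a^n}(t,x;s',y)+\tfrac{1}{|\Lambda|}|\,ds'\bigr)^2$, which by Jensen is at most $2\sum_y \int_{s_-}^t (P_{\a^n}(t,x;s',y)+\tfrac1{|\Lambda|})^2\,ds'$ — a bound uniform in $n$.

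It remains to estimate $\int_{s_-}^t \sum_y (P_{\a^n}(t,x;s';y)+\tfrac1{|\Lambda|})^2\,ds'$ using the Nash--Aronson bound~\eqref{eq:prop.NashAronson}--\eqref{eq:prop.NashAronsongrad}, i.e.\ $0 \le P_{\a^n}(t,x;s';y)+\tfrac1{|\Lambda|} \le \Phi_{C,L}(t-s',x-y)$. Writing $r = t - s'$, we have $\sum_y \Phi_{C,L}(r,x-y)^2 \lesssim (r\vee1)^{-d}\exp(-2r/(CL^2))\sum_y \exp(-2|x-y|/(C\sqrt r)) \lesssim (r\vee1)^{-d}(r\vee1)^{d/2}\exp(-2r/(CL^2)) = (r\vee1)^{-d/2}\exp(-2r/(CL^2))$, and integrating $dr$ over $(0,\infty)$: the contribution of $r \le 1$ is $O(1)$; for $r \ge 1$ the integral of $r^{-d/2}\exp(-2r/(CL^2))$ is $O(1)$ when $d \ge 3$, and is $O(\log L)$ when $d = 2$ (the exponential cutoff at scale $L^2$ converts the logarithmically divergent $\int r^{-1}\,dr$ into $\log L$). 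This yields~\eqref{eq:20291910}. For~\eqref{eq:20291910grad} one repeats the argument with $\phi^n_Q(t,x+e_i;q)-\phi^n_Q(t,x;q)$ in place of $\phi^n_Q(t,x;q)$: the relevant derivative is now a \emph{discrete gradient} of the heat kernel, so~\eqref{eq:prop.NashAronsongrad} supplies an extra factor $(r\vee1)^{-\alpha}$, making $\sum_y |\vec\nabla_x P_{\a^n}|^2 \lesssim (r\vee1)^{-d/2-2\alpha}$, whose time-integral is $O(1)$ in every dimension $d\ge2$ — this removes the logarithm and gives the claimed $\mathcal O_2(C)$.

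The main obstacle is the bookkeeping in the discretization–limiting step: one must verify that $\phi^n_Q \to \phi_Q$ in a mode strong enough to transfer an $\mathcal O_2$ bound (e.g.\ convergence in probability at the point $z$ suffices, together with uniform-in-$n$ control of exponential moments from the concentration bound itself), and one must handle the subtraction of the spatial average carefully so that Duhamel's principle applies to a forcing with zero spatial mean — this is precisely why the corrector $\phi_Q$ rather than $\varphi_Q$ is the natural object and why the centering term appears explicitly in~\eqref{eq:14311610+1}. The heat-kernel integral estimate itself is routine given Nash--Aronson, modulo being careful that the periodic heat kernel and its gradient obey the stated Gaussian bounds with the long-time exponential cutoff at scale $L^2$, which is what produces the $\indc_{\{d=2\}}(\log L)^{1/2}$ and its disappearance in the gradient estimate.
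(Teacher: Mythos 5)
Your proposal is correct and follows the paper's proof essentially verbatim: discretize the Brownian motions in time, apply the Gaussian concentration inequality to $\phi^n$, compute the derivative with respect to each increment via Duhamel and the linearized parabolic equation, and bound the resulting sum of squared heat-kernel integrals via Nash--Aronson (with the gradient bound $(t\vee 1)^{-\alpha}$ on the heat kernel removing the $d=2$ logarithm for the gradient estimate). The only cosmetic differences are the order in which you sum over space and integrate over time in the heat-kernel estimate (the paper integrates over time first, obtaining $C|y|_+^{2-2d}$, then sums over $y$), and your cleaner use of $P_\a + 1/|\Lambda|$ in Duhamel to absorb the centering term at once rather than tracking it separately.
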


\begin{proof}
To ease the notation in the proof, we will assume that $q = 0$, $Q = [s_- , 0 ] \times \Lambda_L$ for some $s_- < -L^2$ and that $z = 0$. We denote the corrector by $\phi$ (i.e., we drop the slope and the cylinder from the notation since they are fixed in the argument). The proof in the general case follows from the same lines.
We prove the estimate
\begin{equation*}
    \left| \phi (0) \right| \leq
    \mathcal{O}_2\left(C \bigl( 1 + (\log L)^{\frac12} \indc_{\{ d = 2\}}\bigr)\right),
\end{equation*}
where we denoted by $0$ the zero of $\R \times \Zd$.
As a preliminary observation, we note that the law of the first-order corrector is invariant under spatial translations. Combining this observation with the identity~\eqref{eq:14311610}, we deduce that: for any $(t,x)\in Q$,
\begin{equation} \label{eq:23311101}
    \E\left[ \phi(t , x)\right] = 0.
\end{equation}
We then consider a discretization of the Brownian motions $\{ B_t(x) \, : \, t \in \R, \, x \in \Zd\}$ in piecewise affine functions defined as follows: for any integer $n \in \N$, any integer $l \in \Z$, and any time $t \in [ l/n , (l+1)/n]$, we set
\begin{equation*}
    B_t^n(x) := \left(n t - l \right)  B_{\frac{k+1}{n}}(x) + \left(l+1 - n t\right) B_{\frac{k}{n}}(x) .
\end{equation*}
We denote by $X_l^n(x)$ the normalized increments of the Brownian motion $B(x)$ between the times $l/n$ and $(l+1)/n$, that is
\begin{equation*}
    X_l^n(x) :=  \sqrt{n} \left( B_{\frac{l+1}{n}}(x) - B_{\frac{l}{n}}(x) \right) \hspace{5mm} \mbox{and} \hspace{5mm} X^n(t , x) = \sum_{k \in \Z} X_l^n(x) \indc_{\{l/n \leq t \leq (l+1)/n\}} ,
\end{equation*}
and note that the random variables $\left\{ X_k^n(x) \, : \, l \in \Z , \, x \in \Zd \right\}$ are i.i.d. Gaussian with expectation $0$ and variance $1$. We denote by $\bar X^n(t) := \frac{1}{\left| \Lambda_L\right|}\sum_{y \in \Lambda_L} X^n(t , y)$ (and assume that this term is spatially constant), and let $\phi^n : Q \to \R$ be the solution of the system of ordinary differential equations
\begin{equation} \label{eq:09272709}
\left\{ 
\begin{aligned}
  & \partial_t \phi^n  = \nabla \cdot V'( \nabla \phi^n)  + \sqrt{2n^{-1}} \left( X^n - \bar X^n \right) & \mbox{in} & \ Q,  \\
  &  \phi^n(s , \cdot )= 0 & \mbox{in} &\,  \Lambda_{L}, \\
  &  \phi^n (t , \cdot)  \in \Omega_{\Lambda_L , \mathrm{per}} & \mbox{for} & \ t \in I.
    \end{aligned} \right.
\end{equation}
We note that, sending $n$ to infinity, the functions $\phi^n$ converges to the solution of the stochastic differential equation~\eqref{eq:14311610+1}, and thus
\begin{equation} \label{eq:21502609}
    \phi^n(0) \underset{n \to \infty}{\longrightarrow} \phi(0) \hspace{3mm} \mathbb{P}-\mbox{almost-surely};
\end{equation}
and we then establish that there exists a constant $C(d, c_+ , c_-) < \infty$ such that, for any $n \in \N$,
\begin{equation} \label{eq:20152609}
    \left| \phi^n (0) \right| \leq
    \mathcal{O}_2\left(C \bigl( 1 + (\log L)^{\frac12} \indc_{\{ d = 2\}}\bigr)\right).
\end{equation}
A combination of~\eqref{eq:21502609} and~\eqref{eq:20152609} with the definition of the $\mathcal{O}$-notation and Fatou's Lemma completes the proof of Proposition~\ref{prop.2.2BL}.

We next prove the stochastic integrability estimate~\eqref{eq:20152609}. We first note that, for each $n \in \N$, the function $\phi^n$ depends only on the values of the increments 
$$\left\{  X_l^n(y) \, : \, l \in \{ \lfloor n s_- \rfloor , \ldots , 0\} , y \in \Lambda_L \right\}.$$
Using that the increments of the Brownian motions are independent Gaussian random variables and the observation $\E[\phi^n(0)] =0$ (which follows from the same argument as in~\eqref{eq:23311101}), we see that Proposition~\ref{prop.2.2BL} is a consequence of Proposition~\ref{propEfronstein} if we can prove that, for a large constant $C$ depending only on $d , c_+ , c_-$, the mapping
\begin{equation*}
    F : \left\{  X_l^n(y) \, : \, l \in \{ \lfloor n s \rfloor , \ldots , 0\} , y \in \Lambda_L \right\} \mapsto \frac{\phi^n (0)}{C( 1 + (\log L)^{\frac12} \indc_{\{d = 2\}} )}
\end{equation*}
is $1$-Lipschitz (in the sense of Proposition~\ref{propEfronstein}). To this end, let us fix $l \in \Z$ and $y \in \Zd$, and note that the derivative $w := \partial \phi^n_L/\partial  X_l^n(y)$ solves the linear parabolic equation
\begin{equation} \label{eq:10282709}
\left\{ \begin{aligned}
   \partial_t w(t , x)  - \nabla \cdot \a \nabla w(t , x) & =  \sqrt{2 n} \indc_{\left\{ t \in \left[\frac{l}{n} , \frac{l+1}{n} \right] \right\}} \left( \indc_{\{ x = y\}} - \frac{1}{\left| \Lambda_L \right|} \right) &~\mbox{in} &~Q,  \\
    w(s , \cdot) &= 0 &~\mbox{in} &~ \Lambda_{L}, \\
    w(t , \cdot) &\in \Omega_{\Lambda_L , \mathrm{per}} & \mbox{for} &~ t \in I,
    \end{aligned} \right.
\end{equation}
with the environment $\a(t,e) = V'' (\nabla  \phi_{L}^{n} (t , e) )$.
Applying Duhamel's principle (Proposition~\ref{propDuhamel}), we obtain the following identity
\begin{equation} \label{eq:15502709}
     w (0) = \sqrt{2 n} \int_{l/n}^{(l+1)/n} P_\a \left( t , x ; s , y \right) \, ds,
\end{equation}
where $P_\a$ is the heat-kernel defined in~\eqref{def.heatkernelperiodic} with respect to the environment $\a$. Using the Nash-Aronson estimate stated in Proposition~\ref{prop.NashAronson} and recalling the definition of the map $\Phi_{C,L}$ stated in~\eqref{def.phiCL}, we obtain   
\begin{equation} \label{eq:11482709}
     |w (0)| \leq C n^{\frac12} \int_{l/n}^{(l+1)/n} \Phi_{C, L } \left( - s , y \right) \, ds + \frac{C}{n^{\frac 12} L^d} \indc_{\{ \frac{l}{n}  \leq L^2 \}}.
\end{equation}
Summing the inequality~\eqref{eq:11482709} over the integers $l \in n I \cap \Zd$ and over the vertices $y \in \Lambda_L$, we deduce that
\begin{equation*}
    \sum_{\substack{l \in n I \cap \Z \\ y \in \Lambda_L } } \left( \frac{\partial \phi^n_L(0)}{\partial  X_l^n(y)} \right)^2  \leq  \sum_{\substack{l \in n I \cap \Z \\ y \in \Lambda_L } } C n \left( \int_{l/n}^{(l+1)/n} \Phi_{C,L} \left( -s , y\right) \, ds  \right)^2 + C L^{-d+2}.
\end{equation*}
There remains to estimate the term in the right-hand side. By the Cauchy-Schwarz inequality, we have
\begin{equation*}
    \sum_{l \in n I \cap \Z } C n \left( \int_{l/n}^{(l+1)/n} \Phi_{C,L} \left( - s , y\right) \, ds \right)^2 \leq C \int_{I} \Phi_{C, L} \left(  - s , y\right)^2 \, ds  \leq \frac{C}{|y|_+^{2d -2}}.
\end{equation*}
where we recall the notation $\left| \cdot \right|_+ := \max \left( \left| \cdot \right| , 1 \right)$.
Summing over the vertices $y \in \Lambda_L$, we see that
\begin{equation*}
    \sum_{y \in \Lambda_L} \frac{C}{|y|_+^{2d -2}} \leq C (1  +  (\log L )\indc_{\{ d = 2 \}}).
\end{equation*}
A combination of the three previous displays
shows that
\begin{equation*}
    \left| \nabla F  \right|^2 = \frac{1}{ C (1  +  (\log L) \indc_{\{ d = 2 \}})}\sum_{\substack{l \in n I \cap \Z \\ y \in \Lambda_L } } \left( \frac{\partial \phi^n_L(0)}{\partial  X_l^n(y)} \right)^2 \leq 1.
\end{equation*}
Applying Proposition~\ref{propEfronstein} completes the proof of~\eqref{eq:20291910}.

The proof of~\eqref{eq:20291910grad} follows a similar outline: using the identity $\partial \vec{\nabla} \phi^n_L/\partial  X_l^n(y) = \vec{\nabla} w$ (where $w$ is the solution of the parabolic equation~\eqref{eq:10282709}), we may use the same argument. The only difference is that we use the estimate~\eqref{eq:prop.NashAronsongrad} on the gradient of the heat kernel instead of~\eqref{eq:prop.NashAronson} to remove the logarithm in two dimensions.
\end{proof}

\subsection{Decay of the spatial average of the flux} \label{sec.fluxcorr}

In this section, we obtain optimal estimates on the fluctuations of the space-time averaged value of the gradient of the corrector and its flux. The proof follows a similar outline to the proof of Proposition~\ref{prop.2.2BL} (and is based on a discretization of the Brownian motions and the Gaussian concentration inequality). However, it is technically more involved due to the nature of the observable considered. In particular, it makes use of the Caccioppoli inequality, the De Giorgi-Nash-Moser regularity, the Nash-Aronson estimate and a decomposition over the scales. We recall the definition~\eqref{defaveragevector} for the average value of a vector field over a parabolic cylinder (in particular, we recall that this quantity is valued in $\Rd$). As in the previous section, we fix a parabolic cylinder $Q := I \times \Lambda$ and a time-dependent slope $q : I \to \Rd$.

\begin{proposition}[Decay of the space-times averages of the gradient of the corrector and the flux]\label{prop.spataveflux}
   There exists a constant $C := C(d , c_+ , c_-) < \infty$, such that, for any integer $\ell \in \N$ and any vertex $z \in Q$ satisfying $z + Q_\ell \subseteq Q$,
    \begin{equation} \label{eq:14590112}
    \left| \left( V'\left( q + \nabla \phi_Q(\cdot, \cdot ; q) \right) \right)_{z + Q_\ell} - \E \left[  \left(  V'\left(q + \nabla \phi_Q(\cdot, \cdot ; q) \right)  \right)_{z + Q_\ell} \right] \right| \leq \mathcal{O}_2 \bigl( C \ell^{-\frac d2} \bigr)\,
    \end{equation}
    and
    \begin{equation} \label{eq:15000112}
    \left| \left( \nabla \phi_Q(\cdot,\cdot ; q) \right)_{z + Q_\ell} \right| \leq \mathcal{O}_2 \bigl( C \ell^{-\frac d2} \bigr)\,.
    \end{equation}
\end{proposition}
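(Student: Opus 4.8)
The plan is to follow the strategy used for Proposition~\ref{prop.2.2BL}: discretize the driving Brownian motions, view each of the two observables as a function of the i.i.d.\ standard Gaussian increments $\{X_l^n(y)\}$, and apply the Gaussian concentration inequality (Proposition~\ref{propEfronstein}); the whole point is to show that the relevant map is $C\ell^{-d/2}$-Lipschitz, i.e.\ that the sum of squares of the partial derivatives of the observable with respect to the increments is at most $C\ell^{-d}$. Concretely, let $\phi^n$ solve the discretized system~\eqref{eq:09272709} with slope $q$; since $\phi^n\to\phi_Q(\cdot;q)$ almost surely and both observables are continuous in $\{X_l^n(y)\}$, it suffices (by Fatou and the stability of the $\mathcal O$-notation under almost sure limits) to prove~\eqref{eq:14590112} and~\eqref{eq:15000112} for $\phi^n$ uniformly in $n$. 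Note that $\E[(\nabla\phi_Q(\cdot;q))_{z+Q_\ell}]=0$ by the translation invariance of the law of $\nabla\phi_Q$ (cf.~\eqref{eq:23311101}), which is why no centering appears in~\eqref{eq:15000112}.

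Differentiating~\eqref{eq:09272709} shows that $w_{l,y}:=\partial\phi^n/\partial X_l^n(y)$ solves the linear parabolic equation
\[
\partial_t w_{l,y}-\nabla\cdot\a\nabla w_{l,y}=\sqrt{2n}\,\indc_{\{t\in[l/n,(l+1)/n]\}}\bigl(\delta_y-\tfrac{1}{|\Lambda|}\bigr)\quad\text{in }Q,
\]
with zero initial data, periodic boundary conditions, and uniformly elliptic environment $\a(t,e):=V''(q(t)+\nabla\phi^n(t,e))$; consequently $\partial V'(q+\nabla\phi^n)/\partial X_l^n(y)=\a\nabla w_{l,y}$. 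By Duhamel's principle (Proposition~\ref{propDuhamel}), $w_{l,y}(t,x)=\sqrt{2n}\int_{l/n}^{(l+1)/n}P_\a(t,x;s,y)\,ds$, so $w_{l,y}$ is a short-time average of the periodic heat kernel centered at $(l/n,y)$; in particular $|w_{l,y}(t,x)|\lesssim n^{-1/2}\Phi_{C,L}(t-l/n,x-y)$, one has $|\vec\nabla w_{l,y}(t,x)|\lesssim n^{-1/2}(|t-l/n|\vee1)^{-\alpha}\Phi_{C,L}(t-l/n,x-y)$ by~\eqref{eq:prop.NashAronsongrad}, and $w_{l,y}$ is $\a$-caloric away from the support of its source.

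Thus~\eqref{eq:15000112} reduces to $\sum_{l,y}|(\nabla w_{l,y})_{z+Q_\ell}|^2\le C\ell^{-d}$ and~\eqref{eq:14590112} to $\sum_{l,y}|(\a\nabla w_{l,y})_{z+Q_\ell}|^2\le C\ell^{-d}$. For the gradient average this is essentially a repackaging of the computation in the proof of Proposition~\ref{prop.2.2BL}: writing $z+Q_\ell=J\times B$, either telescope $\sum_{x\in B}\nabla_i w_{l,y}(t,x)$ into a sum over the two faces of $B$ perpendicular to $e_i$, or use the pointwise gradient bound above; plugging in the heat-kernel estimates, summing over $l$ by Cauchy–Schwarz in time (the factors $2n$ and $1/n$ cancelling as in~\eqref{eq:11482709}) and over $y$ gives $C\ell^{-d}$, the $y$-sum converging in every dimension $d\ge2$, while sources with support meeting $z+Q_\ell$ are treated separately using the pointwise heat-kernel bound and the $L^\infty$/De Giorgi–Nash–Moser estimate of Proposition~\ref{paraNash}. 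For the flux average there is no telescoping, so instead one extracts a divergence structure from $\nabla\cdot(\a\nabla w_{l,y})=\partial_t w_{l,y}-g_{l,y}$ (with $g_{l,y}$ the right-hand side above): testing against a cutoff of the coordinate map $x\mapsto x_i$ on $B$ and integrating by parts in space and over $J$ in time, $(\a\nabla w_{l,y})_{z+Q_\ell,i}$ becomes a time-boundary term in $w_{l,y}(J_\pm,\cdot)$, plus a source term, plus a spatial-boundary term involving the flux $\a\nabla w_{l,y}$ in the annulus between $B$ and an inner box. The first two are controlled as for the gradient average; the annulus term is handled with the Caccioppoli inequality (Proposition~\ref{prop.CAccioppolipara}), which converts $\|\nabla w_{l,y}\|_{\underline{L}^2}$ on the annulus into $\ell^{-1}$ times the $\underline{L}^2$-oscillation of $w_{l,y}$ on a slightly larger cylinder, the latter again estimated via the heat-kernel bounds (with extra smallness for far sources, since the oscillation of $w_{l,y}$ across $z+Q_{2\ell}$ is small compared with its size). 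One then sums over sources by a dyadic decomposition according to the parabolic distance of $(l/n,y)$ to $z+Q_\ell$ (a shell at distance $\sim2^j$ containing $\sim n\,2^{j(d+2)}$ sources), the series converging to $C\ell^{-d}$; the logarithm in $d=2$ is absent thanks to~\eqref{eq:prop.NashAronsongrad}, exactly as in~\eqref{eq:20291910grad}. With both sensitivity bounds in hand, $\{X_l^n(y)\}\mapsto(\text{observable})/(C\ell^{-d/2})$ is $1$-Lipschitz, so Proposition~\ref{propEfronstein} and then $n\to\infty$ conclude.

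I expect the main obstacle to be the flux average: making the divergence-structure/integration-by-parts argument precise in the discrete periodic setting and, above all, controlling the spatial-boundary (annulus) contribution uniformly over all source positions $(l,y)$ and over the discretization parameter $n$, and then organizing the multiscale summation so that it closes at the sharp exponent $\ell^{-d}$ rather than at a weaker power. By contrast the gradient-average bound is close to the computation already performed for Proposition~\ref{prop.2.2BL}, and the discretization/concentration scaffolding is identical.
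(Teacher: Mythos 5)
Your scaffolding is the same as the paper's: discretize the Brownian increments, compute $\partial\phi^n/\partial X_l^n(y)$ as a short-time average of the heat kernel $P_\a$ via Duhamel, and feed the sensitivities into the Gaussian concentration inequality. Where you diverge is in how the flux sensitivity is bounded. The paper simply writes $|\partial(V'(\nabla\phi^n))_{Q_\ell}/\partial X_l^n(y)|\lesssim n^{1/2}\int_{l/n}^{(l+1)/n}\|\nabla P_\a(\cdot;s,y)\|_{\underline L^1(Q_\ell)}\,ds$ and then proves a pointwise-in-$(s,y)$ bound $\|\nabla P_\a(\cdot;s,y)\|_{\underline L^1(Q_\ell)}\leq\Psi_{C,\ell,L}(s,y)$ with a case split on whether $s$ is far from, or close to, the time window of $Q_\ell$. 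Your plan instead extracts a divergence structure by testing against a cutoff of $x\mapsto x_i$ and integrating by parts, producing time-boundary, source, and annulus terms. That route is legitimate — it is essentially the device the paper itself uses in Proposition~\ref{prop.prop3.5techni} for large cylinders and in Substep 2.3 of Proposition~\ref{prop4.3fluxest} — and for far sources it yields the same $\Psi$-type bound via Caccioppoli and the De Giorgi--Nash--Moser H\"older estimate (not via~\eqref{eq:prop.NashAronsongrad}, as you state; the absence of a log comes from the H\"older factor $(\ell/\sqrt{s})^\alpha$ making the $s$-integral converge, and that factor is produced here by Proposition~\ref{paraNash}, not by the gradient bound on $P_\a$).

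The genuine gap is in the near-source regime, which you correctly flag as the main obstacle but then dispose of too quickly. When the source $(l/n,y)$ sits inside or next to $z+Q_{4\ell}$, Caccioppoli cannot be applied over any cylinder containing the source, and the fallback you propose — the pointwise heat-kernel gradient bound~\eqref{eq:prop.NashAronsongrad} together with Proposition~\ref{paraNash} — is not strong enough. That bound yields $\|\nabla P_\a(\cdot;s,y)\|_{\underline L^1(Q_\ell)}\lesssim\ell^{-d-2\alpha}$ (after integrating $(t-s)^{-\alpha}$ over $J$ and dividing by $|Q_\ell|$), whereas what the argument requires is $\ell^{-d-1}$: summing $\ell^{-2(d+2\alpha)}$ over the $\sim\ell^{d+2}$ near sources gives $\ell^{-d+2-4\alpha}$, which exceeds $\ell^{-d}$ unless the De Giorgi--Nash--Moser exponent satisfies $\alpha\geq1/2$ — something that cannot be guaranteed. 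The paper closes this case (Case~2 in its proof) with an exponentially-growing multiscale partition of $Q_\ell$: time strips $[t_k,t_{k+1}]\times\Lambda_\ell$ with $t_{k+1}-t_k\sim(t_k-s)/15$, subdivided spatially at scale $\ell_k=\sqrt{t_{k+1}-t_k}$, arranged so the source is always outside the doubled sub-cylinder, whence Caccioppoli applies cell by cell and the resulting geometric series of $\ell_k$ sums to $C\ell$, giving the sharp $\|\nabla P_\a\|_{\underline L^1(Q_\ell)}\leq C\ell^{-d-1}e^{-|y|/(C\ell)}$ independently of $\alpha$. Some multiscale device of this kind — applied to the annulus term in your integration-by-parts decomposition, or directly as in the paper — is essential; your sketch as written does not close at the exponent $\ell^{-d}$ without an unjustified hypothesis on $\alpha$.
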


\begin{proof}
As in the proof of Proposition~\ref{prop.2.2BL}, and to ease the notation in the argument, we assume that the cylinder $Q$ is of the form $Q := I \times \Lambda_L$ with $I := (s_- , 0)$ and $s_- < - L^2$, that $q = 0$ and $z = 0$. We recall the definitions of the discretized Brownian motions $B^n$ and the dynamic $\phi^n$. We will prove the estimate: for any $n \in \N$,
\begin{equation*}
    \Bigl| \left( V'\left(\nabla \phi^n  \right) \right)_{Q_\ell} - \E \bigl[ \left( V'(\nabla \phi^n\right)_{Q_\ell} \bigr] \Bigl| \leq \mathcal{O}_2 \bigl( C \ell^{-\frac d2} \bigr).
\end{equation*}
By the Gaussian concentration inequality (Proposition~\ref{propEfronstein}), it is sufficient to prove the upper bound
\begin{equation} \label{eq:16142809}
     \sum_{\substack{l \in n I \cap \Z \\ y \in \Lambda_L}} \biggl( \frac{\partial \left( V'\left(\nabla \phi^n  \right) \right)_{Q_\ell}}{\partial X^{n}_l(y)}  \biggr)^2 \leq C\ell^{-d}.
\end{equation}
We fix $l , y$ and recall the notation for the map $w := \partial \phi^n_L/\partial X^{n}_l(y)$, the environment $\a$ and the heat kernel~$P_\a$ introduced in the proof of Proposition~\ref{prop.2.2BL}. We first claim that the following identity holds
\begin{equation} \label{eq:3.155}
    \frac{\partial \left( V'\left(\nabla \phi^n  \right) \right)_{Q_\ell}}{\partial X^{n}_l(y)} =  \sqrt{2n}  \int_{l/n}^{(l+1)/n} \left( \a \nabla P_\a (\cdot ; s , y) \right)_{Q_\ell} \, ds.
\end{equation}
Recalling the definition of the average value of a vector field over a parabolic cylinder stated in~\eqref{defaveragevector}, we have, for any integer $i \in \{ 1 , \ldots, d\}$,
\begin{equation*}
    \left( V'(\nabla \phi^n) \right)_{Q_\ell,i} := \frac{1}{\left| Q_\ell\right|} \sum_{x \in \Lambda_\ell} \int_{-\ell^2}^0 V'(\nabla \phi^n(t, (x , x+e_i))) \, dt.
\end{equation*}
Differentiating both sides of the identity by $ X^{n}_l(y)$, we obtain
\begin{equation*}
    \frac{\partial \left( V'(\nabla \phi^n) \right)_{Q_\ell,i}}{\partial X^{n}_l(y)} = \frac{1}{\left| Q_\ell\right|} \sum_{x \in \Lambda_\ell} \int_{-\ell^2}^0 \frac{\partial V'(\nabla \phi^n(t, (x , x+e_i)))}{\partial X^{n}_l(y)} \, dt = \left( \frac{\partial  V'(\nabla \phi^n) }{\partial X^{n}_l(y)} \right)_{Q_\ell,i}.
\end{equation*}
Since the previous identity is valid for any $i \in \{1 , \ldots, d\}$, we obtain
\begin{equation*}
    \frac{\partial \left( V'(\nabla \phi^n) \right)_{Q_\ell}}{\partial X^{n}_l(y)} = \left( \frac{\partial  V'(\nabla \phi^n) }{\partial X^{n}_l(y)} \right)_{Q_\ell}.
\end{equation*}
We next note that, for any time $t \in I$ and any edge $e \in \vec E(\Lambda_L)$, the chain rule implies
\begin{equation*}
     \frac{\partial  V'\left(\nabla \phi^n (t , e)  \right)}{\partial X^{n}_l(y)} = V''(\nabla \phi^n (t , e)) \nabla w(t,e). 
\end{equation*}
Using the definition of the coefficient $\a$ (below~\eqref{eq:10282709}) and the identity~\eqref{eq:15502709} for the function $w$, we may rewrite the previous display as follows
\begin{align*}
        \frac{\partial  V'\left(\nabla \phi^n (t , e) \right)}{\partial X^{n}_l(y)} & = \a(t , e)  \sqrt{2 n} \int_{l/n}^{(l+1)/n} \nabla P_\a \left( t , e ; s , y \right) \, ds \\
        & = \sqrt{2 n} \int_{l/n}^{(l+1)/n} \a(t , e) \nabla P_\a \left( t , e ; s , y \right) \, ds. \notag
\end{align*}
Combining the previous identities, we see that
\begin{align*}
    \frac{\partial \left( V'(\nabla \phi^n) \right)_{Q_\ell}}{\partial X^{n}_l(y)} & = \left( \frac{\partial  V'(\nabla \phi^n) }{\partial X^{n}_l(y)} \right)_{Q_\ell} \\
    & =  \left( \sqrt{2 n} \int_{l/n}^{(l+1)/n} \a(t , e) \nabla P_\a \left( t , e ; s , y \right) \, ds \right)_{Q_\ell} \\
    & = \sqrt{2 n} \int_{l/n}^{(l+1)/n} \left( \a(t , e) \nabla P_\a \left( t , e ; s , y \right) \right)_{Q_\ell} \, ds,
\end{align*}
where, in the last identity, we commuted the integral over the parameter $s$ with the integral over $t$ and the sum over the edges $e$. This is exactly~\eqref{eq:3.155}.

From~\eqref{eq:3.155}, we deduce that
\begin{equation} \label{eq:14442809}
   \Biggl| \frac{\partial \bigl( V'(\nabla \phi^n  ) \bigr)_{Q_\ell}}{\partial X^{n}_l(y)} \Biggr| \\ \leq C n^{\frac12}  \int_{l/n}^{(l+1)/n} \left\|  \nabla P_\a \left(\cdot ; s , y \right)  \right\|_{\underline{L}^1\left( Q_\ell \right)} \, ds.
\end{equation}
We next estimate the term in the right-hand side. To this end, we denote by $\alpha$ the exponent of the De Giorgi-Nash-Moser regularity (Proposition~\ref{paraNash}), and, for $C < \infty$, introduce the map $\Psi_{C, \ell , L} : Q \to [0 , \infty)$ defined by the formula
\begin{equation} \label{def.psinL0112}
    \Psi_{C, \ell , L}(s  , y) :=  \left\{ \begin{aligned} 
   & \left( \left( \frac{\ell}{s^{\frac12}} \right)^\alpha \wedge 1 \right) \frac{\Phi_{C,L} (s, y)}{\ell} & \mbox{if} & \ s \notin I_{4\ell}, \\
  &  C \ell^{-d-1} \exp \left( -  \frac{|y|}{C \ell}\right) & \mbox{if} & \ s \in I_{4 \ell}.
    \end{aligned} \right.
\end{equation}
We will prove the following upper bound: there exists a constant $C:= C(d , c_+ , c_-) < \infty$ such that, for any $(s , y) \in Q$,
\begin{equation} \label{eq:09592809}
    \left\|  \nabla P_\a \left(\cdot ; s , y \right)  \right\|_{\underline{L}^1\left( Q_\ell \right)} \leq 
    \Psi_{C, \ell , L}(s, y).
\end{equation}
To prove the inequality~\eqref{eq:09592809}, the strategy is to apply the Caccioppoli inequality combined with the De Giorgi-Nash-Moser regularity and the Nash-Aronson estimate. 
The implementation of the argument faces the following technicality (see Case 2 below): to apply the Caccioppoli inequality in the parabolic cylinder $Q_\ell$, we need the heat kernel $P_\a$ to be a solution of the parabolic equation~\eqref{def.paraboliceqautionheat-kernel} in the cylinder $Q_{2 \ell}$, which is not the case if the point $s$ belongs to the interval $I_{2\ell}$. To overcome this issue, we distinguish two cases: whether the point $s$ belongs to the interval $I_{4\ell}$ or not (we use the multiplicative factor $4$ instead of $2$ for technical convenience). In the first case, the argument is rather straightforward, in the second case, we construct a sequence of scales, denoted by $\ell_0 , \ldots, \ell_K$ and defined in~\eqref{def.ellkk}, and define a covering of the parabolic cylinder $Q_\ell$ by a collection of smaller parabolic cylinders, denoted in the argument below by (see~\eqref{eq:15192010})
\begin{equation} \label{defpseudipartP}
\mathcal{P} := \left\{ z + Q_{\ell_k} \, : \,  k \in \{0 , \ldots, K\}, \, z \in \mathcal{Z}_k \right\},
\end{equation}
and satisfying the following property:
\begin{equation} \label{eq:11172809}
    \forall  \, \tilde Q \in \mathcal{P}, \hspace{5mm} \tilde Q = \tilde I \times \tilde \Lambda \implies s \notin 2 \tilde I .
\end{equation}
We are then able to apply the Caccioppoli inequality and the Nash-Aronson estimate on each cylinders of the partition $\mathcal{P}$ to prove the result.

\medskip

\textbf{Case 1: $s \notin I_{4 \ell}$.} In this setting, we apply the Cauchy-Schwarz inequality, then the Caccioppoli inequality and then the De Giorgi-Nash-Moser regularity estimate. We obtain
\begin{align*}
     \left\|  \nabla P_\a \left(\cdot ; s , y \right)  \right\|_{\underline{L}^1\left(Q_{\ell} \right)} & \leq  \left\|  \nabla P_\a \left(\cdot ; s , y \right)  \right\|_{\underline{L}^2\left(Q_{\ell} \right)} \\
     & \leq \frac{C}{\ell} \left\|  P_\a \left(\cdot ; s , y \right) - ( P_\a \left(\cdot ; s , y \right) )_{Q_{2\ell}} \right\|_{\underline{L}^2\left( Q_{2\ell} \right)} \\
     & \leq \frac{C}{\ell}  \ell^\alpha \left[ P_\a \left(\cdot ; s, y \right) \right]_{C^{0 , \alpha}(Q_{2\ell})} \\
     & \leq \frac{C}{\ell}  \left( \frac{\ell}{s^{\frac12} \wedge L} \right)^\alpha \left\| P_\a \left(\cdot ; s , y \right) -  ( P_\a \left(\cdot ; s , y \right) )_{Q_{\sqrt{s / 2} \wedge L}} \right\|_{\underline{L}^2\left(Q_{\sqrt{s / 2} \wedge L}\right)}.
\end{align*}
Applying the Nash-Aronson estimate, we deduce that
\begin{equation*}
     \left\|  \nabla P_\a \left(\cdot ; s , y \right)  \right\|_{\underline{L}^1\left(Q_{\ell} \right)} \leq  \frac{C}{\ell}  \left( \frac{\ell}{s^{\frac12} \wedge L} \right)^\alpha \left\|\Phi_{C,L} \left(\cdot - s ; \cdot - y \right) \right\|_{\underline{L}^2\left(Q_{\sqrt{s / 2} \wedge L}\right)}.
\end{equation*}
Using the assumption $s \notin I_{4\ell}$ together with an explicit computation based on the formula~\eqref{def.phiCL} for the map $\Phi_{C,L}$, we obtain the upper bound
\begin{equation*}
    \left\|  \Phi_{C,L} \left(\cdot  - s , \cdot - y\right)  \right\|_{\underline{L}^2\left( Q_{2\ell} \right)} \leq \Phi_{C,L} (s , y),
\end{equation*}
where we increased the value of the constant $C$ in the right-hand side. A combination of the two previous displays yields
\begin{equation} \label{eq:influenceflux1}
    \left\|  \nabla P_\a \left(\cdot ; s , y \right)  \right\|_{\underline{L}^1\left(Q_{\ell} \right)} \leq \frac{C}{\ell}  \left( \frac{\ell}{s^{\frac12} \wedge L} \right)^\alpha \Phi_{C,L} (s , y).
\end{equation}
Finally, if $s \geq L^2$, then the definition of the map $\Phi_{C,L}$ and the inequality $L^{-\alpha} \exp( - s / (C L^2)) \leq C' s^{-\alpha/2} \exp( - s / (C' L^2))$, valid for some $C' > C$, imply
\begin{equation*}
     \frac{1}{L^\alpha} \Phi_{C,L} (s , y) \leq \frac{1}{s^{\alpha/2}}  \Phi_{C,L} (s , y),
\end{equation*}
where we have increased the value of the constant $C$ in the right-hand side. Combining the two previous inequalities implies~\eqref{eq:09592809}.

\medskip

\textbf{Case 2: $s \in I_{4 \ell}$.} We first partition the time interval $I_\ell$ by constructing an increasing sequence of times $(t_k)_{k \geq 0}$ defined as follows
\begin{equation*}
    t_0 := \max\{ -\ell^2, s\} \hspace{5mm} \mbox{and} \hspace{5mm}
    t_{k+1} :=  t_{k} + \frac{(t_k - s)}{15} \vee 1.
\end{equation*}
We let $K \in \N$ be the smallest integer such that $t_K \geq 0$, and define, for each $k \in \{ 0 , \ldots, K-1 \}$,
\begin{equation} \label{def.ellkk}
    \ell_k := \sqrt{t_{k+1} - t_k}
\end{equation}
and the set
\begin{equation} \label{eq:15192010}
    \mathcal{Z}_k := \left\{ (t_{k+1}, x) \, : \, x \in \ell_k \Zd \cap \Lambda_\ell \right\}.
\end{equation}
For later use, we note that the scales $\ell_k$ grow exponentially in $k$. Consequently, there exists a numerical constant $C < \infty$ such that
\begin{equation*}
    \sum_{k=0}^K \ell_k \leq C \ell.
\end{equation*}
By construction, the set $\mathcal{P}$ defined in~\eqref{defpseudipartP} is a covering of the cylinder $Q_\ell$ satisfying~\eqref{eq:11172809}. We thus have
\begin{equation} \label{eq:12562809}
    \left\|  \nabla P_\a \left(\cdot ; s , y \right)  \right\|_{\underline{L}^1\left( Q_\ell \right)} \leq \frac{1}{\left| Q_\ell \right|} \sum_{k = 1}^K \sum_{z \in \mathcal{Z}_k} \left\|  \nabla P_\a \left(\cdot ; s , y \right)  \right\|_{L^1\left( z + Q_{\ell_k} \right)}.
\end{equation}
We next estimate the terms in the right-hand side by first applying the Cauchy-Schwarz inequality, then the Caccioppoli inequality, and then the Nash-Aronson estimate. We obtain, for any $k \in \{ 0 , \ldots , K\}$ and any $z \in \mathcal{Z}_k$,
\begin{align} \label{eq:28091232}
    \left\|  \nabla P_\a \left(\cdot ; s , y \right)  \right\|_{L^1\left(z + Q_{\ell_k} \right)} & \leq \left| Q_{\ell_k}\right| \left\| \nabla P_\a \left(\cdot ; s , y \right)  \right\|_{\underline{L}^2\left(z + Q_{\ell_k} \right)} \\
     & \leq \frac{C \left| Q_{\ell_k} \right|}{\ell_k} \left\|  P_\a \left(\cdot ; s , y \right) - \left( P_\a \left( \cdot ; s , y \right) \right)_{z + Q_{2\ell_k}} \right\|_{\underline{L}^2\left( z + Q_{2\ell_k} \right)} \notag \\
     & \leq \frac{C \left| Q_{\ell_k} \right|}{\ell_k} \left\|  \Phi_{C,L} \left(\cdot  - s , \cdot - y\right)  \right\|_{\underline{L}^2\left( z + Q_{2\ell_k} \right)}. \notag
\end{align}
Using the property~\eqref{eq:11172809} of the cylinders of the set $\mathcal{P}$ and an explicit computation based on the formula for the map $\Phi_{C , L}$, we deduce that, for any $k \in \{ 0 , \ldots , K\}$ and any $z = (t_{k+1} , x) \in \mathcal{Z}_k$,
\begin{equation} \label{eq:28091233}
    \left\|  \Phi_{C , L} \left(\cdot  - s , \cdot - y\right)  \right\|_{\underline{L}^2\left( z + Q_{2\ell_k} \right)} \leq \Phi_{C,L} \left(t_{k+1}  - s , x - y\right),
\end{equation}
where we have increased the value of the constant $C$ in the right-hand side. Summing over the points $x \in \ell_k \Zd \cap \Lambda_\ell$, we deduce that
\begin{align} 
\label{eq:28091234}
\sum_{x \in \ell_k \Zd \cap \Lambda_\ell} \Phi_{C,L} \left(t_{k+1}  - s , x - y\right) & \leq  \sum_{x \in \ell_k \Zd \cap \Lambda_\ell} \Phi_{C,L} \left(t_{k+1}  - s , x - y\right)  \\
& \leq C \ell_k^{-d} \sum_{x \in \Zd \cap \Lambda_\ell} \Phi_{C,L} \left(t_{k+1}  - s , x - y\right) \notag \\
& \leq C \ell_k^{-d} \exp \left( - \frac{|y|}{C \ell} \right). \notag
\end{align}
Using that $\left| Q_{\ell_k} \right| = \ell_k^{2} (2 \ell_k + 1 )^d$ and combining the inequalities~\eqref{eq:28091232},~\eqref{eq:28091233} and~\eqref{eq:28091234}, we obtain
\begin{equation*}
     \sum_{z \in \mathcal{Z}_k}\left\|  \nabla P_\a \left(\cdot ; s , y \right)  \right\|_{L^1\left(z + Q_{\ell_k} \right)} \leq C \ell_k \exp \left( - \frac{|y|}{C \ell} \right).
\end{equation*}
Summing over the scales $k \in \{0 , \ldots , K \}$ and using the inequality~\eqref{eq:12562809}, we deduce that
\begin{equation*}
    \sum_{k = 1}^K \sum_{z \in \mathcal{Z}_k} \left\|  \nabla P_\a \left(\cdot ; s , y \right)  \right\|_{L^1\left( z + Q_{\ell_k} \right)} \leq C \exp \left( - \frac{|y|}{C \ell} \right) \sum_{k = 1}^K \ell_k \leq C \ell \exp \left( - \frac{|y|}{C \ell} \right). 
\end{equation*}
Dividing both sides of the previous display by the volume factor $\left| Q_\ell \right|$ and using~\eqref{eq:12562809}, we obtain
\begin{equation} \label{eq:influenceflux2}
    \left\|  \nabla P_\a \left(\cdot ; s , y \right)  \right\|_{\underline{L}^1\left( Q_\ell \right)} \leq \frac{C}{\ell^{d+1}} \exp \left( - \frac{|y|}{C \ell} \right).
\end{equation}
In the setting where the point $s$ belongs to the time interval $I_{4 \ell}$, the inequality~\eqref{eq:influenceflux2} implies~\eqref{eq:09592809}. Combining~\eqref{eq:influenceflux1} and~\eqref{eq:influenceflux2} completes the proof of~\eqref{eq:09592809}.

We now complete the proof of Proposition~\ref{prop.spataveflux} using the inequality~\eqref{eq:09592809}. Combining~\eqref{eq:14442809} and~\eqref{eq:09592809} with the Cauchy-Schwarz inequality, we see that
\begin{equation*}
    \sum_{\substack{l \in n I \cap \Z \\ y \in \Lambda_L}} \biggl( \frac{\partial \left( V'\left(\nabla \phi^n  \right) \right)_{Q_\ell}}{\partial X^{n}_l(y)}  \biggr)^2
     \leq  C \int_{I} \sum_{y \in \Lambda_L}\Psi_{C, \ell , L}(s   , y)^2 \, ds .
\end{equation*}
The term in the right-hand side can then be explicitly computed and we have, for any $s \in I$,
\begin{equation*}
    \sum_{y \in \Lambda_L}\Psi_{C, \ell , L}(s   , y)^2 \leq C \biggl( \biggl( \frac{\ell}{s^{\frac12}}\biggr)^{d + 2\alpha} \wedge 1 \biggr) \frac{1}{\ell^{d+2}}.
\end{equation*}
Integrating over the time interval $I$, we obtain
\begin{equation*}
   \int_{I} \sum_{y \in \Lambda_L} \Psi_{C, \ell , L}(s   , y)^2 \, ds \leq C \int_{I} \biggl( \biggl( \frac{\ell}{s^{\frac12}}\biggr)^{d + \alpha} \wedge 1 \biggr) \frac{1}{\ell^{d+2}} \, ds \leq C\ell^{-d}.
\end{equation*}
Combining the three previous displays completes the proof of the inequality~\eqref{eq:16142809}, and thus the proof of Proposition~\ref{prop.spataveflux}. The proof of the inequality~\eqref{eq:15000112} follows from a notational modification of~\eqref{eq:14590112}, and we thus omit the details.
\end{proof}

\subsubsection{Spatial average of the flux corrector with a cutoff function} \label{sec.section3.3.1}

In this section, we state and prove a slightly more general version of Proposition~\ref{prop.spataveflux} which will be useful in order to obtain the rate of convergence in the proof of Theorem~\ref{Th.quantitativehydr}. To state the exact result, we consider a parabolic cylinder~$Q$ of the form $Q := I \times \Lambda_L$ where the interval $I$ is of the form $I = (s_- , 0)$ with $s_- \leq -L^2$. We denote by $\phi$ the Langevin dynamic defined in Definition~\ref{def.firstordercorrfinvol} in the cylinder $Q$ with slope $q = 0$, and fix a function $\chi : \Lambda_L \to \R$ satisfying
\begin{equation} \label{def.cutoffchi}
    0  \leq \chi \leq \indc_{\Lambda_L} \hspace{5mm} \mbox{and} \hspace{5mm} \left\| \nabla \chi \right\|_{L^\infty (\Lambda_L)} \leq CL^{-1}\,.
\end{equation}
We then consider the map, defined on the set $I \times \Lambda_L $, 
\begin{equation*}
    \nabla \chi \cdot V'(\nabla \phi) : (t , x) \mapsto \sum_{\substack{e \in \vec{E}\left( D^\ep\right) \\ e = (x , y')}}\nabla \chi(e) V'\left(\nabla \phi(t , e) \right).
\end{equation*}
The purpose of the following proposition is to study the fluctuations of the average value of the map $\nabla \chi \cdot V'(\nabla \phi)$ over parabolic cylinders of different sizes. Two cases will be of interest for us:
\begin{itemize}
    \item Parabolic cylinders of small sizes: typically cylinders of the form $Q_\ell$ with $\ell \leq L$, in that case the result is essentially a consequence of Proposition~\ref{prop.spataveflux} and the properties of the map $\chi$ (the pointwise bound on its gradient);
    \item Parabolic cylinders of large sizes: typically cylinders of the form $Q_\ell$ with $\ell \geq L$, in that case the result is obtained by making use of the definition of the flux combined with an integration by parts. We emphasize that this case cannot be deduced Proposition~\ref{prop.spataveflux} and contains additional information on the behavior of the flux of the corrector, which is then crucial in order to obtain the optimized rate of convergence in Theorem~\ref{Th.quantitativehydr}.
\end{itemize}

The strategy of the proof follows the one of Proposition~\ref{prop.2.2BL} and Proposition~\ref{prop.spataveflux}: we discretize the Brownian motions and compute the derivative of the dynamic with respect to the discrete increments $X_k^n(y)$.
As it will be useful to us to preserve some information, we state the result in terms of an upper bound of the derivative of the observable $\left( \nabla \chi \cdot V'\left(\nabla \phi^n  \right) \right)_{Q_\ell}$ with respect to the increment $X_k^n(y)$. Remark~\ref{remark3.6} below states the concentration estimate which would be obtained by combining the result of Proposition~\ref{prop.prop3.5techni} with the Gaussian concentration inequality. We recall the notation for the map $\Psi_{C,\ell, L}$ defined in~\eqref{def.psinL0112}, and that the dynamic is defined on a parabolic cylinder of the form $Q = I \times \Lambda_L = [s_- , 0] \times \Lambda_L$ with slope $q = 0$.

\begin{proposition} \label{prop.prop3.5techni}
There exists a constant $C := C(d , c_+ , c_-) < \infty$, such that:
\begin{itemize}
    \item On small cylinders: For any $\ell \leq L$, any $n \in \N$, any $l \in \{ \lfloor n s_- \rfloor , \ldots , 0\}$ and any $y \in \Lambda_L$,
    \begin{equation} \label{eq:16470112}
    \left| \frac{\partial \left( \nabla \chi \cdot V'\left(\nabla \phi^n  \right) \right)_{Q_\ell}}{\partial X^n_l(y)} \right| \leq  C n^{\frac12} L^{-1} \Psi_{C, \ell, L}(l/n, y).
    \end{equation}
    \item On large cylinders: for any $\ell \in [L , \sqrt{|s_-|}]$, any $n \in \N$, any $l \in \{ \lfloor n s_- \rfloor , \ldots , 0\}$ and any $y \in \Lambda_L$,
        \begin{equation} \label{eq:16480112}
     \biggl| \frac{\partial \left( \nabla \chi \cdot V'\left(\nabla \phi^n  \right) \right)_{Q_\ell}}{\partial X_l^n(y)} \biggr| \leq C n^{-\frac12} \ell^{-d-2} \exp \left( - \frac{(l/n)}{C \ell^2} \right).
\end{equation}
\end{itemize}
\end{proposition}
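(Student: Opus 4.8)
The plan is to run the argument of Propositions~\ref{prop.2.2BL} and~\ref{prop.spataveflux} in the presence of the cutoff $\chi$. Following the proof of Proposition~\ref{prop.spataveflux}, discretize the Brownian motions so that $\phi^n$ solves the ODE system~\eqref{eq:09272709}, and differentiate the observable in the increment $X^n_l(y)$. Setting $w := \partial\phi^n/\partial X^n_l(y)$, this function solves the linear parabolic equation~\eqref{eq:10282709} with environment $\a(t,e)=V''(\nabla\phi^n(t,e))$, and Duhamel's principle (Proposition~\ref{propDuhamel}) gives $w(t,x)=\sqrt{2n}\int_{l/n}^{(l+1)/n}P_\a(t,x;s,y)\,ds$. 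The derivative commutes with the space-time average over $Q_\ell$, and the chain rule gives $\partial V'(\nabla\phi^n(t,e))/\partial X^n_l(y)=\a(t,e)\,\nabla w(t,e)$, so that $\partial(\nabla\chi\cdot V'(\nabla\phi^n))_{Q_\ell}/\partial X^n_l(y)=(\nabla\chi\cdot(\a\nabla w))_{Q_\ell}$, where $\nabla\chi\cdot(\a\nabla w)(t,x):=\sum_{y'\sim x}\nabla\chi((x,y'))\,\a(t,(x,y'))\,\nabla w(t,(x,y'))$.

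\textbf{Small cylinders $(\ell\le L)$.} Then $Q_\ell\subseteq Q$ since $\ell\le L\le\sqrt{|s_-|}$. Using $|\nabla\chi|\le CL^{-1}$ (from~\eqref{def.cutoffchi}) and $|\a|\le c_+$ pointwise, the identity above gives $|\partial(\nabla\chi\cdot V'(\nabla\phi^n))_{Q_\ell}/\partial X^n_l(y)|\le CL^{-1}\|\nabla w\|_{\underline{L}^1(Q_\ell)}$. Inserting the Duhamel representation and the estimate~\eqref{eq:09592809} proved inside Proposition~\ref{prop.spataveflux} --- namely $\|\nabla P_\a(\cdot;s,y)\|_{\underline{L}^1(Q_\ell)}\le\Psi_{C,\ell,L}(s,y)$ --- together with the fact that $s\mapsto\Psi_{C,\ell,L}(s,y)$ varies by at most a multiplicative constant over an interval of length $1/n$ (for $n$ large; here $\ell\le L$ is used to compare the two branches of~\eqref{def.psinL0112} at their common boundary), one bounds $\|\nabla w\|_{\underline{L}^1(Q_\ell)}$ by a constant multiple of $\Psi_{C,\ell,L}(l/n,y)$ up to the normalization of the discretization, which gives~\eqref{eq:16470112}.

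\textbf{Large cylinders $(\ell\in[L,\sqrt{|s_-|}])$.} Here the bound $|\nabla\chi|\le CL^{-1}$ is too lossy, and instead I would use the divergence structure of the flux. Since $V$ is symmetric, $V'$ is odd, so $V'(\nabla\phi^n(\cdot,\cdot))$ is a genuine antisymmetric vector field on edges; as $\chi$ is supported in $\Lambda_L$ (hence $\nabla\chi\cdot V'(\nabla\phi^n)$ in $\Lambda_{L+1}\subseteq\Lambda_\ell$), summation by parts gives, for every $t$, $\sum_{x\in\Lambda_\ell}(\nabla\chi\cdot V'(\nabla\phi^n))(t,x)=c_0\sum_{x\in\Lambda_L}\chi(x)\,\nabla\cdot V'(\nabla\phi^n)(t,x)$ for a fixed numerical constant $c_0$. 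Using the equation for $\phi^n$ to replace $\nabla\cdot V'(\nabla\phi^n)(t,x)$ by $\partial_t\phi^n(t,x)$ minus the noise term of~\eqref{eq:09272709}, and differentiating in $X^n_l(y)$, the observable's derivative splits as follows. The noise contribution is supported on $\{l/n\in(-\ell^2,0)\}$, carries the factor $|\chi(y)-|\Lambda_L|^{-1}\sum_x\chi(x)|\le 2$ (from $0\le\chi\le\indc_{\Lambda_L}$), and, after division by $|Q_\ell|\ge c\ell^{d+2}$, is bounded by $Cn^{-1/2}\ell^{-d-2}\exp((l/n)/(C\ell^2))$. The $\partial_t\phi^n$ contribution equals $c_0|Q_\ell|^{-1}\sum_{x\in\Lambda_L}\chi(x)\,(w(0,x)-w(-\ell^2,x))$ after integrating by parts in time over $(-\ell^2,0)$; each endpoint term is estimated by feeding the Duhamel representation into the Nash--Aronson bound (Proposition~\ref{prop.NashAronson}), using the elementary estimate $\sum_{x\in\Lambda_L}\Phi_{C,L}(\tau,x-y)\le C\exp(-\tau/(CL^2))$ together with $L\le\ell$ to trade $L^2$ for $\ell^2$ in the exponent, which yields $\sum_{x\in\Lambda_L}|w(0,x)|+\sum_{x\in\Lambda_L}|w(-\ell^2,x)|\le Cn^{-1/2}\exp((l/n)/(C\ell^2))$. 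In this last step one separates the cases according to whether $l/n\ge -\ell^2$ or not (if it is, then $w(-\ell^2,\cdot)\equiv 0$ since $P_\a(t;s,\cdot)=0$ for $t<s$) and whether $|l/n|\le L^2$ or not (to choose between the two forms of the Nash--Aronson bound in Proposition~\ref{prop.NashAronson}). Dividing by $|Q_\ell|$ then gives~\eqref{eq:16480112}.

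I expect the large-cylinder case to be the main obstacle: the difficulty is not any single estimate but carrying out the summation by parts and the time integration by parts cleanly and then extracting the Gaussian factor $\exp((l/n)/(C\ell^2))$ uniformly in $l$ and $\ell$, which forces the case distinctions above on the position of the impulse time $l/n$ relative to $-\ell^2$ and to $-L^2$ and repeated use of $L\le\ell$ to pass from the heat-kernel localization scale $L$ to the cylinder scale $\ell$. The small-cylinder bound, by contrast, is essentially a corollary of the proof of Proposition~\ref{prop.spataveflux} combined with the cheap gradient bound~\eqref{def.cutoffchi} on $\chi$.
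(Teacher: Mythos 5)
Your small-cylinder argument is essentially identical to the paper's: differentiate, obtain the Duhamel representation, pull out the bound $|\nabla\chi|\le CL^{-1}$, and then invoke the pointwise estimate~\eqref{eq:09592809} on $\|\nabla P_\a(\cdot;s,y)\|_{\underline{L}^1(Q_\ell)}$ established inside the proof of Proposition~\ref{prop.spataveflux}. Your remark that the two branches of $\Psi_{C,\ell,L}$ agree up to constants at $|s|\asymp\ell^2$ provided $\ell\le L$ (so that the factor $\exp(-\ell^2/(CL^2))$ stays bounded below) is the right sanity check, though it goes unmentioned in the paper.

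For large cylinders your route is mathematically equivalent to the paper's but organized in the opposite order. The paper first differentiates the observable, obtaining the Duhamel form $\sqrt{2n}\int_{l/n}^{(l+1)/n}(\nabla\chi\cdot\a\nabla P_\a(\cdot;s,y))_{Q_\ell}\,ds$, then sums by parts spatially and replaces $\nabla\cdot\a\nabla P_\a$ by $\partial_t P_\a$ using the \emph{linearized} equation, and finally integrates by parts in time — the endpoint term $P_\a((-\ell^2)\vee s,\cdot;s,y)$ absorbs the case where the impulse time falls inside the window, since there the heat kernel is evaluated at its initial condition $\delta_y-|\Lambda_L|^{-1}$. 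You instead sum by parts and substitute the \emph{nonlinear} equation $\nabla\cdot V'(\nabla\phi^n)=\partial_t\phi^n-\text{noise}$ first, and only then differentiate. The two coincide because differentiation in $X^n_l(y)$ commutes with the spatial and temporal integrations by parts, and the explicit noise term you produce corresponds exactly to the paper's $P_\a(s,\cdot;s,y)$ endpoint evaluation. Your presentation has the merit of making the impulsive noise contribution visible rather than buried in the $(-\ell^2)\vee s$ notation, and of flagging that $V'(\nabla\phi^n)$ is a genuine antisymmetric edge field (using that $V$ is even) so that the discrete summation by parts is legitimate — a point the paper uses implicitly. Beyond this reorganization I see no gap; your estimates for the two endpoint terms (distinguishing $l/n\gtrless-\ell^2$ and $|l/n|\gtrless L^2$, and using $L\le\ell$ to pass from the heat-kernel localization scale to $\ell$ in the exponent) reproduce the paper's application of Nash--Aronson and deliver the same final bound.
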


\begin{remark} \label{remark3.6}
As mentioned above, the reason we state the result at this level of precision is that it is used to obtain a sharper rate of convergence in Theorem~\ref{Th.quantitativehydr}. The proof follows the same strategy as the one of Proposition~\ref{prop.2.2BL} and Proposition~\ref{prop.spataveflux}, and the argument is in fact straightforward once Proposition~\ref{prop.spataveflux} has been established.
\end{remark}

\begin{remark}
The proof is written for the corrector defined on a specific class of parabolic cylinders and with a slope constant equal to $0$ for notational convenience. The same argument would apply to correctors defined on a general parabolic cylinder and with a general time-dependent slope $q : I \to \R$. The only constraint in the extension is the following: if we denote by $Q = I \times \Lambda$ the domain of the corrector, and by $\tilde Q = \tilde I \times \tilde \Lambda$ the large cylinder in~\eqref{eq:16480112}, then one must have $\Lambda \subseteq \tilde \Lambda$ (so that one can integrate by parts in the argument).
\end{remark}

\begin{remark}
Combining the result of Proposition~\ref{prop.prop3.5techni} with the Gaussian concentration inequality, one obtains the fluctuation estimates:
\begin{itemize}
\item On small cylinders ($\ell \leq L$): 
\begin{equation*}
\left| \left( \nabla \chi \cdot V'\left(\nabla \phi^n  \right) \right)_{Q_\ell} - \E \bigl[ \left( \nabla \chi \cdot V'(\nabla \phi^n\right)_{Q_\ell} \bigr] \right| \leq \mathcal{O}_2 \left( C L^{-1} \ell^{-d/2} \right);
\end{equation*}
\item On large cylinders ($\ell \geq L$): we first remark that since the map $\chi$ is supported in the box $\Lambda_L$, we have $\left( \nabla \chi \right)_Q = 0$. Since, by definition, the field $\phi^n$ is spatially stationnary (thus its expectation does not depend on the spatial variable), and since the map $\chi$ is deterministic, we have the identity
\begin{equation*}
	\E \left[ \left( \nabla \chi \cdot V'\left(\nabla \phi^n  \right) \right)_{Q_\ell} \right] =  \left( \nabla \chi \cdot \E \left[ V'\left(\nabla \phi^n  \right)  \right] \right)_{Q_\ell} = 0.
\end{equation*}
The Gaussian concentration inequality gives in this case
\begin{equation*}
\left| \left( \nabla \chi \cdot V'\left(\nabla \phi^n  \right) \right)_{Q_\ell} \right| \leq \mathcal{O}_2 \left( C \ell^{-d/2 +1} \right).
\end{equation*}
\end{itemize}
\end{remark}

\begin{proof}
We first treat the case $\ell \leq L$ (small cylinders). Using the same arguments as in the proof of Proposition~\ref{prop.spataveflux}, we obtain the identity
    \begin{equation} \label{eq:14252401}
    \frac{\partial \left(  \nabla \chi \cdot V'\left(\nabla \phi^n  \right) \right)_{Q_\ell}}{\partial X^{n}_l(y)} =  n^{\frac12} \int_{l/n}^{(l+1)/n} \left( \nabla \chi \cdot \a \nabla P_\a (\cdot ; s , y) \right)_{Q_\ell} \, ds.
\end{equation}
To prove the inequality~\eqref{eq:16470112}, we use the properties of the map $\chi$ and obtain
\begin{equation*}
   \biggl| \frac{\partial \left(  \nabla \chi \cdot V'\left(\nabla \phi^n  \right) \right)_{Q_\ell}}{\partial X^{n}_l(y)} \biggr| 
   \\ 
   \leq C n^{\frac12} L^{-1}  \int_{l/n}^{(l+1)/n} \left\|  \nabla P_\a \left(\cdot ; s , y \right)  \right\|_{\underline{L}^1\left( Q_\ell \right)} \, ds.
\end{equation*}
The rest of the argument is identical to the proof of Proposition~\ref{prop.spataveflux}.

We next treat the case $\ell \geq L$ (large cylinders). In this setting, the identity~\eqref{eq:14252401} is still valid. Using that $\ell \geq L$ and that the map $\chi$ is supported in $\Lambda_L$, we may perform a discrete integration by parts and write
\begin{equation*}
    \left( \nabla \chi \cdot \a \nabla P_\a (\cdot ; s , y) \right)_{Q_\ell}  = - \left( \chi \nabla \cdot \a \nabla P_\a (\cdot ; s , y) \right)_{Q_\ell}.
\end{equation*}
Using that the heat kernel $P_\a$ solves the parabolic equation $\partial_t P_\a = \nabla \cdot \a \nabla P_\a$, we have
\begin{align*}
    \left( \nabla \chi \cdot \a \nabla P_\a (\cdot ; s , y) \right)_{Q_\ell} & = \left( \chi \partial_t P_\a (\cdot ; s , y) \right)_{Q_\ell} \\
    &  =  \ell^{-2} \left[\left( \chi P_\a (0, \cdot ; s , y) \right)_{\Lambda_\ell} - \left( \chi P_\a ((- \ell^2) \vee s, \cdot ; s , y) \right)_{\Lambda_\ell}\right].
\end{align*}
Using the Nash-Aronson estimate on the heat kernel $P_\a$, we deduce that
\begin{align*}
    \lefteqn{\left| \frac{ \partial \left( \nabla \chi \cdot V'\left(\nabla \phi^n  \right) \right)_{Q_\ell}}{\partial X^{n}_l(y)} \right|} \qquad & \\ & \leq  Cn^{\frac12} \ell^{-d-2} \int_{l/n}^{(l+1)/n} \left\|  P_\a \left((- \ell^2) \vee s, \cdot ; s , y \right)  \right\|_{L^1\left( \Lambda_L \right)} + \left\|  P_\a \left(0, \cdot ; s , y \right)  \right\|_{L^1\left( \Lambda_L \right)} \, ds, \\
    & \leq C n^{-\frac12}\ell^{-d-2} \exp \left( - \frac{(l/n)}{C \ell^2} \right).
\end{align*}
This completes the proof of the proposition. 
\end{proof}

\subsection{Expectation of the average of the flux of the corrector and the surface tension}

In this section, we study quantitatively the expectation of the spatial average of the flux of the corrector and relate it to the surface tension. Specifically, in Section~\ref{subsecsurfacetension}, we introduce a finite-volume version of the surface tension (following~\cite{FS}), and establish its convergence quantitatively with an optimal rate to the (infinite-volume) surface tension of the model. In Section~\ref{subsec.expectationaveflux}, we estimate the difference between the expectation of the spatial average of the flux of the first-order corrector and the finite-volume surface tension.

\subsubsection{The surface tension} \label{subsecsurfacetension}

In this section, we introduce and study the finite-volume surface tension associated with the model. In~\eqref{def.surfacetension}, we recall the notation $\Omega_{\Lambda_L , \mathrm{per}}^\circ$ for the set of periodic real-valued functions $\phi : \Lambda_L \to \R$ satisfying $\sum_{x \in \Lambda_L} \phi(x) = 0$.

\begin{definition}[Finite-volume Gibbs measure and surface tension]
For each integer $L \in \N$, and each slope $p \in \Rd$, we define the Gibbs measure $\mu_{\Lambda_L, p}$ on the space of functions $\Omega_{\Lambda_L , \mathrm{per}}^\circ$ according to the formula
\begin{equation} \label{def.surfacetension}
    \sigma_L (p) := -\frac{1}{\left| \Lambda_L \right|} \log \int_{\Omega_{\Lambda_L , \mathrm{per}}^\circ} \exp \Biggl( - \sum_{e \in E({\Lambda_L})} V(p \cdot e + \nabla \phi(e)) \Biggr) \prod_{x \in {\Lambda_L}} d \phi(x).
\end{equation}
We additionally define the finite-volume Gibbs measure over the space of functions $ \Omega_{\Lambda_L}^\circ$,
\begin{equation} \label{def.Gibbs}
    \mu_{\Lambda_L , p} := \frac{1}{Z_{\Lambda_L,p}} \exp \Biggl( - \sum_{e \in E(\Lambda_L)} V(p \cdot e + \nabla \phi(e)) \Biggr)  \prod_{x \in \Lambda_L} d \phi(x),
\end{equation}
where $Z_{\Lambda_L,p}$ is the normalizing constant.
\end{definition}
In the rest of this section, we let $\phi_{\Lambda_L , p} : \Lambda_L \to \R$ be a random variable distributed according to $\mu_{\Lambda_L, p}$ independent of the Brownian motions. We record below three properties of the finite-volume surface tension and the Gibbs measure $\mu_{\Lambda_L , p}$:
\begin{itemize}
    \item By the Brascamp-Lieb inequality~\cite{BL75, BL76, fontaine1983non}, for every $x \in \Lambda_L$,
    \begin{equation*}
        \left| \phi_{\Lambda_L , p} (x)\right| \leq \mathcal{O}_2 (C (1 + (\log L)^{\frac12} \indc_{\{ d = 2\}}))
        \,.
    \end{equation*}
    \item The Langevin dynamic is ergodic and stationary with respect to~$\mu_{\Lambda_L, p}$. Consequently, if we consider the system of stochastic differential equations
    \begin{equation*}
    \left\{ \begin{aligned}
    d \psi_{{Q_L}, \mathrm{per}}(t , x ; p) & = \nabla \cdot V'(p + \nabla \psi_{Q_L, \mathrm{per}}(\cdot , \cdot ; p)) (t ,x) dt + \sqrt{2} dB_t(x) &\mbox{in}&~ Q_L, \\
    \psi_{{Q_L}, \mathrm{per}}(-L^2 , \cdot ; p) &= \psi_{\Lambda_L , p} &\mbox{in}&~ \Lambda_L, \\
    \psi_{{Q_L}, \mathrm{per}} (t , \cdot ; p) &\in \Omega_{{\Lambda_L} , \mathrm{per}} &\mbox{for}&~ t \in I_L,
    \end{aligned} \right.
    \end{equation*}
    and define, for any time $t \in I$,
    \begin{equation*}
    \phi_{Q_L, \mathrm{per}}(t , x;p) := \psi_{Q_L, \mathrm{per}}(t , x;p) - \frac{1}{\left| \Lambda_L \right|} \sum_{x \in \Lambda_L} \psi_{Q_L, \mathrm{per}}(t , x;p),
    \end{equation*}
    then, for any time $t \in I$, the random variable $\phi_{{Q}, \mathrm{per}}(t , \cdot;p)$ is distributed according to $\mu_{\Lambda_L, p}\,.$
    Consequently, one has the estimate, for any $x \in \Lambda_L$,
    \begin{equation} \label{eq.realBL}
        \left| \phi_{Q_L, \mathrm{per}}(t , x ;p)  \right| \leq \mathcal{O}_2 \bigl(C (1 + (\log L)^{\frac12} \indc_{\{ d = 2\}})\bigr)
        \,.
    \end{equation}
    \item Using an explicit computation, one has the identity
    \begin{equation} \label{eq:17311910}
        D_p \sigma_{L} (p) := \E \left[  \left( V'(p + \nabla \phi_{\Lambda_L, \mathrm{per}} (\cdot, \cdot ; p))  \right)_{\Lambda_L} \right].
    \end{equation}
    Consequently, using the stationarity of the dynamic in both the space and time variables, we have the identity, for any $\ell \leq L$,
    \begin{equation*}
         D_p \sigma_L (p) = \E \bigl[  \left( V'(p + \nabla \phi_{Q_L, \mathrm{per}} (\cdot, \cdot ; p))  \right)_{Q_\ell} \bigr].
    \end{equation*}
\end{itemize}

The main result of this section provides a quantitative rate of convergence for the gradient $D_p \sigma_L$ of the finite-volume surface tension.

\begin{proposition}[Quantitative convergence of the finite-volume surface tension] \label{prop.surfacetesnionapprox}
For each $ p \in \Rd$, the sequence $ \sigma_L(p)$ converges as $L$ tends to infinity to the surface tension $\bar \sigma(p)$. Additionally, there exists a constant $C < \infty$ depending on the parameters $d , c_+ , c_-$ and  $p$ such that, for any $L \in \N$,
\begin{equation*}
\left| D_p \sigma_L(p) -  D_p \bar \sigma(p) \right| 
\leq 
CL^{-1}\bigl(1 + (\log L)^{\frac12} \indc_{\{ d = 2\}}\bigr)
\,.
\end{equation*}
\end{proposition}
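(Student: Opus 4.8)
The plan is to compare the finite-volume object $D_p\sigma_L(p)$, which we know equals the expected space-time average of the flux $\E[(V'(p+\nabla\phi_{Q_L,\mathrm{per}}(\cdot,\cdot;p)))_{Q_\ell}]$ for any $\ell \le L$, against the same flux average computed for the corrector $\phi_Q(\cdot,\cdot;q)$ of Definition~\ref{def.firstordercorrfinvol} on a much larger cylinder $Q = I \times \Lambda_{L'}$ with $L' \gg L$ (taking the slope $q \equiv p$ constant in time, and $|I| = s_+ - s_- \ge (L')^2$). The point is that on the larger cylinder the corrector has forgotten its initial condition and is close to the infinite-volume stationary dynamics, so that $\E[(V'(p+\nabla\phi_Q(\cdot,\cdot;p)))_{z+Q_\ell}]$ should, up to controllable error, be independent of the macroscopic geometry and hence identify $D_p\bar\sigma(p)$ in the limit. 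First I would establish the qualitative convergence $\sigma_L(p) \to \bar\sigma(p)$; this is classical (Funaki--Spohn~\cite{FS}, subadditivity of $|\Lambda_L|\sigma_L$), and then the quantitative statement concerns only the \emph{gradient} via the identity~\eqref{eq:17311910}.

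\textbf{Key steps.} The main technical input is already in hand: Proposition~\ref{prop.spataveflux} gives, for any admissible cylinder, that the space-time average of the flux concentrates around its mean at rate $\mathcal{O}_2(C\ell^{-d/2})$, and the fluctuation bound~\eqref{eq:20291910grad} on $\vec\nabla\phi$ together with the Lipschitz bound on $V'$ gives that $|V'(p+\nabla\phi_Q)|$ is $\mathcal{O}_2(C)$ pointwise. I would proceed as follows. (1) Fix $L$ and run the stationary dynamics $\phi_{Q_M,\mathrm{per}}(\cdot,\cdot;p)$ on a cylinder of size $M$ with $L \le M$; by stationarity and ergodicity of $\mu_{\Lambda_M,p}$ under the Langevin dynamics, $D_p\sigma_M(p) = \E[(V'(p+\nabla\phi_{Q_M,\mathrm{per}}))_{Q_L}]$. (2) Couple the two dynamics $\phi_{Q_M,\mathrm{per}}$ and $\phi_{Q_{M'},\mathrm{per}}$ (with $M < M'$) through the same Brownian motions; their difference $w$ solves the linear parabolic equation $\partial_t w - \nabla\cdot\a\nabla w = 0$ with uniformly elliptic $\a$ (the interpolated Hessian of $V$), so by the Nash--Aronson estimate (Proposition~\ref{prop.NashAronson}) and the Caccioppoli/De Giorgi--Nash--Moser machinery the effect on $\nabla w$, space-time averaged over $Q_L$, decays like a power of $L/M$; more precisely, boundary effects are felt at distance $\sim M$ from $\partial\Lambda_M$ in space and $\sim M^2$ in time, so on the innermost cylinder $Q_L$ the flux average of $w$ has expected size $\lesssim (L/M)^\alpha$ or better, and one optimizes. (3) Combine across a dyadic sequence $M = L, 2L, 4L, \dots$ using the triangle inequality and the summability of the decay to identify the common limit $D_p\bar\sigma(p)$, and read off the rate $CL^{-1}(1 + (\log L)^{1/2}\indc_{\{d=2\}})$; the logarithmic correction in $d=2$ enters through the height fluctuation bound~\eqref{eq.realBL}, which controls the difference between the flux through the boundary layer and its expectation.

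\textbf{The main obstacle.} The delicate point is extracting the \emph{sharp} $L^{-1}$ rate (rather than merely a power $L^{-\alpha}$) for the difference $|D_p\sigma_L(p) - D_p\bar\sigma(p)|$. A naive De Giorgi--Nash--Moser argument on the coupled difference $w$ only yields a Hölder rate, which is not enough. To get $L^{-1}$ I expect one must instead use the flux structure directly: write the finite-volume quantity as a sum of contributions, integrate by parts to exploit that the \emph{divergence} of the flux is the drift in the Langevin dynamics, and use the decay estimate~\eqref{eq:16480112} on \emph{large} cylinders from Proposition~\ref{prop.prop3.5techni} (or the associated concentration estimate $\mathcal{O}_2(C\ell^{-d/2+1})$), which is precisely the ``extra'' information beyond Proposition~\ref{prop.spataveflux} designed for exactly this purpose. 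Concretely, the difference between surface tensions at scales $L$ and $2L$ should be expressible, after integration by parts against a cutoff $\chi$ adapted to $\Lambda_L \subseteq \Lambda_{2L}$, in terms of $(\nabla\chi \cdot V'(\nabla\phi))_{Q_\ell}$ for $\ell \sim L$, and the large-cylinder bound gives this quantity size $\lesssim L^{-1}$ in expectation (with the $d=2$ logarithm from the height field); summing the telescoping series over dyadic scales then yields the claim. The bookkeeping of the cutoff, the boundary layer, and the passage between periodic finite-volume Gibbs measures and the corrector dynamics is where the real work lies, but each ingredient is supplied by the preceding sections.
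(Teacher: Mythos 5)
Your scaffolding is right: dyadic telescoping over scales $M=L,2L,4L,\ldots$, coupling $\phi_{Q_L,\mathrm{per}}$ and $\phi_{Q_{2L},\mathrm{per}}$ through the same Brownian motions so that the difference $w$ solves a linear uniformly parabolic equation, and attributing the $d=2$ logarithm to the Brascamp--Lieb height bound~\eqref{eq.realBL}. That much matches the paper.

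But the ``main obstacle'' you identify is not an obstacle, and your proposed workaround (integration by parts, the large-cylinder flux estimate~\eqref{eq:16480112}, Proposition~\ref{prop.prop3.5techni}) goes off in a direction the paper does not take and that is not needed. You assert that ``a naive De Giorgi--Nash--Moser argument on the coupled difference $w$ only yields a H\"older rate, which is not enough.'' This is a misdiagnosis: the point is that one does not want an \emph{excess-decay} or boundary-decay estimate for $\nabla w$ at all. Instead, since $w=\phi_{Q_L,\mathrm{per}}-\phi_{Q_{2L},\mathrm{per}}$ is a \emph{difference of heights} that is $O\bigl(1+(\log L)^{1/2}\indc_{\{d=2\}}\bigr)$ in $\underline{L}^2(Q_L)$ uniformly in $L$ (by~\eqref{eq.realBL} and the triangle inequality), a single application of the parabolic Caccioppoli inequality, Proposition~\ref{prop.CAccioppolipara}, to the equation $\partial_t w - \nabla\cdot\a\nabla w = 0$ gives directly
\begin{equation*}
\E\bigl[\|\nabla w\|_{\underline{L}^2(Q_{L/2})}\bigr]
\;\leq\;
\frac{C}{L}\,\E\bigl[\|w\|_{\underline{L}^2(Q_L)}\bigr]
\;\leq\;
C L^{-1}\bigl(1+(\log L)^{1/2}\indc_{\{d=2\}}\bigr).
\end{equation*}
Then, because $V'$ is Lipschitz and $D_p\sigma_L(p)=\E[(V'(p+\nabla\phi_{Q_L,\mathrm{per}}))_{Q_{L/2}}]$ by~\eqref{eq:17311910}, the difference of gradients is bounded by $\E[\|\nabla w\|_{\underline{L}^2(Q_{L/2})}]$, and summing the telescoping dyadic series finishes the proof. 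No Nash--Aronson, no De Giorgi--Nash--Moser, no flux structure or cutoff bookkeeping. The sharp $L^{-1}$ comes entirely from the $1/L$ factor in Caccioppoli, once one knows that the heights (and hence $w$) are a priori bounded by Brascamp--Lieb rather than growing with $L$. You had all the right ingredients on the table — reread your own step (2) — but talked yourself out of the simple argument by looking for decay where only an a priori $L^2$ bound was required.
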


\begin{proof}
We first note that, it is sufficient to prove the following inequality: for every~$L \in \N$,
\begin{equation} \label{eq:17081810}
\left| D_p \sigma_L(p) - D_p \sigma_{2L}(p) \right|
\leq
CL^{-1}\bigl(1 + (\log L)^{\frac12} \indc_{\{ d = 2\}}\bigr)
\,.
\end{equation}
Proposition~\ref{prop.surfacetesnionapprox} can indeed be deduced from~\eqref{eq:17081810} by a summation over dyadic scales.
The inequality~\eqref{eq:17081810} can be proved by noting that the map $w  = \phi_{\Lambda_L, \mathrm{per}}(\cdot ; p) - \phi_{\Lambda_{2L}, \mathrm{per}}(\cdot ; p)$ solves a linear parabolic equation with uniformly elliptic coefficient. By the Caccioppoli inequality and the estimate~\eqref{eq.realBL}, we have
\begin{align*}
    \E \bigl[ \left\| \nabla w \right\|_{\underline{L}^2(Q_{L/2})} \bigr] 
    & \leq \frac{C}{L} \E \left[ \left\| \phi_{Q_L, \mathrm{per}}(\cdot , \cdot; p) \right\|_{\underline{L}^2(Q_{L})} \right] + \frac{C}{L} \E \left[ \left\| \phi_{Q_{2L}, \mathrm{per}}(\cdot, \cdot ; p) \right\|_{\underline{L}^2(Q_{L})} \right] \\
    &  \leq CL^{-1}\bigl(1 + (\log L)^{\frac12} \indc_{\{ d = 2\}}\bigr)\,.
\end{align*}
Using the identity~\eqref{eq:17311910}, together with the assumption that $V'$ is Lipschitz, we deduce that
\begin{align} \label{eq:20311910}
    \left| D_p \sigma_L(p) - D_p \sigma_{2L}(p) \right| 
    & = 
    \Bigl| \E \bigl[  \left( V'(p + \nabla \phi_{L , \mathrm{per}}(\cdot, \cdot ; p ))  \right)_{Q_{L/2}} \bigr] - \E \bigl[  \left( V'(p + \nabla \phi_{2L , \mathrm{per}}(\cdot , \cdot; p ))  \right)_{Q_{L/2}} \bigr] \Bigr| \notag \\
    & \leq \E \bigl[ \left\|  \nabla w \right\|_{\underline{L}^2(Q_{L/2})} \bigr].
\end{align}
A combination of the two previous displays completes the proof of the Proposition~\ref{prop.surfacetesnionapprox}.
\end{proof}

\subsubsection{Expectation of the average of the flux corrector} \label{subsec.expectationaveflux}

In this section, we estimate the difference between the expectation of the spatial average of the corrector and the gradient of the surface tension. In the following statement, we let $Q = I \times \Lambda$ and $Q_1 = I_1 \times \Lambda$ be two parabolic cylinders such that $2I_1 \subseteq I$. We additionally denote by $L$ the sidelength of $\Lambda$ and assume that $|I_1| = L^2$. Given a (time-dependent) slope $q : I \to \Rd$ and a (constant) slope $p \in \Rd$, we set
\begin{equation*}
    \left\| p - q \right\|_{\underline{L}^2 (2I_1)}^2  := \frac{1}{|2I_1|} \int_{2I_1} |q(t) - p|^2 \, dt.
\end{equation*}

\begin{proposition} \label{prop3.9}
There exists a constant $C:= C(d ,c_+ , c_-) < \infty$ such that for any constant slope $p \in \Rd$, and any time-dependent slope $q : I \to \Rd$,
\begin{equation*}
\left| \E \left[ \left( V'( q + \nabla \phi_{Q}(\cdot, \cdot ; q))\right)_{Q_1}\right] - D_p \bar \sigma(p) \right| \leq C \left\| p - q \right\|_{\underline{L}^2 (2I_1)} 
+ CL^{-1} \bigl( 1 + (\log L)^{\frac12} \indc_{\{ d = 2\}} \bigr)
\,.
\end{equation*}
\end{proposition}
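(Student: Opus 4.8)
The plan is to interpolate between the corrector $\phi_Q(\cdot,\cdot;q)$ with a time-dependent slope $q$ and the stationary corrector $\phi_{Q_L,\mathrm{per}}(\cdot,\cdot;p)$ with the constant slope $p$, controlling the two sources of discrepancy separately: the fact that $q\neq p$ on the time interval $2I_1$, and the fact that $\phi_Q$ is started from the (deterministic) zero initial condition rather than from the stationary Gibbs state $\mu_{\Lambda_L,p}$. First I would recall from the third bullet of Section~\ref{subsecsurfacetension} the identity $D_p\sigma_L(p)=\E\bigl[(V'(p+\nabla\phi_{Q_L,\mathrm{per}}(\cdot,\cdot;p)))_{Q_1}\bigr]$ (using $|I_1|=L^2$, so $Q_1$ is a cylinder on which the stationary dynamics can be averaged), and then invoke Proposition~\ref{prop.surfacetesnionapprox} to replace $D_p\sigma_L(p)$ by $D_p\bar\sigma(p)$ at the cost of the error $CL^{-1}(1+(\log L)^{\frac12}\indc_{\{d=2\}})$. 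This already produces the second term on the right-hand side, so it remains to bound
\begin{equation*}
\Bigl|\E\bigl[(V'(q+\nabla\phi_Q(\cdot,\cdot;q)))_{Q_1}\bigr]-\E\bigl[(V'(p+\nabla\phi_{Q_L,\mathrm{per}}(\cdot,\cdot;p)))_{Q_1}\bigr]\Bigr|
\end{equation*}
by $C\|p-q\|_{\underline{L}^2(2I_1)}+CL^{-1}(1+(\log L)^{\frac12}\indc_{\{d=2\}})$.

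To estimate this difference I would couple the two dynamics through the same family of Brownian motions and set $w:=\phi_Q(\cdot,\cdot;q)-\phi_{Q_L,\mathrm{per}}(\cdot,\cdot;p)$; arguing as in point~(1) of Section~\ref{sec.1.5.1}, the field $p+\nabla\phi_{Q_L,\mathrm{per}}+\nabla w = q+\nabla\phi_Q + (p-q) $ plus a rearrangement shows that $w$ solves a linear uniformly parabolic equation with a uniformly elliptic environment $\a(t,e)=\int_0^1 V''(\,\cdot\,)\,ds$ and a right-hand side of the form $\nabla\cdot\bigl(\a(t,e)(q(t)-p)\bigr)$ coming from the slope mismatch. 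Since $V'$ is Lipschitz, the left-hand side above is controlled by $\E\bigl[\|q-p\|_{\underline{L}^2(2I_1)}\bigr]+\E\bigl[\|\nabla w\|_{\underline{L}^2(Q_1)}\bigr]$, so it suffices to bound $\E\bigl[\|\nabla w\|_{\underline{L}^2(Q_1)}\bigr]$. For this I would apply the parabolic Caccioppoli inequality (Proposition~\ref{prop.CAccioppolipara}) on the larger cylinder $Q$ (whose time interval contains $2I_1$ with a margin, by the hypothesis $2I_1\subseteq I$ and $|I_1|=L^2$), which gives
\begin{equation*}
\E\bigl[\|\nabla w\|_{\underline{L}^2(Q_1)}\bigr]\leq \frac{C}{L}\E\bigl[\|w\|_{\underline{L}^2(2Q_1)}\bigr]+C\E\bigl[\|q-p\|_{\underline{L}^2(2I_1)}\bigr].
\end{equation*}
The first term is then handled by the triangle inequality $\|w\|\leq\|\phi_Q(\cdot;q)\|+\|\phi_{Q_L,\mathrm{per}}(\cdot;p)\|$ together with the fluctuation bound~\eqref{eq:20291910} of Proposition~\ref{prop.2.2BL} for $\phi_Q$ and the Brascamp--Lieb bound~\eqref{eq.realBL} for the stationary corrector; both give $\mathcal{O}_2\bigl(C(1+(\log L)^{\frac12}\indc_{\{d=2\}})\bigr)$, hence an $L^1$ bound of the same order, so $\frac{C}{L}\E\bigl[\|w\|_{\underline{L}^2(2Q_1)}\bigr]\le CL^{-1}(1+(\log L)^{\frac12}\indc_{\{d=2\}})$. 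Collecting the pieces yields the claimed estimate.

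The main obstacle, and the point requiring genuine care rather than bookkeeping, is the comparison at the level of initial conditions: $\phi_Q$ is started from $0$ at time $s_-$ while the reference field is started from the Gibbs state $\mu_{\Lambda_L,p}$ at time $-L^2$, so the coupling $w$ does not vanish on the parabolic boundary of $Q_1$ and one really does need the Caccioppoli/energy estimate on the fatter cylinder $Q$ — using the gap between $2I_1$ and $I$ — to absorb the boundary/initial-layer contribution into the $L^{-1}$ error. A secondary subtlety is bookkeeping the divergence-form source $\nabla\cdot(\a(q-p))$ correctly when $q$ is time-dependent (it is spatially constant at each fixed time, so $\nabla\cdot(\a\,(q(t)-p))$ is a legitimate forcing for the Caccioppoli inequality with $F=\a(q(t)-p)$ and $\|F\|_{\underline{L}^2}\le C\|q-p\|_{\underline{L}^2(2I_1)}$), and ensuring all stochastic-integrability manipulations only ever use the $\mathcal{O}_2\Rightarrow\E$ bound from Section~\ref{section.stochint}. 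Once these are in place the proposition follows by combining the two displays above with Proposition~\ref{prop.surfacetesnionapprox}.
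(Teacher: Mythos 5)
Your proposal follows the same skeleton as the paper's proof: pass to the finite-volume surface tension via Proposition~\ref{prop.surfacetesnionapprox}, represent its gradient by the stationary corrector, write the difference of the two dynamics as a solution $w$ of a linear uniformly parabolic equation with divergence-form source $\nabla\cdot(\tilde\a\,(q-p))$, apply Caccioppoli, and feed in~\eqref{eq:20291910} and~\eqref{eq.realBL} for the $\underline{L}^2$-bound on $w$. The Lipschitz-of-$V'$ step at the end is also the same.

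There is, however, a concrete defect in the choice of reference corrector. You set $w := \phi_Q(\cdot,\cdot;q) - \phi_{Q_L,\mathrm{per}}(\cdot,\cdot;p)$, where $\phi_{Q_L,\mathrm{per}}$ is (as defined in Section~\ref{subsecsurfacetension}) started at time $-L^2$ in the box $\Lambda_L$. After the usual centering $Q_1 = Q_L$, the function $w$ is only defined on $I_L\times\Lambda_L$, which coincides exactly with the target cylinder $Q_1$. There is therefore no margin — neither in space nor in time — to apply the interior Caccioppoli inequality of Proposition~\ref{prop.CAccioppolipara}, which requires $w$ to solve the equation on the doubled cylinder $2Q_1$. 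Your display $\E\bigl[\|\nabla w\|_{\underline{L}^2(Q_1)}\bigr]\lesssim L^{-1}\E\bigl[\|w\|_{\underline{L}^2(2Q_1)}\bigr]+\cdots$ evaluates $w$ on a set where it has not been constructed, and the hypothesis $2I_1\subseteq I$ only enlarges the domain of $\phi_Q$, not that of $\phi_{Q_L,\mathrm{per}}$. To make the step legitimate one must either (i) replace the reference corrector by $\phi_{Q_{2L},\mathrm{per}}(\cdot,\cdot;p)$, so that $w$ is defined on $2I_L\times\Lambda_L$, and then exploit the spatial stationarity of $\phi_Q$ to shrink the target cylinder to $I_L\times\Lambda_{L/2}$ (this is what the paper does, and it is also why the paper works with $D_p\sigma_{2L}$ rather than $D_p\sigma_L$ before invoking Proposition~\ref{prop.surfacetesnionapprox}), or (ii) argue that $\phi_{Q_L,\mathrm{per}}$ extends to a two-sided stationary process and then use $\Lambda_L$-periodicity of both fields to justify a ``time-only'' Caccioppoli/energy estimate with no spatial boundary terms. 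Neither of these repairs is stated in your proposal; with one of them the argument closes.
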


\begin{proof}
Using the translation-invariance of the system, we assume without loss of generality that $\Lambda = \Lambda_L$ (i.e., the box $\Lambda$ is centered at $0$) and that $Q_1 = Q_L$. By Proposition~\ref{prop.surfacetesnionapprox}, it is sufficient to prove the estimate
\begin{equation*}
     \Bigl| \E \left[ \left( V'( q + \nabla \phi_{Q}(\cdot, \cdot ; q))\right)_{Q_L} \right] - D_p  \sigma_{2L}(p)  \Bigr| 
     \leq C \left\| p - q \right\|_{\underline{L}^2 (2I_L)} 
     + CL^{-1} \bigl( 1 + (\log L)^{\frac12} \indc_{\{ d = 2\}} \bigr)\,.
\end{equation*}
Using that the field $\phi_Q$ is spatially stationary, it is equivalent to prove
\begin{equation*}
    \Bigl| \E \left[ \left( V'( q + \nabla \phi_{Q}(\cdot, \cdot ; q))\right)_{I_L \times \Lambda_{L/2}} \right] - D_p  \sigma_{2L}(p)  \Bigr| 
     \leq C \left\| p - q \right\|_{\underline{L}^2 (2I_L)} 
     + CL^{-1} \bigl( 1 + (\log L)^{\frac12} \indc_{\{ d = 2\}} \bigr)\,.
\end{equation*}
The proof follows the same outline as the proof of Proposition~\ref{prop.surfacetesnionapprox}: we use that the map $w :=  \phi_{Q}(\cdot ; q) - \phi_{Q_{2L}, \mathrm{per}}(\cdot ; p)$ solves a linear parabolic equation with uniformly elliptic coefficient of the form
\begin{equation*}
    \partial_t w - \nabla \cdot \a \left( p - q + \nabla w\right) = 0 ~\mbox{in}~ 2I_{L} \times \Lambda_L.
\end{equation*}
Applying the Caccioppoli inequality together with the estimates~\eqref{eq:20291910} and~\eqref{eq.realBL}, we obtain
\begin{equation*}
    \E \left[ \left\| \nabla w \right\|_{\underline{L}^2(I_L \times \Lambda_{L/2})} \right] \leq C \left\| p - q \right\|_{\underline{L}^2 (2I_L)} 
+ CL^{-1} \bigl( 1 + (\log L)^{\frac12} \indc_{\{ d = 2\}} \bigr).
\end{equation*}
We complete the argument using that the map $V'$ is Lipschitz as in the computation~\eqref{eq:20311910}.
\end{proof}

\subsection{Regularity of the corrector with respect to the slope}
\label{sec.regslope}

In this section, we prove an upper bound on the $L^2$-norm of the difference of two correctors defined on different parabolic cylinders (with large intersection) and with different (time-dependent) slopes. The result will be useful in order to establish Theorem~\ref{Th.quantitativehydr}; its proof is elementary and relies on an application of the Caccioppoli inequality. To state the result, we fix two two parabolic cylinders $Q_1 := I_1 \times \Lambda_1$ and $Q_2 := I_2 \times \Lambda_2$, denote the sidelengths of $\Lambda_1$ and $\Lambda_2$ by $L_1$ and $L_2$ respectively. We additionally fix two time-dependent slopes $q_1 : I_1 \to \Rd$ and $q_2 : I_2 \to \Rd$.

\begin{proposition} \label{prop.linearizedcorrector}
There exists a constant $C:= C(d , c_+ , c_-) < \infty$ such that the following holds. If there exists an integer $L \in \N$ and a point $z = (t , x) \in (0 , \infty) \times \Zd$ such that $L_1 , L_2 \leq 4 L$ and $z + Q_{2L} \subseteq Q_1 \cap Q_2$, then
\begin{align} 
\label{eq:diff2corr}
\lefteqn{
\left\| \nabla \phi_{Q_1}(\cdot, \cdot; q_1) - \nabla \phi_{Q_2}(\cdot, \cdot; q_2) \right\|_{\underline{L}^2(z+ Q_L)} 
} \qquad & 
\notag \\ & 
\leq C \left\|q_1 - q_2 \right\|_{\underline{L}^2(t + I_{2L})} 
+ CL^{-1} \Bigl( \left\|  \phi_{Q_1}(\cdot , \cdot ; q_1)\right\|_{\underline{L}^2(Q_1)}  + \left\|  \phi_{Q_2}(\cdot , \cdot; q_2)\right\|_{\underline{L}^2(Q_2)} \Bigr)
\,.
\end{align}
\end{proposition}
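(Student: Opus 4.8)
The plan is to reduce the estimate to the same linear-parabolic-equation argument already used twice above (in the proofs of Proposition~\ref{prop.surfacetesnionapprox} and Proposition~\ref{prop3.9}), combined with the Caccioppoli inequality. Write $\phi_1 := \phi_{Q_1}(\cdot,\cdot;q_1)$ and $\phi_2 := \phi_{Q_2}(\cdot,\cdot;q_2)$, and set $w := \varphi_{Q_1}(\cdot,\cdot;q_1) - \varphi_{Q_2}(\cdot,\cdot;q_2)$, using the $\varphi$-version of the dynamic from~\eqref{Langevin.def.corrQ} so that the additive Brownian term $\sqrt 2 dB_t(x)$ is common to both equations and cancels in the difference. (One then passes back from $\varphi$ to $\phi$ at the end, noting that $\nabla\varphi = \nabla\phi$ since the subtracted term in~\eqref{eq:14311610} is spatially constant, and that the spatial constant only affects the $\underline L^2$-norm through a term which is already controlled by the right-hand side — indeed $\|\phi_{Q_i}(\cdot,\cdot;q_i)\|_{\underline L^2(Q_i)}$ is exactly the quantity appearing there.) On the common cylinder $z + Q_{2L} \subseteq Q_1 \cap Q_2$, subtracting the two instances of~\eqref{Langevin.def.corrQ} and using the fundamental-theorem-of-calculus identity
\begin{equation*}
    V'(q_1 + \nabla\phi_1) - V'(q_2 + \nabla\phi_2) = \a \bigl( (q_1 - q_2) + (\nabla\phi_1 - \nabla\phi_2) \bigr),
\end{equation*}
with $\a(t,e) := \int_0^1 V''\bigl( s(q_1(t) + \nabla\phi_1(t,e)) + (1-s)(q_2(t) + \nabla\phi_2(t,e)) \bigr)\,ds$ a uniformly elliptic environment by~\eqref{eq.unifconvV}, shows that $w$ solves
\begin{equation*}
    \partial_t w - \nabla \cdot \a\bigl( (q_1 - q_2) + \nabla w \bigr) = 0 \quad \text{in } z + Q_{2L}.
\end{equation*}

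Next I would put this in the form to which Proposition~\ref{prop.CAccioppolipara} applies: the equation reads $\partial_t w - \nabla\cdot\a\nabla w = \nabla\cdot F$ with $F := \a(q_1 - q_2)$, where I am treating the constant-in-space (time-dependent) vector field $q_1 - q_2$ as a vector field in the sense of Section~\ref{subsecfunctions}, so that $\|F\|_{\underline L^2(z+Q_{2L})} \le c_+ \|q_1 - q_2\|_{\underline L^2(t + I_{2L})}$ (the spatial average drops out and only the time-integral of $|q_1(t) - q_2(t)|^2$ survives). Applying the Caccioppoli inequality on the pair $z + Q_L \subseteq z + Q_{2L}$ gives
\begin{equation*}
    \|\nabla w\|_{\underline L^2(z + Q_L)} \le \frac{C}{L}\|w - (w)_{z+Q_{2L}}\|_{\underline L^2(z + Q_{2L})} + C\|q_1 - q_2\|_{\underline L^2(t + I_{2L})}.
\end{equation*}
(Caccioppoli is translation-covariant and insensitive to additive constants in $w$, since it only sees $\nabla w$ on the left and $w$ enters the right only through $L^{-1}\|w\|$, which we may center; so the statement for $Q_{2L}$, $Q_L$ transfers to $z+Q_{2L}$, $z+Q_L$ verbatim.)

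Finally I would bound the term $\frac1L\|w - (w)_{z+Q_{2L}}\|_{\underline L^2(z+Q_{2L})}$ by $\frac{C}{L}\bigl(\|w\|_{\underline L^2(z+Q_{2L})}\bigr)$ after centering, and then enlarge the domain: since $z + Q_{2L}\subseteq Q_1$ and $z + Q_{2L} \subseteq Q_2$, and $L_1, L_2 \le 4L$ so that $|Q_i|$ and $|z+Q_{2L}|$ are comparable up to a dimensional constant, one has $\|w\|_{\underline L^2(z+Q_{2L})} \le C\bigl(\|\varphi_{Q_1}(\cdot,\cdot;q_1)\|_{\underline L^2(Q_1)} + \|\varphi_{Q_2}(\cdot,\cdot;q_2)\|_{\underline L^2(Q_2)}\bigr)$ — here one uses that $\underline L^2$ on the smaller cylinder is bounded by a constant (depending on the ratio of volumes, hence on $d$ only) times the $L^2$ on the smaller cylinder divided by $|z+Q_{2L}|^{1/2}$, which in turn is at most the $L^2$ on $Q_i$ divided by $|z+Q_{2L}|^{1/2} \ge c|Q_i|^{1/2}$. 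Passing from $\varphi$ back to $\phi$: $\nabla w$ is unchanged, and the centering constants $(w)_{z+Q_{2L}}$ can be taken to absorb the spatially-constant differences, so the final right-hand side involves only $\|\phi_{Q_i}(\cdot,\cdot;q_i)\|_{\underline L^2(Q_i)}$ as claimed. This yields~\eqref{eq:diff2corr}.

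There is no serious obstacle here — the proposition is, as the authors say, elementary. The one point requiring a little care is the bookkeeping with additive constants: the correctors $\phi_{Q_i}$ have zero spatial average (by~\eqref{eq:14311610}) while $w$, defined as a difference of $\varphi$'s, does not, and one must make sure the centering in the Caccioppoli step is chosen so that the error term is controlled by the $\underline L^2$-norms of the $\phi_i$'s (which carry the logarithmic $d=2$ correction from Proposition~\ref{prop.2.2BL}) and not by anything larger. A secondary bit of care is ensuring the vector-field interpretation of $q_1 - q_2$ is consistent with the normalization in~\eqref{defaveragevector} so that the time-$\underline L^2$ norm $\|q_1 - q_2\|_{\underline L^2(t+I_{2L})}$ — rather than a space-time norm — is what appears; this is automatic because $q_1 - q_2$ is spatially constant.
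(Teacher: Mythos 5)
Your overall approach — take the difference of the two dynamics, use the fundamental theorem of calculus to see it solves a linear uniformly parabolic equation with source $\nabla\cdot(\a(q_1-q_2))$, and apply Caccioppoli on $z+Q_L\subset z+Q_{2L}$ — is exactly the paper's, and the idea of working with $\varphi_{Q_i}$ so the $\sqrt2\,dB_t(x)$ terms cancel exactly is a good one. But two of your bookkeeping claims, which you flag yourself as "the one point requiring a little care," do not hold as stated.

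First, the claim that "the centering constants $(w)_{z+Q_{2L}}$ can be taken to absorb the spatially-constant differences" fails because those differences are \emph{time-dependent}: from~\eqref{eq:14311610}, $\varphi_{Q_i}-\phi_{Q_i}=\tfrac{\sqrt2}{|\Lambda_i|}\sum_{y\in\Lambda_i}\bigl(B_t(y)-B_{s^{(i)}_-}(y)\bigr)$, and when $\Lambda_1\neq\Lambda_2$ (which is the case in the applications, where the two boxes are translates of each other) the difference $h(t)$ of the two does not vanish. A single number cannot cancel a time-dependent spatial constant. What Caccioppoli (applied to $\tilde w:=\varphi_{Q_1}-\varphi_{Q_2}=w+h$, which solves the clean source-free equation) actually gives, after optimizing the centering constant, is
\[
\|\nabla w\|_{\underline L^2(z+Q_L)} \leq C\|q_1-q_2\|_{\underline L^2(t+I_{2L})} + \frac{C}{L}\Bigl(\|\phi_{Q_1}-\phi_{Q_2}\|_{\underline L^2(z+Q_{2L})} + \bigl\|h-(h)_{t+I_{2L}}\bigr\|_{\underline L^2(t+I_{2L})}\Bigr),
\]
with an extra $h$-term that is not dominated deterministically by the $\|\phi_{Q_i}\|$ terms. (It is $\mathcal O_2\bigl(CL^{1-d/2}\bigr)$ and hence harmless in the applications, but it must be carried.) Note that the paper's own one-line proof has the same silent imprecision: it asserts $\phi_{Q_1}-\phi_{Q_2}$ solves a \emph{source-free} linear parabolic equation, which is literally true only when $\Lambda_1=\Lambda_2$, as in Proposition~\ref{prop3.9} where both dynamics live on the same box.

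Second, your domain-enlargement step requires $|z+Q_{2L}|\geq c|Q_i|$, which you justify by $L_1,L_2\leq 4L$. That hypothesis controls only the spatial sides; nothing bounds $|I_i|$ from above. In the intended application (Section~\ref{section4.2}), $|I_i|\sim L^{4}\gg L^{2}$, so the ratio of volumes is far from a dimensional constant and the deterministic inequality $\|\phi_{Q_i}\|_{\underline L^2(z+Q_{2L})}\leq C\|\phi_{Q_i}\|_{\underline L^2(Q_i)}$ is simply false in general. The estimate that the Caccioppoli argument actually delivers has the \emph{small}-cylinder norms $\|\phi_{Q_i}\|_{\underline L^2(z+Q_{2L})}$ on the right — which, not coincidentally, is also what the downstream error terms $E_z$ in~\eqref{def.errortermEz} actually use. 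So the clean, provable version of the statement should carry $\underline L^2(z+Q_{2L})$ rather than $\underline L^2(Q_i)$.
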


\begin{proof}
As in the proof of Proposition~\ref{prop3.9}, we note that the difference $w := \phi_{Q_1}(\cdot ;  q_1) - \phi_{Q_2}(\cdot  ; q_2)$ solves a linear parabolic equation of the form
\begin{equation} \label{eq:10310908}
\partial_t w + \nabla \cdot \a  ( p - q + \nabla w)  = 0 \hspace{3mm} \mbox{in} ~ (z + Q_{2L})
\,.
\end{equation}
The bound~\eqref{eq:diff2corr} then follows from the Caccioppoli inequality.
\end{proof}

\section{The quantitative hydrodynamic limit} \label{eq:quanthydrolim2sc}

This section is devoted to the proof of Theorem~\ref{Th.quantitativehydr} based on the results established in Section~\ref{sec.section2}. As mentioned in Section~\ref{sec1.5.2}, the proof relies on a two-scale expansion and, in order to be implemented, requires a few preliminary results and notation listed below. In Section~\ref{sec4.1.1}, we collect, without proof, some standard regularity estimates on the solutions of the homogenized equation (i.e., the equation~\eqref{eq:defubarthmhydro}). In Section~\ref{sec4.1.2}, we introduce and study an approximation scheme for the homogenized equation which is used to pass from the discrete setting (where the Langevin dynamics are defined) to the continuous one (where the homogenized equation is defined). Such an approximation scheme has already been used qualitatively in the proof of the hydrodynamic limit in~\cite{FS}; the main input of this section is to obtain a quantitative version of their result (see Proposition~\ref{prop.approx}) with a sufficiently good rate (i.e., of the form $\ep^{\frac12}$) to match the error term in Theorem~\ref{Th.quantitativehydr}. Section~\ref{section4.2} is devoted to the construction of the two-scale expansion. As mentioned in Section~\ref{sec1.5.2}, this construction requires to introduce a mesoscopic scale of size $\ep^{\frac 12}$ with respect to the spatial variable. In Section~\ref{section4.3}, we define a first error term which appears when we implement the two-scale expansion and estimate it using the results of Section~\ref{sec.section2} and the regularity estimates of Section~\ref{section4.3}. In Section~\ref{section4.4}, we introduce and estimate a second error term: the one arising from the flux of the corrector in the two-scale expansion. Its estimation is a key point of the analysis: in order to obtain the rate of convergence stated in Theorem~\ref{Th.quantitativehydr}, one needs to obtain precise estimates on its $\underline{H}^{-1}_{\mathrm{par}} (Q^\ep)$-norm (see Proposition~\ref{prop4.3fluxest}) which requires a precise analysis. Finally, Section~\ref{secTh.quantitativehydr} implements the two-scale expansion and proves Theorem~\ref{Th.quantitativehydr}, following mostly standard techniques and making use of the estimates established in the previous sections.

\subsection{Preliminary results}

\subsubsection{Regularity estimates for the solution of the limiting equation} \label{sec4.1.1}

In this section, we record the regularity properties for solutions of the homogenized equation used in the proof of Theorem~\ref{Th.quantitativehydr}. These estimates are classical and we refer to~\cite[Section 8.4]{giusti2003direct} for a proof in the elliptic setting, and to~\cite[Chapter 6]{ladyvzenskaja1988linear} in the parabolic setting.
Briefly, the bounds~\eqref{eq:regH2eqprop} and~\eqref{eq:regdtgradeqprop} are obtained by differentiating the nonlinear homogenized equation in space and in time, respectively, and applying the usual global energy estimates. 
We recall the notation $I := (-1 , 0)$ used in this section. 

\begin{proposition}[Regularity for solutions of the homogenized equation] \label{prop:H2regdiscfct}
Let $D \subseteq \Rd$ be a bounded and $C^{1,1}$ domain. Then, the solution $\bar u$ of the parabolic equation~\eqref{eq:defubarthmhydro} over the set $Q = I \times D$ satisfies:
\begin{itemize}
    \item The $H^2$-regularity estimate: there exists a constant $C(d , c_+ , c_-)<\infty$ such that
\begin{equation} \label{eq:regH2eqprop}
   \left\|\bar u\right\|_{L^2 \left( I , H^2 \left( D \right) \right)} \leq C  (\left\| f \right\|_{
    H^{2}(Q)} + 1). 
\end{equation}
    \item The time regularity estimate: there exists a constant $C(d , c_+ , c_-) <\infty$ such that
\begin{equation} \label{eq:regdtgradeqprop}
        \left\| \partial_t \bar u \right\|_{L^2 \left( I , H^1 \left( D \right) \right)} \leq C  ( \left\| f \right\|_{H^2(Q)}  +1 ).
    \end{equation}
\end{itemize}
\end{proposition}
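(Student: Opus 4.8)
\textbf{Proof proposal for Proposition~\ref{prop:H2regdiscfct}.} The plan is to treat the two estimates by standard energy methods applied to the spatial and time derivatives of the quasilinear parabolic equation~\eqref{eq:defubarthmhydro}, using the uniform convexity and $C^{1,1}$ regularity of $\bar\sigma$ — that is, the bounds $c_-\,\mathrm{I}_d \le D^2_p\bar\sigma \le c_+\,\mathrm{I}_d$ which follow from~\eqref{eq.unifconvV} — together with the $C^{1,1}$ regularity of the domain $D$ to handle the boundary terms. Throughout I would write $\bar a(p) := D^2_p\bar\sigma(p)$, which is a uniformly elliptic, Lipschitz matrix field; the equation then reads $\partial_t\bar u = \nabla\cdot(\bar a(\nabla\bar u)\nabla\bar u)$ in divergence form, or equivalently $\partial_t\bar u = \mathrm{tr}(\bar a(\nabla\bar u)D^2\bar u)$ once one expands, and one must be careful that the ``extra'' terms from differentiating $\bar a(\nabla\bar u)$ are lower order.

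For the $H^2$ bound~\eqref{eq:regH2eqprop}, I would first establish the basic energy estimate: multiply the equation by $\bar u$ (after subtracting the boundary data $f$) and integrate over $Q$ to get $\sup_{t\in I}\|\bar u(t,\cdot)\|_{L^2(D)}^2 + \|\nabla\bar u\|_{L^2(Q)}^2 \le C(\|f\|_{H^2(Q)}^2+1)$, using Young's inequality and the uniform ellipticity of $\bar a$. Then differentiate the equation in the spatial direction $e_k$: the function $v_k := \partial_k(\bar u - f)$ (or $\partial_k\bar u$ with an inhomogeneous boundary correction) solves a linear uniformly parabolic equation $\partial_t v_k - \nabla\cdot(\bar a(\nabla\bar u)\nabla v_k) = \nabla\cdot F_k$ with $F_k$ built from $D\bar a(\nabla\bar u)\,\partial_k\nabla\bar u\,\nabla\bar u$ plus contributions of $\partial_k f$; since $D\bar a$ is bounded and we already control $\nabla\bar u$ in $L^\infty_t L^2_x$ by the De~Giorgi--Nash--Moser type bounds (or, more elementarily, we absorb the quadratic term using the $L^2$ control of $D^2\bar u$ on the left after a Gagliardo--Nirenberg interpolation), a standard Caccioppoli/energy estimate summed over $k$ gives $\|D^2\bar u\|_{L^2(Q)} \le C(\|f\|_{H^2(Q)}+1)$. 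The $C^{1,1}$ regularity of $D$ is exactly what is needed to flatten the boundary locally and carry out this differentiation up to $\partial D$; one would cover $\partial D$ by finitely many charts, use tangential difference quotients in the tangential directions, and recover the normal second derivative from the equation itself.

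For the time-regularity estimate~\eqref{eq:regdtgradeqprop}, I would differentiate the equation in $t$: the function $z := \partial_t(\bar u - f)$ solves $\partial_t z - \nabla\cdot(\bar a(\nabla\bar u)\nabla z) = \nabla\cdot G$ with $G = D\bar a(\nabla\bar u)\,\partial_t\nabla\bar u\,\nabla\bar u + (\text{terms in }\partial_t f)$, and by the same Caccioppoli-type energy estimate (now testing against $z$, controlling $\int_Q |\partial_t\nabla\bar u|^2$ on the left, and absorbing the quadratic $G$-term via the bound on $\nabla\bar u$ just obtained) one gets $\|\partial_t\nabla\bar u\|_{L^2(Q)} \le C(\|f\|_{H^2(Q)}+1)$. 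The control on the initial slice uses the parabolic boundary data and the $H^2(Q)$ regularity of $f$. I expect the main obstacle to be the bookkeeping of the nonlinear lower-order terms near the boundary: one must verify that every term produced by differentiating $\bar a(\nabla\bar u)$ is genuinely absorbable — which hinges on having $\nabla\bar u \in L^\infty(I;L^2(D))$ or a De~Giorgi--Nash--Moser $C^{0,\alpha}$ bound on $\bar u$ before attempting the second-derivative estimate — and that the boundary contributions from the $C^{1,1}$ (not $C^\infty$) domain and the $H^2$ (not smooth) data do not destroy the argument. Since these are entirely classical facts, I would simply cite~\cite[Section 8.4]{giusti2003direct} in the elliptic case and~\cite[Chapter 6]{ladyvzenskaja1988linear} in the parabolic case rather than reproduce the details.
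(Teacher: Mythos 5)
Your proposal follows the same route the paper indicates: differentiate the equation in space and in time, apply global parabolic energy estimates, and refer to~\cite[Section 8.4]{giusti2003direct} and~\cite[Chapter 6]{ladyvzenskaja1988linear} for the details. One remark, which in fact simplifies your sketch: the source term $F_k$ involving $D\bar a(\nabla\bar u)$ that you introduce when differentiating in $x_k$ does not actually arise. Differentiating the divergence-form equation $\partial_t\bar u = \nabla\cdot D_p\bar\sigma(\nabla\bar u)$ with respect to $x_k$ gives directly $\partial_t\,\partial_k\bar u = \nabla\cdot\bigl(D^2_p\bar\sigma(\nabla\bar u)\,\nabla\partial_k\bar u\bigr)$, because the outer divergence is just $\sum_i\partial_i$ and the chain rule lands entirely on the inner gradient; the same happens when differentiating in $t$. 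In particular the argument requires only the uniform ellipticity $c_-\mathrm{I}_d \le D^2_p\bar\sigma \le c_+\mathrm{I}_d$, not a Lipschitz bound on $D^2_p\bar\sigma$, which under the $C^{1,1}$ assumption on $V$ is only $L^\infty$ a priori; assuming $\bar a = D^2_p\bar\sigma$ is Lipschitz, as you do at the outset, would be circular since the $C^2$ regularity of $\bar\sigma$ is precisely the content of Theorem~\ref{t.regsurfacetension}, proved later and using this proposition. Finally, the identity $\nabla\cdot(\bar a(\nabla\bar u)\nabla\bar u) = \mathrm{tr}(\bar a(\nabla\bar u)D^2\bar u)$ you state is false in general (the left side carries an extra term from differentiating the coefficient); the equation is in divergence form with respect to $D_p\bar\sigma(\nabla\bar u)$, not with respect to $\bar a(\nabla\bar u)\nabla\bar u$, and keeping it in that form is exactly what avoids the quadratic terms you were worried about absorbing.
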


\subsubsection{An approximation scheme for nonlinear parabolic equation and regularity estimates} \label{sec4.1.2} 

In this section, we construct and study an approximation scheme for nonlinear parabolic equations. We recall the notation introduced in Section~\ref{Sectionmicroscopic}. For each point $x \in D^\ep$ and each $\ep \in (0 , 1)$, we recall the definition of the (vector-valued) discrete gradient
\begin{equation} \label{eq.notacalD}
    \vec{\nabla}^\ep u (t , x) := \left( \nabla_1^\ep u(t , x) , \ldots, \nabla_d^\ep u(t , x) \right) \in \Rd.
\end{equation}
For $i \in \{ 1 , \ldots,d \}$, we denote by $\nabla_i^{\ep , *} u(t , x) = \ep^{-1} \left( u(t , x - \ep e_i) - u(t , x) \right)$ the adjoint of the discrete derivative $\nabla_i^\ep$. We then denote by
\begin{equation*}
    \vec{\nabla}^{\ep , *} u (t , x) := \left( \nabla_1^{\ep , *} u(t , x) , \ldots, \nabla_d^{\ep , *} u(t , x) \right) \in \Rd.
\end{equation*}
We next define the discrete elliptic operator
\begin{equation} \label{eq:discellop}
     \vec{\nabla}^\ep \cdot D_p \bar \sigma ( \vec{\nabla}^\ep u) (t , x) = \ep^{-1} \sum_{i = 1}^d \left( D_p \bar \sigma (  \vec{\nabla}^\ep u (t , x)) +  D_p \bar \sigma (  \vec{\nabla}^{\ep , *} u (t , x)) \right) \cdot e_i.
\end{equation}
The operator~\eqref{eq:discellop} is defined so as to satisfy the following identity: for any pair of functions $ u , v : \ep \Zd \to \R$ with finite support
\begin{equation} \label{def.op3444}
    -\sum_{x \in \ep \Zd} \vec{\nabla}^\ep \cdot D_p \bar \sigma (\vec{\nabla}^\ep u) (x) v(x) =  \sum_{x \in \ep \Zd} D_p \bar \sigma (\vec{\nabla}^\ep u(x)) \cdot \vec{\nabla}^\ep v(x).
\end{equation}
Let us recall the notation $\tilde f_\ep(t , x) := (2\ep)^d \int_{[-\ep , \ep]^d} f(t , x + y) \, dy$ and let $\bar u^\ep: Q^\ep \to \R$ be the solution of the discrete parabolic equation
\begin{equation} \label{def.discequep}
    \left\{ \begin{aligned}
     \partial_t \bar u^\ep - \vec{\nabla}^\ep \cdot D_p \bar \sigma (\vec{\nabla}^\ep \bar u^\ep) & = 0 &~\mbox{in} &~  Q^\ep, \\
     \bar u^\ep & = \tilde f_\ep &~\mbox{on} &~ \partial_{\mathrm{par}} Q^\ep.
    \end{aligned} \right.
\end{equation}
The following proposition quantifies the difference of the $L^2$-norm between the solution $\bar u$ of the continuous parabolic equation~\eqref{eq:defubarthmhydro} and the solution $\bar u^\ep$ of the discretized equation~\eqref{def.discequep}. In order to state the result, we extend the map $\bar u^\ep$ and its gradient from the discrete setting to the continuous one into piecewise constant functions by setting
\begin{equation*}
    \bar u^\ep (t , x) := \sum_{y \in \ep \Zd} \bar u^\ep(t , y) \indc_{\{ y \in x +  [-\ep, \ep]^d \}} \hspace{5mm} \mbox{and} \hspace{5mm} \vec{\nabla}^\ep \bar u^\ep(t , x) := \sum_{y \in \ep \Zd} \vec{\nabla}^\ep \bar u^\ep (t , y) \indc_{\{ y \in x +  [-\ep, \ep]^d \}}.
\end{equation*}
The $L^2$-norms in Proposition~\ref{prop.approx} then denotes the continuous one on the space $Q = I \times D$. 
 
\begin{proposition} \label{prop.approx}
There exists a constant $C := C(d) < \infty$ such that, for any $\ep > 0$,
\begin{equation*}
    \left\| \bar u^\ep - \bar u  \right\|_{L^2(Q)} + \| \vec{\nabla}^\ep \bar u^\ep - \nabla \bar u  \|_{L^2(Q)} \leq C \ep^{\frac12}  \left\| f \right\|_{H^{2}(Q)}.
\end{equation*}
\end{proposition}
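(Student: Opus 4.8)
The plan is to prove Proposition~\ref{prop.approx} by a standard energy/comparison argument for uniformly monotone parabolic equations, treating the discrete operator $\vec{\nabla}^\ep\cdot D_p\bar\sigma(\vec{\nabla}^\ep\,\cdot\,)$ as a perturbation of the continuous operator $\nabla\cdot D_p\bar\sigma(\nabla\,\cdot\,)$. Since $\bar\sigma$ is $C^{1,1}$ and uniformly convex, the map $p\mapsto D_p\bar\sigma(p)$ is Lipschitz and strongly monotone, i.e. $(D_p\bar\sigma(p)-D_p\bar\sigma(q))\cdot(p-q)\geq c_-|p-q|^2$ and $|D_p\bar\sigma(p)-D_p\bar\sigma(q)|\leq c_+|p-q|$. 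First I would introduce the interpolant $f_\ep$ of $f$ (or rather of $\bar u$), for instance by convolution at scale $\ep$ as in the definition of $\tilde f_\ep$, and compare $\bar u^\ep$ to a suitable discretization $\Pi^\ep\bar u$ of the continuous solution. The difference $v^\ep := \bar u^\ep - \Pi^\ep\bar u$ vanishes on $\partial_{\mathrm{par}}Q^\ep$ and satisfies a discrete parabolic equation whose right-hand side is the \emph{consistency error}: the amount by which $\Pi^\ep\bar u$ fails to solve the discrete equation, which is controlled by the second derivatives of $\bar u$ in space and the first derivative of $\bar u$ in time, plus the error between $\tilde f_\ep$ and $f$ on the boundary.

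The key steps, in order, are as follows. (1) Record the monotonicity and Lipschitz bounds on $D_p\bar\sigma$ coming from the uniform convexity and $C^{1,1}$-regularity of $\bar\sigma$ (these are available since the surface tension is $C^{1,1}$ and uniformly convex, as stated in the introduction; alternatively one can cite~\cite{FS}). (2) Establish the consistency estimate: for a smooth function $g$, $\bigl\|\partial_t g - \vec{\nabla}^\ep\cdot D_p\bar\sigma(\vec{\nabla}^\ep g) - \bigl(\partial_t g - \nabla\cdot D_p\bar\sigma(\nabla g)\bigr)\bigr\|$ in the relevant dual norm is $O(\ep)$ times $\|g\|_{L^2(I,H^2(D))} + \|\partial_t g\|_{L^2(I,H^1(D))}$; the factor $\ep$ comes from Taylor-expanding the finite differences $\vec{\nabla}^\ep g$ and $\vec{\nabla}^{\ep,*}g$ around $\nabla g$, using that $D_p\bar\sigma$ is Lipschitz and that the symmetrized operator~\eqref{eq:discellop} is consistent to first order. (3) Apply Proposition~\ref{prop:H2regdiscfct} to bound these norms of $\bar u$ by $C(\|f\|_{H^2(Q)}+1)$, and note the $+1$ can be absorbed or the statement adjusted — actually since the equation is homogeneous with zero forcing, the $+1$ should not be needed and one gets $C\ep^{1/2}\|f\|_{H^2(Q)}$; more carefully, one should only keep $\|f\|_{H^2(Q)}$ by the maximum principle / scaling since $\bar u$ is determined by $f$. (4) Test the equation for $v^\ep$ against $v^\ep$ itself, use the strong monotonicity to get $\partial_t\tfrac12\|v^\ep\|_{L^2}^2 + c_-\|\vec{\nabla}^\ep v^\ep\|_{L^2}^2 \leq \langle \text{error}, v^\ep\rangle$, bound the right side by Cauchy–Schwarz and a discrete Poincaré inequality on $D^\ep$ (valid since $v^\ep$ vanishes on $\partial D^\ep$), and integrate in time using $v^\ep(-1,\cdot)=0$. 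This gives $\|v^\ep\|_{L^2(Q^\ep)}^2 + \|\vec{\nabla}^\ep v^\ep\|_{L^2(Q^\ep)}^2 \leq C\ep\|f\|_{H^2(Q)}^2$, hence the square-root rate. (5) Finally, compare the piecewise-constant extensions of $\bar u^\ep$ and $\vec{\nabla}^\ep\bar u^\ep$ with $\bar u$ and $\nabla\bar u$ themselves: $\|\Pi^\ep\bar u - \bar u\|_{L^2(Q)}\leq C\ep\|\bar u\|_{L^2(I,H^1(D))}$ and $\|\vec{\nabla}^\ep(\Pi^\ep\bar u) - \nabla\bar u\|_{L^2(Q)}\leq C\ep\|\bar u\|_{L^2(I,H^2(D))}$, again by Taylor expansion and Proposition~\ref{prop:H2regdiscfct}, and combine with the bound on $v^\ep$ via the triangle inequality.

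The main obstacle I expect is handling the boundary: the discrete domain $D^\ep$ does not exactly match $D$, and the boundary data $\tilde f_\ep$ on $\partial_{\mathrm{par}}Q^\ep$ is only an $\ep$-approximation of $f$ on $\partial_{\mathrm{par}}Q$. This forces one to be careful when constructing the discrete comparison function $\Pi^\ep\bar u$ so that it matches $\tilde f_\ep$ exactly on $\partial D^\ep$ while remaining $O(\ep)$-close to $\bar u$; one typically corrects near the boundary layer using the $C^{1,1}$-regularity of $D$ (so $\bar u$ extends smoothly up to $\partial D$ with $H^2$ control by~\eqref{eq:regH2eqprop}) and pays an $O(\ep)$ cost in a boundary strip of width $O(\ep)$, whose $L^2$-contribution is $O(\ep^{1/2})$ after taking square roots — precisely matching the claimed rate. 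A secondary technical point is that $\bar\sigma$ is only $C^{1,1}$, so $D_p\bar\sigma$ is merely Lipschitz, not $C^1$; this is enough for the first-order Taylor/consistency estimate (one expands $g$, not $D_p\bar\sigma$), but one must be slightly careful to keep the argument of $D_p\bar\sigma$ bounded, which follows from the $L^\infty$ bound on $\nabla\bar u$ available from De Giorgi–Nash–Moser and~\eqref{eq:regH2eqprop} via Sobolev embedding (or can be bypassed by a truncation argument since $D_p\bar\sigma$ is globally Lipschitz anyway).
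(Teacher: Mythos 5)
Your overall plan — compare $\bar u^\ep$ to a mollified discretization of $\bar u$, write the difference as a solution of a linear uniformly parabolic equation, and run an energy estimate with a discrete Poincar\'e inequality — is the same in outline as the paper's proof in Appendix~\ref{app.appendixA}. However, there is a concrete gap in Step~(2), and the bookkeeping for the rate $\ep^{1/2}$ in Steps~(4)--(5) does not hang together, which makes me skeptical that the argument would close as written.

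The gap is your claim that the consistency error is $O(\ep)$ in the relevant dual norm. The flux $F := D_p\bar\sigma(\nabla\bar u)$ is only in $L^2(I,H^1(D))$, since $\nabla\bar u\in L^2(I,H^1)$ and $D_p\bar\sigma$ is merely Lipschitz. If one compares $\nabla\cdot(F\star\chi_\ep)$ to $\vec{\nabla}^\ep\cdot(F\star\chi_\ep)$ (or to $\vec\nabla^\ep\cdot D_p\bar\sigma(\vec\nabla^\ep(\bar u\star\chi_\ep))$), the discrepancy is controlled by a second derivative of $F\star\chi_\ep$, which scales like $\ep^{-1}\|\nabla F\|_{L^2}$; the resulting $L^2$ error is $O(1)$, not $O(\ep)$. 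Moreover the first-order cancellation you attribute to the symmetrized operator~\eqref{eq:discellop} requires $D^2_p\bar\sigma$ to exist, which is not available at this stage of the argument (the $C^2$ regularity of $\bar\sigma$ is a separate theorem). The paper's proof acknowledges exactly this obstruction: it isolates the error term $\mathcal{E}_0$, bounded by $C\ep r^{-1}\|f\|_{H^2(Q)}$, where $r$ is the interior mollification scale, and therefore needs $r\gg\ep$. The approximation error $\|\bar u - \bar u\star\chi_r\| + \|\nabla\bar u - \nabla(\bar u\star\chi_r)\|$ is $O(r\|f\|_{H^2})$ in the other direction; balancing $\ep/r$ against $r$ forces $r=\ep^{1/2}$, which is where the exponent $1/2$ actually comes from. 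Your single-scale discretization $\Pi^\ep\bar u$ at scale $\ep$ does not have this freedom and would lose the rate in the interior.

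Consequently the rate accounting in your write-up is internally inconsistent. If the consistency error were truly $O(\ep)$ in the dual norm as claimed in Step~(2), the energy estimate of Step~(4) with Young's inequality gives $\|\vec\nabla^\ep v^\ep\|_{L^2(Q^\ep)}\lesssim\ep\|f\|_{H^2(Q)}$ directly — no square root of $\ep$ appears. Conversely, the boundary argument in Step~(5) claims ``an $O(\ep)$ cost in a strip of width $O(\ep)$ whose $L^2$-contribution is $O(\ep^{1/2})$,'' but an $O(\ep)$-sized error over a strip of measure $\ep$ contributes $O(\ep^{3/2})$ to an $L^2$ norm, not $O(\ep^{1/2})$; an $O(1)$-sized error over such a strip contributes $O(\ep^{1/2})$ to the $L^2$ norm but only $O(\ep)$ to the energy estimate after Poincar\'e. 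Neither version produces the $\ep^{1/2}$. The key device you are missing is the paper's \emph{two-scale} mollification $U = \xi(\bar u\star\chi_r) + (1-\xi)(\bar u\star\chi_\ep)$ with $r=\sqrt\ep$: scale $\ep$ near $\partial D$ so that $U$ matches $\tilde f_\ep$ on $\partial D^\ep$ and at the initial time, scale $r$ in the bulk so that the consistency error $\mathcal{E}_0\sim\ep/r$ becomes acceptable, and a cutoff $\xi$ to interpolate, with $r$ chosen to balance all four error contributions at $\ep^{1/2}$.
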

The proof of this proposition can be found in Appendix~\ref{app.appendixA}. It implies that, in order to prove Theorem~\ref{Th.quantitativehydr}, it is sufficient to establish it on the discrete space $Q^\ep$ with the function $\bar u^\ep$ instead of $\bar u$. All the analysis of this section will, from now on, be carried out in the discrete setting (either the lattice $\ep \Zd$ or $\Zd$).

\subsection{Construction of the two-scale expansion} \label{section4.2}

This section is devoted to the construction of the two-scale expansion and introduces the mesoscopic scale.

\subsubsection{Mesoscopic scale and partition of unity} Fix a parameter $\ep > 0$ and define the mesoscopic scale
\begin{equation*}
    \kappa := \ep^{\frac 12} \left( 1 + \left| \ln \ep \right|^{\frac 12} \indc_{\{ d = 2 \}} \right).
\end{equation*} 
We recall the definitions of the box $\Lambda_\kappa^\ep$ and the parabolic cylinder $Q^\ep$ introduced in Section~\ref{Sectionmicroscopic}. We next partition the set $D^\ep$ (resp. the cylinder $Q^\ep$) into boxes of the form $y + \Lambda_\kappa^\ep$ (resp. cylinders of the form $z + Q^\ep_\kappa$). To this end, we introduce the sets
\begin{equation*}
    \mathcal{Y}_\kappa :=  \kappa \Zd  \cap D ~\mbox{and}~ \mathcal{Z}_\kappa := \left(\kappa^2 \N_* \times \kappa \Zd \right) \cap Q.
\end{equation*}
Given a point $z = (t , y) \in \mathcal{Z}_\kappa$, we will abuse notation and denote by $z/\ep := (t/\ep^2 , y/\ep)$ the point rescaled diffusively.
We say that two points $y, y' \in \mathcal{Y}_\kappa$ are neighbors if $|y - y'| \leq \kappa$, and that two points $z = (t , y) \in \mathcal{Z}_\kappa$ and $z' = (t',z') \in \mathcal{Z}_\kappa$ are neighbors if $|t - t'| \leq \kappa^2$ and $|y - y'| \leq \kappa$. We note that, with this notation, we have $y \sim y$ and $z \sim z$. For any fixed point $y \in \mathcal{Y}_\kappa$, we will use the notation $\sum_{ y' \sim y} $ to refer to sum over the points $y' \in \mathcal{Y}_\kappa$ satisfying $y' \sim y$.

We next consider a smooth partition of unity $(\chi_y)_{y \in \mathcal{Y}_\kappa} : D^\ep \to \R$ satisfying the following properties: 
\begin{equation} \label{prop.partofunity2sc}
    0 \leq \chi_y \leq \indc_{\{y + \Lambda_{2 \kappa}^\ep\}}, \hspace{3mm} \sum_{y \in \mathcal{Y}_\kappa} \chi_y = 1 \hspace{3mm} \mbox{in} ~D^\ep,
\end{equation}
and
\begin{equation} \label{prop.partofunity2scbis}
     \kappa \| \nabla^{\ep} \chi_y \|_{L^\infty(D^\ep)} + \kappa^{2} \| \nabla^{\ep,2} \chi_y \|_{L^\infty(D^\ep)} \leq C.
\end{equation}

\subsubsection{Average value of the gradient of the homogenized solution} \label{sec.4.2.2} Let $\bar u^\ep$ be the solution of the equation~\eqref{def.discequep}. Given a point $z = (t , y) \in \mathcal{Z}_{\kappa}$, we denote by $\xi_z$ the average value of the gradient $\nabla \bar u^\ep$ over the parabolic cylinder $z + Q_{2\kappa}^\ep$, i.e.,
\begin{equation*}
   \xi_z := \left( \nabla^\ep  \bar u^\ep \right)_{z + Q_{2\kappa}^\ep}.
\end{equation*}
For each $y \in  \mathcal{Y}_\kappa$, we denote by $\xi_y : I \to \Rd$ the time-dependent slope defined by the formula
\begin{equation} \label{def.xi_y}
    \xi_y(t) := \sum_{z \in \mathcal{Z}_{2\kappa}} \xi_z \indc_{\{ (t , y) \in z + Q_{2\kappa}^\ep\}}.
\end{equation}
We note that, with this notation, we have
\begin{equation*}
    \frac{1}{\left| \mathcal{Y}_\kappa \right|}\sum_{y \in \mathcal{Y}_\kappa} \left\| \xi_y \right\|_{L^{2} ((-1,0))} \leq C \left\| \nabla^\ep \bar u^\ep\right\|_{L^2(Q^\ep)}.
\end{equation*}

\subsubsection{Definition of the first-order corrector} Let us set $L := \lfloor \kappa/\ep \rfloor$, and, for $y \in \mathcal{Y}_\kappa$, denote by 
$$Q_y := \ep^{-2}I\times (y/\ep + \Lambda_{2L}).$$
We define the first-order corrector $\phi^\ep$ according to the formula
\begin{equation} \label{def.firstordcorrector2sc}
    \phi^\ep(t,x) = \ep \sum_{y \in \mathcal{Y}_{\kappa}} \chi_{y}(x) \phi_{Q_{y}} \left( \frac{t}{\ep^2} , \frac{x}{\ep} ; \xi_y(t) \right),
\end{equation}
and the two-scale expansion by
\begin{equation} \label{def.wL}
    w^\ep := \bar u^\ep + \phi^\ep.
\end{equation}
For later use, we introduce the notation, for $y \in \mathcal{Y}_{\kappa}$,
\begin{equation*}
    v_{y}(t , x) := \xi_{y}(t) \cdot x + \ep  \phi_{Q_{y}} \left( \frac{t}{\ep^2} , \frac{x}{\ep} ; \xi_y(t) \right).
\end{equation*}

\begin{remark}
As mentioned in Remark~\ref{remark3.2225}, the mesoscopic scale is only defined with respect to the spatial variable as the variations of the slope of the homogenized solution with respect to the time variable are encoded directly in the corrector (the function $\xi_y$ defined in~\eqref{def.xi_y} depends on the time variable); this simplifies the implementation of the two-scale expansion in Section~\ref{secTh.quantitativehydr} and improves the rate of convergence.
\end{remark}

\subsection{Error terms and preliminary estimates} \label{section4.3}
In this section, we introduce a collection of error terms $(E_z)_{z \in \mathcal{Z}_\kappa}$ which appear frequently in the proof of Theorem~\ref{Th.quantitativehydr}. Formally, we define, for any $z = (t , y) \in \mathcal{Z}_\kappa$,
\begin{equation} \label{def.errortermEz}
    E_z :=  \sum_{z' \sim z}\left\| \vec{\nabla}^\ep \bar u^\ep - \xi_z  \right\|_{\underline{L}^2 \left( z + Q_\kappa^\ep \right)}  + \sum_{z' \sim z} \left| \xi_z - \xi_{z'}\right|  + \frac{\ep}{\kappa}  \sum_{ y' \sim y} \left\|  \phi_{Q_{y'}} \left(\cdot, \cdot ; \xi_{y'} \right)\right\|_{\underline{L}^2 (z/\ep + Q_{L})}. 
\end{equation}
The error terms $(E_z)_{z \in \mathcal{Z}_\kappa}$ are small in the following sense: using Proposition~\ref{prop.approx}, the regularity estimate~\eqref{prop:H2regdiscfct}, Proposition~\ref{prop.2.2BL} and the Poincar\'e inequality, we have
\begin{align} \label{eq:estL2normEz}
    \frac{1}{|\mathcal{Z}_\kappa|}\sum_{z \in \mathcal{Z}_\kappa} E_z^2 & \leq C \| \vec{\nabla}^\ep \bar u^\ep - \nabla \bar u  \|_{L^2(Q)}^2 + C \kappa^2 \| \nabla^2 \bar u  \|_{L^2(Q)}^2 + C \kappa^4 \| \partial_t \nabla \bar u  \|_{L^2(Q)}^2 \\
    & \qquad+  \frac{C \ep^2}{\kappa^2} \frac{1}{\left| \mathcal{Y}_\kappa \right|}  \sum_{y \in \mathcal{Y}_\kappa}  \left\|  \phi_{Q_{y'}} \left(\cdot, \cdot ; \xi_{y'} \right)\right\|_{\underline{L}^2 (Q_{y})}^2 \notag \\
    & \leq \mathcal{O}_1 \left( C \ep \bigl( 1 + | \log \ep | \indc_{\{ d = 2\}} \bigr) \right). \notag
\end{align}
We complete this section by stating an inequality involving the error terms $(E_z)_{z \in \mathcal{Z}_\kappa}$ which will be used in the proof of Theorem~\ref{Th.quantitativehydr} below: for any $z = (t , y) \in \mathcal{Z}_\kappa$,
\begin{equation} \label{eq:1805110888}
     \left\| \left( \sum_{y' \sim y} \chi_{y'} \nabla^\ep v_{y'} \right) - \nabla^\ep v_{y}  \right\|_{\underline{L}^2(z + Q_\kappa^\ep)} \leq C E_{z}.
\end{equation}
The upper bound~\eqref{eq:1805110888} is obtained by using the properties of the partition of unity $(\chi_z)_{z \in \mathcal{Z}_\kappa}$ listed in~\eqref{prop.partofunity2sc} and Proposition~\ref{prop.linearizedcorrector}.

\subsection{Estimating the weak norm of the flux of the corrector} \label{section4.4}
In this section, we estimate the $\underline{H}^{-1}_\mathrm{par}(Q^\ep)$-norm of the error term involving the flux of the corrector which appears in~\eqref{eq:16350512} when implementing the two-scale expansion. Obtaining a sharp estimate on this norm is crucial to optimize the rate of convergence in Theorem~\ref{Th.quantitativehydr}, and we dedicate the next proposition to its study. 

While the nature of the observable, and consequently the argument, are technical, the strategy of the proof is straightforward: using the multiscale Poincar\'e inequality (Proposition~\ref{prop:multscPoinc}), we first reduce the study of the $H^{-1}_{\mathrm{par}}(Q^\ep)$-norm of the error term to the study of its averages over parabolic cylinders of various sizes. We then estimate these spatial averages using the same techniques as in Proposition~\ref{prop.2.2BL}, Proposition~\ref{prop.spataveflux} and Proposition~\ref{prop.prop3.5techni}: we discretize the increments of the Brownian motions, compute the derivative of the observable with respect to the discrete increments using Proposition~\ref{prop.prop3.5techni} and apply the Gaussian concentration inequality.

\begin{proposition} \label{prop4.3fluxest}
There exists a constant $C := C(d , c_+ , c_-) < \infty$ such that
\begin{equation} \label{eq:21000212}
    \left\| \sum_{y \in \mathcal{Y}_\kappa} \nabla^\ep \chi_{y} \cdot \left( V' \left( \nabla^\ep v_{y} \right) - D_p \bar \sigma \left( \xi_{y} \right) \right) \right\|_{\underline{H}^{-1}_\mathrm{par} (Q^\ep)} \leq \mathcal{O}_2 \left( C \ep^{\frac12} \bigl( 1 + | \log \! \ep |^{\frac12} \indc_{\{ d = 2\}} \bigr) \right).
\end{equation}
\end{proposition}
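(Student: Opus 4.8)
The plan is to combine the multiscale Poincar\'e inequality (Proposition~\ref{prop:multscPoinc}), which reduces estimating the $\underline{H}^{-1}_{\mathrm{par}}(Q^\ep)$-norm to controlling spatial averages of the error term over dyadic (triadic) parabolic cylinders at all scales, with the Gaussian concentration machinery developed in Propositions~\ref{prop.2.2BL}, \ref{prop.spataveflux} and especially \ref{prop.prop3.5techni}. Write $F := \sum_{y \in \mathcal{Y}_\kappa} \nabla^\ep \chi_{y} \cdot ( V'(\nabla^\ep v_{y}) - D_p \bar\sigma(\xi_y))$. First I would rescale so that the estimate is reduced to the lattice $\Zd$ (using the scaling identity~\eqref{eq:29.5} for the $\underline{H}^{-1}_{\mathrm{par}}$-norm, which produces the factor $\ep$), and then apply Proposition~\ref{prop:multscPoinc} to bound $\|F\|_{\underline{H}^{-1}_{\mathrm{par}}}$ by $\|F\|_{\underline{L}^2}$ plus a weighted sum over scales $3^k$ of the $\ell^2$-average of $|(F)_{z + Q_{3^k}}|^2$ over $z \in \mathcal{Z}_{k,m}$.

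The crux is then to estimate, for each triadic cylinder $z + Q_{3^k}$, the quantity $(F)_{z + Q_{3^k}}$. Because $\nabla^\ep\chi_y$ is supported near $y$ and the partition of unity has finitely many overlaps, on a cylinder of size $3^k$ (in rescaled units, comparable to $3^k\ep$ relative to $\kappa$) only $O((3^k\ep/\kappa \vee 1)^d)$ of the correctors $\phi_{Q_y}$ contribute. I would split into two regimes exactly as in Proposition~\ref{prop.prop3.5techni}: \textbf{small cylinders} (scale $\lesssim \kappa/\ep$), where the factor $\nabla^\ep\chi_y \sim \kappa^{-1}$ and the bound~\eqref{eq:16470112} gives a derivative controlled by $C n^{1/2}\kappa^{-1}\Psi_{C,\ell,L}$, yielding after the Gaussian concentration inequality a fluctuation bound of size $\mathcal{O}_2(C\kappa^{-1}\ell^{-d/2})$ in lattice units; and \textbf{large cylinders} (scale $\gtrsim \kappa/\ep$), where one crucially uses the integration-by-parts estimate~\eqref{eq:16480112}, $\bigl|\partial(\nabla\chi_y\cdot V'(\nabla\phi^n))_{Q_\ell}/\partial X_l^n(y')\bigr| \leq C n^{-1/2}\ell^{-d-2}\exp(-(l/n)/(C\ell^2))$, giving a much faster decay $\mathcal{O}_2(C\ell^{-d/2+1})$ per cylinder; here one must also use that $\mathbb{E}[F] = 0$ on large cylinders since $(\nabla\chi_y)_Q = 0$ and the field is spatially stationary with deterministic $\chi_y$. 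In both regimes I would also need to account for the discrepancy between $V'(\nabla^\ep v_y) = V'(\xi_y + \nabla^\ep(\ep\phi_{Q_y}))$ after rescaling and the actual corrector flux, and for the fact that $D_p\bar\sigma(\xi_y)$ equals (up to the error controlled by Proposition~\ref{prop3.9}) the expectation of the corrector flux — so the ``centering'' term in the concentration step is handled by Proposition~\ref{prop3.9} together with the error-term estimate~\eqref{eq:estL2normEz}.

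Assembling these pieces, I would plug the per-cylinder fluctuation bounds into the weighted multiscale sum: the contribution from scales $k$ with $3^k \lesssim \kappa/\ep$ sums (using the $\mathcal{O}_s$ summation rule~\eqref{sum.Onotation} and the logarithmic loss~\eqref{sum.Omaximum} over the $O(3^{(m-k)(d+2)})$ cylinders at scale $k$) to something of order $\kappa^{-1}$ times $\sum_k 3^k \cdot 3^{kd/2} \cdot 3^{-(m-k)(\cdots)} \cdots$, which telescopes to a bound dominated by the largest relevant scale $\kappa/\ep$ and gives $O(1)$ in lattice units; the large-cylinder contribution is summable because $-d/2+1 < 1$ against the geometric weights. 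Finally, multiplying back by the overall factor $\ep$ from~\eqref{eq:29.5}, and noting $\kappa = \ep^{1/2}(1 + |\log\ep|^{1/2}\indc_{\{d=2\}})$, produces the stated bound $\mathcal{O}_2(C\ep^{1/2}(1 + |\log\ep|^{1/2}\indc_{\{d=2\}}))$. The main obstacle, I expect, is the bookkeeping in the multiscale sum: one must carefully track, for each scale $k$, how many mesoscopic boxes $y$ intersect a given cylinder $z + Q_{3^k}$, control the cross terms between neighboring correctors with different slopes (using Proposition~\ref{prop.linearizedcorrector} and the error terms $E_z$), and verify that the exponents in the geometric series in $k$ land on the right side of criticality so that the sum is dominated by the transition scale $\ell \sim \kappa/\ep$ rather than diverging — the logarithmic corrections in $d=2$ enter precisely through the corrector bound~\eqref{eq:20291910} and must be threaded through consistently.
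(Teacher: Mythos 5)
Your plan follows the paper's proof closely: rescale, apply the multiscale Poincar\'e inequality, split into small versus large triadic cylinders according to whether $3^k$ is below or above the mesoscopic lattice scale $L = \kappa/\ep$, and invoke Proposition~\ref{prop.prop3.5techni} (with its two regimes~\eqref{eq:16470112} and~\eqref{eq:16480112}) plus the Gaussian concentration inequality. That is exactly the paper's Step~2 and its substeps, and your accounting of which mesoscopic boxes contribute to a given cylinder, and of the exponential-locality of $\phi_{Q_y}^n$ in the Brownian increments, are the right observations. (Two small slips that don't affect the outcome: the per-cylinder bound at large scales is $\mathcal{O}_2(3^{-k(d+2)/2})$ rather than $\ell^{-d/2+1}$, and the reduction inside the multiscale sum uses the summation rule~\eqref{sum.Onotation}, not the maximum rule~\eqref{sum.Omaximum}.)

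The one place where your sketch is under-specified is the treatment of the non-centered deterministic part $\sum_y \nabla^\ep\chi_y\cdot\bigl(\E[V'(\nabla^\ep v_y)] - D_p\bar\sigma(\xi_y)\bigr)$, which is in fact the \emph{dominant} source of the $\ep^{1/2}$ error. You propose handling the ``centering'' via Proposition~\ref{prop3.9} and~\eqref{eq:estL2normEz}, which are indeed the right estimates, but the route matters: if you fed this term into the multiscale Poincar\'e together with the fluctuation, the raw $\underline{L}^2$-term alone is of order $\kappa^{-1}\cdot \ep^{1/2}\sim 1$, which is far too large. The paper's Step~1 removes the factor $\kappa^{-1}$ by a discrete integration by parts, using that $\E[V'(\nabla^\ep v_y)] - D_p\bar\sigma(\xi_y)$ depends only on $t$ (spatial stationarity of the corrector law) and hence $\nabla^\ep\chi_y\cdot(\cdots)$ is a spatial divergence; this converts the $\underline{H}^{-1}_{\mathrm{par}}$-norm of a term carrying $\nabla^\ep\chi_y$ into the $L^2$-norm of a term carrying $\chi_y$, which is then estimated via Proposition~\ref{prop3.9}. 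You should state this separation---triangle inequality into a deterministic and a mean-zero part, with the deterministic part handled by integration by parts and not by multiscale Poincar\'e---explicitly, since without it the argument does not close.
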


 \begin{proof}
To ease the presentation of the argument and, in particular, the application of the multiscale Poincar\'e inequality (Proposition~\ref{prop:multscPoinc}), we make two additional assumptions in the proof: the set $D$ is included in the box $[-1 , 1]^d$ and there exists an even integer $m \in \N$ such that $\ep = 3^{-m}$, and consequently $\kappa := 3^{-m/2} (1 + m^{\frac12} \indc_{\{ d = 2\}})$ and $L := \kappa / \ep = 3^{m/2}(1 + m^{\frac12} \indc_{\{ d = 2\}}) .$

We split the proof of the inequality~\eqref{eq:21000212} into two inequalities:
 \begin{equation} \label{eq:21060212}
     \left\| \sum_{y \in \mathcal{Y}_\kappa} \nabla^\ep \chi_{y} \cdot \left( \E \left[ V' \left( \nabla^\ep v_{y} \right) \right] - D_p \bar \sigma \left( \xi_{y} \right) \right) \right\|_{\underline{H}^{-1}_\mathrm{par} (Q^\ep)} \leq C 3^{- \frac m2} \left( 1 + m^{\frac 12} \indc_{\{ d = 2\}} \right),
 \end{equation}
 and
  \begin{equation} \label{eq:21070212}
     \left\| \sum_{y \in \mathcal{Y}_\kappa} \nabla^\ep \chi_{y} \cdot \left( V' \left( \nabla^\ep v_{y} \right) - \E \left[ V' \left( \nabla^\ep v_{y} \right) \right] \right) \right\|_{\underline{H}^{-1}_\mathrm{par} (Q^\ep)} \leq \mathcal{O}_2 \left( C 3^{- \frac m2} \bigl( 1 + m^{\frac 12} \indc_{\{ d = 2\}} \bigr) \right).
 \end{equation}
 The inequality~\eqref{eq:21000212} is then a consequence of~\eqref{eq:21060212},~\eqref{eq:21070212} and the triangle inequality.

 \medskip
 
 \textit{Step 1. Proof of the inequality~\eqref{eq:21060212}.} We note that the maps $\E \left[ V' \left( \nabla^\ep v_{y} \right) \right]$ and $D_p \bar \sigma \left( \xi_{y} \right)$ depend only on the time variable (and not on the spatial variable due to the spatial stationarity of the first-order corrector). Combining this observation with the definition of the $\underline{H}^{-1}_\mathrm{par} (Q^\ep)$-norm and a discrete integration by parts, we obtain
 \begin{equation*}
     \left\| \sum_{y \in \mathcal{Y}_\kappa} \nabla^\ep \chi_{y} \cdot \left( \E \left[ V' \left( \nabla^\ep v_{y} \right) \right] - D_p \bar \sigma \left( \xi_{y} \right) \right) \right\|_{\underline{H}^{-1}_\mathrm{par} (Q^\ep)} \\ \leq \left\| \sum_{y \in \mathcal{Y}_\kappa} \chi_{y} \left( \E \left[ V' \left( \nabla^\ep v_{y} \right) \right] - D_p \bar \sigma \left( \xi_{y} \right) \right) \right\|_{L^{2} (Q^\ep)}.
 \end{equation*}
To estimate the term in the right-hand side, we fix a point $y \in \mathcal{Y}_{\kappa}$, a point $z = (t , y) \in \mathcal{Z}_{\kappa}$, and denote by $z' = (t - 4\kappa^2 , y) \in \mathcal{Z}_{\kappa}$. We then apply Proposition~\ref{prop3.9} with the parabolic cylinder $Q' := z/\ep + Q_{2L}$ and obtain
 \begin{align*}
     \left\| \E \left[ V' \left( \nabla^\ep v_{y} \right) \right] - D_p \bar \sigma \left( \xi_{y} \right) \right\|_{\underline{L}^{2} (z + Q_{2\kappa}^\ep)} & \leq C \left| \xi_{z'} - \xi_{z} \right| + \frac{C\left( 1 + \sqrt{\log \! L} \indc_{\{ d = 2\}} \right)}{L}.
 \end{align*}
Using that the map $\chi_{y}$ is bounded by $1$ and supported in the set $y + \Lambda_{2\kappa}^\ep$, we obtain
\begin{align*}
    \left\| \sum_{y \in \mathcal{Y}_\kappa} \chi_{y} \left( \E \left[ V' \left( \nabla^\ep v_{y} \right) \right] - D_p \bar \sigma \left( \xi_{y} \right) \right) \right\|_{L^{2} (Q^\ep)}^2 &
    \leq \frac{C}{\left| \mathcal{Z}_\kappa \right|} \sum_{z \in \mathcal{Z}_{\kappa}}  \left\| \E \left[ V' \left( \nabla^\ep v_{y} \right) \right] - D_p \bar \sigma \left( \xi_{y} \right) \right\|_{\underline{L}^{2} (z + Q_{2\kappa}^\ep)}^2 \\
    & \leq C 3^{-m} \left( 1 + m \indc_{\{ d = 2\}} \right).
\end{align*}

\textit{Step 2. Proof of the inequality~\eqref{eq:21070212}.} The proof of the inequality~\eqref{eq:21070212} is more involved than the proof of~\eqref{eq:21060212} and is based on an application of the multiscale Poincar\'e inequality. It is subdivided in two steps below. We first rescale the inequality~\eqref{eq:21070212} (using, for instance, the identity~\eqref{eq:29.5}), use the assumptions $D \subseteq [-1 ,1]^d$ and~$\ep = 3^{-m}$ (which imply $[- \ep^{-2} , 0] \times D^\ep/\ep \subseteq Q_{3^m}$) and the definition of the map $v_y$. These observations imply that~\eqref{eq:21070212} is equivalent to the inequality
\begin{multline*}
    \left\| \sum_{y \in \mathcal{Y}_\kappa} \nabla \chi_{y}^\ep \cdot \left( V' \left(  \xi_y + \nabla  \phi_{Q_y} \left( \cdot, \cdot ; \xi_y \right)  \right) - \E \left[ V' \left(  \xi_y + \nabla  \phi_{Q_y} \left(\cdot, \cdot ; \xi_y \right) \right) \right] \right) \right\|_{\underline{H}^{-1}_\mathrm{par} (Q_{3^m})} \\ \leq \mathcal{O}_2 \left( C 3^{-\frac m2} \left( 1 + m^{\frac12} \indc_{\{ d = 2\}} \right) \right),
\end{multline*}
where we used the notation $\chi_{y}^\ep :=  \chi_y (\ep \cdot)$.
By the multiscale Poincar\'e inequality (Proposition~\ref{prop:multscPoinc}) and the property~\eqref{sum.Onotation} of the $\mathcal{O}$-notation, we see that it is sufficient to prove the two following estimates:
\begin{equation} \label{eq:14200312}
     \left\| \sum_{y \in \mathcal{Y}_\kappa} \nabla \chi_{y}^\ep \cdot  V' \left( \xi_y + \nabla  \phi_{Q_y} \left(\cdot,\cdot ; \xi_y \right) \right) \right\|_{\underline{L}^2 (Q_{3^m})} \leq \mathcal{O}_2 \left(C L^{-1}\right),
\end{equation}
and, for every $k \in \{ 0 , \ldots , m\}$ and every $z \in \mathcal{Z}_{k,m}$,
\begin{multline} \label{eq:14210312}
    \left|  \sum_{y \in \mathcal{Y}_\kappa}  \left(\nabla \chi_{y}^\ep \cdot \left( V' \left( \xi_y + \nabla \phi_{Q_y} \left(\cdot ,\cdot ; \xi_y \right) \right) - \E \left[ V' \left( \xi_y + \nabla \phi_{Q_y} \left(\cdot, \cdot ; \xi_y \right) \right) \right] \right) \right)_{z + Q_{3^k}} \right| \\ \leq  \left\{ \begin{aligned}
    \mathcal{O}_2 \left( C L^{-1} 3^{- \frac{dk}{2}} \right) ~\mbox{if}~ 3^{k} \leq L, \\
    \mathcal{O}_2 \left( C 3^{- \frac{(d + 2) k}2} \right)~\mbox{if}~ 3^{k} \geq L.
    \end{aligned} \right.
\end{multline}
We split the proof of the inequalities~\eqref{eq:14200312} and~\eqref{eq:14210312} into three substeps.

\medskip

\textit{Substep 2.1. Proof of the inequality~\eqref{eq:14200312}.} To prove the estimate~\eqref{eq:14200312}, we use the bound~\eqref{prop.partofunity2scbis} on the cutoff functions $(\chi_y)_{y \in \mathcal{Y}_\kappa}$ and obtain
\begin{align*}
    \left\| \sum_{y \in \mathcal{Y}_\kappa} \nabla \chi_{y}^\ep \cdot  V' \left( \xi_y + \nabla \phi_{Q_y} \left(\cdot, \cdot ; \xi_y \right) \right) \right\|_{\underline{L}^{2} (Q_{3^m})}^2
    & \leq \frac{C L^{-2}}{|\mathcal{Y}_\kappa|}\sum_{y \in \mathcal{Y}_\kappa} \left( \left\| \xi_y \right\|_{\underline{L}^{2} (Q_y)}^2 + \left\| \nabla \phi_{Q_y} \left( \cdot, \cdot ; \xi_y \right)  \right\|_{\underline{L}^{2} (Q_y)}^2 \right) \\
    & \leq \mathcal{O}_1 \left( CL^{-2} \right).
\end{align*}
Taking the square root in the previous display completes the proof of~\eqref{eq:14200312}. 

\medskip

\textit{Substep 2.2. Proof of the inequality~\eqref{eq:14210312}, case $3^{k} \leq L$.} We note that, for each  $z \in \mathcal{Z}_{k,m}$, there are at most $C := C(d) < \infty$ vertices $y \in \mathcal{Y}_\kappa$ such that
\begin{equation*}
    \left(\nabla \chi_{y}^\ep \cdot \left( V' \left(  \xi_y + \nabla \phi_{Q_y} \left(\cdot, \cdot ; \xi_y \right)  \right) - \E \left[ V' \left(  \xi_y + \nabla \phi_{Q_y} \left(\cdot, \cdot ; \xi_y \right) \right) \right] \right) \right)_{z + Q_{3^k}} \neq 0,
\end{equation*}
and that, by Proposition~\ref{prop.prop3.5techni}, for any $y \in \mathcal{Y}_\kappa$,
\begin{equation*}
    \left| \left(\nabla \chi_{y}^\ep \cdot \left( V' \left(  \xi_y + \nabla \phi_{Q_y} \left(\cdot, \cdot ; \xi_y \right) \right) - \E \left[ V' \left(  \xi_y + \nabla \phi_{Q_y} \left(\cdot, \cdot ; \xi_y \right) \right) \right] \right) \right)_{z + Q_{3^k}} \right| \leq \mathcal{O}_2 \left( C L^{-1} 3^{-\frac{dk}{2}} \right).
\end{equation*}
A combination of the two previous displays gives the inequality~\eqref{eq:14210312} in the case $3^{k} \leq \kappa/\ep.$

\medskip

\textit{Substep 2.3. Proof of the inequality~\eqref{eq:14210312}, case $3^{k} \geq L$.} To ease the notation and without loss of generality, we assume that $z = 0$. The strategy of the argument follows the one of Proposition~\ref{prop.2.2BL} and Proposition~\ref{prop.spataveflux}. We consider the dynamic $\phi_{Q_y}^n \left(\cdot , \cdot ; \xi_y \right)$ run with the discretized Brownian motions introduced in these proofs (and let $n$ be the size of the mesh of the discretization). In order to prove~\eqref{eq:14210312}, it is sufficient to show, uniformly in the parameter $n$,
\begin{multline} \label{eq:19180312}
    \left|  \sum_{y \in \mathcal{Y}_\kappa}  \left(\nabla \chi_{y}^\ep \cdot \left( V' \left( \xi_y + \nabla \phi_{Q_y}^n \left(\cdot , \cdot ; \xi_y \right) \right) - \E \left[ V' \left( \xi_y + \nabla \phi_{Q_y}^n \left(\cdot , \cdot ; \xi_y \right) \right) \right] \right) \right)_{z + Q_{3^k}} \right| \\ \leq \mathcal{O}_2 \left( C 3^{- \frac{(d + 2) k}2} \right).
\end{multline}
By the Gaussian concentration inequality (Proposition~\ref{propEfronstein}), it is sufficient to prove the two following results: for any $x \in \Lambda_{3^k + 2 L}$ and any $l \in \left\{ - n 3^{2m}, \ldots, 0 \right\}$,
\begin{equation} \label{eq:1408sam}
    \left|   \sum_{y \in \mathcal{Y}_\kappa}  \frac{\partial \left(\nabla \chi_{y}^\ep \cdot  V' \left( \xi_y + \nabla \phi_{Q_y}^n \left(\cdot , \cdot ; \xi_y \right) \right) \right)_{Q_{3^k}}}{\partial X^n_l (x)} \right| \leq C \sqrt{n^{-1}} 3^{-k(d+2)} \exp \left( - \frac{(l/n)}{3^{2k}} \right),
\end{equation}
and, for any $x \notin \Lambda_{3^k + 4 L}$ and any $l \in \left\{ - n 3^{2m}, \ldots, 0 \right\}$,
\begin{equation} \label{eq:1409sam}
   \sum_{y \in \mathcal{Y}_\kappa}  \frac{\partial \left(\nabla \chi_{y}^\ep \cdot  V' \left( \xi_y + \nabla \phi_{Q_y}^n \left(\cdot , \cdot ; \xi_y \right) \right) \right)_{Q_{3^k}}}{\partial X^n_l (x)} = 0.
\end{equation}
The proof of the inequality~\eqref{eq:1408sam} and the identity~\eqref{eq:1409sam} relies the two following observations. First, for any vertex $x \in \Lambda_{3^m}$, there exist at most $C:= C(d) <\infty$ vertices $y \in \mathcal{Y}_\kappa$ such that $x \in y/\ep + \Lambda_{2 L}$. Second, for any $y \in \mathcal{Y}_\kappa$, the dynamic $\phi_{Q_y}^n \left(\cdot , \cdot ; \xi_y \right)$ depends only on the increments $X_l^n(x)$ inside the parabolic cylinder $Q_y$, that is, on the increments $X_l^n(x)$ with $x \in y/\ep + \Lambda_{2L}$.

Consequently, for any $x \in \Lambda_{3^m}$, there exist at most $C:= C(d) < \infty$ vertices $y \in \mathcal{Y}_\kappa$ such that
\begin{equation*}
    \frac{\partial \left(\nabla \chi_{y}^\ep \cdot  V' \left( \xi_y + \nabla \phi_{Q_y}^n \left(\cdot , \cdot ; \xi_y \right) \right) \right)_{ Q_{3^k}}}{\partial X^n_l (x)} \neq 0.
\end{equation*}
Additionally, if a vertex $y \in \mathcal{Y}_\kappa$ is such that $y/\ep\notin \Lambda_{3^{k} + 2 L}$, then the cylinder $Q_y$ does not intersect the cylinder $Q_{3^k}$, and consequently
\begin{equation*}
    \left(\nabla \chi_{y}^\ep \cdot  V' \left( \xi_y + \nabla \phi_{Q_y}^n \left(\cdot , \cdot ; \xi_y \right) \right) \right)_{ Q_{3^k}} = 0.
\end{equation*}
Finally, using Proposition~\ref{prop.prop3.5techni}, we have, for any $y \in \mathcal{Y}_\kappa$, any $x \in y/\ep + \Lambda_{2 L}$, and any $l \in \left\{ - n 3^{2m}, \ldots, 0 \right\}$,
\begin{equation*} 
     \left| \frac{\partial \left( \nabla \chi_{y}^\ep \cdot  V' \left( \xi_y + \nabla \phi_{Q_y}^n \left(\cdot , \cdot ; \xi_y \right) \right) \right)_{Q_{3^k}}}{\partial X_l^n(x)} \right| \leq C \sqrt{n^{-1}} 3^{-k(d+2)} \exp \left( - \frac{(l/n)}{3^{2k}} \right).
\end{equation*}
Combining the previous remarks and inequalities completes the proof of~\eqref{eq:1408sam} and~\eqref{eq:1409sam}.

\end{proof}

\subsection{Two-scale expansion and proof of Theorem~\ref{Th.quantitativehydr}} \label{secTh.quantitativehydr}

This section is devoted to the proof of Theorem~\ref{Th.quantitativehydr} making use of the previous results established in Section~\ref{sec.section2} and Section~\ref{eq:quanthydrolim2sc}.

\begin{proof}[Proof of Theorem~\ref{Th.quantitativehydr}]
We split the proof of the theorem into many steps. The main objective of the proof is to show that the two-scale expansion $w^\ep$ is almost a solution of the equation~\eqref{eq:defuLthmhydro}. Specifically, we will prove the identity~\eqref{eq:16012107} below and estimate the size of the two (small) error terms $\vec{\mathcal{E}}$ and $\mathcal{E}$. To achieve this, we compute the time derivative and gradient of the map $w^\ep$, and compute the value of the term $\nabla^\ep \cdot V'(\nabla^\ep w^\ep)$ in Steps 1, 2, 3 and 4 below. Once the identity~\eqref{eq:16012107} has been established, we show that the $L^2$-norm of the gradient of the difference $u^\ep - w^\ep$ is small. This is the subject of Step~5. In Step 6, we conclude the proof building upon the results established in Step 5.

\medskip

To ease the notation, we denote by $(B^\ep_\cdot(x))_{x \in \ep \Zd}$ the suitably rescaled Brownian motions: for any $(t , x) \in \R \times D^\ep$,
\begin{equation*}
    B^\ep_t (x) := \ep B_{\frac{t}{\ep^2}} \left( \frac{x}{\ep}\right).
\end{equation*}
We first establish the identity
\begin{equation} \label{eq:16012107}
    \partial_t \left(  w^\ep - \sqrt{2} B_{\cdot}^\ep - \mathcal{E}^{\mathrm{mean}} \right)- \nabla^\ep \cdot V'(\nabla^\ep w^\ep)  = \nabla^\ep \cdot \vec{\mathcal{E}} + \mathcal{E} ,
\end{equation}
where the functions $\vec{\mathcal{E}} : (-1 , 0) \times \vec{E}( D^\ep) \to \R$ and $\mathcal{E}: (-1 , 0) \times D^\ep \to \R$ are explicit error terms satisfying the estimate
\begin{align} \label{eq:10451909}
   \| \vec{\mathcal{E}} \|_{L^2(Q^\ep)} + \left\| \mathcal{E} \right\|_{\underline{H}^{-1}_{\mathrm{par}}(Q^\ep)} \leq \mathcal{O}_2 \left( C \ep^{\frac12} \bigl( 1 + |\log \ep|^{\frac12} \indc_{\{ d = 2\}} \bigr) \right),
\end{align}
and the error term $\mathcal{E}^{\mathrm{mean}} : (-1 , 0) \times D^\ep \to \R$ arises from the correction involving the averaged sums of the Brownian motions in the right-hand side of~\eqref{eq:14311610+1}, and is defined by the formula
\begin{equation} \label{def.Emean}
    \mathcal{E}^{\mathrm{mean}} (t , x) := \sum_{y \in \mathcal{Y}_\kappa} \chi_y(x) \left( \frac{\sqrt{2}}{\left| \Lambda^\ep_{2 \kappa}\right|}\sum_{x' \in y + \Lambda^\ep_{2\kappa}} B^\ep_t \left(x'\right) \right).
\end{equation}

\textit{Step 1. Computing the time derivative of the two-scale expansion $w^\ep$.} To prove the formula~\eqref{eq:16012107} and the estimate~\eqref{eq:10451909}, we first compute the time derivative of the map $w^\ep -  B^\ep$. Using the definition~\eqref{def.wL} of the two-scale expansion $w^\ep$, we obtain the identity
\begin{align} \label{eq:16002107.sec4}
\lefteqn{
\partial_t \left(  w^\ep -  \sqrt{2} B_\cdot^\ep - \mathcal{E}^{\mathrm{mean}}  \right)
} \qquad & \notag \\ & 
= \partial_t \bar u^\ep  +  \partial_t \left( \phi^\ep -  \sqrt{2} B_\cdot^\ep - \mathcal{E}^{\mathrm{mean}} \right) \notag \\
 & = \partial_t  \bar u^\ep   +  \ep \sum_{y \in \mathcal{Y}_\kappa} \chi_{y} \partial_t \left( \phi_{Q_{y}} \left( \frac{\cdot}{\ep^2} , \frac{\cdot}{\ep} ; \xi_{y} \right) - \sqrt{2} B_\cdot^\ep - \frac{\sqrt{2}}{\left| \Lambda^\ep_{2 \kappa}\right|}\sum_{x' \in y + \Lambda^\ep_{2\kappa}} B^\ep_\cdot \left(x'\right) \right). 
\end{align}

\textit{Step 2. Computing the gradient of the two-scale expansion $w^\ep$.} We next compute the gradient of the map $w^\ep$. Using the definition~\eqref{def.wL}, an explicit computation, the definition of the error term $E_z$ and the inequalities~\eqref{prop.partofunity2sc} and~\eqref{eq:estL2normEz}, we obtain, for each $(t , e) \in (-1 , 0) \times \vec{E} \left( D^\ep\right)$,
\begin{equation} \label{formulawL.2sc}
   \nabla^\ep w^\ep (t , e) = \sum_{y \in \mathcal{Y}_\kappa} \vec{\chi}_{y}(e) \nabla^\ep v_{y}(t , e) + \vec{\mathcal{E}}_1 (t , e),
\end{equation}
where we used the notation, for any edge $e = (x_0 , x_1) \in E \left( D^\ep\right)$,
\begin{equation*}
    \vec{\chi}_{y}(e) = \frac{\chi_{y}(x_0) + \chi_{y}(x_1)}{2},
\end{equation*}
and the error term $\vec{\mathcal{E}}_1$ satisfies the $L^2$-bound
\begin{equation} \label{eq:11471208}
    \| \vec{\mathcal{E}}_1 \|_{L^2(Q^\ep)} \leq \mathcal{O}_2 \left( C \ep^{\frac12} \bigl( 1 + | \log \ep |^{\frac12} \indc_{\{ d = 2\}} \bigr) \right).
\end{equation}

\textit{Step 3. Computing the value of $\nabla^\ep \cdot V' \left( \nabla^\ep w^\ep \right)$: the identity~\eqref{eq:15592107}.} Building upon the identity~\eqref{formulawL.2sc} and the bound~\eqref{eq:11471208}, we establish the identity
\begin{align} \label{eq:15592107}
    \nabla^\ep \cdot V' \left( \nabla^\ep w^\ep \right)
    & = \nabla^\ep \cdot \sum_{y \in \mathcal{Y}_\kappa}  \vec{\chi}_{y} V' \left( \nabla^\ep v_{y} \right)   + \nabla^\ep \cdot \vec{\mathcal{E}}_2,
\end{align}
where the error term $\vec{\mathcal{E}}_2$ satisfies the estimate
\begin{equation} \label{eq:15602107}
    \| \vec{\mathcal{E}}_2 \|_{L^2(Q^\ep)} \leq \mathcal{O}_2 \left( C \ep^{\frac12} \bigl( 1 + | \log \ep|^{\frac12} \indc_{\{ d = 2\}} \bigr) \right). 
\end{equation}
To establish~\eqref{eq:15592107} and~\eqref{eq:15602107}, we first decompose the term $\vec{\mathcal{E}}_2$ as follows
\begin{equation} \label{eq:12001208}
    \vec{\mathcal{E}}_2
     = \underset{\eqref{eq:12001208}-(i)}{\underbrace{V' \left( \nabla^\ep w^\ep \right) -  V' \left( \sum_{y \in \mathcal{Y}_\kappa} \vec{\chi}_{y} \nabla^\ep v_{y} \right)}}
     +  \underset{\eqref{eq:12001208}-(ii)}{\underbrace{V' \left( \sum_{y \in \mathcal{Y}_\kappa} \vec{\chi}_{y} \nabla^\ep v_{y} \right)  - \sum_{y \in \mathcal{Y}_\kappa} \vec{\chi}_{y} V' \left( \nabla^\ep v_{y} \right)}}
\end{equation}
and estimate the two terms~\eqref{eq:12001208}-(i) and~\eqref{eq:12001208}-(ii) in two distinct substeps.

\medskip

\textit{Substep 3.1. Estimating the term~\eqref{eq:12001208}-(i).} We use that the map $V'$ is Lipschitz and the identity~\eqref{formulawL.2sc}. We obtain
\begin{equation*}
    \left| V' \left( \nabla^\ep w^\ep \right) -  V' \left( \sum_{y \in \mathcal{Y}_\kappa} \vec{\chi}_{y} \nabla^\ep v_{y} \right) \right| \leq C | \vec{\mathcal{E}}_1 |.
\end{equation*}
Applying the estimate~\eqref{eq:11471208}, we deduce that
\begin{equation} \label{eq:15161508}
    \left\| \eqref{eq:12001208}-(i) \right\|_{L^2(Q^\ep)} \leq \mathcal{O}_2 \left( C \ep^{\frac12} \bigl( 1 + | \log \ep |^{\frac12} \indc_{\{ d = 2\}} \bigr) \right).
\end{equation}

\textit{Substep 3.2. Estimating the term~\eqref{eq:12001208}-(ii).} We use that the map $V'$ is Lipschitz, together with the inequality~\eqref{eq:1805110888}. We obtain, for any $z = (t ,y) \in \mathcal{Z}_\kappa$,
\begin{align*}
    \biggl\|  V' \biggl( \ \sum_{y' \in \mathcal{Y}_\kappa} \vec{\chi}_{y'} \nabla^\ep v_{y'} \biggr)  - V' \left( \nabla^\ep v_y \right)  \biggr\|_{\underline{L}^2(z + Q_\kappa^\ep)} & \leq C \biggl\| \biggl( \ \sum_{y' \in \mathcal{Y}_\kappa} \vec{\chi}_{y'} \nabla^\ep v_{y'} \biggr) - \nabla^\ep v_{y}  \biggr\|_{\underline{L}^2(z + Q_\kappa^\ep)} 
    \leq C E_{z}.
\end{align*}
A similar argument yields
\begin{align*}
    \biggl\|  \sum_{y' \in \mathcal{Y}_\kappa} \vec{\chi}_{y'} V' \left( \nabla^\ep v_{y'} \right)  - V' \left( \nabla^\ep v_{y} \right)  \biggr\|_{\underline{L}^2(z + Q_\kappa^\ep)} 
    & = \biggl\| \sum_{y' \in \mathcal{Y}_\kappa} \vec{\chi}_{y'} \left( V'(\nabla^\ep v_{y'})  -V'(\nabla^\ep v_{y}) \right)  \biggr\|_{\underline{L}^2(z + Q_\kappa^\ep)} \\ 
    & \leq C \biggl\| \sum_{y' \in \mathcal{Y}_\kappa} \vec{\chi}_{y'} \left( \nabla^\ep v_{y'}  -\nabla^\ep v_{y} \right)  \biggr\|_{\underline{L}^2(z + Q_\kappa^\ep)} \\ & \leq C E_{z}.
\end{align*}
Combining the two previous displays and summing over the vertices $z \in \mathcal{Z}_\kappa$, we deduce that
\begin{equation} \label{eq:15171508}
    \left\|  \eqref{eq:12001208}-(ii) \right\|_{L^2( Q^\ep)} \leq \mathcal{O}_2 \left( C \ep^{\frac12} \bigl( 1 + | \log \ep |^{\frac12} \indc_{\{ d = 2\}} \bigr) \right).
\end{equation}
Combining the inequalities~\eqref{eq:15161508} and~\eqref{eq:15171508} completes the proof of~\eqref{eq:15602107}.

\medskip

\textit{Step 4. Computing the value of $\nabla^\ep \cdot V' \left( \nabla^\ep w^\ep \right)$: the identity~\eqref{eq:16391508}.} In this step, we prove the identity
\begin{equation} \label{eq:16391508}
     \nabla^\ep \cdot \sum_{y \in \mathcal{Y}_\kappa}  \vec{\chi}_{y} V' \left( \nabla^\ep v_{y} \right)  =  \sum_{y \in \mathcal{Y}_\kappa}  \chi_{y} \nabla^\ep \cdot V' ( \nabla^\ep v_{y} )  +  \vec{\nabla}^\ep \cdot D_p \bar \sigma \left( \vec{\nabla}^\ep \bar u^\ep \right) + \mathcal{E} + \nabla^\ep \cdot \vec{\mathcal{E}}_3, 
\end{equation}
where the two error terms $\mathcal{E}$ and $\vec{\mathcal{E}}_3$ satisfy
\begin{equation} \label{eq:09191608}
    \left\| \mathcal{E} \right\|_{\underline{H}^{-1}_{\mathrm{par}} \left( Q^\ep \right)} + \| \vec{\mathcal{E}}_3 \|_{L^2(Q^\ep)} \leq \mathcal{O}_2 \left( C \ep^{\frac12} \bigl( 1 + | \log \ep |^{\frac12} \indc_{\{ d = 2\}} \bigr) \right).
\end{equation}
We introduce the following time-dependent vector field
\begin{equation*}
    \sum_{y \in \mathcal{Y}_\kappa}\vec{\chi}_{y} D_p \bar \sigma \left( \xi_{y} \right) : (t , e) \mapsto  \sum_{y \in \mathcal{Y}_\kappa}\vec{\chi}_{y}(e) D_p \bar \sigma \left( \xi_{y}(t) \right) \cdot e.
\end{equation*}
To prove the identity~\eqref{eq:16391508} and the upper bound~\eqref{eq:09191608}, we will prove the two identities
\begin{equation} \label{eq:09501608}
    \vec{\nabla}^\ep \cdot D_p \bar \sigma \left( \vec{\nabla}^\ep \bar u^\ep \right) = \vec{\nabla}^\ep \cdot \biggl( \ \sum_{y \in \mathcal{Y}_\kappa}\vec{\chi}_{y} D_p\bar \sigma \left( \xi_{y} \right) \biggr) + \nabla^\ep \cdot \vec{\mathcal{E}_3}
\end{equation}
and
\begin{equation} \label{eq:09511608}
    \nabla^\ep \cdot \biggl( \ \sum_{y \in \mathcal{Y}_\kappa}  \vec{\chi}_{y} \left( V' \left( \nabla^\ep v_{y} \right) -  D_p\bar \sigma \left( \xi_{y} \right)\right) \biggr)=  \sum_{y \in \mathcal{Y}_\kappa}  \chi_{y} \nabla^\ep \cdot V' \left( \nabla^\ep v_{y} \right)  + \mathcal{E}
\end{equation}
together with the estimate~\eqref{eq:09191608}. Each of these identities is proved in a specific substep.

\medskip

\textit{Substep 4.1. Proof of the identity~\eqref{eq:09501608}.} In this step, we focus on the identity~\eqref{eq:09501608} and the estimate on the error term $\vec{\mathcal{E}}_3$. Using the definition~\eqref{eq:discellop} and the identity $\sum_{y \in \mathcal{Y}_\kappa} \vec{\chi}_{y}=1$, we may write, for any $t \in (-1,0)$ and $e = (x_0 , x_1) \in \vec{E}(D^\ep)$,
\begin{align*}
    \vec{\mathcal{E}}_3 (t , e) & := D_p \bar \sigma \left( \vec{\nabla}^\ep \bar u^\ep(t , x_0) \right) \cdot e - \sum_{y \in \mathcal{Y}_\kappa}\vec{\chi}_{y}(t , e)  D_p \bar \sigma \left( \xi_{y} \right) \cdot e \\
    & \; = \sum_{y \in \mathcal{Y}_\kappa}\vec{\chi}_{y}(t , e) \left( D_p \bar \sigma \left( \vec{\nabla}^\ep \bar u ^\ep(t , x_0) \right) \cdot e -  D_p \bar \sigma \left( \xi_{y} \right) \cdot e \right).
\end{align*}
For any $z = (t , y) \in \mathcal{Z}_\kappa$, we have
\begin{equation*}
     \| \vec{\mathcal{E}}_3 \|_{\underline{L}^{2} \left( z +  Q_\kappa^\ep \right)} \leq C \sum_{y' \sim y} \bigl\| \vec{\nabla}^\ep \bar u ^\ep - \xi_{y'} \bigr\|_{\underline{L}^2(z +  Q_\kappa^\ep)}  \leq C E_z.
\end{equation*}
Summing over the vertices $z \in \mathcal{Z}_\kappa$, we obtain
\begin{equation*}
     \| \vec{\mathcal{E}}_3 \|_{L^2(Q^\ep)} \leq \mathcal{O}_2 \left( C \ep^{\frac12} \bigl( 1 + | \log \ep |^{\frac12} \indc_{\{ d = 2\}} \bigr) \right).
\end{equation*}

\medskip

\textit{Substep 4.2. Proof of the identity~\eqref{eq:09511608}.} In this substep, we focus on the identity~\eqref{eq:09511608} and prove the error estimate~\eqref{eq:09191608} on the term $\mathcal{E}_2$. Expanding the discrete divergence, we may write
\begin{align} \label{eq:11111608}
     \nabla^\ep \cdot \biggl( \ \sum_{y \in \mathcal{Y}_\kappa}  \vec{\chi}_{y} \left( V' \left( \nabla^\ep v_{y} \right) - D_p \bar \sigma \left( \xi_{y} \right)\right) \biggr)& =   \sum_{y \in \mathcal{Y}_\kappa}  \chi_{y} \nabla^\ep \cdot \left( V' \left( \nabla^\ep v_{y} \right) - D_p  \bar \sigma \left( \xi_{y} \right) \right) \\ & \qquad + \sum_{y \in \mathcal{Y}_\kappa} \nabla^\ep \chi_{y} \cdot \left( V' \left( \nabla^\ep v_{y} \right) - D_p \bar \sigma \left( \xi_{y} \right) \right), \notag
\end{align}
where we used the notation, for any $(t , x) \in (0 , \infty) \times D^\ep$,
\begin{align} \label{eq:16350512}
\lefteqn{ 
\sum_{y \in \mathcal{Y}_\kappa} \nabla^\ep \chi_{y} \cdot \left( V' \left( \nabla^\ep v_{y} \right) -  D_p \bar \sigma \left( \xi_{y} \right) \right) (t , x) }
\qquad & \notag \\
& 
=  \sum_{y \in \mathcal{Y}_\kappa}  \sum_{\substack{e \in \vec{E}\left( D^\ep\right) \\ e = (x , y')}} \nabla^\ep \chi_{y}(t , e) \left( V' \left( \nabla^\ep v_{y}(t , e) \right) -  D_p \bar \sigma \left( \xi_{y}  \right) \cdot e \right).
\end{align}
We next simplify the previous display as follows: using that the terms $ D_p \bar \sigma \left( \xi_{y} \right)$ are spatially constant, we write
\begin{equation} \label{eq:11112608}
    \nabla^\ep \cdot \left( V' \left( \nabla^\ep v_{y} \right) - D_p \bar \sigma \left( \xi_{y} \right) \right) = \nabla^\ep \cdot  V' \left( \nabla^\ep v_{y} \right).
\end{equation}
From~\eqref{eq:11111608} and~\eqref{eq:11112608}, we obtain the following formula for the error term $\mathcal{E}$,
\begin{equation*}
    \mathcal{E} := \sum_{y \in \mathcal{Y}_\kappa} \nabla^\ep \chi_{y} \cdot \left( V' \left( \nabla^\ep v_{y} \right) - D_p \bar \sigma \left( \xi_{y} \right) \right).
\end{equation*}
The $\underline{H}^{-1}_{\mathrm{par}}(Q^\ep)$-norm of the error term $\mathcal{E}$ is then estimated by Proposition~\ref{prop4.3fluxest}. The proofs of the identity~\eqref{eq:09511608} and the inequality~\eqref{eq:09191608} are complete.

\medskip

\textit{Step 5. Estimating the norm of the difference $v := u^\ep - w^\ep$.} We next combine the three identities~\eqref{eq:16002107.sec4},~\eqref{eq:15592107} and~\eqref{eq:16391508} and use the definition~\eqref{def.discequep} of the map $\bar u^\ep$ and the one of the Langevin dynamic~\eqref{eq:defuLthmhydro}. We obtain the identity~\eqref{eq:16012107} with the error term
\begin{equation*}
    \vec{\mathcal{E}} := \vec{\mathcal{E}}_2 + \vec{\mathcal{E}}_3.
\end{equation*}
The upper bounds~\eqref{eq:15602107} and~\eqref{eq:09191608} ensure that the error terms $\mathcal{E}$ and $\vec{\mathcal{E}}$ satisfy the estimate~\eqref{eq:10451909}. We then take the difference between the equations~\eqref{eq:defuLthmhydro} and~\eqref{eq:16012107} and obtain that the map $v := u^\ep - w^\ep$ solves the linear parabolic equation
\begin{equation} \label{eq:v2scexp}
    \left\{ \begin{aligned}
    \partial_t \left( v - \mathcal{E}^{\mathrm{mean}} \right) - \nabla^\ep \cdot \a \nabla^\ep v & = \nabla^\ep \cdot \vec{\mathcal{E}} + \mathcal{E} &~\mbox{in}~&Q^\ep, \\
    v & = \phi^\ep &~\mbox{in}~ &\partial_{\mathrm{par}} Q^\ep.
    \end{aligned} \right.
\end{equation}
where the environment $\a$ is given by the formula
\begin{equation*}
    \a(t , e) := \int_0^1 V''(s \nabla^\ep \bar u^\ep (t , e) + (1-s)\nabla^\ep w^\ep (t , e) ) ds.
\end{equation*}
Using the linearity of the equation~\eqref{eq:v2scexp}, we may decompose the map $v$ according to the formula $v = v_0 + v_1 + v_2$, where the functions $v_0$, $v_1$ and $v_2$ are the solutions of the parabolic equations
\begin{equation} \label{eq:v0scexp}
    \left\{ \begin{aligned}
    \partial_t v_0 - \nabla^\ep \cdot \a \nabla^\ep v_0 & = 0 &~\mbox{in}~&Q^\ep, \\
    v_0 & = \phi^\ep &~\mbox{in}~& \partial_{\mathrm{par}}  Q^\ep,
    \end{aligned} \right.
\end{equation}
and
\begin{equation} \label{eq:v1scexp}
    \left\{ \begin{aligned}
    \partial_t v_1 - \nabla^\ep \cdot \a \nabla^\ep v_1 & = \nabla^\ep \cdot \vec{\mathcal{E}} + \mathcal{E} &~\mbox{in}~&Q^\ep, \\
    v_1 & = 0 &~\mbox{in}~& \partial_{\mathrm{par}}  Q^\ep,
    \end{aligned} \right.
\end{equation}
and
\begin{equation} \label{eq:v22scexp}
    \left\{ \begin{aligned}
    \partial_t \left( v_2 - \mathcal{E}^{\mathrm{mean}} \right) - \nabla^\ep \cdot \a \nabla^\ep v_2 & = 0  &~\mbox{in}~&Q^\ep, \\
    v_2 & = 0 &~\mbox{in}~& \partial_{\mathrm{par}}  Q^\ep.
    \end{aligned} \right.
\end{equation}
We then estimate the $L^2$-norm of the gradients of the three maps $v_0$, $v_1$ and $v_2$ in three separate substeps.

\medskip

\textit{Substep 5.1. Estimating the term $v_0$.} Regarding the term $v_0$, we will prove the estimate
\begin{equation} \label{eq:17592009}
    \left\| \nabla v_0 \right\|_{L^2 ( Q^\ep)} \leq  \mathcal{O}_2 \left( C \ep^{\frac12} \bigl( 1 + | \log \! \ep |^{\frac12} \indc_{\{ d = 2\}} \bigr) \right).
\end{equation}
The discrete version of Stokes' theorem (suitably scaled to take into account the parameter~$\ep$) gives
\begin{equation*}
    \sum_{x \in D^\ep} \left(- \nabla^\ep \cdot \a \nabla^\ep v_0 (t , x) \right) v_0(t , x) = \sum_{e \in E(D^\ep)}  \a(t , e) (\nabla^\ep v_0 (t , x))^2 + \frac1\ep \sum_{x \in \partial D^\ep} \phi^\ep(t , x) \mathbf{n} \cdot \a \nabla^\ep v_0 (t , x),
\end{equation*}
where the notation $\mathbf{n} \cdot \a \nabla^\ep v_0$ refers to the discrete version of the normal component of the vector field $\a \nabla^\ep v_0$, formally defined by, for $t \in (-1 ,0)$ and $x \in \partial D^\ep$,
\begin{equation*}
    \mathbf{n} \cdot \a \nabla^\ep v_0 (t , x) := \sum_{\substack{e = (y , x) \in \vec{E}(\Zd) \\ y \in D^\ep}}  \a(t , e) \nabla^\ep v_0(t , e).
\end{equation*}
Multiplying the equation~\eqref{eq:v0scexp} by $v_0$, integrating over the parabolic cylinder $Q^\ep$ and applying the Cauchy-Schwarz inequality, we obtain (after multiplication by the volume factor $\ep^d$ to normalize the $L^2(Q^\ep)$-norms)
\begin{align} \label{eq:12500712}
    \lefteqn{\frac 12 \left\| v_0 (0 , \cdot) \right\|_{L^2(D^\ep)}^2 - \frac 12 \left\| \phi^\ep (-1 , \cdot) \right\|_{L^2(D^\ep)}^2 + \left\| \nabla^\ep v_0 \right\|_{L^2(Q^\ep)}^2} \qquad & \\ &
    \leq C \ep^{-1} \Biggl( \ep^d \int_I \sum_{x \in \partial D^\ep} |\phi^\ep(t , x)|^2 dt \Biggr)^{\frac12} \Biggl( \ep^d \int_I \sum_{\substack{e = (x , y) \in E(D^\ep) \\ x \in D^\ep, y \notin D^\ep}} |\nabla^\ep v_0(t , e)|^2 dt \Biggr)^{\frac12}. \notag
\end{align}
We next use the inequality
\begin{equation*}
    \ep^d \int_I \sum_{\substack{e = (x , y) \in E(\Zd) \\ x \in D^\ep, y \notin D^\ep}} |\nabla^\ep v_0(t , e)|^2 dt \leq \ep^d \int_I \sum_{e\in E(D^\ep) } |\nabla^\ep v_0(t , e)|^2 dt = \left\| \nabla^\ep v_0 \right\|_{L^2(Q^\ep)}^2.
\end{equation*}
Combining the two previous displays and using that the first term in the right-hand side of~\eqref{eq:12500712} is nonnegative, we obtain the upper bound
\begin{equation*}
     \left\| \nabla^\ep v_0 \right\|_{L^2(Q^\ep)}^2 \leq C \ep^{-1} \Biggl( \ep^d \int_I \sum_{x \in \partial D^\ep} |\phi^\ep(t , x)|^2 dt \Biggr)^{\frac12} \left\| \nabla^\ep v_0 \right\|_{L^2(Q^\ep)} +  \frac 12 \left\| \phi^\ep(0 , \cdot) \right\|_{L^2(D^\ep)}^2,
\end{equation*}
which implies
\begin{equation*}
    \left\| \nabla^\ep v_0 \right\|_{L^2(Q^\ep)}^2 \leq C \ep^{d-2} \int_I \sum_{x \in \partial D^\ep} |\phi^\ep(t , x)|^2 dt + C \left\| \phi^\ep(0 , \cdot) \right\|_{L^2(D^\ep)}^2.
\end{equation*}
Using the pointwise bound on the corrector stated in Proposition~\ref{prop.2.2BL}, the fact that the cardinality of the boundary $\partial D^\ep$ is of order $\ep^{1-d}$ (since $D$ is assumed to be Lipschitz) and the property~\eqref{sum.Onotation} of the $\mathcal{O}$-notation, we obtain
\begin{equation} \label{eq:18060710}
    \ep^{d-2} \int_I \sum_{x \in \partial D^\ep} |\phi^\ep(t , x)|^2 dt  \leq \mathcal{O}_1 \left(  C\ep  \left( 1 + \left| \log \! \ep \right| \indc_{\{ d = 2\}} \right) \right)
\end{equation}
and, by a similar argument,
\begin{equation*}
    \left\| \phi^\ep(0 , \cdot) \right\|_{L^2(Q^\ep)}^2 \leq \mathcal{O}_1 \left(  C\ep^2\left( 1 + \left| \log \! \ep \right| \indc_{\{ d = 2\}} \right) \right).
\end{equation*}
A combination of the previous displays gives
\begin{equation} \label{eq:estv_0goodone}
    \left\| \nabla^\ep v_0 \right\|_{L^2(Q^\ep)} \leq \mathcal{O}_2 \left( C \ep^{\frac12} \bigl( 1 + | \log \! \ep |^{\frac12} \indc_{\{ d = 2\}} \bigr) \right).
\end{equation}

\medskip

\textit{Substep 5.2. Estimating the term $v_1$.} In this substep, we estimate the term $\underline{L}^2 ( Q^\ep)$-norm of the map $v_1$. Applying Lemma~\ref{lem.idenH-1par} together with a rescaling argument, we obtain the existence of a pair of functions~$(w , w^*) \in L^2(I ; H^1_0(D^\ep)) \times L^2(I ; H^{-1}(D^\ep))$ such that
\begin{equation} \label{eq:decompE}
    \mathcal{E} = \partial_t w  + w^*,
\end{equation}
and $t \mapsto w(t , x)$ is continuous, $w(-1 , x) = w(0 , x) = 0$, and
\begin{equation*}
    \left\| w \right\|_{\underline{L}^2(I ; \underline{H}^1(D^\ep))} + \left\| w^* \right\|_{\underline{L}^2(I ; \underline{H}^{-1}(D^\ep))}  \leq C \left\| \mathcal{E} \right\|_{\underline{H}^{-1}_{\mathrm{par}}(Q^\ep)}.
\end{equation*}
Combining the equations \eqref{eq:v1scexp} and~\eqref{eq:decompE}, we deduce that
\begin{equation*}
    \partial_t ( v_1 - w) = \nabla^\ep \cdot \a \nabla^\ep v_1 + \nabla^\ep \cdot \vec{\mathcal{E}} + w^* ~\mbox{in}~ Q^\ep.
\end{equation*}
Multiplying the equation by $(v_1 - w)$, integrating over the parabolic cylinder $Q^\ep$, and using that $w$ is equal to $0$ on the boundary of the time interval, we obtain the estimate
\begin{align*}
    \frac12\left\| v_1(0 , \cdot)\right\|_{L^2(D^\ep)}^2 + \left\| \nabla^\ep v_1 \right\|_{L^2(Q^\ep)}^2 & \leq C \left\| w \right\|_{\underline{L}^2(I ; \underline{H}^1(D^\ep))}^2 + C \left\| w^* \right\|_{\underline{L}^2(I ; \underline{H}^{-1}(D^\ep))}^2  + C \| \vec{\mathcal{E}} \|_{L^2(Q^\ep)}^2 \\
    & \leq C \| \vec{\mathcal{E}} \|_{L^2(Q^\ep)}^2 + C \left\| \mathcal{E} \right\|_{\underline{H}^{-1}_{\mathrm{par}}(Q^\ep)}^2.
\end{align*}
We deduce that
\begin{equation} \label{bound nabla v1ep}
    \left\| \nabla^\ep v_1 \right\|_{L^2 (Q^\ep)} \leq \mathcal{O}_2 \left( C \ep^{\frac12} \bigl( 1 + | \log \ep |^{\frac12} \indc_{\{ d = 2\}} \bigr) \right).
\end{equation}

\medskip

\textit{Substep 5.3. Estimating the term $v_2$.} We first let $Y$ be the solution of the stochastic differential equation
\begin{equation*}
    \left\{ \begin{aligned}
    d Y_t(x) &= - \frac{1}{\kappa^2} Y_t(x) \, dt + d \mathcal{E}^{\mathrm{mean}}(t , x) &~\mbox{for}~ (t, x) \in (-1 , 0) \times D^\ep,  \\
    Y_{-1}(x) & = 0 &~\mbox{for}~ x \in D^\ep.
    \end{aligned}
   \right.
\end{equation*}
Since the term $\mathcal{E}^{\mathrm{mean}}$ is Gaussian (as a sum of independent Brownian motions), the process $Y$ is Gaussian. Moreover, it can be written as the stochastic integral
\begin{equation*}
    Y_t(x) = \int_{-1}^t e^{-\frac{(t-s)}{\kappa^2}} d \mathcal{E}^{\mathrm{mean}}(s , x)
\end{equation*}
Combining the previous identity with the definition of $\mathcal{E}_{\mathrm{mean}}$ stated in~\eqref{def.Emean}, we see that the process $Y$ satisfies the following properties: for each $t , s\in (-1,0)$, and each $x, y \in D^\ep$,
\begin{equation} \label{eq:16481312}
    \left\{ \begin{aligned}
    \mathrm{var} \left[ Y_t(x) \right] & \leq C \ep^d \kappa^{2-d}, \\
    \mathrm{cov} \left[Y_t(x) , Y_s(y) \right] & \leq C  \ep^d \kappa^{2-d} e^{-\frac{|t - s|}{\kappa^2}}, \\
     \mathrm{cov} \left[ Y_t(x) , Y_s(y)\right] &= 0 &\mbox{if}~ |x - y| \geq 4 \kappa.
    \end{aligned} \right.
\end{equation}
Combining these properties with the multiscale Poincar\'e inequality (after suitable rescaling and omitting some of the technical details), yields the bound on the $\underline{H}^{-1}_{\mathrm{par}}(Q^\ep)$-norm of the process $Y$,
\begin{equation} \label{eq:16491312}
    \left\| Y \right\|_{\underline{H}^{-1}_{\mathrm{par}}(Q^\ep)} \leq C \kappa \times \ep^{\frac{d}{2}} \kappa^{1-\frac{d}{2}}.
\end{equation}
Intuitively, the estimate~\eqref{eq:16491312} is the product of two terms: the first one corresponds to the scale above which the process $Y$ behaves like an uncorrelated, space-time, white noise and the second one corresponds to the typical size of $Y$ (as obtained in the first line of~\eqref{eq:16481312}).
Using the process $Y$, we may rewrite the definition of the map $v_2$ as follows
    \begin{equation*}
    \left\{ \begin{aligned}
    \partial_t \left( v_2 - Y \right) - \nabla^\ep \cdot \a \nabla^\ep v_2 & = \frac{1}{\kappa^2} Y &~\mbox{in}~&Q^\ep, \\
    v_2 & = 0 &~\mbox{in}~& \partial_{\mathrm{par}}  Q^\ep.
    \end{aligned} \right.
\end{equation*}
In this form, the $L^2(Q^\ep)$-norm of the gradient of the map $v_2$ using energy estimates, the bounds~\eqref{eq:16481312} and~\eqref{eq:16491312}, and the identification of the space $\underline{H}^{-1}_{\mathrm{par}}(Q^\ep)$ similar to the one used to estimate the function~$v_1$. We obtain (skipping the technical details as the techniques are essentially the same as the ones used to estimate the $L^2$-norms of the gradient of the functions $v_0$ and $v_1$ above)
\begin{equation} \label{eq:17011312}
    \left\| \nabla^\ep v_2 \right\|_{L^2(Q^\ep)} \leq \mathcal{O}_2 \left(  \frac{C \ep}{\kappa} + C \kappa \right) \leq \mathcal{O}_2 \left( C \ep^{\frac12} \left( 1 + \left| \ln \! \ep \right|^{\frac 12} \right) \right).
\end{equation}

\medskip

\textit{Step 6. The conclusion.} Combining~\eqref{eq:estv_0goodone},~\eqref{bound nabla v1ep} and~\eqref{eq:17011312}, we obtain
\begin{equation*}
    \left\| \nabla^\ep  u^\ep - \nabla^\ep w^\ep \right\|_{L^2(Q^\ep)} \leq \mathcal{O}_2 \left( C \ep^{\frac 12} \left( 1 + \left| \ln \! \ep \right|^{\frac 12} \right) \right),
\end{equation*}
which completes the proof of the estimate stated in Remark~\ref{remark1.2}. Combining the previous inequality with the bound~\eqref{eq:18060710} and the Poincar\'e inequality (with a trace term) gives the upper bound
\begin{equation*}
    \left\| u^\ep - w^\ep \right\|_{L^2(Q^\ep)} \leq \mathcal{O}_2 \left( C \ep^{\frac 12} \left( 1 + \left| \ln \! \ep \right|^{\frac 12} \right) \right).
\end{equation*}
Using the definition of the two-scale expansion $w^\ep$ (stated in~\eqref{def.wL}) and Proposition~\ref{prop.2.2BL}, we obtain the estimate
\begin{equation*}
    \left\| \bar u - w^\ep \right\|_{L^2(Q^\ep)} \leq \mathcal{O}_2 \left( C \ep \left( 1 + \left| \ln \! \ep \right|^{\frac 12} \right) \right).
\end{equation*}
A combination of the two previous displays together with the triangle inequality completes the proof of Theorem~\ref{Th.quantitativehydr}.
\end{proof}

\section{$C^2$-regularity of the surface tension} \label{sec.section4}
This section is devoted to the proof of the $C^2$-regularity of the surface tension $\bar \sigma$ following the outline presented in Section~\ref{OutlineC2reg}. The section is mostly independent of Sections~\ref{sec.section2} and~\ref{eq:quanthydrolim2sc} (except that the bound~\eqref{eq:20291910grad} of Proposition~\ref{prop.2.2BL} is used), and is structured as follows. In Section~\ref{section5.1}, we prove a general property of stochastic processes. In Section~\ref{section.linearizationC2reg}, we establish a linearization estimate for the Langevin dynamic, following the argument of~\cite[Lemma 2.4]{AFK1} in the case of nonlinear elliptic equation. Finally in Section~\ref{sectionC2regfinal}, we introduce a finite-volume approximation $(\tau_L)_{L \geq 1}$ of the surface tension and establish, building upon the results of the Sections~\ref{section5.1} and~\ref{section.linearizationC2reg}, that its gradient is equicontinuous. Taking the limit $L \to \infty$ then implies that the second derivative of the surface tension is continuous by the Arzel\`a-Ascoli Theorem. 

In the rest of this section, and contrary to the two previous sections, we only consider Langevin dynamics with slopes which are constant in time. For $z \in \R \times \Zd$, we denote by $\phi_{z , L}(\cdot ; p)$ (resp. $\phi_{L}(\cdot ; p)$) the solution of the dynamic introduced in Definition~\ref{def.firstordercorrfinvol} in the parabolic cylinder $z + Q_L$ (resp. $Q_L$) with a constant slope $p\in \Rd$.

\subsection{A general property for the solutions of stochastic differential equations} \label{section5.1}

In this section, we prove the following general property for solutions of stochastic differential equations of the form $d X_t = h_t dt + dB_t$.

\begin{proposition} \label{prop:diffOrnstein-Uhlenbeck}
    Fix a constant $K > 0$ and a filtration $\left(\mathcal{F}_t\right)_{t \geq 0}$. Let $\left( B_t \right)_{t \geq 0}$ be a Brownian motion for the filtration $(\mathcal{F}_t)_{t \geq 0}$ and let $(h_t)_{t \geq 0}$ square-integrable process adapted to the filtration $(\mathcal{F}_t)_{t \geq 0}$. Assume that $(h_t)_{t \geq 0}$ satisfies the integrability estimate
    \begin{equation} \label{eq:asshstochbounded}
        \left\| h \right\|_{L^2((0,1))} \leq \mathcal{O}_2(K).
    \end{equation}
    Let $X_t$ be a solution of the stochastic differential equation
    \begin{equation} \label{eq:15292009}
        dX_t = h_t dt + dB_t \hspace{5mm} \mbox{in} \hspace{5mm} (0, 1).
    \end{equation}
    Then there exists a constant $C := C(K) < \infty$ such that, for any $\ep > 0$,
    \begin{equation*}
        \int_0^1 \indc_{ \left\{ \left| X_t \right|\leq \ep \right\} } \, dt \leq \mathcal{O}_2 \left( C \ep \right).
    \end{equation*}
\end{proposition}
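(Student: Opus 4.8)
\textbf{Proof plan for Proposition~\ref{prop:diffOrnstein-Uhlenbeck}.}
The plan is to split the solution as $X_t = H_t + B_t$ where $H_t := X_0 + \int_0^t h_s\,ds$ is an absolutely continuous process whose derivative is controlled in $L^2((0,1))$ by assumption~\eqref{eq:asshstochbounded}. The heuristic is that a Brownian motion, being of H\"older regularity only $\tfrac12-$, is ``rough'' relative to the Lipschitz-in-mean drift $H$, so the set of times where $X_t = H_t + B_t$ stays within a window of size $\ep$ must be small; the drift can only shift the window around but cannot make $B$ linger in it. To make this quantitative I would discretize time into $N \sim \ep^{-2}$ intervals $I_j := [(j-1)/N, j/N]$ and estimate the contribution of each $I_j$ to $\int_0^1 \indc_{\{|X_t|\le\ep\}}\,dt$ separately, then sum.

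First I would bound the oscillation of $H$ on $I_j$ by $\osc_{I_j} H \le |I_j|^{1/2} \bigl(\int_{I_j} h_s^2\,ds\bigr)^{1/2}$ via Cauchy--Schwarz. Next, the key probabilistic input: condition on $\mathcal{F}_{(j-1)/N}$ and on the value $B_{(j-1)/N}$; on $I_j$ the increment $B_t - B_{(j-1)/N}$ is a fresh Brownian motion, independent of the drift's behaviour up to a predictable perturbation. One shows that for a standard Brownian bridge/motion on an interval of length $\delta$, and for \emph{any} (even random but adapted) point $a$, the Lebesgue measure of the sojourn time $\int_{I_j}\indc_{\{|B_t - a|\le\ep\}}\,dt$ has expectation $\lesssim \ep\,\delta^{1/2}$, and more importantly satisfies a Gaussian-type tail: $\int_{I_j}\indc_{\{|B_t-a|\le \ep\}}\,dt \le \mathcal{O}_2(C\ep\delta^{1/2})$ uniformly in $a$. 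This is a standard fact about Brownian sojourn times (via the reflection principle or the explicit local time estimates); the point is that the bound is uniform over the shift $a$, which absorbs both $X_0$ and the drift contribution on $I_j$ once $\osc_{I_j} H$ is small compared to $\ep$. On the (rare) intervals where $\osc_{I_j}H \gtrsim \ep$, I would instead just bound the contribution of $I_j$ crudely by $|I_j| = 1/N$ and control the number of such bad intervals using~\eqref{eq:asshstochbounded}: $\#\{j : \int_{I_j}h_s^2\,ds \ge c\ep^2 N^{-1}\cdot N\} $ is $O_s$-small by Markov/Chebyshev applied to $\|h\|_{L^2}^2 \le \mathcal{O}_1(CK^2)$.

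Then I would assemble the pieces: writing $\delta = 1/N \sim \ep^2$, the ``good'' intervals each contribute $\mathcal{O}_2(C\ep\cdot\ep) = \mathcal{O}_2(C\ep^2)$ and there are $N \sim \ep^{-2}$ of them, and using the summation rule~\eqref{sum.Onotation} together with the fact that the per-interval estimates, after conditioning, can be made to involve (essentially) independent randomness across disjoint intervals, the total is $\mathcal{O}_2(C\ep)$ — note that the summation rule for $\mathcal{O}_2$ needs a little care since naive summation of $N$ terms of size $\ep^2$ in $\mathcal{O}_2$ gives $\mathcal{O}_2(C N \ep^2) = \mathcal{O}_2(C)$, which is too weak, so one really must exploit (conditional) independence and a martingale/Bernstein-type concentration for the sum rather than a union bound. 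The ``bad'' intervals contribute at most $(1/N)\cdot\#\{\text{bad } j\}$, which is $\mathcal{O}_1(C\ep^2 \cdot \ep^{-2}) $-type but with the extra smallness coming from the Chebyshev gain on $\|h\|_{L^2}$; one checks this is also $\le \mathcal{O}_2(C\ep)$, perhaps after trading stochastic integrability exponents via~\eqref{Onotation.power}.

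\textbf{Main obstacle.} The delicate point is the concentration of the \emph{sum} over the $\sim\ep^{-2}$ intervals: a term-by-term $\mathcal{O}_2$ bound combined with the crude summation rule loses everything, so the argument must use that, conditionally on the past, the Brownian sojourn in interval $I_j$ is independent of the sojourns in $I_1,\dots,I_{j-1}$ and has a sub-exponential (indeed sub-Gaussian at scale $\ep\delta^{1/2}$) tail around its mean $\lesssim\ep\delta^{1/2}$; one then applies a Bernstein inequality for sums of conditionally independent bounded-variance random variables to get that the total sojourn, which has mean $\lesssim \ep$, concentrates at that scale with Gaussian tails, yielding the $\mathcal{O}_2(C\ep)$ conclusion. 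Handling the adaptedness of $a$ (the shift) carefully — so that the conditional sojourn estimate genuinely applies — and controlling the interaction between the random drift and the Brownian increments on each interval is where the real work lies; everything else is bookkeeping with the $\mathcal{O}_s$ calculus of Section~\ref{section.stochint}.
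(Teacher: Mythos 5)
The paper's proof is a one-shot Itô-formula argument that does not discretize time at all: it constructs a $C^2$ convex function $F_\ep$ with $F_\ep'' \ge \indc_{[-\ep,\ep]}$, $F_\ep'$ bounded by $2\ep$, and $F_\ep$ itself small, then applies Itô's formula to $F_\ep(X_t)$. The occupation term $\int_0^1 F_\ep''(X_t)\,dt$ is solved for and bounded by three pieces: the boundary term $|F_\ep(X_1)-F_\ep(X_0)|$ (controlled by $2\ep|X_1-X_0|$), the drift term $\int_0^1 F_\ep'(X_t)h_t\,dt$ (controlled by $2\ep\|h\|_{L^1}$), and the stochastic integral $\int_0^1 F_\ep'(X_t)\,dB_t$ (controlled via Dubins–Schwarz as a time-changed Brownian motion with clock bounded by $4\ep^2$, hence $\mathcal{O}_2(C\ep)$). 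Your approach is genuinely different, and I don't believe it closes.

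The gap is in the concentration step, and you have actually put your finger on the exact place where the difficulty lives but have not resolved it. You propose to condition on $\mathcal{F}_{(j-1)/N}$ at the start of each mesoscopic interval $I_j$ and use conditional independence of the Brownian increments, then run a Bernstein-type argument. But the conditional mean $m_j := \E\bigl[\int_{I_j}\indc_{\{|X_t|\le\ep\}}\,dt \,\big|\, \mathcal{F}_{(j-1)/N}\bigr]$ is a random variable: it is roughly of order $\ep\delta^{1/2}$ when $|X_{(j-1)/N}|\lesssim \delta^{1/2}$ and essentially zero otherwise. A Bernstein inequality for the martingale difference $\sum_j\bigl(\text{sojourn}_j - m_j\bigr)$ gives concentration around the random quantity $\sum_j m_j$, not around $C\ep$. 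The sum $\sum_j m_j$ is itself a (discretized, smoothed) version of the occupation time you are trying to estimate, so the argument is circular: you have reduced the problem to a variant of itself. To break the circularity one would need an a priori deterministic or high-probability bound of the form $\sum_j m_j \le C\ep$, which is precisely the content of the proposition. A second, subordinate issue is the stochastic integrability of the Bernstein bound itself: with per-summand a.s. bound $\delta=\ep^2$ and the required deviation scale also of order $\ep$, you are in the regime where Bernstein is sub-exponential rather than sub-Gaussian in the deviation parameter, so getting a clean $\mathcal{O}_2(C\ep)$ statement (rather than a weaker integrability) is not automatic even if the circularity were resolved.

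The contrast with the paper is instructive: the function $F_\ep$ converts the occupation time directly into an Itô expansion where the dangerous occupation quantity appears exactly once (as the $\frac12\int F_\ep''$ term) and can be isolated algebraically, so there is no self-reference. The sub-Gaussianity of the stochastic integral then comes for free from the uniform bound $|F_\ep'|\le 2\ep$ and the Dubins–Schwarz representation, no martingale concentration inequality needed. If you want to salvage a discretization-based approach you would need a genuinely new idea to decouple the conditional mean of the sojourn from the occupation time itself, for instance by first establishing a rough a priori bound and then bootstrapping, but at that point the Itô argument is clearly the more economical route.
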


\begin{proof}
Fix $\ep > 0$, and let $f_\ep : \R \to \R$ be defined by the formula
\begin{equation*}
    f_\ep (x) := \max( 2 - \ep^{-1} |x| , 0 ).
\end{equation*}
We note that this function is continuous, compactly supported in the interval $[-2 \ep , 2\ep]$ and satisfies $f_\ep \geq \indc_{ [-\ep , \ep]}$. We next denote by $F_\ep : \R \to \R$ the function satisfying
\begin{equation*}
    \left( F_\ep \right)'' = f_\ep \hspace{5mm} \mbox{and} \hspace{5mm} F_\ep = 0 ~\mbox{on}~(-\infty, -2\ep).
\end{equation*}
The explicit formula of $F_\ep$ could be easily obtained but is not relevant for our purposes; instead we record four of its elementary properties: the map~$F_\ep$ is twice continuously differentiable, is nonnegative, satisfies~$F_\ep(x) \leq 2\ep \max(x + 2\ep, 0)$ and $F_\ep'(x) \leq 2\ep \indc_{[-2\ep , \infty)}$. 
Applying It\^{o}'s formula to the process $(F_\ep(X_t))_{t \geq 0}$ yields
\begin{align*}
    F_{\ep}(X_1) & =F_{\ep}( X_0) + \int_0^1 F'_\ep (X_t) \, dX_t + \frac12 \int_0^1 F_{\ep}''(X_t) \, dt \\
    & = F_{\ep}(X_0) + \int_0^1 F'_\ep (X_t) \, dB_t + \int_0^1 F'_\ep (X_t) h_t \, dt + \frac12 \int_0^1 F_{\ep}''(X_t) \, dt.
\end{align*}
Using the properties of the function $F_\ep$, we deduce that
\begin{align} \label{eq:09502109}
   \int_0^1 \indc_{ \left\{ \left| X_t \right|\leq \ep \right\} } \, dt 
    & \leq \int_0^1 F_{\ep}''(X_t) \, dt \\
    & \leq 2 \left| F_\ep(X_1) - F_\ep(X_0)\right| + 2 \biggl| \int_0^1 F'_\ep (X_t) \, dB_t \biggr| + 4\ep \!\int_0^1 \left| h_t \right| \, dt. \notag
\end{align}
We then estimate the three terms in the right-hand side. For the first one, we use that the map $F'_\ep$ is $(2\ep)$-Lipschitz together with the definition~\eqref{eq:15292009} of the stochastic process $X_t$. We obtain
\begin{equation*}
\left| F_\ep(X_1) - F_\ep(X_0)\right|
\leq 2\ep \left| X_1 - X_0 \right| 
\leq 2\ep \left| B_1 + \int_0^1 h_t \, dt \right| 
\leq 2\ep \left| B_1 \right| +2\ep \left\| h \right\|_{L^2(0,1)}.
\end{equation*}
Using that the law of $B_1$ is a centered normal distribution of variance $1$ together with the assumption~\eqref{eq:asshstochbounded}, we deduce that
\begin{equation} \label{eq:12109}
    \left| F_\ep(X_1) - F_\ep(X_0)\right| \leq \mathcal{O}_2(C\ep).
\end{equation}
To estimate the second term, we use that $t \mapsto \int_0^t F_\ep'(X_s) dB_s$ is a martingale whose quadratic variation is the function $t \mapsto \int_0^t F_\ep'(X_s)^2 \, ds$. By the Dubins-Schwarz theorem (extending the function $s \mapsto F_\ep'(X_s)$ by $1$ for $s \geq 1$ if necessary), there exists a Brownian motion $(\beta_t)_{t \geq 0}$ such that $\int_0^t F_\ep'(X_s) dB_s = \beta_{\int_0^t F_\ep'(X_s)^2 \, ds}$. Using that the map $F_\ep'$ is bounded by $2\ep$ and standard estimates on the supremum of a Brownian motion, we deduce that
\begin{equation} \label{eq:22109}
   \left|  \int_0^1 F_\ep'(X_s) dB_s \right| =  \left| \beta_{\int_0^1 F_\ep'(X_s)^2 \, ds} \right| \leq \sup_{s \in [0 , 4\ep^2]} \left| \beta_s \right| \leq \mathcal{O}_2 \left( C \ep \right).
\end{equation}
Combining~\eqref{eq:09502109} with~\eqref{eq:12109} and~\eqref{eq:22109} and the definition of the map $F_\ep$, we deduce the existence of a finite constant $C := C(K)<\infty$ such that
\begin{equation*}
\int_0^1 \indc_{ \left\{ \left| X_t \right|\leq \ep \right\} } \, dt  \leq \left| F_\ep(X_1) - F_\ep(X_0)\right| + 2 \left| \int_0^1 F'_\ep (X_t) \, dB_t \right| + 4\ep \left\| h \right\|_{L^2(0,1)} 
\leq \mathcal{O}_2(C \ep)\,.
\end{equation*}
The proof of Proposition~\ref{prop:diffOrnstein-Uhlenbeck} is complete.
\end{proof}

As a consequence of Proposition~\ref{prop:diffOrnstein-Uhlenbeck}, we record the following corollary which estimates the amount of time spent by the process $X_t$ in a measurable set of prescribed Lebesgue measure.

\begin{corollary} \label{cor:diffOrnstein-Uhlenbeck}
    Fix a constant $K > 0$ and a filtration $\left(\mathcal{F}_t\right)_{t \geq 0}$. Let $\left( B_t \right)_{t \geq 0}$ be a Brownian motion for the filtration $(\mathcal{F}_t)_{t \geq 0}$ and let $(h_t)_{t \geq 0}$ square-integrable process adapted to the filtration $(\mathcal{F}_t)_{t \geq 0}$. Assume that $(h_t)_{t \geq 0}$ satisfies the integrability estimate
    \begin{equation*}
        \left\| h \right\|_{L^2(0,1)} \leq \mathcal{O}_2(K).
    \end{equation*}
    Let $(X_t)_{t \in [0 , 1]}$ be a solution of the stochastic differential equation
    \begin{equation} \label{eq:1529200922}
        dX_t = h_t dt + dB_t \hspace{5mm} \mbox{in} \hspace{5mm} (0, 1).
    \end{equation}
    Then there exists a constant $C := C(K) < \infty$ such that for any measurable set $A \in \mathcal{B}(\R)$,
    \begin{equation*}
        \int_0^1 \indc_{ \left\{ X_t \in A \right\} } \, dt \leq \mathcal{O}_2 \left( C |A| \right).
    \end{equation*}
\end{corollary}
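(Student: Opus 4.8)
The plan is to deduce Corollary~\ref{cor:diffOrnstein-Uhlenbeck} from Proposition~\ref{prop:diffOrnstein-Uhlenbeck} by a layer-cake (distribution-function) argument together with a uniform approximation of an arbitrary Borel set by finite unions of intervals. First I would dispose of the case of an interval: if $A = (a-\ep, a+\ep)$ is an interval of half-length $\ep$ centered at $a$, then the shifted process $\tilde X_t := X_t - a$ still solves $d\tilde X_t = h_t\,dt + dB_t$, so Proposition~\ref{prop:diffOrnstein-Uhlenbeck} (applied to $\tilde X$) gives
\begin{equation*}
    \int_0^1 \indc_{\{X_t \in A\}}\,dt = \int_0^1 \indc_{\{|\tilde X_t| \leq \ep\}}\,dt \leq \mathcal{O}_2(C\ep) = \mathcal{O}_2\bigl(\tfrac{C}{2}|A|\bigr),
\end{equation*}
with the constant $C = C(K)$ furnished by the proposition (note the proposition is stated for the closed condition $|X_t|\le\ep$, which dominates the open one). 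This is the only place the stochastic input is used.

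Next I would handle a finite disjoint union $A = \bigsqcup_{i=1}^N A_i$ of intervals. Applying the interval case to each $A_i$ and using the summation rule~\eqref{sum.Onotation} for the $\mathcal{O}$-notation,
\begin{equation*}
    \int_0^1 \indc_{\{X_t \in A\}}\,dt = \sum_{i=1}^N \int_0^1 \indc_{\{X_t \in A_i\}}\,dt \leq \mathcal{O}_2\Bigl(C_2 \sum_{i=1}^N \tfrac{C}{2}|A_i|\Bigr) = \mathcal{O}_2\bigl(C'|A|\bigr),
\end{equation*}
where $C' := C_2 C/2$ depends only on $K$ (the constant $C_2$ from~\eqref{sum.Onotation} is universal, and crucially it is independent of $N$). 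So the estimate with a $K$-dependent constant survives passing to arbitrary finite unions of intervals, which is the key point that makes the general Borel case accessible.

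Finally I would pass to a general $A \in \mathcal{B}(\R)$ by outer regularity of Lebesgue measure: given $\delta > 0$, choose an open set $U \supseteq A$ with $|U| \leq |A| + \delta$; write $U$ as a countable disjoint union of open intervals $\bigsqcup_{i\ge1} U_i$, and let $U^{(N)} := \bigsqcup_{i=1}^N U_i$. Then $\indc_{\{X_t \in A\}} \leq \indc_{\{X_t \in U\}}$ and, by monotone convergence in $N$, $\int_0^1 \indc_{\{X_t\in U\}}\,dt = \lim_{N\to\infty} \int_0^1 \indc_{\{X_t \in U^{(N)}\}}\,dt$ almost surely. Since $|U^{(N)}| \leq |U| \leq |A| + \delta$ for every $N$, the finite-union bound gives $\int_0^1 \indc_{\{X_t \in U^{(N)}\}}\,dt \leq \mathcal{O}_2(C'(|A|+\delta))$ uniformly in $N$; because $\{Y : Y \leq \mathcal{O}_2(M)\}$ is closed under monotone a.s.\ limits (Fatou's lemma applied to $\mathbb{E}[\exp((Y/M)^2)]$, exactly as at the end of the proof of Proposition~\ref{prop.2.2BL}), the limit $\int_0^1 \indc_{\{X_t\in A\}}\,dt$ also satisfies $\le \mathcal{O}_2(C'(|A|+\delta))$. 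Letting $\delta \downarrow 0$ (again via the monotone-limit closure, now in $\delta$) yields $\int_0^1 \indc_{\{X_t \in A\}}\,dt \leq \mathcal{O}_2(C'|A|)$, which is the claim with $C := C'$. The main obstacle is the bookkeeping in this last step --- namely checking that the $\mathcal{O}_2$-class is stable under the two limiting operations ($N\to\infty$ and $\delta\downarrow 0$) with the constant held fixed --- but this is routine given Fatou's lemma and the monotonicity of $M\mapsto \exp((Y/M)^2)$, and no further probabilistic estimates are needed beyond the interval case.
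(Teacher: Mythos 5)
Your proof is correct and follows essentially the same route as the paper: reduce to a single interval by the translation $X_t \mapsto X_t - a$ and Proposition~\ref{prop:diffOrnstein-Uhlenbeck}, then pass to a general Borel set by covering it with a countable disjoint family of intervals of nearly minimal total length and invoking the summation property of the $\mathcal{O}_2$-notation. The only (cosmetic) difference is that you make explicit the monotone/Fatou passage from finite to countable unions and the $\delta\downarrow 0$ limit, whereas the paper directly applies the summation rule~\eqref{sum.Onotation} to the countable collection and absorbs a factor of $2$ (from $\sum_n |I_n|\le 2|A|$) into the constant; both are sound.
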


\begin{proof}
We first note that, for any $a \in \R$, the process $X_t + a$ is a solution of the stochastic differential equation~\eqref{eq:1529200922}. Applying Proposition~\ref{prop:diffOrnstein-Uhlenbeck}, we obtain that, for every~$a \in \R$ and~$\ep > 0$,
\begin{equation} \label{eq:10312109}
    \int_0^1 \indc_{ \left\{ \left| X_t +a \right|\leq \ep \right\} } \, dt \leq \mathcal{O}_2 \left( C \ep \right).
\end{equation}
The stochastic integrability estimate~\eqref{eq:10312109} can be equivalently reformulated as follows: for any interval $I \subseteq \R$,
\begin{equation} \label{eq:10332109}
    \int_0^1 \indc_{ \left\{ X_t \in I \right\} } \, dt \leq \mathcal{O}_2 \left( C |I| \right).
\end{equation}
Let us consider a measurable set $A \subseteq \R$ satisfying $|A| \leq 1$ (otherwise the statement is trivially satisfied). Using the outer regularity of the Lebesgue measure on $\R$, we deduce that there exists a countable collection of disjoint intervals $(I_n)_{n \in \mathbb{N}}$ satisfying $A \subseteq \cup_{n \in  \mathbb{N}} I_n$ and $\sum_{n \in \mathbb{N}} |I_n| \leq 2|A|$. We thus obtain
\begin{equation*}
    \int_0^1 \indc_{ \left\{ X_t \in A \right\} } \, dt \leq \sum_{n \in \N} \int_0^1 \indc_{ \left\{ X_t \in I_n \right\}  } \, dt.
\end{equation*}
Using Proposition~\ref{prop:diffOrnstein-Uhlenbeck}, we deduce that
\begin{equation*}
     \int_0^1 \indc_{ \left\{ X_t \in A \right\} } \, dt \leq \mathcal{O}_2 \left(C \sum_{n \in \mathbb{N}} |I_n|\right)  \leq \mathcal{O}_2(C|A|).
\end{equation*}
The proof of Corollary~\ref{cor:diffOrnstein-Uhlenbeck} is complete.
\end{proof}

\subsection{Linearization estimate} \label{section.linearizationC2reg}

In this section, and following~\cite[Lemma 2.4]{AFK2}, we identify the first-order term in the asymptotic development of the difference $\phi_L (\cdot, \cdot ; q)-\phi_L (\cdot, \cdot ; p)$ with $p , q \in \Rd$ and $|p - q| \ll 1$. This term is characterized to be the solution of the linearized parabolic equation and is introduced in Definition~\ref{def.linearizedcorr} below. In Lemma~\ref{lemma3.92209}, we estimate quantitatively the $L^2$-norm of the map $\phi_L (\cdot, \cdot ; q)-\phi_L (\cdot, \cdot ; p) -  \nabla w_{L,p , q-p}$ and obtain a rate of convergence depending on information which is intrinsic to the potential $V$ (and specifically, on how precisely the second derivative $V''$ can be approximated by Lipschitz functions).

\begin{definition} \label{def.linearizedcorr}
    Fix $L \in \N$ and $p , \xi \in \Rd$, and denote by $\a := V''(\nabla \phi_L(\cdot, \cdot; p))$. We let $w_{L,p , \xi} : Q_L \to \R$ be the solution of the parabolic equation with periodic boundary conditions
\begin{equation} \label{eq:11252109}
    \left\{ \begin{aligned}
    \partial_t w_{L,p , \xi} - \nabla \cdot \a \nabla w_{L,p , \xi} & = \nabla \cdot \a \xi & ~\mbox{in} &~ Q_L, \\
    w_{L,p , \xi}(-L^2 , \cdot) & = 0 & ~\mbox{in} &~  \Lambda_L, \\
    w_{L,p , \xi} (t , \cdot) &\in \Omega_{\Lambda_L , \mathrm{per}} &~\mbox{for}&~t \in I_L.
    \end{aligned} \right.
\end{equation}
\end{definition}

\begin{lemma}\label{lemma3.92209}
Fix $L \in \N$ and $R\geq 1$. Then there exists a continuous function~$\chi_R : \R_+ \to \R_+$ depending on $c_+ , c_- ,R$ and the rate of the limit in~\eqref{conv.regularityV''} over all parameters $(R,\ep) \in [1,\infty)\times (0,1]$, 
which satisfies $\chi_R(0) = 0$, such that, for every $p,q\in B_R$,
\begin{equation} \label{eq:09362309}
    \left\| \nabla \phi_L (\cdot, \cdot ; q) - \nabla \phi_L (\cdot, \cdot ; p) -  \nabla w_{L,p , q-p} \right\|_{\underline{L}^2(Q_L)} \leq \mathcal{O}_2 \left( C \chi_R(|p - q|) |p-q|\right).
\end{equation}
\end{lemma}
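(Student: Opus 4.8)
The plan is to prove Lemma~\ref{lemma3.92209} by setting up the nonlinear equation satisfied by the difference $\delta := \phi_L(\cdot,\cdot;q) - \phi_L(\cdot,\cdot;p)$, subtracting off the linearized corrector $w := w_{L,p,q-p}$, and estimating the residual via an energy estimate in which the only nontrivial input is controlling the ``amount of time'' the dynamics spends where $V''$ is poorly approximated by a Lipschitz function. First, since $\phi_L(\cdot,\cdot;p)$ and $\phi_L(\cdot,\cdot;q)$ are driven by the same Brownian motions, the difference solves
\begin{equation*}
    \partial_t \delta - \nabla \cdot \left( \mathbf{a}_\delta \, (q - p + \nabla \delta) \right) = 0 \quad \text{in } Q_L, \qquad \delta(-L^2,\cdot) = 0,
\end{equation*}
where $\mathbf{a}_\delta(t,e) := \int_0^1 V''\bigl(p + s(q-p) + \nabla \phi_L(t,e;p) + s\nabla\delta(t,e)\bigr)\,ds$, a uniformly elliptic environment. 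Subtracting the equation~\eqref{eq:11252109} for $w$ (with $\xi = q-p$ and $\mathbf{a} = V''(\nabla\phi_L(\cdot,\cdot;p))$), the residual $\rho := \delta - w$ satisfies a parabolic equation of the form $\partial_t \rho - \nabla\cdot\mathbf{a}_\delta\nabla\rho = \nabla\cdot\bigl((\mathbf{a}_\delta - \mathbf{a})(q - p + \nabla w)\bigr)$ with zero initial data and periodic boundary conditions, so an energy estimate gives
\begin{equation*}
    \left\| \nabla \rho \right\|_{\underline{L}^2(Q_L)} \leq C \left\| (\mathbf{a}_\delta - \mathbf{a})(q - p + \nabla w) \right\|_{\underline{L}^2(Q_L)}.
\end{equation*}

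The right-hand side is bounded by $\left\| \mathbf{a}_\delta - \mathbf{a} \right\|_{\underline{L}^{2}(Q_L)}\bigl(|p-q| + \left\|\nabla w\right\|_{\underline{L}^\infty}\bigr)$ after Hölder (or, more carefully, by splitting $\mathbf{a}_\delta - \mathbf{a}$ into a part bounded by $2c_+$ on a small-measure set and a part that is genuinely small). To make $\left\| \mathbf{a}_\delta - \mathbf{a} \right\|$ small I would argue: at each fixed edge $e$, by the pointwise gradient bound~\eqref{eq:20291910grad} of Proposition~\ref{prop.2.2BL} the drift $h_t := \nabla\cdot V'(\nabla\phi_L)(t,e)$ is essentially bounded, so $t \mapsto \phi_L(t,e;p)$ (hence $t\mapsto \nabla\phi_L(t,e;p)$) satisfies a one-dimensional SDE $dX_t = h_t\,dt + \sqrt2\,d(\text{Brownian increment})$ to which Corollary~\ref{cor:diffOrnstein-Uhlenbeck} applies. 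Consequently the process $\nabla\phi_L(\cdot,e;p)$ spends at most time $\mathcal{O}_2(C|A|)$ in any measurable set $A$; applying this to $A = A_{S,\kappa}(\epsilon) - (\text{shift by } p \text{ and by } s(q-p))$, the ``bad'' set where $V''$ differs from the Lipschitz approximation $V''_\kappa$ by more than $\epsilon$, we get that the contribution to $\left\|\mathbf{a}_\delta - \mathbf{a}\right\|$ from the bad set is $\mathcal{O}_2(C\,|A_{S,\kappa}(\epsilon)|^{1/2})$, while on the complement $|V''_\kappa(x) - V''_\kappa(y)| \leq C\kappa^{-1}|x-y|$ from~\eqref{eq:VrgLIP} and $|x - y| \lesssim |p - q| + |\nabla\rho| + |\nabla w|$. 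Optimizing over $\kappa$, $S$, $\epsilon$ (with $S \sim R + (\log L)^{1/2}$ chosen large enough that $\nabla\phi_L$ stays in $[-S,S]$ with high probability, using~\eqref{eq:20291910grad}) and absorbing the $\left\|\nabla\rho\right\|$ term into the left-hand side for $|p-q|$ small produces the modulus $\chi_R$, which depends only on $c_-, c_+, R$ and the rate in~\eqref{conv.regularityV''}.

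There is a subtlety in that Corollary~\ref{cor:diffOrnstein-Uhlenbeck} is stated on the interval $(0,1)$ whereas the corrector lives on $Q_L = (-L^2,0)\times\Lambda_L$, so I would either rescale time or partition $(-L^2,0)$ into unit subintervals and apply the corollary on each, using the summation property~\eqref{sum.Onotation} of the $\mathcal{O}$-notation (the number of subintervals $L^2$ contributes only to the constant, which is allowed since $\chi_R$ may depend on $L$ through the rate only via the $(S,\epsilon)$ parameters --- actually one must be slightly careful here and keep $S = S(L)$ but note the rate in~\eqref{conv.regularityV''} is taken as a supremum over $S \geq 1$, so the $L$-dependence drops out). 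I would also need to check that $\left\|\nabla w\right\|_{\underline{L}^\infty(Q_L)} \leq C|p-q|$ or at least a bound on $\left\|\nabla w\right\|_{\underline{L}^{2+\gamma_0}}$ via the global Meyers estimate (Proposition~\ref{propMeyers}) applied to~\eqref{eq:11252109}, which suffices after interpolation; this is routine. The main obstacle is the interplay between the three small parameters: the Lipschitz constant $\kappa^{-1}$ of $V''_\kappa$ blows up as $\kappa \to 0$, the measure $|A_{S,\kappa}(\epsilon)|$ shrinks only through the (unquantified, but positive-rate) limit~\eqref{conv.regularityV''}, and $\epsilon$ must be sent to $0$ too --- balancing these to extract a genuine modulus of continuity $\chi_R$ with $\chi_R(0)=0$, rather than merely a qualitative statement, is the delicate bookkeeping step, and it is exactly where the quantified Lusin input and Proposition~\ref{prop:diffOrnstein-Uhlenbeck} combine.
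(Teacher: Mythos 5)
Your plan is structurally the same as the paper's: write the residual $\rho$ as a solution of a linear parabolic equation with source $\nabla\cdot\bigl((\mathbf{a}_\delta-\mathbf{a})(q-p+\nabla w)\bigr)$, bound the source by separating a small-measure ``bad'' set (where the mollified $V''_\kappa$ is a poor approximation to $V''$) from a Lipschitz good region, control the bad-set time via Corollary~\ref{cor:diffOrnstein-Uhlenbeck}, and close using the parabolic Meyers estimate and $L^1$--$L^\infty$ interpolation. The ingredients, the role of each, and the bookkeeping (partitioning $(-L^2,0)$ into unit intervals, uniformity in $L$ via the supremum over $S$ in~\eqref{conv.regularityV''}) all match the paper.

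There is, however, one concrete gap. You propose applying Corollary~\ref{cor:diffOrnstein-Uhlenbeck} to the process $\nabla\phi_L(\cdot,e;p)$ and a \emph{deterministically shifted} target set $A_{S,\kappa}(\epsilon) - p - s(q-p)$. This controls the time that $p+s(q-p)+\nabla\phi_L(\cdot,e;p)$ spends in $A_{S,\kappa}(\epsilon)$, which is what you need for the term $V''(y)-V''_\kappa(y)$ with $y = p+\nabla\phi_L(\cdot,e;p)$. But the other argument of $V''$ appearing inside $\mathbf{a}_\delta$ is $x_s := p + s(q-p) + \nabla\phi_L(\cdot,e;p) + s\nabla\delta(\cdot,e)$, and $s\nabla\delta$ is a \emph{random} process, not a constant shift. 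You cannot absorb it into the set $A$ (the corollary requires $A$ deterministic). The fix is exactly the paper's: observe that $x_s = s\,\nabla v_L(\cdot,e;q) + (1-s)\,\nabla v_L(\cdot,e;p)$ is itself the solution of a one-dimensional SDE of the admissible form, because $\phi_L(\cdot;p)$ and $\phi_L(\cdot;q)$ are driven by the \emph{same} Brownian motions, so the linear combination has drift $h_t = s\,\nabla\cdot V'(\nabla v_L(\cdot;q)) + (1-s)\,\nabla\cdot V'(\nabla v_L(\cdot;p))$ evaluated across the edge $e$, which is $\mathcal{O}_2(C)$ by~\eqref{eq:20291910grad}, and noise given by a fixed scalar multiple of one Brownian increment. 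Apply Corollary~\ref{cor:diffOrnstein-Uhlenbeck} directly to $x_s$ with the deterministic set $A_{S,\kappa}(\epsilon)$; this closes the term $V''(x_s)-V''_\kappa(x_s)$ and is where the coupling through common noise is actually used.

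Two smaller bookkeeping points. First, the paper finds it cleaner to estimate $\|\tilde{\mathbf a}-\mathbf a\|_{\underline L^1(Q_L)}$ directly (using the energy estimate $\|\nabla\tilde w\|_{\underline L^2}\leq C|p-q|$ for the Lipschitz piece, with no need to absorb $\nabla\rho$ into the left-hand side) and only afterward interpolates up to $\underline L^{p_0}$ using $\|\tilde{\mathbf a}-\mathbf a\|_{L^\infty}\leq 2c_+$ and Meyers on $\tilde w$; your $L^2$-with-absorption route also works but generates cross terms in $\nabla\rho\cdot\nabla w$ that take extra care. Second, your guess $S\sim R+(\log L)^{1/2}$ is unnecessary: the Gaussian tail bound from~\eqref{eq:20291910grad} is uniform in $L$, so one can take $S_\epsilon \sim \epsilon^{-1}$ with no $L$-dependence, and the supremum over $S\geq 1$ in~\eqref{conv.regularityV''} is what lets you quantify $\kappa_\epsilon$ uniformly.
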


\begin{proof} We split the proof into several steps.

\medskip

\textit{Step 1. Setup and preliminary observations.} Fix $L \in \N$, $R\geq1$ and $p, q \in B_R$. We may assume that $|p - q| \leq 1$. We introduce the notation $v_{L} (t , x ; p) := p \cdot x + \phi_L( t , x ; p)$ and $v_{L} (t , x ; q) := q \cdot x + \phi_L(t,x ; q)$. We first observe that the difference $\tilde w = \phi_L(\cdot, \cdot ; q ) - \phi_L(\cdot, \cdot ; p)$ solves the parabolic equation
\begin{equation} \label{eq:11262109}
    \left\{ \begin{aligned}
    \partial_t \tilde w - \nabla \cdot \tilde \a \nabla \tilde w & = \nabla \cdot \tilde \a (q-p) & ~\mbox{in} &~ Q_L, \\
    \tilde w(-L^2, \cdot)& = 0 & ~\mbox{in} &~ \Lambda_L, \\
    \tilde w (t , \cdot) &\in \Omega_{\Lambda_L , \mathrm{per}} &~\mbox{for}&~t \in I_L,
    \end{aligned} \right.
\end{equation}
where the environment $\tilde \a$ is given by the formula
\begin{equation} \label{eq:15492309}
    \tilde \a (t , e) := \int_0^1 V''(s \nabla v_L(t , e ; q) + (1-s) \nabla v_L(t , e ; p)) \, ds.
\end{equation}
Using the uniform ellipticity of the environment $\tilde \a$, we have the estimate
\begin{equation} \label{eq:energyesttildew}
   \left\| \nabla \tilde w \right\|_{\underline{L}^2(Q_L)} \leq C |p-q|.
\end{equation}
For later use, we note that the Meyers estimate (Proposition~\ref{interiorparabolicMEyers}) applied to the function $\tilde w$ yields the following result: there exists an exponent $\gamma_{0}:= \gamma_0(d , c_+ , c_-) > 0$ such that
\begin{equation} \label{eq:Meyersfortildew}
    \left\| \nabla \tilde w \right\|_{\underline{L}^{2 + \gamma_{0}} (Q_{L})} \leq  C |p-q|.
\end{equation}
Taking the difference between the equations~\eqref{eq:11252109} and~\eqref{eq:11262109}, we obtain that the map $v := \tilde w - w_{L,p,q-p}$ solves the parabolic equation
\begin{equation} \label{def.vlinearizationest}
     \left\{ \begin{aligned}
    \partial_t v - \nabla \cdot \a \nabla v & = \nabla \cdot \left( (\tilde \a - \a) (q-p + \nabla \tilde w) \right)& ~\mbox{in} &~ Q_L, \\
    v(-L^2 , \cdot) & = 0 & ~\mbox{in} &~\Lambda_L, \\
    v (t , \cdot) &\in \Omega_{\Lambda_L , \mathrm{per}} &~\mbox{for all} &~t \in I_L.
    \end{aligned} \right.
\end{equation}
The definition~\eqref{def.vlinearizationest} leads to the energy estimate
\begin{equation} \label{eq:16022209}
    \left\| \nabla v(t , \cdot) \right\|_{\underline{L}^2(Q_L)}  \leq C \left\| (\tilde \a - \a) (q-p + \nabla \tilde w) \right\|_{\underline{L}^2(Q_L)}.
\end{equation}
We next estimate the term in the right-hand side of~\eqref{eq:16022209}, and split the argument into two steps.

\medskip

\textit{Step 2. Comparing the environment $\a$ and $\tilde \a$.} We first prove that the term $\tilde \a - \a$ is small when the slopes $p , q$ are close to each other. Formally, we prove the inequality: for any $\ep >0$, there exist $\delta:= \delta(d , R , V, \ep) > 0$ and $C:= C(d , R , V) < \infty $ such that, for every $p,q\in B_R$ satisfying $|p - q| \leq \delta$,
\begin{equation} \label{eq:13462009}
    \left\| \tilde \a - \a \right\|_{\underline{L}^1(Q_L)} \leq \mathcal{O}_2( C \ep).
\end{equation}
Here and throughout, dependence of constants on $V$ is restricted to dependence on the rate of the limit in~\eqref{conv.regularityV''} over all possible parameters $(R,\ep) \in [1,\infty)\times (0,1]$. To establish~\eqref{eq:13462009}, we fix $\ep > 0$ and use Proposition~\ref{prop.2.2BL} to obtain the following tail estimate on the gradient of the field: we claim that there exists a constant $C := C(d , c_+ , c_- , R) < \infty$ such that for any $(t,e) \in I_{L} \times E \left(\Lambda_{L} \right)$, any slopes $p,q \in B_R$ with $|p - q| \leq 1$, and any $S > 0$,
\begin{equation*}
    \mathbb{P} \bigl[ \left|\nabla v_L(t,e ; q) \right| \geq S \bigr] \leq C \exp \left(- \frac{S^2}{C}\right).
\end{equation*}
We may thus select~$S_\ep := C \ep^{-1}>0$ for some constant $C$
depending only on~$d , c_+ , c_-,R$ such that, for every~$q \in B_R$ with $|p - q| \leq 1$, and every $(t,e) \in I_{L} \times E\left(\Lambda_{L} \right)$,
\begin{equation} \label{eq:15072209}
     \indc_{\left\{ \left|\nabla v_L(t,e ; q) \right| \geq S_\ep \right\}} \leq \mathcal{O}_2(\ep).
\end{equation}
Using~\eqref{conv.regularityV''}, for every~$\ep>0$, we may select a parameter $\kappa_\ep >0$, depending on~$d,R$ and the rate of the limit in~\eqref{conv.regularityV''}, in addition to~$\ep$, such that
\begin{equation} \label{eq:Arepkappaep}
    \left| A_{S_\ep , \kappa_\ep}(\ep)  \right| \leq \ep.
\end{equation}
We will now prove the estimate~\eqref{eq:13462009} with the value $\delta := \kappa_\ep \ep >0$. We introduce the notation
\begin{equation*}
    \a_{\kappa_\ep} := V_{\kappa_\ep}''(\nabla v_L(\cdot, \cdot ; p))
\end{equation*}
and
\begin{equation*}
    \tilde \a_{\kappa_\ep} := \int_0^1 V_{\kappa_\ep}''(s \nabla v_L(\cdot ; q) + (1-s) \nabla v_L(\cdot ; p)) \, ds.
\end{equation*}
By the triangle inequality, we can write
\begin{equation} \label{eq:14562209}
    \left\| \tilde \a - \a \right\|_{\underline{L}^1(Q_L)} \leq \underset{\eqref{eq:14562209}-(i)}{\underbrace{\left\| \tilde \a - \tilde \a_{\kappa_\ep} \right\|_{\underline{L}^1(Q_L)}}} + \underset{\eqref{eq:14562209}-(ii)}{\underbrace{\left\| \tilde \a_{\kappa_\ep} - \a_{\kappa_{\ep}} \right\|_{\underline{L}^1(Q_L)}}} + \underset{\eqref{eq:14562209}-(iii)}{\underbrace{\left\| \a_{\kappa_{\ep}} - \a \right\|_{\underline{L}^1(Q_L)}}},
\end{equation}
and estimate the three terms in the right-hand side separately.

\medskip

\textit{Substep 2.1. Estimating the term~\eqref{eq:14562209}-(i).} We first use the inequality: for any $x \in \R$,
\begin{equation*}
    \left| V_{\kappa_\ep}''(x) - V''(x) \right| \leq \ep + C \indc_{\{ x \in  A_{S_\ep , \kappa_\ep}(\ep)\}} + C \indc_{\{ |x| \geq S_\ep \}}.
\end{equation*}
We thus obtain
\begin{align} \label{eq:13602209}
     \left\| \tilde \a - \tilde \a_{\kappa_\ep} \right\|_{\underline{L}^1(Q_L)} & \leq \ep + \frac{C}{L^{d+2}}  \int_{I_L} \sum_{e \in \vec{E}(\Lambda_L)} \int_0^1 \indc_{\{ s \nabla v_L(t , e ; q) + (1-s) \nabla v_L(t , e ; p)  \in A_{S_\ep , \kappa_\ep}(\ep)\}} \, ds \, dt \\ & 
     \qquad + \frac{C}{L^{d+2}}  \int_{I_L} \sum_{e \in \vec{E}(\Lambda_L)} \int_0^1 \indc_{\{ \left| s \nabla v_L(t , e ; q) + (1-s) \nabla v_L(t , e ; p) \right|  \geq S_\ep \}} \, ds \, dt. \notag
\end{align}
We next estimate the two terms in the right-hand side. To treat the first term, we prove the estimate: for any $s \in [0 , 1 ]$, any time $T \in (-L^2, -1)$, and any edge $e \in \vec{E}(\Lambda_L)$,
\begin{equation} \label{eq:13592209}
    \int_T^{T+1} \indc_{\{ s \nabla v_L(t , e ; q) + (1-s) \nabla v_L(t , e ; p)  \in A_{S_\ep , \kappa_\ep}(\ep)\}} \, dt \leq \mathcal{O}_2 (C \ep).
\end{equation}
The proof relies on an application of Corollary~\ref{cor:diffOrnstein-Uhlenbeck}. We fix $s \in (0,1)$, a time $T \in (-L^2, -1)$, an edge $e = (x ,y) \in \vec{E}(\Lambda_L)$ and denote by $(X_t)_{t  \in [0 , 1]}$ the stochastic process
\begin{equation*}
    X_t := s \nabla v_L(T + t , e ; q) + (1-s) \nabla v_L(T + t , e ; p).
\end{equation*}
Using the definition of the first-order correctors $v_L(\cdot, \cdot ; p)$ and $v_L(\cdot, \cdot ; q)$, we see that the process $X_t$ solves a stochastic differential equation of the form
\begin{equation*}
    dX_t = h_t dt + 2 d \tilde B_t,
\end{equation*}
where $\tilde B_t = \left( B_{T+t}(y) -  B_{T+t}(x)\right)/\sqrt{2}$ is a Brownian motion for the filtration $\mathcal{F}_t := \sigma \left( \{ B_{T + s}(w) \, : \, s \leq t, w \in \Zd\}\right)$, and the adapted process $h_t$ is defined by the formula
\begin{align*}
    h_t & := s \nabla \cdot V' (\nabla  v_L( \cdot; q)) (T + t , y ) + (1 - s) \nabla \cdot V' (\nabla  v_L( \cdot; p)) (T + t , y ) \\
    & \quad - s \nabla \cdot V' (\nabla  v_L( \cdot; q)) (T + t , x ) + (1 - s) \nabla \cdot V' (\nabla v_L( \cdot; p)) (T + t , x ).
\end{align*}
Using the pointwise bound on the gradient of the dynamic stated in Proposition~\ref{prop.2.2BL}, we deduce that, for any $t \in (0 , 1)$,
\begin{equation*}
    \left| h_t \right| \leq \mathcal{O}_2 (C).
\end{equation*}
Using the property~\eqref{int.Onotation} of the $\mathcal{O}$-notation, we can integrate over the times $t$ in the interval~$[0,1]$ and deduce that
\begin{equation*}
    \left\| h \right\|_{L^2((0 , 1))} \leq \mathcal{O}_2 (C).
\end{equation*}
We apply Corollary~\ref{cor:diffOrnstein-Uhlenbeck} to the process $X_t$ with the set~$A_{S_\ep , \kappa_\ep}(\ep)$, and use the inequality~\eqref{eq:Arepkappaep} to complete the proof of the estimate~\eqref{eq:13592209}. Noting that the inequality~\eqref{eq:13592209} is valid for any $s \in (0,1)$, integrating over this variable and using the property~\eqref{int.Onotation} of the $\mathcal{O}$-notation, we deduce that: for
any time $T \in (-L^2, -1)$, and any edge $e \in E(\Lambda_L)$,
\begin{equation*}
     \int_0^1 \int_T^{T+1} \indc_{\left\{ \left| s \nabla v_L(t , e ; p) + (1-s) \nabla v_L(t , e ; q) \right|  \geq S_\ep \right\}} \, dt  \, ds \leq \mathcal{O}_2(C \ep).
\end{equation*}
We next sum over the times $T \in \{-L^2 , \ldots, -1\}$, over the edges of the box $\Lambda_L$, and use the property~\eqref{sum.Onotation} of the $\mathcal{O}$-notation. We obtain
\begin{align} \label{eq:15172209}
     \lefteqn{\frac{C}{L^{d+2}}  \int_{I_L} \sum_{e \in \vec{E}(\Lambda_L)} \int_0^1 \indc_{\left\{ s \nabla v_L(t , e ; q) + (1-s) \nabla v_L(t , e ; p)  \in A_{S_\ep , \kappa_\ep}(\ep)\right\}} \, ds \, dt} \qquad & \\ & =  \frac{C}{L^{d+2}}  \sum_{T \in \{ -L^2 , \ldots, -1\}} \sum_{e \in \vec{E}(\Lambda_L)} \int_0^1 \int_T^{T+1} \indc_{\{ s \nabla v_L(t , e ; q) + (1-s) \nabla v_L(t , e ;p)  \in A_{S_\ep , \kappa_\ep}(\ep)\}} \, ds \, dt \notag \\
     & \leq \mathcal{O}_2(C \ep). \notag
\end{align}
We next estimate the third term in the right-hand side of~\eqref{eq:13602209}. Using the estimate~\eqref{eq:15072209}, we see that, for any $s \in (0 , 1)$, any time $t \in (-L^2 , 0)$, and any edge $e \in E \left( \Lambda_L\right)$,
\begin{align*}
     \indc_{\{ \left| s \nabla v_L(t , e ;q) + (1-s) \nabla v_L(t , e ; p) \right|  \geq S_\ep \}} & \leq \indc_{\{ \left| \nabla v_L(t , e ; q) \right| \geq S_\ep \}} + \indc_{\{ \left| \nabla v_L(t , e ; p) \right|  \geq S_\ep \}}
     \leq \mathcal{O}_2(C \ep).
\end{align*}
Integrating over $s$ in the interval $(0,1)$, over the times $t \in (-L^2 , 0)$, summing over the edges $e \in E \left( \Lambda_L\right)$, and using the property~\eqref{sum.Onotation} of the $\mathcal{O}$-notation, we deduce that
\begin{equation} \label{eq:15182209}
    \frac{C}{L^{d+2}}  \int_{I_L} \sum_{e \in \vec{E}(\Lambda_L)} \int_0^1 \indc_{\{ \left| s \nabla v_L(t , e ; q) + (1-s) \nabla v_L(t , e ; p) \right|  \geq S_\ep \}} \, ds \, dt \leq  \mathcal{O}_2(C \ep).
\end{equation}
Combining the estimates~\eqref{eq:13602209},~\eqref{eq:15172209} and~\eqref{eq:15182209}, we deduce that
\begin{equation*}
     \left\| \tilde \a - \tilde \a_{\kappa_\ep} \right\|_{\underline{L}^1(Q_L)} \leq \mathcal{O}_2(C \ep).
\end{equation*}

\medskip

\textit{Substep 2.2. Estimating the term ~\eqref{eq:14562209}-(ii).} We use the inequality~\eqref{eq:VrgLIP} and obtain, for any $(t , e) \in I_L \times E \left(\Lambda_L\right)$,
\begin{equation*}
   | \tilde \a_{\kappa_\ep}(t , e) - \a_{\kappa_{\ep}}(t , e)| \leq \frac{C}{\kappa_\ep}  \left| \nabla \tilde w(t , e) \right|.
\end{equation*}
Taking the $L^1$-norm over the parabolic cylinder $Q_{L}$, using Jensen's inequality and the bound~\eqref{eq:energyesttildew}, we deduce that
\begin{equation}
    \left\| \tilde \a - \a_{\kappa_\ep} \right\|_{\underline{L}^1(Q_L)} \leq \frac{C}{\kappa_\ep} \left\| \nabla \tilde w \right\|_{\underline{L}^1 \left( Q_{L}\right)} \leq \frac{C}{\kappa_\ep} \left\| \nabla \tilde w \right\|_{\underline{L}^2 \left( Q_{L}\right)} \leq \frac{C |p - q|}{\kappa_\ep} \leq C\ep.
\end{equation}

\medskip

\textit{Substep 2.3. Estimating the term~\eqref{eq:14562209}-(iii).} We use the same technique as the one used to estimate the term~\eqref{eq:14562209}-(i). We first note that the following bound holds
\begin{align*}
     \left\| \a - \a_{\kappa_\ep} \right\|_{\underline{L}^1(Q_L)} & \leq \ep + \frac{C}{L^{d+2}}  \int_{I_L} \sum_{e \in \vec{E}(\Lambda_L)} \int_0^1 \indc_{\{\nabla v_L(t , e ; p)  \in A_{S_\ep , \kappa_\ep}(\ep)\}} \, ds \, dt \\ & 
     \qquad + \frac{C}{L^{d+2}}  \int_{I_L} \sum_{e \in \vec{E}(\Lambda_L)} \int_0^1 \indc_{\{ \left| \nabla v_L(t , e ; p) \right|  \geq S_\ep \}} \, ds \, dt. \notag
\end{align*}
We apply the estimate~\eqref{eq:13592209} with $s = 1$ and obtain: for any time $T \in (-L^2 , -1)$ and any edge $e \in \vec{E} \left( \Lambda_L\right)$,
\begin{equation*}
    \int_T^{T+1} \indc_{\{ \nabla v_L(t , e ; p)  \in A_{S_\ep , \kappa_\ep}(\ep)\}} \, dt \leq \mathcal{O}_2 (C \ep).
\end{equation*}
Summing over the times $T \in \{-L^2 , \ldots, -1\}$ and over the edges of the box $\Lambda_L$, we obtain
\begin{equation*} 
    \frac{1}{L^{d+2}} \int_{I_L} \sum_{e \in \vec{E}\left( \Lambda_L \right)} \indc_{\{ \nabla v_L(t , e ; p)  \in A_{S_\ep , \kappa_\ep}(\ep)\}} \leq \mathcal{O}_2 \left( C \ep \right).
\end{equation*}
A combination of the previous estimates with~\eqref{eq:15072209} yields
\begin{equation*}
     \left\| \a_{\kappa_{\ep}} - \a \right\|_{\underline{L}^1(Q_L)} \leq \mathcal{O}_2(C \ep).
\end{equation*}

Combining the results of the Substeps 1.1, 1.2 and 1.3 completes the proof of the estimate~\eqref{eq:13462009}.

\medskip

\textit{Step 3. Estimating the right-hand side of~\eqref{eq:16022209}.}
We now complete the proof of Lemma~\ref{lemma3.92209} using the estimate~\eqref{eq:13462009}. By H\"{o}lder's inequality, the upper bound~\eqref{eq:Meyersfortildew}, and denoting by $p_0 := (2+ \gamma_0)/\gamma_0$, we may rewrite~\eqref{eq:16022209} as 
\begin{align} \label{eq:16202209}
   \left\| \nabla v(t , \cdot) \right\|_{\underline{L}^2(Q_L)} & \leq C \left\| (\tilde \a - \a) (q-p + \nabla \tilde w) \right\|_{\underline{L}^2(Q_L)}  \\
   & \leq C \left\| \tilde \a - \a \right\|_{\underline{L}^{p_0}(Q_L)} \left\| q-p + \nabla \tilde w \right\|_{\underline{L}^{2+\gamma_0}(Q_L)} \notag \\
   & \leq C \left\| \tilde \a - \a \right\|_{\underline{L}^{p_0}(Q_L)} |p-q|. \notag
\end{align}
Using that the maps $\tilde \a$ and $\a$ are bounded by the constant $c_+$, and interpolating the space $L^{p_0}(Q_L)$ between the spaces $L^1(Q_L)$ and $L^\infty(Q_L)$, we deduce that
\begin{equation} \label{eq:1621209}
    \left\| \tilde \a - \a \right\|_{\underline{L}^{p_0}(Q_L)} \leq \left\| \tilde \a - \a \right\|_{\underline{L}^{1}(Q_L)}^{1/p_0} \left\| \tilde \a - \a \right\|_{L^{\infty}(Q_L)}^{(p_0 - 1)/p_0}  \leq C \left\| \tilde \a - \a \right\|_{\underline{L}^{1}(Q_L)}^{1/p_0}.
\end{equation}
A combination of~\eqref{eq:16202209} and~\eqref{eq:1621209} shows
\begin{equation*}
    \left\| \nabla v(t , \cdot) \right\|_{\underline{L}^2(Q_L)} \leq C \left\| \tilde \a - \a \right\|_{\underline{L}^{1}(Q_L)}^{1/p_0} |p-q|.
\end{equation*}
We then set
\begin{equation} \label{def.chipr}
    \chi_R (r) := \inf \Bigl\{ \ep > 0 \, : \, \forall p,q \in B_R~\mbox{with}~ |p - q| \leq r, ~ \left\| \tilde \a - \a \right\|_{\underline{L}^{1}(Q_L)}^{1/p_0} \leq \mathcal{O}_2 \left( \ep \right)  \Bigr\},
\end{equation}
so that we have the inequality
\begin{equation*}
    \left\| \nabla v(t , \cdot) \right\|_{\underline{L}^2(Q_L)} \leq \mathcal{O}_2 \left(C \chi_R (|p - q|) |p-q|\right).
\end{equation*}
The inequality~\eqref{eq:13462009} ensures that $\chi_R (r) \to 0$ as~$r \to 0$ which yields the lemma. 
\end{proof}

\begin{remark} \label{Rk:gentildew}
We record the following generalization of Lemma~\ref{lemma3.92209} whose proof can be deduced from the same argument. For every~$L\in\N$, $p , q \in \Rd$ and $\xi \in \Rd$, we denote by $\tilde w_{L,p , q , \xi} : Q_L \to \R$ the solution of the parabolic equation
\begin{equation*}
    \left\{ \begin{aligned}
    \partial_t \tilde w_{L,p , q , \xi} - \nabla \cdot \tilde \a \nabla \tilde w_{L,p , q , \xi} & = \nabla \cdot \tilde \a \xi & ~\mbox{in} ~& Q_L, \\
    \tilde w_{L,p , q , \xi}(-L^2, \cdot)& = 0 & ~\mbox{in}~& \Lambda_L, \\
     \tilde w_{L,p , q , \xi} (t , \cdot) &\in \Omega_{\Lambda_L , \mathrm{per}} &~\mbox{for}~&t \in I_L,
    \end{aligned} \right.
\end{equation*}
where $\tilde \a$ is the environment defined in~\eqref{eq:15492309}, then we have the inequality
\begin{equation} \label{eq:gentildew}
    \left\| w_{p , \xi} - \tilde w_{p , q , \xi} \right\|_{\underline{L}^2(Q_L)} \leq \mathcal{O}_2 \left( C \chi_R(|p-q|) |\xi| \right).
\end{equation}
\end{remark}

\begin{remark}
An investigation of the proof shows that if the second derivative $V^{\prime\prime}$ is $C^{0,\alpha}$ then $\chi_R$ is a H\"older modulus (with a regularity exponent $\beta \ll \alpha$).
\end{remark}

\subsection{$C^2$-regularity of the surface tension} \label{sectionC2regfinal}

This section is devoted to the proof of Theorem~\ref{t.regsurfacetension}, building upon the result of Lemma~\ref{lemma3.92209}.

\begin{proof}[Proof of Theorem~\ref{t.regsurfacetension}]
Let us recall the notation $v_L (t , x ; p) : = p \cdot x + \phi_L (t , x ; p)$. We first set, for $p \in \Rd$,
\begin{equation*}
   \tau_L (p) := \mathbb{E} \left[ (V'(\nabla v_{L}(\cdot , \cdot ; p) ))_{Q_{L/2}} \right].
\end{equation*}
Using Proposition~\ref{prop3.9} (where $q$ is chosen to be the constant slope equal to $p$), we have that
\begin{equation}
\label{e.tauL.barsigma}
   \left| \tau_L (p) - D_p \bar \sigma(p) \right| \leq 
   CL^{-1} \Bigl( 1 + (\log L)^{\frac12}  \indc_{\{ d = 2\}} \Bigr).
\end{equation}
Using the result of Lemma~\ref{lemma3.92209}, we will prove the following two properties:
\begin{enumerate}
    \item[(i)] The map $\tau_L$ is differentiable and its gradient is given by
    \begin{equation} \label{eq:08412309}
        \partial_{i} \tau_L(p) = \mathbb{E} \left[ (V''(\nabla v_{L}(\cdot , \cdot ; p)) (e_i + \nabla w_{L,p, e_i}) )_{Q_{L/2}} \right], \quad p\in\Rd, \ i\in \{1,\ldots,d\}\,;
    \end{equation}
    \item[(ii)] The partial derivatives of $\tau_L$ are continuous and their moduli of continuity satisfy
    \begin{equation} \label{eq:14062309}
        \left|  \partial_{i} \tau_L(p) - \partial_{i} \tau_L(q)  \right| \leq C \chi_R (|p - q|),\quad p,q\in B_R\,,
    \end{equation}
where the moduli~$\{\chi_R\}_{R\geq 1}$ are given in Lemma~\ref{lemma3.92209}.
In particular, the family~$\left\{ \partial_{i} \tau_L\right\}_{L \geq 0}$ is locally equicontinuous on~$\Rd$. 
\end{enumerate}
Since the map $\xi \mapsto w_{L, p , \xi}$ is linear, the differentiability of~$\tau_L$ and the identity~\eqref{eq:08412309} are equivalent to the statement that
\begin{equation} \label{eq:09492309}
    \tau_L (p+ \xi) = \tau_L (p) + \mathbb{E} \left[ (V''(\nabla v_{L}(\cdot , \cdot ; p)) (\xi + \nabla w_{L,p, \xi}) )_{Q_{L/2}} \right] + o(|\xi|)
    \quad \mbox{as} \ |\xi|\to 0\,.
\end{equation}
We now prove~\eqref{eq:09492309}. Taking the expectation in the inequality~\eqref{eq:09362309} of Lemma~\ref{lemma3.92209}, we obtain, for every $|\xi|\leq 1$,
\begin{equation*}
    \E \left[ \left\| \nabla \phi_{L} \left( \cdot , \cdot ; p + \xi\right) - \nabla \phi_{L} \left( \cdot , \cdot ; p \right) - \nabla w_{L,p, \xi} \right\|_{\underline{L}^2(Q_L)} \right] \leq C \chi_{|p|+1}(|\xi|) |\xi|.
\end{equation*}
Using that the map $V'$ is Lipschitz together with Lemma~\ref{lemma3.92209} and Jensen's inequality, we obtain
\begin{equation*}
    \left| \tau_L(p + \xi) - \E \left[ \left( V'(\nabla v_{L} \left( \cdot , \cdot ; p \right) + \xi +  \nabla w_{L,p, \xi}) \right)_{Q_{L/2}}   \right] \right| \leq C \chi_{|p|+1}(|\xi|) |\xi|.
\end{equation*}
Consequently,~\eqref{eq:09492309} is equivalent to
\begin{align} 
\label{eq:13522309}
\lefteqn{
 \E \left[ \left( V'(\nabla v_{L} \left( \cdot ; p \right) + \xi +  \nabla w_{L,p, \xi}) \right)_{Q_{L/2}}  \right] 
 } \qquad & \notag \\
&
=  \tau_L (p) + \mathbb{E} \left[ (V''(\nabla v_{L}(\cdot ; p)) (\xi + \nabla w_{L,p, \xi}) )_{Q_{L}/2} \right] + o(\xi) 
\quad \mbox{as} \ |\xi|\to 0\,.
\end{align}
Since the map $V'$ is Lipschitz, it is differentiable almost everywhere. We denote by $\mathrm{Diff}_{V'} \subseteq \R$ the set where it is differentiable. We thus have, for any $(t , e) \in I_L \times E(\Lambda_L)$ such that $\nabla v_{L} ( t, e ; p ) \in \mathrm{Diff}_{V'}$,
\begin{align} 
\label{eq:13232309}
&
V'(\nabla v_{L} ( t, e ; p ) {+} \xi {+} \nabla w_{L,p, \xi}) = V'(\nabla v_{L} ( t, e ; p ) ) +  V''(\nabla v_{L} \left(t,e ; p \right)) ( \xi {+} \nabla w_{L,p, \xi}) + o(\xi)
\end{align}
Additionally, since the map $V'$ is Lipschitz and $V''$ is bounded, and since the map $\xi \mapsto \nabla w_{L,p, \xi}$ is linear, we have the following upper bound: for any $|\xi|\leq 1$,
\begin{multline} \label{eq:13242309}
|\xi|^{-1}  \left| V'(\nabla v_{L} \left( \cdot, \cdot  ; p \right) + \xi + \nabla w_{L,p, \xi}) - V'(\nabla v_{L} \left( \cdot, \cdot ; p \right) ) -  V''(\nabla v_{L} \left( \cdot, \cdot ; p \right)) (\xi + \nabla w_{L,p, \xi} )\right|
\\ 
\leq C( 1 +  \left| \nabla w_{L,p, \xi/|\xi|} \right| )
\leq C + C \sum_{i = 1}^d  \left| \nabla w_{L,p, e_i} \right|. 
\end{multline}
Using the definition of $w_{L, p , \xi}$ stated in~\eqref{eq:11252109}, we have, for every~$i \in \{1 , \ldots, d\}$, 
\begin{equation} \label{eq:13252309}
    \E \left[ \left\| \nabla w_{L,p, e_i}  \right\|_{\underline{L}^2(Q_L)} \right] \leq C.
\end{equation}
Finally, since the set $\mathrm{Diff}_{V'}$ has full Lebesgue measure, we can apply~\eqref{lemma3.92209} to obtain
\begin{equation} \label{eq:13262309}
    \E \Biggl[ \int_{I_L} \sum_{e \in E \left( \Lambda_L \right)} \indc_{\{ \nabla v_{L} \left( t, e ; p \right) \notin \mathrm{Diff}_{V'}\}} \, dt \Biggr] = 0.
\end{equation}
Combining~\eqref{eq:13232309},~\eqref{eq:13242309},~\eqref{eq:13252309} and~\eqref{eq:13262309} with the dominated convergence theorem completes the proof of~\eqref{eq:13522309}. The proof of~\eqref{eq:09492309}, and thus of~\eqref{eq:08412309}, is complete.

\smallskip

We now prove the continuity estimate~\eqref{eq:14062309}. Pick $R\geq 1$ and $p,q\in B_R$. Using the identity~\eqref{eq:08412309} and the triangle inequality, we first write
\begin{align} \label{eq:16082309}
    \lefteqn{\left| \partial_{i} \tau_L(q) - \partial_{i} \tau_L(p) \right| }\qquad & \\ & = \left| \mathbb{E} \left[ (V''(\nabla v_{L}(\cdot, \cdot ; p)) (e_i + \nabla w_{L,p, e_i}) )_{Q_{L/2}} \right] - \mathbb{E} \left[ (V''(\nabla v_{L}(\cdot, \cdot ; q)) (e_i + \nabla w_{L,q, e_i}) )_{Q_{L/2}} \right] \right| \notag \\
    & \leq \left| \mathbb{E} \left[ (V''(\nabla v_{L}(\cdot, \cdot ; p)) (e_i + \nabla w_{L,p, e_i}) )_{Q_{L/2}} \right] - \mathbb{E} \left[ (V''(\nabla v_{L}(\cdot, \cdot ; p))(e_i +  \nabla w_{L,q, e_i}) )_{Q_{L/2}} \right] \right| \notag \\
    & \quad + \left| \mathbb{E} \left[ (V''(\nabla v_{L}(\cdot, \cdot ; p)) (e_i +\nabla w_{L,q, e_i}) )_{Q_{L/2}} \right] - \mathbb{E} \left[ (V''(\nabla v_{L}(\cdot, \cdot ; q)) (e_i + \nabla w_{L,q, e_i}) )_{Q_{L/2}} \right] \right|. \notag
\end{align}
We next estimate the two terms in the right-hand side. For the first one, we use that~$V''$ is bounded together with Jensen's inequality to obtain
\begin{multline*}
    \left| \mathbb{E} \left[ (V''(\nabla v_{L}(\cdot, \cdot ; p)) (e_i + \nabla w_{L,p, e_i} )_{Q_{L/2}} \right] - \mathbb{E} \left[ (V''( \nabla v_{L}(\cdot, \cdot ; p)) (e_i + \nabla w_{L,q, e_i}) )_{Q_{L/2}} \right] \right| \\
    \leq \mathbb{E} \left[ \left\| \nabla w_{L,p, e_i} -   \nabla w_{L,q, e_i} \right\|_{\underline{L}^2(Q_L)} \right]
    \,.
\end{multline*}
We estimate the term in the right-hand side using the triangle inequality and the upper bound~\eqref{eq:gentildew} stated in Remark~\ref{Rk:gentildew} with $\xi = e_i$. We obtain
\begin{align*}
\lefteqn{
    \mathbb{E} \left[ \left\| \nabla w_{L,p, e_i} - \nabla w_{L, q, e_i} \right\|_{\underline{L}^2(Q_L)} \right]
    } \qquad & \\ 
    & \leq  \mathbb{E} \left[ \left\| \nabla w_{L,p, e_i} -   \nabla \tilde w_{L,p , q, e_i} \right\|_{\underline{L}^2(Q_L)} \right] +  \mathbb{E} \left[ \left\|   \nabla \tilde w_{L,p , q, e_i} - \nabla w_{L, q, e_i} \right\|_{\underline{L}^2(Q_L)} \right] 
    \leq  C \chi_R(|p-q|).
\end{align*}
A combination of the two previous displays yields
\begin{multline} \label{eq:17172309}
    \left| \mathbb{E} \left[ (V''(\nabla v_{L}(\cdot, \cdot ; p)) (e_i + \nabla w_{L,p, e_i}) )_{Q_{L/2}} \right] - \mathbb{E} \left[ (V''( \nabla v_{L}(\cdot, \cdot ; p)) (e_i + \nabla w_{L,q, e_i}) )_{Q_{L/2}} \right] \right| \\ \leq  C \chi_R(|p-q|).
\end{multline}
To estimate the second term in the right-hand side of~\eqref{eq:16082309}, we first note that, by the Cauchy-Schwarz inequality,
\begin{align} \label{eq:17132309}
    \lefteqn{\left| \mathbb{E} \left[ (V''(\nabla v_{L}(\cdot, \cdot ; p)) (e_i + \nabla w_{L,q, e_i}) )_{Q_{L/2}} \right] - \mathbb{E} \left[ (V''( \nabla v_{L}(\cdot, \cdot ; q)) (e_i + \nabla w_{L,q, e_i}) )_{Q_{L/2}} \right] \right|}\qquad \notag & \\ & \leq C \E \left[ \left\| V''(\nabla v_{L}(\cdot, \cdot ; p)) -  V''(\nabla v_{L}(\cdot, \cdot ; q) )\right\|_{\underline{L}^2 (Q_L)} \right] \E \left[ \left\| \nabla w_{L,q, e_i} \right\|_{\underline{L}^2 (Q_L)} \right] \notag \\
    & \leq C \E \left[ \left\| V''(\nabla v_{L}(\cdot, \cdot ; p)) -  V''(\nabla v_{L}(\cdot, \cdot ; q)) \right\|_{\underline{L}^2 (Q_L)} \right]. 
\end{align}
Recalling the definition of the coefficient $\tilde \a$ stated in~\eqref{eq:15492309}, and the definition of the modulus of continuity~\eqref{def.chipr}, we deduce that
\begin{align} 
\label{eq:17142309}
     \lefteqn{\E \left[ \left\| V''(\nabla v_{L}(\cdot, \cdot ; p)) -  V''(\nabla v_{L}(\cdot, \cdot ; q)) \right\|_{\underline{L}^2 (Q_L)} \right]} \qquad & \notag \\ & \leq \E \left[ \left\| V''( \nabla v_{L}(\cdot, \cdot ; p)) -  \tilde \a \right\|_{\underline{L}^2 (Q_L)} \right] +  \E \left[ \left\| \tilde \a -  V''( \nabla v_{L}(\cdot, \cdot ; q) \right\|_{\underline{L}^2 (Q_L)} \right] \\
     & \leq C \chi_R(|p-q|). \notag
\end{align}
A combination of~\eqref{eq:17132309} and~\eqref{eq:17142309} yields the upper bound
\begin{multline} \label{eq:17162309}
    \left| \mathbb{E} \left[ (V''(\nabla v_{L}(\cdot, \cdot ; p))(e_i +  \nabla w_{L,q, e_i}) )_{Q_{L/2}} \right] - \mathbb{E} \left[ (V''(\nabla v_{L}(\cdot, \cdot ; p)) (e_i + \nabla w_{L,q, e_i}) )_{Q_{L/2}} \right] \right| \\
    \leq C \chi_R(|p-q|).
\end{multline}
A combination of the three inequalities~\eqref{eq:16082309},~\eqref{eq:17172309} and~\eqref{eq:17162309} completes the proof~\eqref{eq:14062309}.

To conclude, we observe~\eqref{eq:14062309} that the family~$\left\{ p \mapsto D_p\tau_L(p) \,:\, L\in\N \right\}$ is locally bounded and equicontinuous on~$\Rd$. We may therefore extract a subsequence converging locally uniformly to a continuous function. Since the map~$\tau_L$ converges locally uniformly as~$L\to\infty$ to~$D_p\bar \sigma(p)$ by~\eqref{e.tauL.barsigma}, we deduce that the function~$D_p \bar \sigma$ belongs the space $C^1(\R^d)$ and the whole sequence~$D_p\tau_L$ converges locally uniformly to $D^2_p\bar\sigma$ as $L\to \infty$. In particular, $\bar \sigma \in C^2$ and its second derivative~$D^2_p \bar \sigma$ satisfies, in view of~\eqref{eq:14062309},
\begin{equation*}
    \left|  D^2_p \bar\sigma(p) - D^2_p \bar\sigma(q)  \right| \leq C \chi_R(|p-q|),\quad p,q\in B_R\,.
\end{equation*}
The proof of Theorem~\ref{t.regsurfacetension} is complete.
\end{proof}

\begin{remark}
In view of~\eqref{eq:08412309}, the proof of Theorem~\ref{t.regsurfacetension} yields the following formula for~$D^2\bar\sigma$:
\begin{equation} \label{eq:14062309.blargh}
  \partial_{i} D_p\bar\sigma (p) = 
  \lim_{L\to \infty}
  \mathbb{E} \left[ \bigl(V''(\nabla v_{L}(\cdot ; p)) (e_i + \nabla w_{L,p, e_i}) \bigr)_{Q_{L/2}} \right], \quad p\in\Rd, \ i \in \{1,\ldots,d\}\,.
\end{equation}
\end{remark}

\section{Quantitative hydrodynamic limit with optimized stochastic error} \label{Optimizingstochint}

This section and Section~\ref{sec.Section5} are devoted to the proof of Theorem~\ref{theoremlargescale}. As mentioned in Section~\ref{outlinelargescale}, we first prove in Proposition~\ref{prop4.4} a version of Theorem~\ref{Th.quantitativehydr} more adapted to our purposes. The (technical) reason is twofold:
\begin{itemize}
    \item The statement of Theorem~\ref{Th.quantitativehydr} requires that the boundary conditions belongs to the Sobolev space $H^2(Q)$. This assumption is used in order to optimize the rate of convergence in Theorem~\ref{Th.quantitativehydr} but is too strong for Theorem~\ref{theoremlargescale}.
    \item Using the result of Theorem~\ref{Th.quantitativehydr} directly yields a suboptimal stochastic integrability for the minimal scale.
\end{itemize}
In order to overcome these two difficulties, we establish Proposition~\ref{prop4.4}, which exhibits the following features:
\begin{itemize}
    \item The boundary condition is required to belong to the Sobolev space $W^{1 , 2+ \gamma_0}_{\mathrm{par}}(Q)$, where $\gamma_0$ is the exponent appearing in the Meyers estimate (Proposition~\ref{propMeyers}). Additionally, we prove a result which is uniform over the boundary conditions whose $W^{1 , 2+ \gamma_0}_{\mathrm{par}}(Q)$-norm are bounded by a constant $M$.
    \item We decompose the error term into two parts: a deterministic error of the form $\ep^\beta$ for some small exponent $\beta > 0$, and a random error of the form $\mathcal{O}_2 \left( \ep^s \right)$ for a fixed exponent $s \in (0 , d/2)$ (see~\eqref{eq:17391412}). This way of writing the error optimizes the stochastic term at the cost of a deterministic error term.
\end{itemize}

\begin{proposition}[Quantitative hydrodynamic limit with optimized stochastic error] \label{prop4.4}
Fix $s \in (0 , d/2)$, $\ep \in (0,1)$ and let $Q := [-1 , 0] \times [-1 , 1]^d$. Let $\gamma_0 > 0$ be the exponent of Proposition~\ref{propMeyers}. For $f \in W^{1 ,2+ \gamma_0}_{\mathrm{par}}(Q)$, let $u^\ep_f : Q^\ep \to \R$ be the solution of the system of stochastic differential equations
\begin{equation} \label{eq:defuLthmhydro6.1}
    \left\{ \begin{aligned}
    d u^\ep_f(t , x) & = \nabla^\ep \cdot V'(\nabla^\ep u^\ep_f) (t , x) dt + \sqrt{2} \ep dB_{\frac{t}{\ep^2}}\left( \frac{x}{\ep} \right) &~\mbox{for}~& (t,x) \in  Q^\ep,   \\
    u^\ep_f & = \tilde f_\ep &~\mbox{on}~& \partial_{\mathrm{par}} Q^\ep,
    \end{aligned} \right.
\end{equation}
and let $\bar u : Q \to \R$ be the solution of the parabolic equation
\begin{equation} \label{eq:defubarthmhydro6.2}
    \left\{ \begin{aligned}
    \partial_t \bar u_f - \nabla \cdot D_p \bar \sigma(\nabla \bar u_f) & = 0 &~\mbox{in}~& Q, \\
    \bar u_f &= f &~\mbox{on}~& \partial_{\mathrm{par}} Q. \\
    \end{aligned} \right.
\end{equation}
Then, there exists an exponent $\beta_{s} >0 $ and constant $C < \infty$ depending on $d ,s, c_+ , c_-$ such that, for any $M > 0$,
\begin{equation} \label{eq:17391412}
    \sup_{f \, : \, \left\| f \right\|_{W^{1 ,2+ \gamma_0}_{\mathrm{par}}}(Q) \leq M}\left\| u_f - \bar u_f \right\|_{L^2 \left( Q^\ep \right)} \leq C (M + 1 ) \ep^{\beta_{s}} + \mathcal{O}_2 \left( C \ep^{s} \right).
\end{equation}
\end{proposition}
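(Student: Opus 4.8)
The plan is to reduce Proposition~\ref{prop4.4} to Theorem~\ref{Th.quantitativehydr} by a density argument, splitting the boundary data into a smooth part (to which the $H^2$-based estimate of Theorem~\ref{Th.quantitativehydr} applies with a good deterministic rate) and a small rough remainder (which is controlled by energy estimates and the regularity of the linear equation, at the cost of a deterministic error of order $\ep^{\beta_s}$). Concretely, given $f \in W^{1,2+\gamma_0}_{\mathrm{par}}(Q)$ with $\|f\|_{W^{1,2+\gamma_0}_{\mathrm{par}}(Q)} \leq M$, I would mollify $f$ at some scale $\rho = \rho(\ep)$ to be chosen, producing $f_\rho \in H^2(Q)$ with $\|f_\rho\|_{H^2(Q)} \leq C \rho^{-\theta} M$ for an appropriate $\theta$ (coming from the gain of one derivative upon mollification, measured in the parabolic Sobolev scaling) and with $\|f - f_\rho\|_{W^{1,2}_{\mathrm{par}}(Q)} \leq C \rho^{\gamma_0/(2+\gamma_0)} M$ (interpolating the mollification error between $W^{1,2+\gamma_0}_{\mathrm{par}}$ and a weaker norm). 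Let $\bar u_{f_\rho}$ solve~\eqref{eq:defubarthmhydro6.2} with data $f_\rho$, and $u^\ep_{f_\rho}$ the corresponding Langevin dynamics~\eqref{eq:defuLthmhydro6.1}.

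The first main estimate comes from applying Theorem~\ref{Th.quantitativehydr} to $f_\rho$: this gives $\|u^\ep_{f_\rho} - \bar u_{f_\rho}\|_{L^2(Q^\ep)} \leq \mathcal{O}_2(C \rho^{-\theta} M \ep^{1/2}(1 + |\log \ep|^{1/2}\indc_{\{d=2\}}))$. To turn the $\mathcal{O}_2$-bound with a $\rho$-dependent prefactor into the split form~\eqref{eq:17391412}, I would use that $\mathcal{O}_2(K) \leq \mathcal{O}_2(K') + K$ whenever $K \leq K' + K$ trivially, or more usefully: if $X \leq \mathcal{O}_2(A + B)$ with $A$ deterministic then $X \leq A + \mathcal{O}_2(CB)$ (by the triangle-type inequality for the $\mathcal{O}$-notation, splitting the bound and absorbing the deterministic part). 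So choosing $\rho$ so that $\rho^{-\theta}\ep^{1/2}$ is of the form $\ep^{s'}$ for a suitable $s' \geq s$ would be inconsistent with also making the mollification error small; instead, I would pick $\rho = \ep^{a}$ for a small $a>0$ so that $\rho^{-\theta}\ep^{1/2}(1+|\log\ep|^{1/2}) \leq C\ep^{s}$ provided $s < 1/2 - a\theta$, which is possible since $s < d/2$ and one can take $a$ small (the constraint $s<d/2$ is generous here; the real constraint is $s < 1/2$ in $d\geq 2$, but since $d/2 \geq 1 > 1/2$ we need to be slightly careful — actually for $s \in (1/2, d/2)$ one must instead use Remark~\ref{remark1.2}(1) on the gradient together with an interpolation/boosting step, or simply note the stated range $(0,d/2)$ forces revisiting; I would handle $s<1/2$ directly and for larger $s$ invoke that the stochastic error can be taken with any $\mathcal O_2$-exponent up to $d/2$ because the corrector fluctuation estimates in Proposition~\ref{prop.2.2BL} and the flux estimates in Proposition~\ref{prop.spataveflux} hold with $\mathcal O_2$ and can be upgraded in the two-scale argument). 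The cleanest route is: run the two-scale expansion of Section~\ref{eq:quanthydrolim2sc} but only track the stochastic part of each error term with its sharp $\mathcal O_2(\ep^{d/2\wedge(\cdot)})$ scaling and dump the mesoscopic/deterministic mismatch into the $C(M+1)\ep^{\beta_s}$ term.

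The second main estimate controls the difference $u^\ep_f - u^\ep_{f_\rho}$ and $\bar u_f - \bar u_{f_\rho}$. For the homogenized equations, $w := \bar u_f - \bar u_{f_\rho}$ solves a linear uniformly parabolic equation with zero right-hand side and boundary data $f - f_\rho$, so by the (global) energy estimate $\|\nabla w\|_{L^2(Q)} + \|w\|_{L^2(Q)} \leq C\|f-f_\rho\|_{W^{1,2}_{\mathrm{par}}(Q)} \leq C M \rho^{\gamma_0/(2+\gamma_0)}$ — here the Meyers exponent enters precisely to make the interpolation of the mollification error work, since the data only lives in $W^{1,2+\gamma_0}_{\mathrm{par}}$ and one needs the $W^{1,2}_{\mathrm{par}}$-error to be a positive power of $\rho$. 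For the Langevin dynamics, $v := u^\ep_f - u^\ep_{f_\rho}$ solves (subtracting the two copies of~\eqref{eq:defuLthmhydro6.1} driven by the \emph{same} Brownian motions, which is the key cancellation) the \emph{linear} discrete parabolic equation $\partial_t v - \nabla^\ep\cdot\a\nabla^\ep v = 0$ with $\a$ the chord-slope environment $\int_0^1 V''(s\nabla^\ep u^\ep_f + (1-s)\nabla^\ep u^\ep_{f_\rho})\,ds$ and boundary data $\tilde f_\ep - \widetilde{(f_\rho)}_\ep$; since $\a$ is uniformly elliptic, the same deterministic energy estimate gives $\|v\|_{L^2(Q^\ep)} \leq C\|\tilde f_\ep - \widetilde{(f_\rho)}_\ep\|_{W^{1,2}_{\mathrm{par}}(Q^\ep)} \leq C M \rho^{\gamma_0/(2+\gamma_0)} + C M \ep^{1/2}$ (the last term from the discretization error of the boundary data, controlled as in Proposition~\ref{prop.approx}). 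Then the triangle inequality $\|u^\ep_f - \bar u_f\|_{L^2(Q^\ep)} \leq \|v\|_{L^2(Q^\ep)} + \|u^\ep_{f_\rho} - \bar u_{f_\rho}\|_{L^2(Q^\ep)} + \|\bar u_{f_\rho} - \bar u_f\|_{L^2(Q^\ep)}$ combines the three bounds; optimizing the choice $\rho = \ep^a$ to balance $\rho^{\gamma_0/(2+\gamma_0)}$ against the power of $\ep$ lost to $\rho^{-\theta}$ in the first estimate yields the exponent $\beta_s > 0$, with the $\mathcal O_2(\ep^s)$ term surviving from Theorem~\ref{Th.quantitativehydr} applied to $f_\rho$. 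The supremum over $\|f\|_{W^{1,2+\gamma_0}_{\mathrm{par}}(Q)} \leq M$ is immediate since every constant above depends on $f$ only through $M$.

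I expect the main obstacle to be the bookkeeping in the first estimate: getting Theorem~\ref{Th.quantitativehydr} to deliver a bound of the split form $C(M+1)\ep^{\beta_s} + \mathcal O_2(C\ep^s)$ for $s$ possibly larger than $1/2$ requires re-examining the two-scale argument rather than treating Theorem~\ref{Th.quantitativehydr} as a black box — specifically one must verify that the \emph{stochastic} contributions to $\vec{\mathcal E}$, $\mathcal E$ (Proposition~\ref{prop4.3fluxest}) and to the boundary-layer term $v_0$ all carry $\mathcal O_2$-tails with the \emph{sharp} $\ep^{d/2}$-type scaling coming from Proposition~\ref{prop.spataveflux}, while the lossy mesoscopic-scale contributions are purely deterministic and can be absorbed into $C(M+1)\ep^{\beta_s}$. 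A secondary technical point is ensuring the mollification is performed compatibly with the parabolic scaling and with the domain $Q = [-1,0]\times[-1,1]^d$ (mollifying in space and time with the correct anisotropic weights, and handling the boundary by a standard extension), so that both $\|f_\rho\|_{H^2(Q)} \lesssim \rho^{-\theta}M$ and $\|f - f_\rho\|_{W^{1,2}_{\mathrm{par}}(Q)} \lesssim \rho^{\gamma_0/(2+\gamma_0)}M$ hold simultaneously; this is routine but is where the Meyers exponent $\gamma_0$ is genuinely needed.
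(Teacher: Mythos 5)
Your plan to reduce Proposition~\ref{prop4.4} to Theorem~\ref{Th.quantitativehydr} via mollification of the boundary data runs into a genuine obstruction at the $\mathcal{O}$-split step, and the paper takes a fundamentally different route to sidestep it.

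The claimed implication ``$X \leq \mathcal{O}_2(A+B)$ with $A$ deterministic $\implies X \leq A + \mathcal{O}_2(CB)$'' is false. By definition, $X \leq \mathcal{O}_2(K)$ controls the tail $\mathbb{P}[X \geq tK] \leq 2e^{-t^2}$; if $K = A + B$ with $B \ll A$, the typical size of $X$ is still of order $A$, and $X - A$ need not have Gaussian tails at scale $B$ (take $X = (A+B)|Z|$ with $Z$ standard Gaussian: $X - A$ is of order $A$, not $B$, with appreciable probability). So applying Theorem~\ref{Th.quantitativehydr} to the mollified data $f_\rho$ delivers $\mathcal{O}_2(C\rho^{-\theta}M\ep^{1/2}(1+|\log\ep|^{1/2}\indc_{\{d=2\}}))$, and there is no post-processing step that converts this into $C(M{+}1)\ep^{\beta_s} + \mathcal{O}_2(C\ep^s)$. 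The problem is acute when $s > 1/2$ (which is permitted since $s \in (0,d/2)$ and $d \geq 2$): $\mathcal{O}_2(C\ep^{1/2})$ has strictly heavier tails than $\mathcal{O}_2(C\ep^s)$, so even with $\rho$-independent constants the black-box output of Theorem~\ref{Th.quantitativehydr} is simply too weak. You do gesture at the correct fix — ``re-run the two-scale expansion, tracking the stochastic contribution with its sharp scaling and dumping the deterministic mismatch into $C(M{+}1)\ep^{\beta_s}$'' — but this is not a small refinement of Theorem~\ref{Th.quantitativehydr}; it is precisely what the paper builds from scratch in Section~\ref{Optimizingstochint}.

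The paper's actual strategy does not mollify at all. It establishes a \emph{new} concentration estimate for the corrector and its flux (Proposition~\ref{prop.concentration}) of the split form $CL^{1-\delta_s} + \mathcal{O}_2(CL^{1-s})$, proved by comparison with the Langevin dynamics of the Gaussian free field (Definition~\ref{def.DynGFF}), explicit diagonalization of the discrete Laplacian, and a truncation of the multiscale Poincar\'e sum at scale $3^{m_s}$ with $m_s = \lfloor 2sm/d \rfloor$ that genuinely separates the deterministic and stochastic contributions. It then upgrades this to a bound uniform over a discrete family of time-dependent slopes (Proposition~\ref{prop.concentrationunif}) via an $L^\infty$-type union bound using~\eqref{sum.Omaximum}. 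Finally it re-runs the two-scale expansion with a nearly-macroscopic mesoscopic scale $\kappa = \ep^{\theta_1}$ and an explicit boundary layer $r = \ep^{\theta_0}$ inside which the corrector is assigned slope zero; the $W^{1,2+\gamma_0}_{\mathrm{par}}$ hypothesis enters through the global Meyers estimate (Proposition~\ref{prop4.5Meyersetreg}) to control the boundary-layer contribution, not through any mollification of $f$. Your observation that the difference of two Langevin trajectories coupled to the same Brownian motions solves a linear uniformly parabolic equation is correct and is indeed used elsewhere in the paper, but it does not rescue the mollification route here because the main obstruction lies in the term $u^\ep_{f_\rho} - \bar u_{f_\rho}$, not in the remainder terms.
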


The proof of Proposition~\ref{prop4.4} follows a strategy which is similar to the one used to establish Theorem~\ref{Th.quantitativehydr} and is closely related to the argument of~\cite[Section 11.4]{AKMbook}. It is decomposed into different sections and is structured as follows. In Section~\ref{section.stochheatequation}, we obtain some suitable bounds on the finite-volume corrector and its flux. These estimates will be optimized with respect to the stochastic term at the cost of a suboptimal, but algebraically small deterministic term. The proof can be found in Section~\ref{Sec:concentrationineq} and makes use of a comparison with the Langevin dynamic for the Gaussian free field which is introduced in Section~\ref{langevinfreefield}. Section~\ref{sectionproof6.2} contains the proof of Proposition~\ref{prop4.4} and is decomposed into two subsections. In Section~\ref{section.meyersetinternalreg}, we record the regularity properties on the solution of the limiting equation~\eqref{eq:defubarthmhydro6.2} which are used in the proof. Section~\ref{section6.2.2.2} contains the proof of Proposition~\ref{prop4.4}. As in the proof of Theorem~\ref{Th.quantitativehydr}, it is based on a two-scale expansion with three main differences: we choose the mesoscopic scale $\kappa$ to be almost as large as the macroscopic scale, add a (large) boundary layer where the finite-volume corrector is assigned the constant slope equal to $0$, and use the estimates on the $\underline{L}^2$-norm of the corrector and the $\underline{H}^{-1}_{\mathrm{par}}$-norm of its flux established in Section~\ref{section.stochheatequation}.

\subsection{Concentration inequalities for the corrector and its flux} \label{section.stochheatequation}

In this subsection, we establish some estimates for the finite-volume corrector and its flux.

\subsubsection{The dynamic of the Gaussian free field} \label{langevinfreefield}
In this section, we introduce the Langevin dynamics of the Gaussian free field and record some of its properties. We denote by $I := [s_- , s_+] \subseteq \R$, and let $\Lambda \subseteq \Zd$ be a box of sidelength $L$.

\begin{definition}[Dynamic of the Gaussian free field] \label{def.DynGFF}
    Let $\psi$ be a Gaussian free field in the box $\Lambda$ with average value equal to $0$ (i.e., a random surface distributed according to the Gibbs measure~\eqref{def.Gibbs} with $V(x) = x^2/2$ and $p = 0$) and independent of the Brownian motions $(B_t(x))_{t \in \R , x \in \Zd}$. We denote by $\psi_{{I \times \Lambda}} : I \times \Lambda \to \R$ be the solution of the system of stochastic differential equations
\begin{equation*}
    \left\{ \begin{aligned}
    d \psi_{{I \times \Lambda}}(t , x) & = \Delta \psi_{{I \times \Lambda}} (t ,x) dt + \sqrt{2} dB_t(x) - \frac{\sqrt{2}}{\left| \Lambda \right|} \sum_{y \in \Lambda}  dB_t(y) &~\mbox{for}&~(t,x) \in I \times \Lambda,  \\
    \psi_{ Q}( s_- , \cdot) &= \psi &~\mbox{for}&~ x \in \Lambda, \\
    \psi_{{I \times \Lambda}} (t , \cdot ) &\in \Omega_{\Lambda, \mathrm{per}} & \mbox{for}&~ t \in I.
    \end{aligned} \right.
\end{equation*}
\end{definition}
Since the dynamic is stationary with respect to the Gaussian free field, we have that, for any time $t \in I$, the random surface $\psi_{{I \times \Lambda}}(t, \cdot)$ is distributed according to a Gaussian free field with average value $0$ in the box $\Lambda$. We record below three properties of the dynamic $\psi_{{I \times \Lambda}}$. The proof of these results can be obtained by explicit computations, essentially diagonalizing the Laplacian on a discrete box to reduce the problem to concentration estimates for a sum of independent random variables. A detailed sketch of the argument is given below.

\begin{proposition}[Concentration for the dynamic of the free field]
There exists a constant $C := C(d) < \infty$ such that, for any $z \in I \times \Lambda$, and any $\ell \in \N$ such that $z + Q_\ell \subset I \times \Lambda$,
\begin{equation} \label{eq:16102611}
    \bigl| \bigl( \nabla \psi_{{I \times \Lambda}}  \bigr)_{z + Q_\ell} \bigr| 
    \leq \mathcal{O}_2 \bigl( C\ell^{-\frac d2} \bigl) 
    \qquad \mbox{and}\qquad 
    \left\| \nabla \psi_{{I \times \Lambda}} \right\|_{\underline{L}^2(Q)} 
    \leq C 
    + \mathcal{O}_2\bigl( C L^{-\frac d2} \bigr),
\end{equation}
and
\begin{equation} \label{eq:14102611}
    \left\| \psi_{{I \times \Lambda}} \right\|_{\underline{L}^2(Q)} \leq 
    \left\{
    \begin{aligned} 
    &  C \ln L +  \mathcal{O}_2\bigl( C \bigr) & \mbox{if} & \ d = 2\,, \\
    & C + \mathcal{O}_2\left( C L^{1- \frac{d}{2}} \right) & \mbox{if} & \ d \geq 3.
    \end{aligned}
    \right.
\end{equation}
\end{proposition}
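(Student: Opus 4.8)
The statement concerns the Langevin dynamics $\psi_{I\times\Lambda}$ of the Gaussian free field, which is a \emph{linear} stochastic system. The plan is to exploit this linearity by diagonalizing the discrete Laplacian with periodic boundary conditions on $\Lambda$, which reduces the infinite-dimensional dynamics to a family of decoupled scalar Ornstein--Uhlenbeck processes indexed by the Fourier modes, and then to reassemble concentration estimates for the relevant observables from the Gaussian tails of these modes. Concretely, write $\Lambda=\Lambda_L$ (after translation) and let $\{e_k\}_{k}$ be the eigenbasis of $-\Delta$ on $\Omega_{\Lambda_L,\mathrm{per}}^\circ$ with eigenvalues $\lambda_k>0$; the Brownian forcing decomposes as a family of independent scalar Brownian motions $\beta_t(k)$ (with an extra normalization removing the $k=0$ mode, matching the $\frac{\sqrt2}{|\Lambda|}\sum_y dB_t(y)$ term), so that $\hat\psi_t(k) = e^{-\lambda_k(t-s_-)}\hat\psi_{s_-}(k) + \sqrt2\int_{s_-}^t e^{-\lambda_k(t-r)}d\beta_r(k)$. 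Since the initial datum $\psi$ is itself a GFF, each $\hat\psi_t(k)$ is a centered Gaussian with variance exactly $\lambda_k^{-1}$ (this is the stationarity of the dynamics with respect to the GFF, already recorded in the excerpt), and the different modes are independent.

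\textbf{Key steps.} First I would reduce each of the three claimed bounds to a statement of the form ``$X \le \mathbb{E}[X] + \mathcal{O}_2(\text{(std.\ dev.\ scale)})$'' for an explicit Gaussian observable $X$, using the Gaussian concentration inequality (Proposition~\ref{propEfronstein}): $\left(\nabla\psi_{I\times\Lambda}\right)_{z+Q_\ell}$ is a linear functional of the i.i.d.\ Gaussians $(\hat\psi_{s_-}(k), \beta_\cdot(k))$, hence its fluctuations are controlled by its $L^2$ norm as a functional, which one computes from the spectral representation; the bound $\ell^{-d/2}$ then comes from the standard cancellation in averaging a stationary gradient field over a box of side $\ell$ (equivalently, $\sum_k |\widehat{\indc_{Q_\ell}}(k)|^2 \lambda_k^{-1}|k|^2/|Q_\ell|^2 \lesssim \ell^{-d}$, the same computation underlying Proposition~\ref{prop.spataveflux}). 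Second, for $\|\nabla\psi_{I\times\Lambda}\|_{\underline{L}^2(Q)}$ and $\|\psi_{I\times\Lambda}\|_{\underline{L}^2(Q)}$ I would compute the expectation: $\mathbb{E}\bigl[\|\nabla\psi_t\|_{\underline{L}^2}^2\bigr] = \frac{1}{|\Lambda|}\sum_k \lambda_k^{-1}|\nabla e_k|^2$-type sums which are $O(1)$ uniformly in $L$ and in $t$ (this is the Brascamp--Lieb / GFF gradient variance bound), while $\mathbb{E}\bigl[\|\psi_t\|_{\underline{L}^2}^2\bigr] = \frac{1}{|\Lambda|}\sum_{k\ne 0}\lambda_k^{-1} = O(\ln L)$ in $d=2$ and $O(1)$ in $d\ge3$ (the standard Green-function-at-the-origin estimate). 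The fluctuation scale for $\|\nabla\psi\|_{\underline{L}^2(Q)}$ about its mean is then $L^{-d/2}$ by the same box-averaging cancellation applied now to the macroscopic box $Q_L$ itself; for $\|\psi\|_{\underline{L}^2(Q)}$ the fluctuation scale is $L^{1-d/2}$ in $d\ge3$ and $O(1)$ in $d=2$, which comes from bounding the variance of $\|\psi\|_{\underline{L}^2(Q)}$ (a quadratic-type, but here we can bound $\|\psi\|_{\underline L^2}$ by a linear functional plus a fluctuation via $\|\psi\|_{\underline L^2} \le (\mathbb E \|\psi\|_{\underline L^2}^2)^{1/2} + \|\,\|\psi\|_{\underline L^2} - (\cdot)\,\|$ and concentration of the Lipschitz map $\psi\mapsto\|\psi\|_{\underline L^2(Q)}$, whose Lipschitz constant in the white-noise coordinates is $(|\Lambda|\, \sup_k \lambda_k^{-1}\times \text{overlap})^{1/2}$, giving the stated scaling). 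I would also need the elementary $\mathcal{O}_2$-summation and powers rules from Section~\ref{section.stochint} to pass from $X^2\le \text{(mean)} + \mathcal{O}_1(\cdot)$ to $X \le \text{(const)} + \mathcal{O}_2(\cdot)$.

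\textbf{Main obstacle.} The routine part is the spectral bookkeeping; the one genuinely delicate point is getting the \emph{sharp} fluctuation exponent $L^{-d/2}$ for $\|\nabla\psi_{I\times\Lambda}\|_{\underline L^2(Q)}$ and $L^{1-d/2}$ for $\|\psi_{I\times\Lambda}\|_{\underline L^2(Q)}$, rather than a lossy bound. Since these are essentially $L^2$-norms (quadratic functionals of Gaussians), naive Gaussian concentration applied to the norm itself gives the right Lipschitz constant only if one is careful: the norm $\psi\mapsto \|\psi\|_{\underline L^2(Q)}$ is $1$-Lipschitz in $\psi$ with the $\underline L^2(Q)$ metric, but one must convert to the metric on the underlying i.i.d.\ white-noise/initial-data coordinates, picking up the factor $\sup_k\lambda_k^{-1}$ from the covariance and a volume factor; in $d=2$ this produces the $\ln L$ inside the mean and only an $O(1)$ fluctuation, which must be tracked correctly against the $\indc_{\{d=2\}}$ dichotomy. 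I expect to handle this by treating the time-integrated norm $\int_I\|\psi_t\|_{\underline L^2(\Lambda)}^2\,dt$ and using the explicit OU covariance $\mathrm{cov}(\hat\psi_t(k),\hat\psi_s(k)) = \lambda_k^{-1}e^{-\lambda_k|t-s|}$ to compute both mean and variance in closed form, then invoking Gaussian chaos concentration (Hanson--Wright / the $\mathcal{O}_2$-type bound for Lipschitz functions suffices here since we only need one power of the standard deviation). The comparison with the nonlinear Langevin dynamics enters elsewhere (Section~\ref{Sec:concentrationineq}); for this proposition the argument is self-contained in the Gaussian setting.
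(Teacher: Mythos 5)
Your approach is essentially the paper's own: diagonalize the discrete Laplacian on $\Omega_{\Lambda,\mathrm{per}}^\circ$ with the periodic Fourier basis, exploit stationarity of the dynamics with respect to the GFF so that each time slice has the exact GFF law with Fourier coefficients $X_{\mathbf{k}}/\sqrt{\lambda_{\mathbf{k}}}$, and then apply Gaussian concentration (Proposition~\ref{propEfronstein}) with the Lipschitz constant read off from the spectral representation (for $\|\nabla\psi_t\|_{\underline L^2}$ it is $|\Lambda|^{-1/2}$; for $\|\psi_t\|_{\underline L^2}$ the spectral gap $\lambda_{\mathbf k}\gtrsim L^{-2}$ gives $L/\sqrt{|\Lambda|}=L^{1-d/2}$).

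Two small differences between your plan and the paper's execution, neither of which is a gap. First, for the averaged-gradient bound in~\eqref{eq:16102611} the paper does not unwind the Fourier sum explicitly; it identifies the variance of the Gaussian $(\nabla\psi(t,\cdot))_{\Lambda_\ell,i}$ as $\frac{1}{|\Lambda_\ell|}\sum_{x\in\Lambda_\ell}\nabla_i u(x)$ where $u$ solves a dual periodic elliptic problem, and then a single Cauchy--Schwarz argument shows this is at most $|\Lambda_\ell|^{-1}\lesssim\ell^{-d}$; this is the same computation as your $\sum_k|\widehat{\indc_{Q_\ell}}(k)|^2\lambda_k^{-1}|k|^2/|Q_\ell|^2$, but avoids any explicit mode-by-mode bookkeeping. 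Second, and more substantively, you flag the time integration as the delicate point and propose to attack the time-integrated quadratic form $\int_I\|\psi_t\|_{\underline L^2}^2\,dt$ directly via the OU covariance and a Hanson--Wright / Gaussian-chaos estimate. The paper sidesteps this entirely: since the dynamics is stationary, concentration is applied at each \emph{fixed} time $t$, giving $\|\psi_t\|_{\underline L^2(\Lambda)}\le\E[\|\psi_t\|_{\underline L^2(\Lambda)}]+\mathcal O_2(CL^{1-d/2})$ uniformly in $t$, and then one simply integrates over $t\in I$ using the elementary $\mathcal O_s$-integration rule~\eqref{int.Onotation}. This avoids the chaos machinery and the need for the explicit OU covariance $\lambda_k^{-1}e^{-\lambda_k|t-s|}$. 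Your route would work too, but it is heavier than necessary; keeping the concentration at fixed time and moving the time integral outside is the cleaner way to land the sharp exponents.
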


\begin{proof}
Without loss of generality, we may assume that $z = 0$ and denote by $L$ the sidelength of the box $\Lambda$. 

We now prove first inequality in~\eqref{eq:16102611}. We first fix a time $t \in I$ and integer $i \in \{ 1 , \ldots, d\}$, and note that the random variable $\bigl( \nabla \psi_{{I \times \Lambda}} (t , \cdot) \bigr)_{\Lambda_\ell, i}$ is Gaussian and that its variance is given by
\begin{equation*}
    \mathrm{Var} \left[ \bigl( \nabla \psi_{{I \times \Lambda}} (t , \cdot) \bigr)_{\Lambda_\ell , i} \right] = \frac{1}{|\Lambda_\ell|} \sum_{x \in \Lambda_\ell} \nabla_i u(x)
\end{equation*}
where $u$ is the periodic solution to the equation $- \Delta u = - \nabla \cdot ( \indc_{\Lambda_\ell} e_i) $ in the box $\Lambda_L$. In particular, the map $u$ satisfies the identity
\begin{equation*}
    \sum_{e \in \vec{E} (\Lambda_L)} (\nabla u(e))^2 = \frac{1}{|\Lambda_\ell|} \sum_{x \in \Lambda_\ell} \nabla_i u(x).
\end{equation*}
Additionally, the Cauchy-Schwarz inequality implies that
\begin{equation*}
    \frac{1}{|\Lambda_\ell|} \sum_{x \in \Lambda_\ell} \nabla_i u(x) \leq \left( \frac{1}{|\Lambda_\ell|} \sum_{x \in \Lambda_\ell} (\nabla_i u(x))^2  \right)^{\frac 12} \leq \left( \frac{1}{|\Lambda_\ell|} \sum_{e \in \vec{E} (\Lambda_L)} (\nabla u(e))^2  \right)^{\frac 12}.
\end{equation*}
A combination of the two previous displays implies that
\begin{equation*}
    \frac{1}{|\Lambda_\ell|} \sum_{x \in \Lambda_\ell} \nabla_i u(x) \leq \frac{1}{\left| \Lambda_\ell \right|} \leq \frac{C}{\ell^d}.
\end{equation*}
As the previous estimates are valid for any $i \in \{ 1 , \ldots, d \}$ and the random variable $\bigl( \nabla \psi_{{I \times \Lambda}} (t , \cdot) \bigr)_{\Lambda_\ell} $ is Gaussian, we obtain the upper bound
\begin{equation*}
    \bigl| \bigl( \nabla \psi_{{I \times \Lambda}}  (t , \cdot)\bigr)_{\Lambda_\ell} \bigr| 
    \leq \mathcal{O}_2 \bigl( C\ell^{-\frac d2} \bigl).
\end{equation*}
We then integrate over time and use the property~\eqref{int.Onotation} to complete the proof of~\eqref{eq:16102611}.

We next prove the remaining estimates by diagonalizing the Laplacian. For any $k \in \{ -L , \ldots, L \}^d$, we denote by 
\begin{equation} \label{eq:orthonormLap}
    e_{\mathbf{k}}(x) := \frac{1}{(2L+1)^{d/2}} \exp \left( \frac{2 i \pi \mathbf{k} \cdot x}{2L+1} \right).
\end{equation}
This collection of function is a complex orthonormal basis of $\Omega_{\Lambda, \mathrm{per}}$, i.e., 
\begin{equation*}
    \sum_{x \in \Lambda} e_{\mathbf{k}}(x)\overline{e_{\mathbf{k'}}(x)} = \indc_{\{ \mathbf{k} = \mathbf{k'}\}},
\end{equation*}
and that it diagonalizes the Laplacian,
\begin{equation*}
    - \Delta e_{\mathbf{k}} = \lambda_{\mathbf{k}}  e_{\mathbf{k}} ~~\mbox{with}~~\lambda_{\mathbf{k}} = \sum_{i = 1}^d \left( 2 - 2 \cos \left( \frac{2 \pi \mathbf{k}_i}{2L+1} \right) \right).
\end{equation*}
Since for any time $t$, the law of $\psi_{I \times \Lambda}(t , \cdot)$, i.e., a Gaussian multivariate distribution whose covariance matrix is given by the Green's function, we have the identity in law
\begin{equation*}
    \psi_{I \times \Lambda}(t , \cdot) \overset{\mathrm{(law)}}{=} \sum_{\mathbf{k} \in \{ -L , \ldots, L\}^d \setminus \{0 \}} \frac{X_{\mathbf{k}} e_{\mathbf{k}}}{\sqrt{\lambda_{\mathbf{k}}}}  ,
\end{equation*}
where $(X_{\mathbf{k}})_{\mathbf{k}}$ is a collection of standard complex Gaussian random variables satisfying the following two conditions: (i) $X_{\mathbf{k}}= \bar X_{\mathbf{-k}}$ and (ii) $X_{\mathbf{k}}$ and $X_{\mathbf{k'}}$ are independent if $\mathbf{k'} \notin \{ \mathbf{k},-\mathbf{k} \}$.

Using that the family of functions $(e_{\mathbf{k}})_{k \in \{ -L , \ldots, L \}^d}$ is orthonormal and diagonalizes the Laplacian, we may compute the law of the $L^2$-norm of the map $\psi_{I \times \Lambda}(t , \cdot)$ and its gradient as follows
\begin{equation} \label{eq:20140510}
     \left\| \psi_{I \times \Lambda}(t , \cdot) \right\|_{\underline{L}^2(\Lambda) } \overset{\mathrm{(law)}}{=} \left( \frac{1}{\left| \Lambda \right|}\sum_{\mathbf{k} \in \{ -L , \ldots, L\}^d \setminus \{0 \}} \frac{|X_{\mathbf{k}}|^2}{\lambda_{\mathbf{k}}} \right)^{\sfrac12}
\end{equation}
and
\begin{equation} \label{eq:20130510}
    \left\| \nabla \psi_{I \times \Lambda}(t , \cdot) \right\|_{\underline{L}^2(\Lambda) }\overset{\mathrm{(law)}}{=} \left( \frac{1}{\left| \Lambda \right|} \sum_{\mathbf{k} \in \{ -L , \ldots, L\}^d \setminus \{ 0 \}} |X_{\mathbf{k}}|^2 \right)^{\sfrac 12}.
\end{equation}
To treat the (easier) term~\eqref{eq:20130510}, we note that the map, defined on $\mathbb{C}^{(2L+1)^d-1}$ and valued in $\R$,
\begin{equation*}
(x_{\mathbf{k}})  \mapsto  \left( \frac{1}{\left| \Lambda \right|} \sum_{\mathbf{k} \in \{ -L , \ldots, L\}^d \setminus \{0 \}} |x_{\mathbf{k}}|^2 \right)^{\sfrac 12}
\end{equation*}
is $\sqrt{1/|\Lambda|}$-Lipschitz (where we used the Euclidean metric as in Proposition~\ref{propEfronstein}). We may thus apply the Gaussian concentration inequality (or to be precise a slight modification of it to take into account that the Gaussian random variables $X_{\mathbf{k}}$ are complex and are not exactly i.i.d.), and note that the expectation of any sides of~\eqref{eq:20130510} is bounded uniformly in $L$, to obtain that
\begin{equation*}
    \left\| \nabla \psi_{{I \times \Lambda}}(t , \cdot) \right\|_{\underline{L}^2(\Lambda)} \leq C + O_2 \left( C L^{-d/2}\right).
\end{equation*}
After (suitable) integration with respect to the time variable (making use of the properties listed in Section~\ref{section.stochint}), we obtain the second estimate of~\eqref{eq:16102611}.

We next treat the term~\eqref{eq:20140510}. To this end, we note that the non-zero eigenvalues $\lambda_{\mathbf{k}}$ are always larger than $c/L^2$. We deduce that the right-hand side of~\eqref{eq:20140510} is a $C L/ \sqrt{|\Lambda|}$-Lipschitz function of Gaussian random variables. The Gaussian concentration inequality implies this time that
\begin{equation*}
    \left\| \psi_{{I \times \Lambda}}(t , \cdot) \right\|_{\underline{L}^2(\Lambda)} \leq \E \left[ \left\| \psi_{{I \times \Lambda}}(t , \cdot) \right\|_{\underline{L}^2(\Lambda)} \right] + O_2 \left( C L^{1 - d/2} \right).
\end{equation*}
The expectation in the right-hand side is of order $C ( 1 + \sqrt{\ln L} \indc_{d = 2})$. The proof of~\eqref{eq:14102611} is complete.

\end{proof}

\subsubsection{Concentration inequalities for the finite-volume corrector and its flux} \label{Sec:concentrationineq}

In this subsection, we prove concentration estimates for the $\underline{L}^2$-norm of the finite-volume corrector~$\psi_{{I \times \Lambda}}$, introduced in Definition~\ref{def.firstordercorrfinvol}, as well as the $\underline{H}^{-1}_{\mathrm{par}}$-norm of its flux. In the following statement, we assume that the length of the time interval $I$ is close to $L^2$ (the sidelength of the box $\Lambda$). This assumption simplifies the proof and will be satisfied when we apply the two-scale expansion in Section~\ref{section6.2.2.2}.

\begin{proposition}[Concentration inequality for the corrector and its flux] \label{prop.concentration} For any exponent $s \in (0 , d/2)$, there exist a constant $C : = C(s , d , c_+, c_-) < \infty$ and two exponents $\delta_s := \delta_s(d, s) > 0$ and $\zeta_s := \zeta_s(d, s) > 0$ such that, if $L^2 \leq |I| \leq L^{2 + \zeta_s }$, then for any time-dependent slope $q : I \to \Rd$, and any $z \in I \times \Lambda$ satisfying $z + Q_L \subseteq I \times \Lambda$,
\begin{equation} \label{eq:12312611}
     \left\| \phi_{{I \times \Lambda}}(\cdot ; q) \right\|_{\underline{L}^2 \left( z + Q_{L} \right)}  \leq C  L^{ 1 -\delta_s }+ \mathcal{O}_2 \left( C  L^{1 -s}\right),
\end{equation}
and
\begin{equation} \label{eq:12322611}
     \left\|  V'(p + \nabla \phi_{{I \times \Lambda}}(\cdot ;q)) - \E \left[ V'(p + \nabla \phi_{{I \times \Lambda}}(\cdot ;q)) \right] \right\|_{\underline{H}^{-1}_{\mathrm{par}} (z  + Q_{L})} \\
    \leq C L^{ 1 -\delta_s }+ \mathcal{O}_2 \left( C   L^{1 -s}\right).
\end{equation}
\end{proposition}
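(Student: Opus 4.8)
\textbf{Proof strategy for Proposition~\ref{prop.concentration}.}
The plan is to follow the scheme already used for Proposition~\ref{prop.2.2BL} and Proposition~\ref{prop.spataveflux}: discretize the Brownian motions at mesh size $n$, compute the derivative of the relevant observable with respect to the increments $X_l^n(y)$, and apply the Gaussian concentration inequality (Proposition~\ref{propEfronstein}). The new ingredient, compared to those propositions, is that here we are willing to trade a suboptimal but algebraically small \emph{deterministic} error term for an optimized stochastic exponent; this is achieved by comparing with the Langevin dynamics of the Gaussian free field introduced in Section~\ref{langevinfreefield}. Concretely, for~\eqref{eq:12312611} I would write $\phi_{I\times\Lambda}(\cdot;q) = \psi_{I\times\Lambda} + \bigl(\phi_{I\times\Lambda}(\cdot;q) - \psi_{I\times\Lambda}\bigr)$, where $\psi_{I\times\Lambda}$ is the free-field dynamics coupled to the same Brownian motions and started from a free field sampled independently. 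The difference $w := \phi_{I\times\Lambda}(\cdot;q) - \psi_{I\times\Lambda}$ solves a linear parabolic equation $\partial_t w - \nabla\cdot\a\nabla w = \nabla\cdot\bigl((\a-1)(q+\nabla\psi_{I\times\Lambda})\bigr)$ with a uniformly elliptic environment plus an initial-data discrepancy, so energy estimates plus the pointwise gradient bound~\eqref{eq:20291910grad} and the free-field estimates~\eqref{eq:16102611} control $\|w\|_{\underline L^2}$ by $CL$ (a deterministic bound, but only of the trivial order $L$); the point is that the \emph{expectation} of $\|\phi_{I\times\Lambda}(\cdot;q)\|_{\underline L^2}$ is of order $L^{1-\delta_s}$ for some $\delta_s>0$ — this follows by combining the sublinearity of the corrector (Proposition~\ref{prop.2.2BL} gives $\mathcal O_2(C(1+(\log L)^{1/2}\indc_{d=2}))$ pointwise, hence $\E[\|\phi\|_{\underline L^2(Q_L)}^2] \lesssim 1 + \log L\,\indc_{d=2}$, which is certainly $\le CL^{2-2\delta_s}$) — and that the fluctuation of $\|\phi_{I\times\Lambda}(\cdot;q)\|_{\underline L^2}$ around its mean is $\mathcal O_2(CL^{1-s})$ for any $s<d/2$.

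The fluctuation bound is obtained exactly as in Proposition~\ref{prop.2.2BL}: the map $F:\{X_l^n(y)\}\mapsto \|\phi^n_{I\times\Lambda}(\cdot;q)\|_{\underline L^2(z+Q_L)}$ is Lipschitz, and its gradient is controlled by $\sum_{l,y}\bigl(\partial_{X_l^n(y)}\phi^n\bigr)^2$, which by Duhamel's principle (Proposition~\ref{propDuhamel}) and the Nash--Aronson estimate (Proposition~\ref{prop.NashAronson}) integrated over the parabolic cylinder yields a bound of order $L^{2-d}\cdot|Q_L|^{-1}\cdot|Q_L| \sim L^{2}\cdot L^{-d}\cdot\mathrm{(volume\ normalization)}$; a careful bookkeeping of the averaged $\underline L^2$ normalization on $z+Q_L$ gives that the squared Lipschitz constant of $F$ is $O(L^{2-d}\cdot L^{2})\cdot L^{-2} = O(L^{2-d})$... more precisely, one finds the Lipschitz constant is $O(L^{1-d/2+\eta})$ for arbitrarily small $\eta$ once the time interval is not too long, which forces the constraint $|I|\le L^{2+\zeta_s}$ (a longer time interval would let the dynamics mix and inflate the influence sum through the factor $|I|/|Q_L|$). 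Applying Proposition~\ref{propEfronstein} then gives $\|\phi^n_{I\times\Lambda}(\cdot;q) - \E[\cdots]\|_{\underline L^2(z+Q_L)} \le \mathcal O_2(CL^{1-s})$ for any $s<d/2$, uniformly in $n$, and passing to the limit $n\to\infty$ (Fatou, as in Proposition~\ref{prop.2.2BL}) and splitting the mean into $\le CL^{1-\delta_s}$ yields~\eqref{eq:12312611}.

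For the flux estimate~\eqref{eq:12322611} I would combine the multiscale Poincar\'e inequality (Proposition~\ref{prop:multscPoinc}) with the averaged-flux derivative identity~\eqref{eq:3.155} from Proposition~\ref{prop.spataveflux}. The $\underline H^{-1}_{\mathrm{par}}$-norm of the centered flux reduces, via Proposition~\ref{prop:multscPoinc}, to controlling $\|V'(p+\nabla\phi) - \E[\cdots]\|_{\underline L^2(z+Q_L)}$ together with the scale-$3^k$ averages $\bigl|(V'(p+\nabla\phi)-\E[\cdots])_{z'+Q_{3^k}}\bigr|$. The $\underline L^2$ term is handled by the Meyers estimate (Proposition~\ref{propMeyers} / Proposition~\ref{interiorparabolicMEyers}) applied to the difference of the nonlinear dynamics and its free-field counterpart, giving again a deterministic $CL^{1-\delta_s}$ plus a stochastic $\mathcal O_2(CL^{1-s})$ obtained by the same discretized-Brownian-motion concentration argument. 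Each dyadic average over $z'+Q_{3^k}$, by~\eqref{eq:3.155}, the Caccioppoli and De Giorgi--Nash--Moser estimates, and the Nash--Aronson bound~\eqref{eq:09592809} (exactly as in Proposition~\ref{prop.spataveflux}), has squared influence sum $\le C 3^{-dk}$, hence is $\mathcal O_2(C3^{-dk/2})$, and the weighted sum $\sum_{k=0}^m 3^k \bigl(3^{-dk}\bigr)^{1/2}$ over the scales up to $L$ converges to $O(1)$ after the $L$-rescaling because $d\ge 2$... in fact summing gives the $L^{1-s}$ order with room to spare. The main obstacle I anticipate is the bookkeeping needed to pin down the admissible exponents $\delta_s$, $\zeta_s$: one must check that the deterministic comparison error from the free-field splitting really is $L^{1-\delta_s}$ rather than just $L$ (this relies on the sublinearity of the corrector, i.e.\ the $\log$-bound in $d=2$ and boundedness in $d\ge 3$ coming from Proposition~\ref{prop.2.2BL} and the free-field bound~\eqref{eq:14102611}), and that the constraint $|I|\le L^{2+\zeta_s}$ is genuinely enough to keep the influence sums under control after the multiscale decomposition — this interplay between the length of the time window, the stochastic exponent $s$, and the deterministic gain $\delta_s$ is the delicate point, everything else being a routine adaptation of Propositions~\ref{prop.2.2BL} and~\ref{prop.spataveflux}.
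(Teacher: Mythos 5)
Your general setup — comparing with the free-field dynamics of Section~\ref{langevinfreefield}, discretizing the Brownian motions, and invoking the multiscale Poincar\'e inequality for the flux — is in the right spirit, but the two estimates of the proposition are not actually reachable by the arguments you sketch, and the specific mechanism behind the $L^{1-\delta_s}+\mathcal{O}_2(L^{1-s})$ trade-off is the piece you are missing.

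For~\eqref{eq:12312611}: your claim that the squared Lipschitz constant of $F : \{X_l^n(y)\}\mapsto\|\phi^n\|_{\underline{L}^2(z+Q_L)}$ is $O(L^{2-d})$ is incorrect. By Cauchy--Schwarz $|\partial_{X_l^n(y)}F|\le\|\partial_{X_l^n(y)}\phi^n\|_{\underline{L}^2(z+Q_L)}$, and after Duhamel and the Nash--Aronson bound $|P_\a|\le\Phi_{C,L}$ one gets $\sum_{l,y}(\partial_{X_l^n(y)}F)^2\le\frac{C}{|Q_L|}\int_{z+Q_L}\int_I\sum_y P_\a(t,x;s,y)^2\,ds\,dt\le C(1+\log L\,\indc_{\{d=2\}})$: the sum over $y$ of $P_\a(t,x;s,y)^2$ is $\lesssim((t-s)\vee 1)^{-d/2}$ with exponential decay beyond time $L^2$, and integrating in $s$ then leaves a constant. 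So the Lipschitz constant is $O(1)$, not $O(L^{1-d/2})$, and the resulting bound $\|\phi\|_{\underline{L}^2}\le\E[\cdots]+\mathcal{O}_2(C)$ cannot produce $\mathcal{O}_2(CL^{1-s})$ for $s>1$ (which is allowed whenever $d\ge 3$), since $\mathcal{O}_2(C)$ is strictly weaker than $\mathcal{O}_2(CL^{1-s})$ in that regime. The paper's proof of~\eqref{eq:12312611} is not a direct concentration argument on the $\underline{L}^2$-norm at all: it sets $w:=\phi-\psi$, notes that $\partial_t w = \nabla\cdot\g$ with $\g:= V'(\nabla\phi)-\E[V'(\nabla\phi)]-\nabla\psi$, applies Proposition~\ref{multscalepoincrealpar} to reduce $\|w\|_{\underline{L}^2}$ to $\underline{H}^{-1}_{\mathrm{par}}$-norms of $\nabla w$ and $\g$, and then estimates those $\underline{H}^{-1}_{\mathrm{par}}$-norms by the multiscale Poincar\'e inequality with a \emph{truncation} at scale $m_s=\lfloor 2sm/d\rfloor$. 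It is precisely the PDE structure $\partial_t w=\nabla\cdot\g$ (which you do not use) combined with the scale truncation that unlocks the $L^{1-s}$ stochastic order.

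For~\eqref{eq:12322611}: you correctly reach for the multiscale Poincar\'e inequality, but you overlook the truncation. Summing the fluctuation bounds $\mathcal{O}_2(3^{-dk/2})$ over all dyadic scales $k\le m$ with weight $3^k$ gives $\mathcal{O}_2(C)$ for $d\ge3$ and $\mathcal{O}_2(C\log L)$ for $d=2$, not $\mathcal{O}_2(CL^{1-s})$; again this is weaker than the claim when $s>1$. The paper instead splits the scale sum at $m_s$: scales $k\le m_s$ are estimated \emph{deterministically} by $3^{m_s}(\|\nabla\phi\|_{\underline{L}^2}+1)$, and scales $k>m_s$ give $\mathcal{O}_2(C3^{m-dm_s/2})=\mathcal{O}_2(CL^{1-s})$; the choice $\delta_s=\tfrac12(1-2s/d)$, $\zeta_s=1-2s/d$ is exactly what makes both halves land on $L^{1-\delta_s}$ and $L^{1-s}$ respectively. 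Finally, your explanation of the constraint $|I|\le L^{2+\zeta_s}$ is misdirected: the influence sums are not inflated by a long time window (the $\exp(-(t-s)/(CL^2))$ factor in $\Phi_{C,L}$ cuts them off regardless of $|I|$); the constraint enters on the \emph{deterministic} side, when passing from the energy estimate on $\underline{L}^2(I\times\Lambda)$ to $\underline{L}^2(Q_L)$ via the volume ratio $|I|/L^2\le L^{\zeta_s}$, a loss that the deterministic term $L^{1-\delta_s}$ is designed to absorb.
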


\begin{remark}
The proof below gives the explicit values $\delta_s = \frac 12 (1 - \frac{2s}d)$, $\zeta_s := (1 - \frac{2s}{d})$ (though they are not used in the rest of the argument).
\end{remark}

\begin{proof}
We assume, without loss of generality, that $\Lambda := \Lambda_L$ (i.e., the center of the box is the vertex $0$), that $z = 0$ and $q = 0$. We first prove the concentration inequality on the $\underline{L}^2(I \times \Lambda_L)$-norm of the gradient of the map $\phi_{{I \times \Lambda_L}}$: there exists a constant $C := C(d , c_+ , c_-) < \infty$ such that
\begin{equation} \label{eq:15442510}
    \left\| \nabla \phi_{{I \times \Lambda_L}} \right\|_{\underline{L}^2(I \times \Lambda_L)} \leq C + \mathcal{O}_2 \left( C L^{-d/2} \right).
\end{equation}
Let $\psi_{{I \times \Lambda_L}}$ be the solution of the stationary dynamic introduced in Definition~\ref{def.DynGFF}. The map $w :=  \phi_{{I \times \Lambda_L}} - \psi_{{I \times \Lambda_L}}$ solves the equation
\begin{equation} \label{def.w16202510}
    \partial_t w = \nabla \cdot \left( V'(\nabla w + \nabla \psi_{{I \times \Lambda_L}})- \nabla \psi_{{I \times \Lambda_L}}\right)  \hspace{5mm}\mbox{in} \hspace{5mm} I \times \Lambda_L.
\end{equation}
An energy estimate then yields the inequality
\begin{equation} \label{eq:15412510}
    \left\| \nabla w \right\|_{\underline{L}^2\left(I \times \Lambda_L \right)}^2 \leq C \left\| \nabla \psi_{{I \times \Lambda_L}} \right\|_{\underline{L}^2\left(I \times \Lambda_L\right)}^2 + \frac{C}{|I|} \left\| \psi_{{I \times \Lambda_L}}(s_- , \cdot) \right\|_{\underline{L}^2\left(\Lambda_L\right)}^2,
\end{equation}
and consequently
\begin{equation} \label{eq:15411510}
    \left\| \nabla \phi_{{I \times \Lambda_L}} \right\|_{\underline{L}^2\left(I \times \Lambda_L\right)}^2 \leq C \left\| \nabla \psi_{{I \times \Lambda_L}} \right\|_{\underline{L}^2\left(I \times \Lambda_L\right)}^2 + \frac{C}{L^2} \left\| \psi_{{I \times \Lambda_L}}(s_- , \cdot) \right\|_{\underline{L}^2\left(\Lambda_L\right)}^2.
\end{equation}
The inequality~\eqref{eq:15442510} is then a consequence of~\eqref{eq:15411510} and the properties~\eqref{eq:14102611} and~\eqref{eq:16102611} of the map $\psi_{{I \times \Lambda}}$. We then deduce that
\begin{align*}
    \left\| \nabla \phi_{{I \times \Lambda}} \right\|_{\underline{L}^2\left(Q_L\right)}^2 \leq \frac{|Q|}{|Q_L|} \left\| \nabla \phi_{{I \times \Lambda}} \right\|_{\underline{L}^2\left(Q \right)}^2 & \leq \frac{|I|}{L^2} \left( C + \mathcal{O}_1 \left( C L^{-d} \right) \right) \\
    & \leq C L^{\zeta_s} \left( 1  + \mathcal{O}_1 \left( L^{-d} \right)\right).
\end{align*}
We next prove the inequality~\eqref{eq:12322611}. To ease the presentation of the argument, we assume that $L = 3^m$ for some integer $m \in \N$. We next set $\delta_s = \frac 12 (1 - \frac{2s}d)$, $\zeta_s := (1 - \frac{2s}{d})$ and $m_{s} = \lfloor 2s m / d \rfloor$. By the multiscale Poincar\'e inequality (Proposition~\ref{prop:multscPoinc}), it is sufficient to show that
\begin{multline} \label{eq:16372510}
    \sum_{k = 0}^m 3^{k}  \left( \frac{1}{|\mathcal{Z}_{k ,m}|} \sum_{z \in \mathcal{Z}_{k , m}} \left| \left(V'(\nabla \phi_{{I \times \Lambda_L}}) - \E \left[ V'(\nabla \phi_{{I \times \Lambda_L}}) \right] \right)_{z + Q_{3^k}} \right|^2 \right)^{\sfrac 12} \\ \leq C 3^{m(1-\delta_s)} + \mathcal{O}_2(C 3^{  m - d m_{s}/2}).
\end{multline}
To prove~\eqref{eq:16372510}, we truncate the sum in the left-hand side at the value $m_{s}$ as follows
\begin{multline} \label{eq:1134101010}
    \sum_{k = 0}^m 3^{k}  \left( \frac{1}{|\mathcal{Z}_{k ,m}|} \sum_{z \in \mathcal{Z}_{k , m}} \left| \left(V'(\nabla \phi_{{I \times \Lambda_L}}) - \E \left[ V'(\nabla \phi_{{I \times \Lambda_L}}) \right] \right)_{z + Q_{3^k}} \right|^2 \right)^{\sfrac 12}  \\ = \underset{\eqref{eq:1134101010}-(i)}{\underbrace{\sum_{k = 0}^{m_s} 3^{k}  \left( \frac{1}{|\mathcal{Z}_{k ,m}|} \sum_{z \in \mathcal{Z}_{k , m}} \left| \left(V'(\nabla \phi_{{I \times \Lambda_L}}) - \E \left[ V'(\nabla \phi_{{I \times \Lambda_L}}) \right] \right)_{z + Q_{3^k}} \right|^2 \right)^{\sfrac 12}}} \\
    \underset{\eqref{eq:1134101010}-(ii)}{\underbrace{\sum_{k = m_s}^m 3^{k}  \left( \frac{1}{|\mathcal{Z}_{k ,m}|} \sum_{z \in \mathcal{Z}_{k , m}} \left| \left(V'(\nabla \phi_{{I \times \Lambda_L}}) - \E \left[ V'(\nabla \phi_{{I \times \Lambda_L}}) \right] \right)_{z + Q_{3^k}} \right|^2 \right)^{\sfrac 12}}}.
\end{multline}
We then estimate the two terms separately by writing
\begin{align*}
    (\eqref{eq:1134101010}-(i))  \leq C 3^{m_{s}} \left( \left\| \nabla \phi_{{I \times \Lambda_L}} \right\|_{\underline{L}(Q_{3^m})} +1 \right) 
     & \leq C 3^{m_s + \zeta_s m/2}  (1 + \mathcal{O}_2( 3^{- dm/2})) \\
     & \leq C 3^{m (1 - \delta_s)}  (1 + \mathcal{O}_2( 3^{- dm/2}))
\end{align*}
and
\begin{equation*}
     (\eqref{eq:1134101010}-(ii))  \leq C\sum_{k = m_{s}}^{m} 3^{k} \mathcal{O}_2 \left( 3^{- dk/2} \right) 
      \leq \mathcal{O}_2 \left( C 3^{m - d m_s/2} \right).
\end{equation*}
Combining the two previous displays completes the proof of~\eqref{eq:16372510}.

We next prove the inequality~\eqref{eq:12312611}. The proof follows a similar outline. We consider the map $w = \phi_{{I \times \Lambda_L}} - \psi_{{I \times \Lambda_L}}$ and note that the equation~\eqref{def.w16202510} can be rewritten as follows (using that the divergence of a spatially constant term is equal to $0$)
\begin{equation*}
    \partial_t w = \nabla \cdot \left( V'(\nabla \phi_{{I \times \Lambda_L}}) - \E \left[ V'(\nabla \phi_{{I \times \Lambda_L}})\right] - \nabla \psi_{{I \times \Lambda_L}} \right)   \hspace{5mm}\mbox{in} \hspace{5mm} I \times \Lambda_L.
\end{equation*}
Defining $\g :=  V'(\nabla \phi_{{I \times \Lambda_L}}) - \E \left[  V'(\nabla \phi_{{I \times \Lambda_L}}) \right] - \nabla \psi_{{I \times \Lambda_L}}$, we can use Proposition~\ref{multscalepoincrealpar} and write
\begin{equation} \label{eq:16512510}
    \left\| w \right\|_{\underline{L}^2(Q_{3^m})} \leq C \left\| \nabla w \right\|_{\underline{H}^{-1}_{\mathrm{par}}(Q_{3^m})} + C \left\| \g \right\|_{\underline{H}^{-1}_{\mathrm{par}}(Q_{3^m})}.
\end{equation}
We next estimate the two terms in the right-hand side using the multiscale Poincar\'e inequality. We first write
\begin{equation*}
    \left\| \nabla w \right\|_{\underline{H}^{-1}_{\mathrm{par}}(Q_{3^m})} \leq C \left\| \nabla w \right\|_{\underline{L}^{2}(Q_{3^m})} + C \sum_{k = 0}^m 3^{k} \left( \frac{1}{|\mathcal{Z}_{k , m}|}  \sum_{z \in \mathcal{Z}_{k , m}} 
    \bigl| (\nabla w)_{z + Q_{3^k}} \bigr|^2 \right)^{\sfrac12}.
\end{equation*}
Using the same argument as in the proof of~\eqref{eq:16372510}, and making use of~\eqref{eq:15000112} instead of~\eqref{eq:14590112}, we obtain
\begin{equation*}
     \left\| \nabla w \right\|_{\underline{H}^{-1}_{\mathrm{par}}(Q_{3^m})} \leq C 3^{ (1 -\delta_s)m }+ \mathcal{O}_2 \left( C 3^{(1 -s)m}\right).
\end{equation*}
There only remains to estimate the second term in the right-hand side of~\eqref{eq:16512510}. To this end, we first write
\begin{equation*}
     \left\| \g \right\|_{\underline{H}^{-1}_{\mathrm{par}}(Q_{3^m})} \leq \left\| V'(\nabla \phi_{{I \times \Lambda_L}}) - \E \left[  V'(\nabla \phi_{{I \times \Lambda_L}}) \right] \right\|_{\underline{H}^{-1}_{\mathrm{par}}(Q_{3^m})} + \left\| \nabla \psi_{{I \times \Lambda_L}} \right\|_{\underline{H}^{-1}_{\mathrm{par}}(Q_{3^m})}.
\end{equation*}
The first term in the right-hand side is estimated by the inequality~\eqref{eq:12322611}. The second term is estimated by applying the multiscale Poincar\'e and is identical to the proof of~\eqref{eq:12322611}. We obtain
\begin{equation} \label{eq:16192611}
    \left\| w \right\|_{\underline{L}^2(Q_{3^{m-1}})} \leq C 3^{ (1 -\delta_s)m }+ \mathcal{O}_2 \left( C 3^{(1 -s)m}\right).
\end{equation}
To complete the proof, we write
\begin{equation*}
    \left\| \phi_{{I \times \Lambda_L}} \right\|_{\underline{L}^2(Q_{3^{m-1}})} \leq \left\| w \right\|_{\underline{L}^2(Q_{3^m})} + \left\| \psi_{{I \times \Lambda_L}} \right\|_{\underline{L}^2(Q_{3^m})}
\end{equation*}
and use the inequalities~\eqref{eq:14102611} and~\eqref{eq:16192611}.
\end{proof}

We next establish a technical refinement of Proposition~\ref{prop.concentration} which will be used in the proof of Proposition~\ref{prop4.4}. Specifically, we prove an estimate which is uniform over the time-dependent slopes $q : I \to \Rd$ satisfying the two following criteria:
\begin{itemize}
    \item[(i)] They are constant on the time intervals of the form $[-(n+1) L^2 , n L^2]$ for $n \in \N$;
    \item[(ii)] They are bounded by a constant $M$.
\end{itemize}
We will denote this set by $\mathcal{S}_{I,M}$, i.e.,
\begin{equation} \label{set.sql}
    \mathcal{S}_{I, M} := \left\{ q : I \to \Rd \, : \, q ~\mbox{is constant on}~ [-(n+1) L^2 , n L^2] ~\mbox{for}~ n \in \N ~\mbox{and}~ \left\| q \right\|_{L^\infty(I)} \leq M \right\}.
\end{equation} 

\begin{proposition} \label{prop.concentrationunif}
Let~$s \in (0 , d/2)$. There exist~$C : = C(s , d , c_+, c_-) < \infty$ and two exponents $\delta_s , \zeta_s > 0$ depending on $d,s$, such that, if $L^2 \leq |I| \leq L^{2 + \zeta_s}$, then for every $M \in [1,\infty)$ and every $z \in I \times \Lambda$ such that $z + Q_L \subseteq I \times \Lambda$,
\begin{equation*}
    \sup_{q \in \mathcal{S}_{I, M}} \left\| \phi_{{I \times \Lambda}}(\cdot ; q) \right\|_{\underline{L}^2 ( z + Q_L )}  \leq C (M+1)  L^{ 1 -\delta_s }+ \mathcal{O}_2 \bigl( C L^{1 -s}\bigr),
\end{equation*}
and
\begin{equation*}
     \sup_{q \in \mathcal{S}_{I, M}} \left\|  V'(p + \nabla \phi_{{I \times \Lambda}}(\cdot ;q)) - \E \left[ V'(p + \nabla \phi_{{I \times \Lambda}}(\cdot ;q)) \right] \right\|_{\underline{H}^{-1}_{\mathrm{par}} (z  + Q_{L})}
    \leq C (M+1) L^{ 1 -\delta_s }+ \mathcal{O}_2 \bigl( C   L^{1 -s}\bigr).
\end{equation*}
\end{proposition}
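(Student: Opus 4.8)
The plan is to upgrade Proposition~\ref{prop.concentration} to a statement that is uniform over the class $\mathcal{S}_{I,M}$ of piecewise-constant, bounded time-dependent slopes, essentially by a chaining/union-bound argument combined with a Lipschitz-in-slope estimate. First I would reduce, as in the proof of Proposition~\ref{prop.concentration}, to the normalized situation $\Lambda = \Lambda_L$, $z=0$, and observe that any slope $q \in \mathcal{S}_{I,M}$ takes only finitely many values in $B_M$, one on each of the $N := |I|/L^2 \leq L^{\zeta_s}$ sub-intervals $I_n := [-(n+1)L^2, -nL^2]$. The key point is that, by Proposition~\ref{prop.linearizedcorrector} (or the elementary Caccioppoli-based argument in Section~\ref{sec.regslope}), if $q, q'$ are two time-dependent slopes with $\|q - q'\|_{L^\infty(I)} \leq \eta$, then the difference $\phi_{I\times\Lambda}(\cdot;q) - \phi_{I\times\Lambda}(\cdot;q')$ solves a uniformly parabolic equation with right-hand side of divergence form and size $O(\eta)$; hence its $\underline{L}^2(Q_L)$ and $\underline{H}^{-1}_{\mathrm{par}}(Q_L)$-norms (and likewise for the flux difference, using that $V'$ is Lipschitz) are bounded deterministically by $C\eta L$ plus a lower-order term coming from the $\underline{L}^2$-norms of the two correctors themselves, which by~\eqref{eq:15411510}-type energy estimates is $O(L^{1+\zeta_s/2})$ up to an $\mathcal{O}_2$ fluctuation.

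Next I would discretize the slope: cover $B_M$ by a net $\mathcal{N}_\eta$ of mesh $\eta := L^{-\delta_s}$ with $|\mathcal{N}_\eta| \leq C(M/\eta)^d = C M^d L^{d\delta_s}$, and let $\widetilde{\mathcal{S}}_{I,M}$ be the (finite) set of slopes in $\mathcal{S}_{I,M}$ taking values in $\mathcal{N}_\eta$ on each $I_n$; then $|\widetilde{\mathcal{S}}_{I,M}| \leq |\mathcal{N}_\eta|^N \leq \exp(C L^{\zeta_s} \log L)$. For each fixed $q \in \widetilde{\mathcal{S}}_{I,M}$ the bounds~\eqref{eq:12312611} and~\eqref{eq:12322611} of Proposition~\ref{prop.concentration} apply verbatim, giving $\underline{L}^2$ and weak-norm estimates of size $C L^{1-\delta_s} + \mathcal{O}_2(CL^{1-s})$. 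Taking the maximum over $\widetilde{\mathcal{S}}_{I,M}$ via the union-bound property~\eqref{sum.Omaximum} of the $\mathcal{O}$-notation costs a factor $(\log |\widetilde{\mathcal{S}}_{I,M}|)^{1/2} \leq C L^{\zeta_s/2}(\log L)^{1/2}$ in the $\mathcal{O}_2(\cdot)$-bound, so the stochastic term becomes $\mathcal{O}_2(C L^{1-s+\zeta_s/2}(\log L)^{1/2})$. Finally, for a general $q \in \mathcal{S}_{I,M}$, pick the nearest $q' \in \widetilde{\mathcal{S}}_{I,M}$ (with $\|q-q'\|_{L^\infty} \leq \eta = L^{-\delta_s}$) and add the Lipschitz-in-slope estimate $C\eta L + (\text{lower order}) \leq C L^{1-\delta_s} + \mathcal{O}_2(C L^{1-s+\zeta_s/2})$; a multiplicative factor $(M+1)$ enters because the deterministic Lipschitz bound and the $\underline{L}^2$-norm of the corrector with a nonzero bounded slope scale linearly in $M$ (one writes $\phi_{I\times\Lambda}(\cdot;q) = \phi_{I\times\Lambda}(\cdot;0) + (\text{difference})$ and bounds the difference by $C\|q\|_{L^\infty(I)} L$ as above).

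The arithmetic then forces the admissible range of exponents: one must choose $\delta_s$ and $\zeta_s$ small enough (in terms of $s$ and $d$) that $1 - s + \zeta_s/2 \leq 1 - s'$ for some $s' \in (0,d/2)$ still, and that the net/chaining losses $L^{\zeta_s/2}(\log L)^{1/2}$ are absorbed into the exponent; concretely the values $\delta_s = \tfrac12(1-\tfrac{2s}{d})$, $\zeta_s = (1-\tfrac{2s}{d})$ already used in Proposition~\ref{prop.concentration} work after slightly shrinking $s$, since $\zeta_s/2 < \delta_s$ leaves room. I would organize the write-up as: (Step 1) the deterministic Lipschitz-in-slope estimate via Caccioppoli/Proposition~\ref{prop.linearizedcorrector}; (Step 2) construction of the net and the bound on $|\widetilde{\mathcal{S}}_{I,M}|$; (Step 3) union bound over the net using~\eqref{sum.Omaximum}; (Step 4) the approximation step and bookkeeping of the $(M+1)$ factor and the final exponents.

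The main obstacle I anticipate is controlling the cardinality of the discretized slope class: because $N \approx L^{\zeta_s}$ time-intervals each carry an independent choice of slope, $|\widetilde{\mathcal{S}}_{I,M}|$ is exponentially large in $L^{\zeta_s}$, so the $(\log|\widetilde{\mathcal{S}}_{I,M}|)^{1/2} \approx L^{\zeta_s/2}$ loss from the union bound must be strictly smaller than the gain $L^{-\delta_s}$ from the net mesh and smaller than the deterministic surplus $L^{-\delta_s}$ in~\eqref{eq:12312611}-\eqref{eq:12322611}; this is exactly why $\zeta_s$ cannot be taken independently of $\delta_s$, and checking this inequality (together with the requirement $|I| \leq L^{2+\zeta_s}$ used for the energy estimate on the corrector) is the delicate bookkeeping. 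A secondary, purely technical point is that Proposition~\ref{prop.linearizedcorrector} as stated compares correctors on cylinders with large intersection; here the two cylinders coincide, so one can either invoke it directly or redo the one-line Caccioppoli computation, which is cleaner.
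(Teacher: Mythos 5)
Your proposal has exactly the right architecture — discretize the time-dependent slope into a finite net, union-bound via the $\mathcal{O}$-maximum property~\eqref{sum.Omaximum}, and close the gap for a general slope in~$\mathcal{S}_{I,M}$ by the deterministic Lipschitz-in-slope estimate (Caccioppoli plus Poincar\'e, noting as you do that the two correctors live on the same cylinder so Proposition~\ref{prop.linearizedcorrector}'s boundary term disappears). This is the paper's approach.

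The genuine gap is in the exponent bookkeeping that makes the union bound land exactly on~$\mathcal{O}_2(CL^{1-s})$. You apply Proposition~\ref{prop.concentration} at the same exponent~$s$, get the degraded term~$\mathcal{O}_2(CL^{1-s+\zeta_s/2}(\log L)^{1/2})$, and claim ``the values $\delta_s = \tfrac12(1-\tfrac{2s}{d})$, $\zeta_s = 1-\tfrac{2s}{d}$ already used work after slightly shrinking $s$, since $\zeta_s/2 < \delta_s$ leaves room.'' This is wrong on two counts. First, with those explicit values $\zeta_s/2 = \delta_s$ exactly, so there is no room. Second, and more fundamentally, the comparison $\zeta_s/2 \lessgtr \delta_s$ is the wrong one: $\delta_s$ governs the \emph{deterministic} surplus $L^{1-\delta_s}$, but the union bound degrades the \emph{stochastic} exponent, and to land on $L^{1-s}$ you must start strictly ahead. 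The paper does this by fixing an intermediate exponent~$s_0$ with $s < s_0 < d/2$, applying Proposition~\ref{prop.concentration} at~$s_0$ (stochastic term~$\mathcal{O}_2(CL^{1-s_0})$, deterministic term $CL^{1-\delta_{s_0}}$), choosing~$\zeta_s$ so small that~$\log|\mathcal{Q}| \leq L^{(s_0-s)/2}$, and then observing that $L^{1-s_0}(\log|\mathcal{Q}|)^{1/2} \leq L^{1-s_0+(s_0-s)/4} \leq L^{1-s}$. The correct inequality to check is thus $\zeta_s/2 < s_0 - s$, not $\zeta_s/2 < \delta_s$. The two quantities are unrelated, and conflating them is precisely the ``delicate bookkeeping'' you flagged as an obstacle but did not resolve.

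A secondary issue: your net has fixed mesh~$\eta = L^{-\delta_s}$, so its cardinality is~$|\widetilde{\mathcal{S}}_{I,M}| \lesssim (M^d L^{d\delta_s})^N$, which carries an~$M$-dependence. The union-bound loss then contains a $(\log M)^{1/2}$ factor inside the $\mathcal{O}_2(\cdot)$-bound, whereas the statement requires the constant there to depend only on $s,d,c_\pm$ (only the deterministic term is allowed to scale with~$M+1$). The paper sidesteps this by choosing a mesh of size~$M/L^2$: the number of mesh points per interval is then~$\sim L^{2d}$ independently of~$M$, so~$|\mathcal{Q}|$ is~$M$-independent, while the approximation error scales as~$CML^{\zeta_s}$ and is absorbed into the $M$-dependent deterministic term. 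Scaling the mesh with~$M$ in this way is the clean fix.
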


\begin{proof}
The proof relies on a combination of Proposition~\ref{prop.concentration} and a union bound. We first set $s_0 := (s + d) / 2 $ so that $s < s_0  <d$. By choosing the exponent $\zeta_s > 0$ small enough we may:
\begin{itemize}
    \item[(i)] Apply the result of Proposition~\ref{prop.concentration} with the exponent $s_0$ (instead of $s$);
    \item[(ii)] Find a finite collection of slopes $\mathcal{Q}
\subseteq \mathcal{S}_{I , M}$ satisfying the following properties:
\begin{equation} \label{eq:11531712}
    \forall q \in \mathcal{S}_{I , M}, ~\exists q_0 \in \mathcal{Q},~\left\| q - q_0 \right\|_{\underline{L}^2(I)} \leq M L^{-1},
\end{equation}
and
\begin{equation} \label{eq:16540801}
    \left| \mathcal{Q} \right| \leq  \exp \left(L^{(s_0 - s)/2} \right).
\end{equation}
\end{itemize}
Indeed, each slope of the set $\mathcal{S}_{I , M}$ can only take $L^{\zeta_s}$-different values, all of them belonging to the interval $[- M , M]$, we may thus consider a mesh of size $M/L^2$ for each of these intervals, and obtain an upper bound on the cardinality of $\mathcal{Q}$ of the form $(L^2)^{L^{\zeta_s}}$. We can then select the parameter $\zeta_s$ sufficiently small so as to have the upper bound~\eqref{eq:16540801}.

Applying the property~\eqref{sum.Omaximum} of the $\mathcal{O}$-notation, we have that
\begin{align} \label{eq:16091712}
   \sup_{q_0 \in \mathcal{Q}} \left\| \phi_{{I \times \Lambda}}(\cdot ; q_0) \right\|_{\underline{L}^2 ( z + Q_L )}  & \leq  C L^{ 1 - \delta_{s_0}} + \mathcal{O}_2 \bigl( C L^{1 - s_0}  (\log  |\mathcal{Q}|)^{\frac12} \bigr) \\
    & \leq  C L^{ 1 - \delta_{s_0}} + \mathcal{O}_2 \bigl( C L^{1- s}\bigr). \notag
\end{align}
We then fix $q \in \mathcal{S}_{I , M}$ and $q_0 \in \mathcal{Q}$ such that~\eqref{eq:11531712} is verified. We note that the difference $\phi_{{I \times \Lambda}}(\cdot ; q) - \phi_{{I \times \Lambda}}(\cdot ; q_0)$ solves a parabolic equation with uniformly elliptic coefficient in the cylinder $I \times \Lambda$. An energy estimate thus gives the upper bound
\begin{equation*}
    \left\| \nabla \phi_{{I \times \Lambda}}(\cdot ; q) -  \nabla \phi_{{I \times \Lambda}}(\cdot ; q_0) \right\|_{\underline{L}^2(I \times \Lambda)} \leq C \left\| q - q_0 \right\|_{\underline{L}^2 \left( I \right)} \leq \frac{CM}{L}.
\end{equation*}
Consequently, by the Poincar\'e inequality for functions with spatial average equal to $0$ in the box $\Lambda$,
\begin{align} \label{eq:16081712}
    \left\|  \phi_{{I \times \Lambda}}(\cdot ; q) - \phi_{{I \times \Lambda}}(\cdot ; q_0) \right\|_{\underline{L}^2(z + Q_L)} & \leq L  \left\| \nabla  \phi_{{I \times \Lambda}}(\cdot ; q) - \nabla \phi_{{I \times \Lambda}}(\cdot ; q_0) \right\|_{\underline{L}^2(z + Q_L)}  \\
    & \leq L^{1 + \zeta_s}  \left\| \nabla  \phi_{{I \times \Lambda}}(\cdot ; q) - \nabla \phi_{{I \times \Lambda}}(\cdot ; q_0) \right\|_{\underline{L}^2(I \times \Lambda)} \notag  \\
    & \leq C M L^{\zeta_s}. \notag
\end{align}
Combining~\eqref{eq:16091712} and~\eqref{eq:16081712} (and assuming that $\zeta_s$ is small), we obtain
\begin{align*}
    \sup_{q \in \mathcal{S}_{I , M}} \left\| \phi_{{I \times \Lambda}}(\cdot ; q) \right\|_{\underline{L}^2 \left( z + Q_L \right)} & \leq  C L^{ 1 - \delta_{s_0}} + \mathcal{O}_2 \bigl( C L^{1- s}\bigr) + C M L^{\zeta_s} \\
    & \leq C (M+1) L^{ 1 - \delta_{s_0}} + \mathcal{O}_2 \bigl( C L^{1- s}\bigr).
\end{align*}
The proof of Proposition~\ref{prop.concentrationunif} is complete.
\end{proof}

\subsection{Proof of Proposition~\ref{prop4.4}} \label{sectionproof6.2}

This section is devoted to the proof of Proposition~\ref{prop4.4}. In Section~\ref{section.meyersetinternalreg}, we recall some standard regularity estimates for solutions of nonlinear parabolic equations, and, in Section~\ref{section6.2.2.2}, we implement the two-scale expansion using the results of Proposition~\ref{prop.concentrationunif}.

\subsubsection{Regularity for the solutions of the homogenized equation and discretization scheme} \label{section.meyersetinternalreg}

We recall the following notation: for each $r > 0$, we denote by $Q(r)$ the parabolic cylinder $Q = [-1 , 0] \times [-1 , 1]^d$ to which a boundary layer of size $r$ has been removed, i.e,
\begin{equation} \label{eq:14341801}
Q(r) := \left\{ (t , x) \in Q \, : \, t \geq -1 + r^2 ~\mbox{and}~ \dist (x , \partial [-1 , 1]^d) \geq r  \right\} ~\mbox{and}~ Q^\ep(r) := Q(r) \cap Q^\ep.
\end{equation}
We then collect some regularity properties satisfied by the solution of $\bar u_f$ of the equation~\eqref{eq:defubarthmhydro6.2}: the global Meyers estimate and the interior regularity. The proof of the global Meyers estimate essentially follows from~\cite[Appendix B]{ABM} (written in the linear setting), or from the one of~\cite{parviainen2009global} (written for more general, nonlinear and degenerate parabolic equations but with different assumptions regarding the regularity of the boundary conditions). The interior regularity is obtained by noting that the functions $v := \nabla \bar u_f$ and $w = \partial_t \bar u_f$ are solutions of linear parabolic equations with uniformly elliptic coefficient field and by applying either the Caccioppoli inequality (Proposition~\ref{prop.CAccioppolipara}), or the regularity estimates of Proposition~\ref{paraNash}.

\begin{proposition}[Meyers estimate and interior regularity for the homogenized equation] \label{prop4.5Meyersetreg}
There exist an exponent $\gamma_0 := \gamma_0(d , c_+ , c_-) > 0$ and a constant $C:= C(d , c_+ , c_-) < \infty$ such that
\begin{equation*}
    \left\| \nabla \bar u_f \right\|_{L^{(2 + \gamma_0)} (Q)}  \leq C (\left\| f \right\|_{W^{1,2 + \gamma_0}_{\mathrm{par}}(Q)} +1), 
\end{equation*}
and, for any $r \in (0 , 1)$,
\begin{equation*}
     r^{(2 + d)/2} \left\| \nabla \bar u_f \right\|_{L^\infty(Q(r))}  + r \left\| \nabla^{2} \bar u_f \right\|_{L^{2} (Q(r))} +  r^2 \left\| \partial_t \nabla \bar u_f \right\|_{L^2(Q(r))}  \leq C (\left\| f \right\|_{H^{1}_{\mathrm{par}}(Q)} +1). 
\end{equation*}
\end{proposition}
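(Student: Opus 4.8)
\textbf{Proof proposal for Proposition~\ref{prop4.5Meyersetreg}.} The statement splits naturally into a global part (the $L^{2+\gamma_0}$-bound on $\nabla\bar u_f$ up to the parabolic boundary) and an interior part (the weighted $L^\infty$, $L^2$-Hessian, and $L^2$-time-derivative bounds on the shrunken cylinders $Q(r)$). The plan is to treat these separately, since they rely on different mechanisms: the global bound is a Meyers-type self-improvement of integrability, while the interior bounds are energy/regularity estimates for \emph{linear} parabolic equations obtained by differentiating the nonlinear equation.

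For the global Meyers estimate, the first step is to record that $\bar u_f$ solves $\partial_t\bar u_f - \nabla\cdot D_p\bar\sigma(\nabla\bar u_f)=0$ with $\bar u_f=f$ on $\partial_{\mathrm{par}}Q$, and that by the $C^{1,1}$-regularity and uniform convexity of $\bar\sigma$ (Theorem~\ref{t.regsurfacetension} gives even $C^2$, but $C^{1,1}$ suffices here) the map $p\mapsto D_p\bar\sigma(p)$ is Lipschitz with linear growth: $|D_p\bar\sigma(p)|\le c_+|p| + |D_p\bar\sigma(0)|$ and $(D_p\bar\sigma(p)-D_p\bar\sigma(q))\cdot(p-q)\ge c_-|p-q|^2$. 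Writing $D_p\bar\sigma(\nabla\bar u_f) = D_p\bar\sigma(0) + \mathbf{A}(x,t)\nabla\bar u_f$ with $\mathbf{A}$ a (measurable, uniformly elliptic) effective coefficient field obtained by the usual interpolation $\mathbf{A} := \int_0^1 D^2_p\bar\sigma(s\nabla\bar u_f)\,ds$, one puts the equation in the divergence form $\partial_t\bar u_f - \nabla\cdot\mathbf{A}\nabla\bar u_f = \nabla\cdot F$ with $F := -D_p\bar\sigma(0)$ constant (hence in every $L^q$). One then invokes the global parabolic Meyers estimate — this is the nonlinear/boundary analogue of Proposition~\ref{propMeyers}, available from~\cite[Appendix B]{ABM} in the linear setting or from~\cite{parviainen2009global} in the nonlinear one — to upgrade $\nabla\bar u_f \in L^2$ to $\nabla\bar u_f \in L^{2+\gamma_0}$, with the norm controlled by $\|f\|_{W^{1,2+\gamma_0}_{\mathrm{par}}(Q)}$ (which supplies both the boundary data and, via the basic energy estimate, the $L^2$-bound to feed into Meyers) plus $1$ for the constant term. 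This gives the first displayed inequality.

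For the interior regularity, fix $r\in(0,1)$ and work inside $Q(r)$, away from the boundary. Differentiating the nonlinear equation in the spatial direction $e_j$: the function $v_j := \partial_j\bar u_f$ solves the \emph{linear} parabolic equation $\partial_t v_j - \nabla\cdot(D^2_p\bar\sigma(\nabla\bar u_f)\nabla v_j) = 0$, with coefficient field $D^2_p\bar\sigma(\nabla\bar u_f)$ that is uniformly elliptic with bounds $c_-,c_+$ by uniform convexity and the $C^{1,1}$-bound~\eqref{eq.unifconvV} transferred to $\bar\sigma$ via Theorem~\ref{t.regsurfacetension}. Applying the $L^\infty$ and $C^{0,\alpha}$ estimate of Proposition~\ref{paraNash} to $v_j$ on parabolic subcylinders of radius $\sim r$ tiling $Q(r)$ yields the scaled bound $\|\nabla\bar u_f\|_{L^\infty(Q(r))}\le C r^{-(2+d)/2}\|\nabla\bar u_f\|_{L^2(Q(r/2))}$, and the $L^2$ factor on the right is absorbed into $C(\|f\|_{H^1_{\mathrm{par}}(Q)}+1)$ by the basic energy estimate; reading the powers of $r$ from the parabolic scaling of Proposition~\ref{paraNash} produces the $r^{(2+d)/2}$ weight. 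For the Hessian, apply the parabolic Caccioppoli inequality (Proposition~\ref{prop.CAccioppolipara}) to each $v_j$ on radius-$r$ cylinders: $\|\nabla v_j\|_{L^2(Q(r))}\le (C/r)\|v_j\|_{L^2(Q(r/2))}\le (C/r)\|\nabla\bar u_f\|_{L^2(Q)}$, giving the $r^{-1}$ weight on $\|\nabla^2\bar u_f\|_{L^2(Q(r))}$. Finally $w := \partial_t\bar u_f$ solves the same linear equation $\partial_t w - \nabla\cdot(D^2_p\bar\sigma(\nabla\bar u_f)\nabla w)=0$ (differentiate in $t$); applying Caccioppoli to $w$ and then controlling $\|w\|_{L^2}$ via the energy estimate $\|\partial_t\bar u_f\|_{L^2(Q(r))}\lesssim r^{-1}\|\nabla\bar u_f\|_{L^2(Q)}$ (from testing the equation against a cutoff and integrating by parts in time) yields the $r^2$-weighted bound on $\|\partial_t\nabla\bar u_f\|_{L^2(Q(r))}$.

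The routine parts are the energy estimates and the bookkeeping of $r$-powers in the parabolic scaling. The one genuine point requiring care — the main obstacle — is the \emph{global} Meyers estimate near $\partial_{\mathrm{par}}Q$ for the nonlinear equation with only $W^{1,2+\gamma_0}_{\mathrm{par}}$ boundary data: one must check that the boundary term produced in the reverse-Hölder/Gehring argument is controlled by exactly the norm $\|f\|_{W^{1,2+\gamma_0}_{\mathrm{par}}(Q)}$, which is why the proposition is stated with that specific space rather than $H^2$; here I would cite~\cite{parviainen2009global} (adapted to our ellipticity class, where the nonlinearity is actually Lipschitz so the hypotheses there are easily met) or reduce to the linear global Meyers estimate of~\cite[Appendix B]{ABM} applied to the linearized-in-the-solution form above, noting that the linearized coefficient field is merely measurable and uniformly elliptic, which is all that Gehring's lemma needs.
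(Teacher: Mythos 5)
Your proof follows exactly the approach the paper itself sketches: the global $L^{2+\gamma_0}$-bound is obtained by writing the equation in divergence form with the interpolated coefficient $\mathbf{A}=\int_0^1 D^2_p\bar\sigma(s\nabla\bar u_f)\,ds$ and invoking the global parabolic Meyers estimate of~\cite[Appendix B]{ABM} or~\cite{parviainen2009global}, while the interior estimates come from observing that each $\partial_j\bar u_f$ and $\partial_t\bar u_f$ solve linear, uniformly parabolic equations with coefficient $D^2_p\bar\sigma(\nabla\bar u_f)$ and then applying Proposition~\ref{paraNash} and the Caccioppoli inequality (Proposition~\ref{prop.CAccioppolipara}) on cylinders of size $\sim r$ covering $Q(r)$, with the stated powers of $r$ coming from the parabolic rescaling and the basic energy estimate $\|\nabla\bar u_f\|_{L^2(Q)}\lesssim \|f\|_{H^1_{\mathrm{par}}(Q)}+1$. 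The bookkeeping of the $r$-weights is correct, and the caveat you raise about the boundary term in the Meyers argument is precisely the point the paper flags when it notes that~\cite{parviainen2009global} uses ``different assumptions regarding the regularity of the boundary conditions''.
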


We additionally record the quantitative estimate on the discretization scheme that we will use in the proof. We note that the right-hand side depends on the $W^{1 , 2 + \gamma_0}_{\mathrm{par}}(Q)$-norm of the boundary conditions instead of its $H^2(Q)$-norm. This causes a deterioration of the rate of convergence, but is more suitable for our purposes.

\begin{proposition}[Discretization scheme with $W^{1 , 2 + \gamma_0}_{\mathrm{par}}$ boundary conditions] \label{prop6.8discschemeloc}
Let $\bar u_f^\ep$ be the solution of the discretized equation~\eqref{def.discequep}. There exist a constant $C:= C(d, c_+ , c_-) < \infty$ and an exponent $\beta := \beta(d , c_+ , c_-) > 0$ such that, for any $\ep > 0$,
\begin{equation*} 
    \left\| \bar u^\ep_f - \bar u_f  \right\|_{L^2(Q)} + \| \vec{\nabla}^\ep \bar u^\ep_f - \nabla \bar u_f  \|_{L^2(Q)} \leq C \ep^{\beta}  (\left\| f \right\|_{W^{1,2 + \gamma_0}_{\mathrm{par}}(Q)} +1).
\end{equation*}
\end{proposition}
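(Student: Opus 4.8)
The plan is to reduce Proposition~\ref{prop6.8discschemeloc} to the already-established Proposition~\ref{prop.approx} by means of the Meyers/interior regularity estimates of Proposition~\ref{prop4.5Meyersetreg} together with an interpolation (or boundary-layer) argument. The obstruction to applying Proposition~\ref{prop.approx} directly is that its right-hand side carries the $H^2(Q)$-norm of the boundary data $f$, which is not controlled by $\|f\|_{W^{1,2+\gamma_0}_{\mathrm{par}}(Q)}$. So the first step is to split the cylinder $Q$ into an interior piece $Q(r)$, where $\bar u_f$ is smooth by the interior regularity estimate of Proposition~\ref{prop4.5Meyersetreg} (in particular $\|\nabla^2 \bar u_f\|_{L^2(Q(r))} \lesssim r^{-1}(\|f\|_{H^1_{\mathrm{par}}(Q)}+1)$ and similarly for $\partial_t \nabla \bar u_f$), and a boundary layer $Q\setminus Q(r)$ of width $r$. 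The parameter $r$ will eventually be optimized as a small power of $\ep$.

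Second, on the interior region I would run the same two-scale/comparison argument as in the proof of Proposition~\ref{prop.approx} (which is the statement in Appendix~\ref{app.appendixA}), but localized to $Q(r)$: the discretization error between $\bar u_f$ and $\bar u_f^\ep$ on $Q(r)$ is controlled by $C\ep^{1/2}$ times the $L^2(I,H^2)$-type norm of $\bar u_f$ restricted to a slightly larger interior cylinder, which by Proposition~\ref{prop4.5Meyersetreg} is at most $C\ep^{1/2} r^{-2}(\|f\|_{W^{1,2+\gamma_0}_{\mathrm{par}}(Q)}+1)$. Third, on the boundary layer $Q\setminus Q(r)$ I would not try to prove convergence but merely bound each of the two terms $\bar u^\ep_f - \bar u_f$ and $\vec{\nabla}^\ep \bar u^\ep_f - \nabla \bar u_f$ crudely: the gradient terms $\nabla \bar u_f$ and $\vec{\nabla}^\ep \bar u^\ep_f$ are each bounded in $L^{2+\gamma_0}$ uniformly (by the global Meyers estimate of Proposition~\ref{prop4.5Meyersetreg} for $\bar u_f$, and by the discrete global Meyers estimate Proposition~\ref{propMeyers} applied to the equation~\eqref{def.discequep} for $\bar u^\ep_f$ — note $D_p\bar\sigma$ is Lipschitz since $\bar\sigma$ is $C^{1,1}$, so that equation fits the framework), whence by H\"older's inequality their $L^2$-norm over the layer $Q\setminus Q(r)$, which has measure $\lesssim r$, is at most $C r^{\gamma_0/(2+\gamma_0)}(\|f\|_{W^{1,2+\gamma_0}_{\mathrm{par}}(Q)}+1)$; the function-value terms over the layer are handled by a Poincar\'e/trace inequality against these gradient bounds and the common boundary data, contributing a similar power of $r$.

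Combining the three pieces gives an estimate of the shape
\begin{equation*}
    \| \bar u^\ep_f - \bar u_f\|_{L^2(Q)} + \|\vec\nabla^\ep \bar u^\ep_f - \nabla\bar u_f\|_{L^2(Q)} \leq C\bigl( \ep^{1/2} r^{-2} + r^{\gamma_0/(2+\gamma_0)} \bigr)(\|f\|_{W^{1,2+\gamma_0}_{\mathrm{par}}(Q)}+1),
\end{equation*}
and choosing $r := \ep^{\theta}$ with $\theta>0$ small enough that $1/2 - 2\theta > 0$ balances (or at least makes positive) both exponents yields $\beta := \min\{\tfrac12 - 2\theta, \ \theta\gamma_0/(2+\gamma_0)\} > 0$, which is the claimed bound. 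The main obstacle I anticipate is the careful bookkeeping in the boundary-layer estimate: one must make sure the discrete solution $\bar u^\ep_f$ genuinely inherits a uniform (in $\ep$) global Meyers bound from Proposition~\ref{propMeyers} with a right-hand side controlled by $\|f\|_{W^{1,2+\gamma_0}_{\mathrm{par}}(Q)}$ — this requires writing~\eqref{def.discequep} in divergence form with a forcing coming from the discretized boundary data $\tilde f_\ep$ and checking that $\|\tilde f_\ep\|_{W^{1,2+\gamma_0}_{\mathrm{par}}}\lesssim \|f\|_{W^{1,2+\gamma_0}_{\mathrm{par}}}$ uniformly in $\ep$ — and one must control the interior discretization error on cylinders whose distance to $\partial Q$ shrinks like $r=\ep^\theta$, tracking the dependence of the constant in the localized version of Proposition~\ref{prop.approx} on this distance. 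None of these steps is deep, but the precise powers must be matched to produce a genuinely positive $\beta$.
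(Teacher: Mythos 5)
Your high-level strategy is the right one — boundary layer of algebraic width, interior $H^2$ regularity from Proposition~\ref{prop4.5Meyersetreg}, Meyers plus H\"older to absorb the layer — but there is a genuine gap at the step labelled ``run the same two-scale/comparison argument as in the proof of Proposition~\ref{prop.approx}\ldots but localized to $Q(r)$.'' There is no localized version of Proposition~\ref{prop.approx} available. The proof of Proposition~\ref{prop.approx} builds a comparison function $U$ that is designed to \emph{match the discretized boundary data $\tilde f_\ep$ on $\partial_{\mathrm{par}} Q^\ep$}, so that $w := U - \bar u^\ep$ solves a linear parabolic equation with zero Dirichlet data and the energy estimate closes on the full cylinder. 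If you instead try to estimate $\|\bar u^\ep_f - \bar u_f\|_{L^2(Q(r))}$ directly in the interior, the difference does not vanish on $\partial_{\mathrm{par}} Q(r)$, and the energy estimate over $Q(r)$ produces an uncontrollable boundary term. There is no Caccioppoli-type inequality that lets you bound the interior $L^2$-norm of a solution of a parabolic equation by interior quantities alone; information must propagate from the boundary. So the inequality $\|\bar u^\ep_f - \bar u_f\|_{L^2(Q(r))} \leq C\ep^{1/2}r^{-2}(\|f\|_{W^{1,2+\gamma_0}_{\mathrm{par}}}+1)$ is asserted, not proved, and it is the crux of the argument.

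The paper's actual proof of Proposition~\ref{prop6.8discschemeloc} repairs exactly this issue by a different decomposition: instead of splitting the domain and estimating separately, it splits the \emph{approximation function}. It introduces \emph{two} scales $r = \ep^{\theta_0}$ (boundary-layer width) and $\kappa = \ep^{\theta_1}$ (mollification scale) with $0 < \theta_0 \ll \theta_1 \ll 1$, and sets $U = \xi\,(\bar u \star \chi_\kappa) + (1-\xi)\,(\bar u \star \chi_\ep)$ with a space-time cutoff $\xi$ equal to $1$ on $Q(2r)$ and $0$ outside $Q(r)$. The fine mollification $\chi_\ep$ near the boundary ensures $U = \tilde f_\ep$ on $\partial_{\mathrm{par}} Q^\ep$, so $w = U - \bar u^\ep$ has zero boundary data and the global energy estimate can be run; the coarse mollification $\chi_\kappa$ in the interior is where the interior $H^2$ regularity enters. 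The error terms then split naturally: two interior residuals $\mathcal{E}_0,\mathcal{E}_1$ controlled using $\|\nabla^2\bar u\|_{L^2(Q(r))}\lesssim r^{-1}$, a boundary-layer residual $\mathcal{E}_2$ absorbed in $\underline{W}^{-1,2+\gamma_0}$ by the Meyers bound and H\"older on the small measure $|Q\setminus Q(2r)|$, and a cutoff-commutator term $\mathcal{E}_3 = \partial_t\xi\,(\bar u\star\chi_\kappa - \bar u\star\chi_\ep)$ whose smallness is what forces the hierarchy $2\theta_0 < \theta_1$. Your single-scale bookkeeping with $r=\sqrt{\ep}$ implicit in the interior and a separate $r=\ep^\theta$ for the layer misses $\mathcal{E}_3$ entirely, and collapses the two scales into one. (A minor point: the H\"older exponent for the boundary layer should be $\gamma_0/(2(2+\gamma_0))$, not $\gamma_0/(2+\gamma_0)$; this does not affect positivity but changes the value of $\beta$.) To turn your sketch into a proof you would need to replace the domain-splitting by the paper's cutoff-glued comparison function, or supply an interior discretization estimate with an explicit boundary-trace term and then show that trace term is controllable — which, as explained above, brings you back to the paper's construction.
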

The proof of this proposition can be found in Appendix~\ref{app.appendixA}.

\subsubsection{Two-scale expansion and proof of Proposition~\ref{prop4.4}}  \label{section6.2.2.2}

In this section, we implement the two-scale expansion to prove Proposition~\ref{prop4.4}, the argument is similar to the proof of Theorem~\ref{Th.quantitativehydr} and to the argument of~\cite[Section 11.4]{AKMbook} (it can in fact be seen as a combination of these two proofs), we thus only provide a detailed sketch of the argument, making use of the notation introduced in Section~\ref{eq:quanthydrolim2sc}.

\begin{proof}[Proof of Proposition~\ref{prop4.4}]
By Proposition~\ref{prop6.8discschemeloc}, it is sufficient to prove the result with the solution $\bar u^\ep$ of the discretized equation~\eqref{def.discequep} instead of the function $\bar u$. We first choose a boundary layer $r$ and a mesoscopic scale $\kappa$ of the form $r = \ep^{\theta_0}$ and $\kappa = \ep^{\theta_1}$ for some small exponents $0 < \theta_0 \ll \theta_1 \ll 1$ to be selected later in the argument, and define $L := \kappa / \ep = \ep^{ \theta_1-1}$. We then introduce the notation
\begin{equation*}
    \mathcal{Y}_\kappa := \kappa \Zd \cap [-1 , 1]^d \hspace{5mm} \mbox{and} \hspace{5mm} \mathcal{Z}_\kappa := \left(\kappa^2 \N_* \times \kappa \Zd \right) \cap Q . 
\end{equation*}
For $f \in W^{1 , 2 + \gamma_0}_{\mathrm{par}}(Q)$ and $z\in \mathcal{Z}_{\kappa}$, we denote by 
\begin{equation*}
\xi_z := 
\left\{
\begin{aligned}
& \left( \nabla^\ep  \bar u^\ep_f \right)_{z + Q_{2\kappa}^\ep} 
& \mbox{if} & \ z \in Q(r), \\
& 0 & \mbox{if} & \ z \notin Q(r).
\end{aligned} \right.
\end{equation*}
We note that, by Proposition~\ref{prop4.5Meyersetreg}, one has the upper bound
\begin{equation*}
    \sup_{z \in \mathcal{Z}_\kappa} \left| \xi_z \right| \leq r^{-(2 + d)/2} (M+1).
\end{equation*}
As in Section~\ref{eq:quanthydrolim2sc} (and specifically~\eqref{def.xi_y}), we define the map $\xi_y : [-1 , 0] \to \Rd$ according to the formula
    \begin{equation*}
    \xi_y(t) := \sum_{z \in \mathcal{Z}_{\kappa}} \xi_z \indc_{\{ (t , y) \in z + Q_{\kappa}^\ep\}}.
\end{equation*}
We next introduce the set
\begin{multline*}
     \mathcal{S}_\ep := \left\{ q : [-1 , 0] \to \Rd \, : \, q ~\mbox{is constant on}~ [-(n+1) \kappa^2 , n \kappa^2] ~\mbox{for}~ n \in \N \right. \\ \left. ~\mbox{and}~ \left\| q \right\|_{L^\infty([-1 , 0])} \leq r^{-(2 + d)/2} (M+1) \right\}.
\end{multline*}
This set is a suitably rescaled version of the sets $\mathcal{S}_{I , M}$ introduced in~\eqref{set.sql}; it is defined so that $\xi_y \in \mathcal{S}_\ep$. We then introduce the function
\begin{equation*}
     \phi^\ep(t , x) :=
     \ep \sum_{y \in \mathcal{Y}_{\kappa}} \chi_{y}(x) \phi_{Q_{y}} \left( \frac{t}{\ep^2} , \frac{x}{\ep} ; \xi_y(t) \right),
\end{equation*}
as well as the two-scale expansion
\begin{equation*}
    w^\ep := \bar u^\ep_f + \phi^\ep.
\end{equation*}
In the present setting, the error terms $(E_z)_{z \in \mathcal{Z}_\kappa}$ take the following form, distinguishing whether the point $z$ belongs to the interior of the cylinder or lies in the boundary layer:
\begin{itemize}
\item For any $z = (t,y) \in Q(r)$, we let
\begin{align*}
    E_z :=  \sum_{z' \sim z}\left\| \vec{\nabla}^\ep \bar u^\ep_f - \xi_z  \right\|_{\underline{L}^2 \left( z + Q_\kappa^\ep \right)}  + \sum_{z' \sim z} \left| \xi_z - \xi_{z'}\right|  +\ep \kappa^{-1}  \sum_{ y' \sim y} \left\|  \phi_{Q_{y'}} \left(\cdot, \cdot ; \xi_{y'} \right)\right\|_{\underline{L}^2 (z/\ep + Q_{L})}.
\end{align*}
\item For any $z = (t,y) \in Q \setminus Q(r)$, we let
\begin{align*}
    E_z & :=  \| \nabla^{\ep} \bar u^\ep_f \|_{\underline{L}^2( z + Q_{2\kappa}^\ep)} +   \sum_{ y' \sim y} \left\| \nabla \phi_{Q_{y'}} \left(\cdot, \cdot ; 0 \right)\right\|_{\underline{L}^2 (z/\ep + Q_{L})}.
\end{align*}
\end{itemize}
Using Proposition~\ref{prop4.5Meyersetreg} and Proposition~\ref{prop6.8discschemeloc} (and a computation similar to the one of~\eqref{eq:estL2normEz}), we deduce that
\begin{align*}
\lefteqn{ \frac{1}{|\mathcal{Z}_\kappa|}\sum_{z \in \mathcal{Z}_\kappa \cap Q(r)}  \sum_{z' \sim z} \left( \left\| \vec{\nabla}^\ep \bar u^\ep - \xi_z  \right\|_{\underline{L}^2 \left( z + Q_\kappa^\ep \right)}^2  + \left| \xi_z - \xi_{z'}\right|^2 \right) } \qquad & \\ & \leq C \| \vec{\nabla}^\ep \bar u^\ep - \nabla \bar u  \|_{L^2(Q)}^2 + C \kappa \| \nabla^2 \bar u  \|_{L^2(Q(r))}^2 + C \kappa^2 \| \partial_t \nabla \bar u  \|_{L^2(Q(r))}^2 \\
& \leq  C \left( \ep^{2\beta} + \kappa^2 r^{-2} + \kappa^4 r^{-4} \right)  (\left\| f \right\|_{W^{1,2 + \gamma_0}_{\mathrm{par}}(Q)}^2 +1) \\
& \leq C \left( \ep^{2\beta} + \ep^{2\theta_1 - 2\theta_0} \right) (\left\| f \right\|_{W^{1,2 + \gamma_0}_{\mathrm{par}}(Q)}^2 +1).
\end{align*}
If the exponent $\theta_0$ defining the size of the mesoscopic scale is chosen small enough, then one can apply (the suitably rescaled version of) Proposition~\ref{prop.concentrationunif} with the exponent $s_0 := (s + d)/2$ and obtain, for any $z = (t , y) \in \mathcal{Z}_\kappa$,
\begin{equation*}
\sup_{q \in \mathcal{S}_\ep} \left\| \phi_{Q_y}(\cdot, \cdot ; q) \right\|_{\underline{L}^2 \left( z + Q_L \right)}
\leq 
C (M +1 ) r^{-(2 + d)/2}   L^{ 1 -\delta_{s_0} }
+ \mathcal{O}_2 \left( C L^{1 -s_0}\right).
\end{equation*}
Using the identity $L := \lfloor \kappa/\ep \rfloor$, we deduce that 
\begin{equation*}
\sup_{q \in \mathcal{S}_\ep} 
\ep \kappa^{-1} 
\left\| \phi_{Q_y}(\cdot, \cdot ; q) \right\|_{\underline{L}^2 \left( z + Q_L \right)} 
\leq 
C (M+1)r^{-(2 + d)/2} \left( \frac \ep \kappa \right)^{\delta_{s_0}} 
+ \mathcal{O}_2 \left( C \left( \frac \ep \kappa \right)^{s_0} \right).
\end{equation*}
Using that $s_0 > s$, choosing the exponents $\theta_0$ and $\theta_1$ small enough, we deduce that there exists $\beta_s := \beta_s(d , s) > 0$ such that
\begin{equation*}
     \sup_{q \in \mathcal{S}_\ep} \ep \kappa^{-1} \left\| \phi_{Q_y}(\cdot ; q) \right\|_{\underline{L}^2 \left( z + Q_L \right)} \leq C (M+1) \ep^{\beta_s} + \mathcal{O}_2 \left( C \ep^{s} \right).
\end{equation*}
Summing over the points $z \in \mathcal{Z}_\kappa \cap Q(r)$ and using that, for any map $f \in W^{1 , 2 + \gamma_0}_{\mathrm{par}}(Q)$ satisfying $\left\| f \right\|_{W^{1 , 2+\gamma_0}_{\mathrm{par}}(Q)} \leq M$, the slope $\xi_y$ belongs to the set $q \in \mathcal{S}_\ep$, we deduce that there exists an exponent $\beta_s := \beta_s(d , s) > 0$ such that the error terms $(E_z)_{z \in \mathcal{Z}_\kappa}$ satisfy: 
\begin{equation} \label{eq:13000912}
    \sup_{f \, : \, \left\| f \right\|_{W^{1 , 2 + \gamma_0}_{\mathrm{par}}}(Q) \leq M} \frac{1}{|\mathcal{Z}_\kappa|}\sum_{z \in \mathcal{Z}_\kappa \cap Q(r)} E_z^2  \leq C (M+1) \ep^{2\beta_{s}} + \mathcal{O}_1 \left( C \ep^{2s} \right).
\end{equation}
We next estimate the sum of error terms in the boundary layer. By Proposition~\ref{prop4.5Meyersetreg} and H\"{o}lder's inequality, we have
\begin{align*}
    \left\| \nabla \bar u_f \right\|_{L^2( Q \setminus Q(r))} & \leq \left( \left|Q \setminus Q(r) \right| \right)^{\frac{(2 + \gamma_0) }{ 2 - (2 + \gamma_0) }} \left\| \nabla \bar u_f \right\|_{L^{(2 + \gamma_0) }(Q \setminus Q(r))} \\
    & \leq \left( \left|Q \setminus Q(r) \right| \right)^{\frac{(2 + \gamma_0) }{ 2 - (2 + \gamma_0) }} (\left\| f \right\|_{W^{1,2 + \gamma_0}_{\mathrm{par}}(Q)}+1) \\
    & \leq r^{\frac{(2 + \gamma_0) }{ 2 - (2 + \gamma_0) }} (\left\| f \right\|_{W^{1,2 + \gamma_0}_{\mathrm{par}}(Q)} +1).
\end{align*}
By Proposition~\ref{prop6.8discschemeloc} and using the definition $r = \ep^{\theta_0}$, we deduce that
\begin{align*}
     \left\| \nabla^\ep \bar u^\ep_f \right\|_{L^2( Q^\ep \setminus Q^\ep(r))} & \leq \| \vec{\nabla}^\ep \bar u^\ep_f - \nabla \bar u_f  \|_{L^2(Q \setminus Q(r))} + \left\| \nabla \bar u_f \right\|_{L^2( Q \setminus Q(r))} \\
     & \leq \| \vec{\nabla}^\ep \bar u^\ep_f - \nabla \bar u_f  \|_{L^2(Q)} + \left\| \nabla \bar u_f \right\|_{L^2( Q \setminus Q(r))} \\
     & \leq C \left( \ep^{\beta} + \ep^{ \theta_0 \frac{(2 + \gamma_0) }{ 2 - (2 + \gamma_0) }} \right) (\left\| f \right\|_{W^{1,2 + \gamma_0}_{\mathrm{par}}(Q)} +1).
\end{align*}
Combining the previous display with the bound~\eqref{eq:15442510} and choosing the exponents $\theta_0$ and $\theta_1$ small enough (depending on $d , s, c_+ , c_- $), we obtain that there exists an exponent $\beta_{s} > 0$ such that
\begin{equation} \label{eq:13010912}
    \sup_{f \, : \, \left\| f \right\|_{W^{1 , 2 + \gamma_0}_{\mathrm{par}}}(Q) \leq M} \frac{1}{|\mathcal{Z}_\kappa|}\sum_{z \in \mathcal{Z}_\kappa \cap ( Q \setminus Q(r))} E_z^2  \leq C (M + 1) \ep^{2\beta_{s}} + \mathcal{O}_1 \left( C \ep^{2s} \right).
\end{equation}
Combining~\eqref{eq:13000912} and~\eqref{eq:13010912} yields
\begin{equation} \label{eq:13421912}
    \sup_{f \, : \, \left\| f \right\|_{W^{1 , 2 + \gamma_0}_{\mathrm{par}}}(Q) \leq M} \frac{1}{|\mathcal{Z}_\kappa|}\sum_{z \in \mathcal{Z}_\kappa} E_z^2 \leq C (M+1)  \ep^{2 \beta_{s}} + \mathcal{O}_1 \left( C \ep^{2s} \right) .
\end{equation}
Equipped with the estimate~\eqref{eq:13421912}, one can essentially rewrite the two-scale expansion of Section~\ref{Th.quantitativehydr} once the term arising from the flux (Proposition~\ref{prop4.3fluxest}) has been estimated (by optimizing the stochastic term). To this end, we may combine the definition of the $\underline{H}^{-1}_{\mathrm{par}} (Q^\ep)$-norm with the properties~\eqref{prop.partofunity2sc} and~\eqref{prop.partofunity2scbis} of the cutoff functions $\chi_{y}$. We obtain
\begin{align*}
     \lefteqn{\biggl\| \sum_{y \in \mathcal{Y}_\kappa} \nabla^\ep \chi_{y} \cdot \left( V' \left( \nabla^\ep v_{y} \right) - D_p \bar \sigma \left( \xi_{y} \right) \right) \biggr\|_{\underline{H}^{-1}_\mathrm{par} (Q^\ep)} } \qquad & \\ &
     \leq \sum_{y \in \mathcal{Y}_\kappa} \biggl\| \nabla^\ep \chi_{y} \cdot \left( V' \left( \nabla^\ep v_{y} \right) - D_p \bar \sigma \left( \xi_{y} \right) \right) \biggr\|_{\underline{H}^{-1}_\mathrm{par} (Q^\ep)} 
     \\ & 
    \leq C \kappa^{-2} \sum_{\substack{z \in \mathcal{Z}_\kappa \\ z = (t , y)}} \left\| V' \left( \nabla^\ep v_{y} \right) - D_p \bar \sigma \left( \xi_{y} \right) \right\|_{\underline{H}^{-1}_\mathrm{par} (z + Q^\ep_{2\kappa})}.
\end{align*}
The term in the right-hand side can then be decomposed as follows
\begin{align*}
    \lefteqn{\left\| V' \left( \nabla^\ep v_{y} \right) - D_p \bar \sigma \left( \xi_{y} \right) \right\|_{\underline{H}^{-1}_\mathrm{par} (z + Q^\ep_{2\kappa})} } \qquad & \\ &
    \leq  \left\| V' \left( \xi_y + \nabla \phi_{Q_y} (\cdot , \cdot ; \xi_y) \right) - \E \left[ V' \left(  \xi_y + \nabla \phi_{Q_y} (\cdot , \cdot ; \xi_y) \right) \right] \right\|_{\underline{H}^{-1}_\mathrm{par} (z/\ep + Q_{2L})} \\ & \qquad + \left\| \E \left[ V' \left( \nabla v_y \right) \right] - D_p \bar \sigma \left( \xi_{y} \right) \right\|_{\underline{H}^{-1}_\mathrm{par} (z + Q^\ep_{2\kappa})}.
\end{align*}
The first term in the right-hand side is estimated thanks to the concentration inequality stated in Proposition~\ref{prop.concentrationunif}. The second term in the right-hand side is estimated thanks to Proposition~\ref{prop3.9}. We obtain the upper bound
\begin{equation} \label{eq:1301091222}
\sup_{f \, : \, \left\| f \right\|_{W^{1 , 2 + \gamma_0}_{\mathrm{par}}}(Q) \leq M} 
\biggl\| \sum_{y \in \mathcal{Y}_\kappa} \nabla^\ep \chi_{y} \cdot \left( V' \left( \nabla^\ep v_{y} \right) - D_p \bar \sigma \left( \xi_{y} \right) \right) \biggr\|_{\underline{H}^{-1}_\mathrm{par} (Q^\ep)} \leq C (M+1) \ep^{\beta_{s}} + \mathcal{O}_2 \left( C \ep^{s} \right).
\end{equation}
One can then rewrite the proof of Theorem~\ref{Th.quantitativehydr} of 
Section~\ref{Th.quantitativehydr}, using the estimates~\eqref{eq:13000912} and~\eqref{eq:1301091222} instead of~\eqref{eq:estL2normEz} and~\eqref{eq:21000212} to complete the proof of Proposition~\ref{prop4.4}.
\end{proof}

\section{Large-scale regularity for the Langevin dynamic} \label{sec.Section5}

This section is devoted to the proof of the large-scale regularity stated in Theorem~\ref{theoremlargescale}. The strategy of the argument follows the one initially introduced in~\cite{AS} in the context of stochastic homogenization, and his reminiscent of the Schauder regularity theory: using the quantitative homogenization theorem (or rather Proposition~\ref{prop4.4} which optimizes the stochastic term), we are able to iterate the homogenization estimate over different scales, use the $C^{1,\alpha}$ regularity of the solution of the homogenized equation (see Proposition~\ref{C11minusregforthehydrodyn}) and transfer it to the Langevin dynamic.

The proof is decomposed in three subsections and is structured as follows. In Section~\ref{sec:reghomogeq}, we record, mostly without proof, some standard regularity estimates for the solutions of the homogenized equation. Sections~\ref{sectionapprox} and~\ref{section.proofoftheoremlargescale} constitute the core of the proof of Theorem~\ref{theoremlargescale}. In Section~\ref{sectionapprox}, we use the results of the previous sections to establish that any solution of the Langevin dynamic is well-approximated, over large scales, by a solution of the homogenized equation. In Section~\ref{section.proofoftheoremlargescale}, we iterate the previous results down the scales and make use of the regularity of the homogenized equation to complete the proof of Theorem~\ref{theoremlargescale}.

\subsection{$C^{1,\alpha}$-regularity for the hydrodynamic limit} \label{sec:reghomogeq}

In this section, we collect some regularity properties of the solution of the equation~\eqref{eq:11582309}. We recall the definition of the H\"{o}lder seminorms stated in Section~\ref{sectionholdernorms}. Before stating the result, we introduce the set $\mathcal{P}_1$ of affine functions in $\Rd$,
\begin{equation*}
    \mathcal{P}_1 := \left\{ \ell : \Rd \to \R \, : \, \exists p \in \Rd, \, c \in \R, ~ \ell(x) = p \cdot x + c \right\}.
\end{equation*}
In the following statement, we will denote by $\bar Q_L$ the continuous cylinder $[-L^2 , 0] \times [-L , L]^d \subseteq \R \times \Rd$; the $\underline{L}^2(Q_L)$ and H\"{o}lder norms are considered in the discrete setting (but similar statements hold in the continuous setting). They can essentially be obtained by differentiating the equation and applying the De Giorgi-Nash-Moser regularity.

\begin{proposition}[Interior $C^{1 , \alpha}$-regularity for solutions of the homogenized equation] \label{C11minusregforthehydrodyn}
Fix $L > 0$, and let $\bar u : \bar Q_L \to \R$ be a solution of the nonlinear parabolic equation
\begin{equation} \label{eq:11582309}
    \partial_t \bar u - \nabla \cdot D_p \bar \sigma(\nabla \bar u) = 0 ~\mbox{in}~ \bar Q_L,
\end{equation}
then one has the regularity estimate: there exists an exponent $\alpha  > 0$ and a constant $C_{\alpha} < \infty$ depending on $d , c_+ , c_-$ such that, for any $ L \geq 0$,
    \begin{equation*}
        \left[\bar u \right]_{C^{1 , \alpha} (Q_{L/2})} \leq  \frac{C_{\alpha} }{L^{1+\alpha}}  \left\| \bar u  - (\bar u)_{Q_L}\right\|_{\underline{L}^2 \left( Q_L\right)}.
    \end{equation*}
    Consequently, for any $l \in (0 , L)$,
\begin{equation} \label{eq:13132309}
    \inf_{\ell \in \mathcal{P}_1} \left\| \bar u - \ell \right\|_{\underline{L}^2 \left( Q_l \right)} \leq C_{\alpha} \left( \frac{l}{L} \right)^{\!\! 1+\alpha}   \inf_{\ell \in \mathcal{P}_1} \left\| u - \ell \right\|_{\underline{L}^2 \left( Q_L \right)}.
\end{equation}
\end{proposition}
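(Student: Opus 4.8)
\textbf{Proof proposal for Proposition~\ref{C11minusregforthehydrodyn}.}

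The plan is to establish the interior $C^{1,\alpha}$ estimate for solutions of the homogenized equation~\eqref{eq:11582309} by a standard Campanato-iteration (excess-decay) argument, exploiting the key structural fact that the gradient of a solution itself solves a linear, uniformly parabolic equation with continuous (indeed $C^{0,\beta}_{\mathrm{loc}}$, via bootstrap) coefficients. First I would observe that, by the $C^2$-regularity of the surface tension established in Theorem~\ref{t.regsurfacetension}, the map $p \mapsto D_p\bar\sigma(p)$ is $C^1$ with a modulus of continuity $\chi_R$ for its derivative. Differentiating~\eqref{eq:11582309} formally in the direction $e_i$, the function $v_i := \partial_i \bar u$ solves
\begin{equation*}
    \partial_t v_i - \nabla \cdot \bigl( D^2_p\bar\sigma(\nabla\bar u)\nabla v_i\bigr) = 0,
\end{equation*}
which is a linear parabolic equation whose coefficient field $\mathbf{a}(t,x) := D^2_p\bar\sigma(\nabla\bar u(t,x))$ is uniformly elliptic (with constants depending only on $c_-,c_+$ through the uniform convexity of $\bar\sigma$) and, once we know $\nabla\bar u$ is bounded and continuous, is itself continuous in $(t,x)$. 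The De Giorgi–Nash–Moser estimate (Proposition~\ref{paraNash}) applied to $v_i$ then gives an interior $C^{0,\alpha'}$ bound on $\nabla\bar u$ for some $\alpha'>0$; feeding this back into the coefficient field yields that $\mathbf{a}$ is locally $C^{0,\alpha'}$, and the Schauder theory for linear parabolic equations (or equivalently the excess-decay lemma for equations with continuous coefficients) upgrades $v_i$ to $C^{0,\alpha}$ on a smaller cylinder for an exponent $\alpha$ depending only on $d,c_+,c_-$.

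The mechanics I would carry out are: (i) a Caccioppoli/energy estimate (Proposition~\ref{prop.CAccioppolipara}) bounding $\|\nabla\bar u\|_{\underline{L}^2(Q_{L/2})}$ by $L^{-1}\|\bar u - (\bar u)_{Q_L}\|_{\underline{L}^2(Q_L)}$; (ii) the $L^\infty$-bound of Proposition~\ref{paraNash} applied to each $v_i$ to pass from $\underline{L}^2$ to pointwise control of $\nabla\bar u$; (iii) the iteration of the excess decay $\Xi(l) := l^{-1}\inf_{\ell\in\mathcal P_1}\|\bar u - \ell\|_{\underline{L}^2(Q_l)}$, which by comparison with solutions of the frozen-coefficient equation $\partial_t v - \nabla\cdot(\mathbf{a}(z_0)\nabla v)=0$ satisfies, for a suitable small $\lambda\in(0,1)$,
\begin{equation*}
    \Xi(\lambda l) \leq \tfrac12 \lambda^{\alpha} \Xi(l) + C\,\omega(l)\,\bigl(\Xi(l) + \text{const}\bigr),
\end{equation*}
where $\omega$ is the modulus of continuity of $\mathbf{a}$; summing this geometric-type recursion down the scales produces the claimed $C^{1,\alpha}$ bound, and rescaling gives~\eqref{eq:13132309} with the constant $C_\alpha$ absorbing the fixed data. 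All of the needed ingredients (Caccioppoli, Meyers, De Giorgi–Nash–Moser, and the $C^2$-regularity of $\bar\sigma$) are available in the excerpt, so this is a matter of assembling the classical Schauder/Campanato machinery.

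The main obstacle I anticipate is not conceptual but a matter of care in the discrete setting: the excerpt works on $\Zd$ (or $\ep\Zd$), so the "differentiate the equation" step must be interpreted through discrete difference quotients, and the frozen-coefficient comparison argument must be run so that the error terms are genuinely controlled by the \emph{continuity modulus} of $D^2_p\bar\sigma$ rather than by any quantitative Hölder rate (which we may not have under the bare $C^{1,1}$ hypothesis on $V$). Concretely, one wants a qualitative statement — existence of \emph{some} $\alpha>0$ and $C_\alpha<\infty$ depending only on $d,c_+,c_-$ — and the cleanest route is to note that $\bar\sigma$ is uniformly convex and $C^{1,1}$ (it is even $C^2$ by Theorem~\ref{t.regsurfacetension}, but $C^{1,1}$ already suffices: $D^2_p\bar\sigma$ is bounded and measurable, hence the gradient of a solution is $C^{0,\alpha}$ by De Giorgi–Nash–Moser alone, without any Schauder upgrade). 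In fact this simplification is the right one to emphasize: since $\bar\sigma\in C^{1,1}_{\mathrm{loc}}$ and uniformly convex, each $\partial_i\bar u$ solves a uniformly parabolic equation with bounded measurable coefficients, so $\nabla\bar u \in C^{0,\alpha}$ directly, and the $C^{1,\alpha}$-regularity of $\bar u$ together with the excess-decay statement~\eqref{eq:13132309} follows. I would therefore structure the proof around this observation, invoking Proposition~\ref{paraNash} as the single nontrivial regularity input and Proposition~\ref{prop.CAccioppolipara} to initialize the iteration.
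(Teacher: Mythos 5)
Your proposal is correct and, once you self-correct in the final paragraph, matches the paper's intended argument: the authors only indicate that the estimates ``can essentially be obtained by differentiating the equation and applying the De Giorgi--Nash--Moser regularity,'' which is exactly the streamlined version you arrive at, using the Caccioppoli inequality to initialize and Proposition~\ref{paraNash} to give the interior H\"older bound on $\nabla\bar u$, with~\eqref{eq:13132309} then following by comparing $\bar u$ with its first-order Taylor polynomial. The opening excursion into Schauder/Campanato theory with a frozen-coefficient comparison is unnecessary --- since $\bar\sigma$ is uniformly convex and $C^{1,1}$ (already known from Funaki--Spohn, without invoking Theorem~\ref{t.regsurfacetension}), the coefficient field $D^2_p\bar\sigma(\nabla\bar u)$ appearing in the equation for $\partial_i\bar u$ is bounded, measurable, and uniformly elliptic, so De Giorgi--Nash--Moser alone suffices, as you correctly observe.
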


\subsection{Approximation by solutions of the homogenized equation} \label{sectionapprox}

In this section, we prove that every solution $u : Q_L \to \R$ of the Langevin dynamic~\eqref{def.ulargescale} is well-approximated on sufficiently large scales by a solution of the nonlinear parabolic equation~\eqref{eq:11582309}.

\begin{proposition} \label{lemma3.17}
Fix~$s \in (0 , d)$ and~$M \in[1,\infty)$. There exist~$C_{\mathrm{hom}} := C_{\mathrm{hom}} (s, M,d,c_+,c_-) < \infty$, $C := C (s, d,c_+,c_-) < \infty$, an exponent $\beta := \beta (s,d , c_+ , c_-) > 0$, and a nonnegative random variable $\mathcal{M}_{\mathrm{hom}}^s$ satisfying
\begin{equation} \label{14263009}
    \mathcal{M}_{\mathrm{hom}}^s \leq \mathcal{O}_{s} (C_{\mathrm{hom}}),
\end{equation}
such that the following statement holds. For any $L \geq  \mathcal{M}_{\mathrm{hom}}^s$ and any solution $u : Q_L \to \R$ of the Langevin dynamic
\begin{equation*}
    \left\{ \begin{aligned}
    d u(t , x) = \nabla \cdot V'(\nabla u) &(t , x) dt + \sqrt{2} dB_{t}\left( x \right)~\mbox{for}~(t,x) \in  Q_L, \\
    \frac{1}{L} \left\| u - (u)_{Q_L} \right\|_{\underline{L}^2(Q_L)} &\leq M,
    \end{aligned} \right.
\end{equation*}
there exists a function $\bar u : Q_{L/2} \to \R$ solution of the equation
\begin{equation} \label{eqhomog.2612}
     \partial_t \bar u - \nabla \cdot D_p \bar \sigma(\nabla \bar u) = 0  \hspace{5mm} \mbox{in}~ \bar Q_{L/2}
\end{equation}
such that
\begin{equation} \label{eq:19181912}
    \left\| u - \bar u \right\|_{\underline{L}^2 (Q_{L/2})} \leq C L^{- \beta} \| u - \left( u \right)_{Q_L} \|_{\underline{L}^2(Q_L)} + C L^{1- \beta}.
\end{equation}
\end{proposition}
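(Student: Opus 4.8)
\textbf{Proof strategy for Proposition~\ref{lemma3.17}.} The plan is to reduce the statement, by rescaling, to an application of Proposition~\ref{prop4.4} (the quantitative hydrodynamic limit with optimized stochastic error), and then to absorb the various boundary and regularity issues into the definition of the random minimal scale $\mathcal{M}_{\mathrm{hom}}^s$. First I would fix $L$ and a solution $u$ of the Langevin dynamics on $Q_L$ with $\frac1L\|u - (u)_{Q_L}\|_{\underline L^2(Q_L)}\le M$, and rescale diffusively: setting $\ep := 1/L$ and $u^\ep(t,x) := \ep\, u(t/\ep^2, x/\ep)$, the function $u^\ep$ solves the Langevin dynamics of the form~\eqref{eq:defuLthmhydro6.1} on the rescaled cylinder $Q^\ep$ (with $Q = [-1,0]\times[-1,1]^d$, after a harmless change of the spatial domain from a cube of side $L$ to the unit cube), with boundary data $f^\ep$ that is the diffusive rescaling of $u$ restricted to $\partial_{\mathrm{par}} Q_L$. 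The normalization hypothesis on $u$ translates precisely into the bound $\|u^\ep - (u^\ep)_{Q^\ep}\|_{L^2(Q^\ep)}\le M$, i.e.\ the rescaled boundary data has $L^2$-norm controlled by $M$ (up to the mean, which can be subtracted without loss of generality).

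The difficulty is that Proposition~\ref{prop4.4} requires the boundary data to lie in $W^{1,2+\gamma_0}_{\mathrm{par}}(Q)$ with a quantitative bound, whereas from the $\underline L^2$-control on $u$ alone we only know $f^\ep\in L^2$. This is exactly the point where the large-scale regularity must be bootstrapped: I would use the \emph{interior} Caccioppoli and Meyers inequalities (Proposition~\ref{prop.CAccioppolipara} and Proposition~\ref{interiorparabolicMEyers}) applied to the difference-of-solutions equation~\eqref{eq:14001310} to gain the $W^{1,2+\gamma_0}_{\mathrm{par}}$-regularity on a slightly smaller cylinder, at the cost of a factor depending only on the $\underline L^2$-norm of $u$ on $Q_L$; concretely, one takes the homogenized boundary data to be defined on $Q_{3L/4}$ rather than $Q_L$, runs Proposition~\ref{prop4.4} there, and then the $W^{1,2+\gamma_0}_{\mathrm{par}}$-norm of the rescaled data is bounded by $C(M+1)$ using the interior regularity estimates together with the Meyers self-improvement. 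The extra loss from shrinking the cylinder is absorbed into the exponent $\beta$. The random minimal scale $\mathcal{M}_{\mathrm{hom}}^s$ is then \emph{defined} to be the smallest dyadic (or triadic) scale above which the stochastic error term $\mathcal{O}_2(C\ep^{s_0})$ produced by Proposition~\ref{prop4.4} (with $s_0 := (s+d/2)/2$ or a similar intermediate exponent so that $s < s_0 < d/2$ after the diffusive rescaling — note the role swap of $d$ and $d/2$) is dominated by the deterministic term $L^{-\beta}(M+1)$; a union bound over dyadic scales, using the summation and maximum properties~\eqref{sum.Onotation},~\eqref{sum.Omaximum} of the $\mathcal O$-notation, then yields~\eqref{14263009} with stochastic integrability exponent $s$.

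The remaining steps are mechanical. Once Proposition~\ref{prop4.4} is applied on (the rescaled version of) $Q_{3L/4}$, I undo the rescaling: the estimate $\|u^\ep_{f^\ep} - \bar u_{f^\ep}\|_{L^2(Q^\ep)}\le C(M+1)\ep^{\beta_s} + \mathcal{O}_2(C\ep^{s_0})$ becomes, after multiplying by $\ep^{-1}$ and using the scaling of the $L^2$-norm, an estimate of the form $\|u - \bar u\|_{\underline L^2(Q_{L/2})}\le CL^{1-\beta}(M+1) + \mathcal{O}_2(CL^{1-s_0})$; combined with the normalization $M \le \frac1L\|u - (u)_{Q_L}\|_{\underline L^2(Q_L)}$ this gives the two terms in~\eqref{eq:19181912}, namely $CL^{-\beta}\|u - (u)_{Q_L}\|_{\underline L^2(Q_L)} + CL^{1-\beta}$, provided $L\ge\mathcal{M}_{\mathrm{hom}}^s$ so that the stochastic term is below the deterministic one. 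Here $\bar u$ is the (continuous, then restricted to the discrete cylinder) solution of the homogenized equation~\eqref{eqhomog.2612} on $Q_{L/2}$ produced by Proposition~\ref{prop4.4}; its existence and the fact that it solves~\eqref{eqhomog.2612} are part of the conclusion of that proposition. I expect the main obstacle to be the bookkeeping in the second paragraph: carefully propagating the interior $W^{1,2+\gamma_0}_{\mathrm{par}}$-bound on the boundary data through the rescaling and verifying that the constant depends only on $M$ (and not on $L$), while keeping the exponents $\beta$, $\beta_s$, $s_0$ consistent so that the final $\beta$ in the statement is strictly positive.
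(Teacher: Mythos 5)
Your overall skeleton matches the paper's: diffusive rescaling to a unit cylinder, application of Proposition~\ref{prop4.4}, a union bound over scales to define $\mathcal{M}_{\mathrm{hom}}^s$, and undoing the rescaling. But there is a genuine gap in the one step you flag as ``bookkeeping'': obtaining the $W^{1,2+\gamma_0}_{\mathrm{par}}$-bound on the boundary data. You propose to apply interior Caccioppoli/Meyers ``to the difference-of-solutions equation~\eqref{eq:14001310} \ldots to gain $W^{1,2+\gamma_0}_{\mathrm{par}}$-regularity of the boundary data.'' The problem is that $u$ is a solution of the Langevin dynamics, so $\partial_t u$ contains white noise; $u$ is H\"older-$1/2$ in time and no better, and no interior regularity estimate can upgrade its trace on $\partial_{\mathrm{par}} Q_{3L/4}$ to $W^{1,2+\gamma_0}_{\mathrm{par}}$ — that space requires $\partial_t u \in L^{2+\gamma_0}_tW^{-1,2+\gamma_0}_x$, which the white noise prevents. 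You point at~\eqref{eq:14001310}, i.e.\ that the difference of two Langevin solutions with the same noise is caloric, and that is indeed the right observation; but you never construct the second solution to subtract, nor say how to recover an estimate on $u$ itself once the boundary data has been altered.

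The paper fills this gap with a concrete apparatus that your proposal does not reproduce. It introduces a mesoscopic scale $\ell = L^{1-\nu}$, a partition of unity $(\chi_y)_{y\in\partial\mathcal{Y}}$ near the lateral boundary, and for each $y$ a local Langevin solution $\varphi_y$ on $I_{L/2}\times(y+\Lambda_{2\ell})$ with slope $0$. Then $w_y := u - \varphi_y$ is genuinely caloric, so Meyers gives a quantitative $W^{1,2+\gamma_0}_{\mathrm{par}}$-bound on $w := \sum_y\chi_y w_y$; this uses that $\|\varphi_y\|_{\underline{L}^2}\lesssim\ell^{1-\delta}$ (Proposition~\ref{prop.concentration}) via the minimal scale $\mathcal{M}_1$. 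Proposition~\ref{prop4.4} is then applied to the Langevin solution $\psi$ with boundary data $w$ — not to $u$ — and yields $\|\psi - \bar u\|$ small. The final, indispensable step that your proposal omits is Step 5 of the paper: showing $\|u-\psi\|$ is small, which relies on the boundary discrepancy $\varphi = u - w$ being small, and is itself delicate because $\varphi$ is rough in time — the paper must mollify in time (defining $\tilde\varphi$), decompose $\partial_t h_0$ into a regular part plus a stochastic integral $\eta\cdot B$, and introduce a further minimal scale $\mathcal{M}_2$ to control $\theta\cdot B$ and $\eta\cdot B$ in the energy estimate. Without that step, the homogenization estimate you derive concerns a different Langevin trajectory than the one in the statement.
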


\begin{proof}
The strategy of the proof relies on the observation that the map $u$ solves the system of stochastic differential equations
\begin{equation} \label{eq:deflanglargescael}
    \left\{ \begin{aligned}
    d u(t , x) &= \nabla \cdot V'(\nabla u) (t , x) dt + \sqrt{2} dB_{t}\left( x \right) &~\mbox{for} &~(t,x) \in  Q_{L/2}, \\
    u & = u  & ~\mbox{on} &~ \partial_{\mathrm{par}} Q_{L/2}.
    \end{aligned} \right.
\end{equation}
and apply (the suitably rescaled version of) Proposition~\ref{prop4.4} to the system~\eqref{eq:deflanglargescael}. A technical problem is caused by the fact that the boundary condition $u$ does not belong to the Sobolev space $W^{1,2+\gamma_0}_{\mathrm{par}}$ (essentially due to the roughness in the time variable caused by the Brownian motions). We correct this lack of regularity by using a (simpler) solution of the Langevin dynamic denoted by $\varphi_y$ below together with a partition of unity.

\medskip

\textit{Step 1. Declaration of exponents, mesoscopic scales and partition of unity.} We fix a small exponent $0 < \nu \ll 1$ whose value will be selected later in the proof. We define a (large) mesoscopic scale by setting $\ell = L^{1-\nu}$. We then introduce the sets
\begin{equation*}
    \mathcal{Y} := \ell \Zd \cap \Lambda_{L/2} \hspace{5mm} \mbox{and} \hspace{5mm} \partial \mathcal{Y} := \left\{  y \in \mathcal{Y} \, : \, \dist(y , \partial \Lambda_{L/2}) \leq \ell \right\}.
\end{equation*}
For each $y \in \partial \mathcal{Y}$, we denote by $Q_y$ the parabolic cylinder $I_{L/2} \times (y + \Lambda_{2 \ell})$, and let $\varphi_y$ be the solution of the Langevin dynamic introduced in~\eqref{Langevin.def.corrQ} of Definition~\ref{def.firstordercorrfinvol} in the cylinder $Q_y$ with slope $q = 0$. In particular, we emphasize that the dynamic $\varphi$ is not assumed to have spatial average equal to $0$ in this proof. We then introduce a partition of unity $(\chi_y)_{y \in \partial \mathcal{Y}}$ satisfying
\begin{equation} \label{prop.partofunity2screpeat}
    0 \leq \chi_y \leq \indc_{\{y + \Lambda_{2 \ell} \}}, \hspace{3mm} \sum_{y\in  \partial \mathcal{Y}} \chi_y = 1 \hspace{3mm} \mbox{on} ~\partial \Lambda_{L/2}, ~\mbox{and}~ \| \nabla \chi_y \|_{L^\infty(\Lambda_{L/2})} \leq \frac{C}{\ell}.
\end{equation}
We denote by
\begin{equation*}
    \varphi := \sum_{y \in \partial \mathcal{Y}} \chi_y \varphi_y,
\end{equation*}
and note that $\varphi$ is supported in a boundary layer of size $\ell$ around the set $I_{L/2} \times \partial \Lambda_{L/2}$.

As we will have to regularize the function $\varphi$ with respect to the time variable, we let $\eta : \R \to [0,1]$ be a smooth nonnegative cutoff function supported in the interval $[-1 ,1]$ and satisfying $\int_\R \eta = 1$. We then denote by $\tilde \varphi_y$ the time convolution of $\varphi_y$ and $\eta$,
\begin{equation} \label{eq:18422212}
    \tilde \varphi_y(t  , x) = \varphi \star \eta (t , x) = \int_\R \varphi_y (s  , x) \eta(t - s) \, ds,
\end{equation}
and by 
$$\tilde \varphi = \sum_{y \in \partial \mathcal{Y}} \chi_y \tilde \varphi_y.$$

We let $\gamma_0 > 0$ be the exponent which appears in the Meyers estimate (Proposition~\ref{interiorparabolicMEyers}) and set $s_0 := (s + d) / 2$. We then let $2 \beta_0$ be the minimum of the exponent $\beta_{s_0 , 2 + \gamma_0}$ appearing in Proposition~\ref{prop4.4}, the exponent $\delta_{s_0}$ appearing in the right-hand side of Proposition~\ref{prop.concentration} and the exponent $(s_0 - s)/4$. We additionally let $C_0/4$ be the maximum of the constants which appear in the right-hand sides of Proposition~\ref{prop.concentration}, the inequality~\eqref{eq:15442510}, the Meyers estimate (Proposition~\ref{interiorparabolicMEyers}), the multiscale Poincar\'e inequality (Proposition~\ref{prop:multscPoinc}) and Proposition~\ref{prop4.4} (with the parameter $s := s_0/2$). 

\medskip

\textit{Step 2. Definition of the minimal scales.} Equipped with these constants and exponents, we define three minimal scales $\mathcal{M}_0, \mathcal{M}_1$ and $\mathcal{M}_2$ as follows. The first one provides a minimal scale above which homogenization (in the form of Proposition~\ref{prop4.4}) occurs uniformly over the boundary condition:
\begin{equation*}
    \mathcal{M}_0 := \sup \Biggl\{ L \in \N \, : \, \sup_{ \substack{f : Q_L \to \R \\ K_f \leq C L^{\beta_0} (M+1)} }  \frac{\left\| u_f - \bar u_f \right\|_{\underline{L}^2 \left( Q_{L/2} \right)}}{ K_f + 1 } \geq C_0 L^{1-\beta_0} \Biggr\},
\end{equation*}
where the constant $C$ in the supremum is the one which appears in~\eqref{eq:11322012} and where we set 
$$K_f := \left\| f \right\|_{\underline{W}^{1, 2 + \gamma_0}_{\mathrm{par}}(Q_{L/2})}.$$
The variable $\mathcal{M}_1$ provides a minimal scale above which the $L^2$-norms of the corrector $\varphi_y$ are smaller than $C_0 \ell^{1- \beta_0}$ and the $L^2$-norm of its gradient is bounded:
\begin{equation*}
    \mathcal{M}_1 := \sup \left\{ L \in \N \, : \, \sup_{y \in \partial \mathcal{Y}} \left\| \varphi_y \right\|_{\underline{L}^2 (Q_y)} \geq C_0 \ell^{1- \beta_0} ~\mbox{or}~ \sup_{y \in \partial \mathcal{Y}} \left\| \nabla \varphi_y \right\|_{\underline{L}^2(Q_y)} \geq C_0 \right\}.
\end{equation*}
Finally, the minimal scale $\mathcal{M}_2$ is defined only in terms of the Brownian motions, and will be used to estimate two technical terms (specifically~\eqref{eq:15052312} and~\eqref{eq:16572312} below). It is defined as follows: we let $\theta : \R \to \R$ be the compactly supported (in the interval $(-1 , 1)$) function defined by the formula $\theta(t) = \int_{-\infty}^t \eta(s) ds - \indc_{\{t \geq 0\}}$, and define the stochastic integrals 
\begin{equation*}
    \left(\eta \cdot B\right) (t , x) := \int_{t-1}^{t+1} \eta(s) d B_s(x) ~~\mbox{and} ~~\left(\theta \cdot B\right) (t , x) := \int_{t-1}^{t+1} \theta(s) d B_s(x).
\end{equation*}
We then set
\begin{multline*}
    \mathcal{M}_2 := \sup \left\{ L \in \N \, : \, \frac{1}{L} \left\| \theta \cdot B \right\|_{\underline{L}^2 \left( I_{L/2} \times \partial \Lambda_{L/2} \right)}^2 \geq C_0 L^{-\nu} \right. \\ \left.  ~\mbox{and}~ \left\| \left( \sum_{y \in \partial \mathcal{Y}} \chi_y \right)  \left( \eta \cdot B \right) \right\|_{\underline{H}^{-1}_{\mathrm{par}} \left( Q_{L/2} \right)}^2 \geq C_0 L^{-\nu} \right\}.
\end{multline*}
We define the minimal scale $\mathcal{M}_{\mathrm{hom}}^s$ as follows
\begin{equation} \label{def.minscalehomog}
    \mathcal{M}_{\mathrm{hom}}^s  := \mathcal{M}_0 \vee \mathcal{M}_1 \vee \mathcal{M}_2.
\end{equation}

\medskip

\textit{Step 3. Quantifying the stochastic integrability of the minimal scale $\mathcal{M}_{\mathrm{hom}}^s$.} We first verify that the minimal scale $\mathcal{M}_{\mathrm{hom}}^s$ satisfies the stochastic integrability estimate~\eqref{14263009}. By Proposition~\ref{prop4.4} and the definitions of the constant $C_0$ and the exponent $\beta_0$, we have the estimate
\begin{align} \label{eq:15252012}
    \mathbb{P} \left( \sup_{ \substack{f : Q_L \to \R \\ K_f \leq C L^{\beta_0}(M+1)} }  \frac{\left\| u_f - \bar u_f \right\|_{\underline{L}^2 \left(Q_{L/2} \right)}}{ K_f + 1 } \geq C_0 L^{1-\beta_0} \right) & \leq \exp \left( - \frac{L^{s_0- 2\beta_0}}{C} \right) \\
    & \leq \exp \left( - \frac{L^{s}}{C} \right). \notag
\end{align}
A similar argument, using Proposition~\ref{prop.concentration}, the identity $\ell = L^{1-\nu}$ and a concentration estimate on the averaged sum of Brownian motions, yields the upper bound
\begin{equation*}
    \mathbb{P} \left( \left\| \varphi_{y} \right\|_{\underline{L}^2(Q_y)} \geq C_0 L^{1- \beta_0} \right) \leq \exp \left( - \frac{L^{(1 -\nu)(s_0 - 2 \beta_0)}}{C} \right).
\end{equation*}
Using that the cardinality of the set $\partial \mathcal{Y}$ is smaller than $C L^{(d-1)\nu}$, a union bound, and choosing the exponent $\nu$ small enough, we deduce that
\begin{equation} \label{eq:15502112}
    \mathbb{P} \left( \sup_{y \in \partial \mathcal{Y}} \left\| \varphi_{y} \right\|_{\underline{L}^2(Q_y)} \geq C_0 L^{1- \beta_0} \right) \leq C L^{(d-1)\nu} \exp \left( - \frac{L^{(1 -\nu)(s_0 - 2 \beta_0)}}{C} \right)  \leq  C \exp \left( - \frac{L^{s}}{C} \right).
\end{equation}
The same argument, using this time the inequality~\eqref{eq:15442510}, yields
\begin{equation} \label{eq:11442412}
     \mathbb{P} \left( \sup_{y \in \partial \mathcal{Y}} \left\| \nabla \varphi_{y} \right\|_{\underline{L}^2(Q_y)} \geq C_0 \right)  \leq  C \exp \left( - \frac{L^{s}}{C} \right).
\end{equation}
Finally, using that the map $\theta$ is supported in the interval $[-1 , 1]$, we can use standard concentration estimates for sum of independent random variables to obtain that
\begin{equation} \label{eq:11542412}
    \mathbb{P} \biggl[   \frac{1}{L} \left\| \theta \cdot B \right\|_{\underline{L}^2 \left( I_{L/2} \times \partial \Lambda_{L/2} \right)}^2 \geq \frac{C_0}{L} \biggr] \leq  C \exp \left( - \frac{L^{s}}{C} \right),
\end{equation}
and similarly, using this time the multiscale Poincar\'e inequality stated in Proposition~\ref{multscalepoincrealpar},
\begin{equation} \label{eq:11532412}
\mathbb{P} \Biggl[
\biggl\| \left( \sum_{y \in \partial \mathcal{Y}} \chi_y \right)  \left( \eta \cdot B \right) \biggr\|_{\underline{H}^{-1}_{\mathrm{par}} \left( Q_{L/2} \right)}^2
\geq C_0 L^{-\nu}
\Biggr] 
\leq  
C \exp \left( - \frac{L^{s}}{C} \right).
\end{equation}
Combining the fives estimates~\eqref{eq:15252012},~\eqref{eq:15502112},~\eqref{eq:11442412},~\eqref{eq:11542412} and~\eqref{eq:11532412} with a union bound shows, for any $L > 0$,
\begin{equation} \label{eq:14592909}
    \mathbb{P} \left[ \mathcal{M}_{\mathrm{hom}}^s > L \right] \leq \sum_{L' = L}^\infty C \exp \left( - \frac{(L')^{s}}{C} \right)  \leq C \exp \left( - \frac{L^{s}}{C} \right).
\end{equation}
The inequality~\eqref{eq:14592909} implies the stochastic integrability estimate~\eqref{14263009}.

\medskip

\textit{Step 4. Proving the approximation estimate~\eqref{eq:19181912}: Defining the function $\bar u$.} We next prove the inequality~\eqref{eq:19181912}. To this end, we fix a sidelength $L \geq 2 \mathcal{M}_{\mathrm{hom}}^s$. For each $y \in \partial \mathcal{Y}$, we denote by $w_y := u - \varphi_y$ and set $w := \sum_{y} \chi_y w_y$. Let us note that $w$ is equal to $u - \varphi$ on the boundary $\partial_{\mathrm{par}} Q_L$. We next note that, for each $y \in \partial \mathcal{Y}$, the map $w_y$ solves a linear parabolic equation with uniformly elliptic coefficients in the cylinder $Q_y$. We may thus apply the Meyers estimate (Proposition~\ref{interiorparabolicMEyers}) and the Caccioppoli inequality (Proposition~\ref{prop.CAccioppolipara}) and obtain: for any $z \in Q_y$ such that $z + Q_{2\ell} \subseteq Q_y$,
\begin{align*}
    \left\| \nabla w_y \right\|_{\underline{L}^{2 + \gamma_0} (z + Q_{\ell})} &\leq \frac{C}{\ell} \left\| w_y - (w_y)_{z + Q_{2\ell}} \right\|_{\underline{L}^2(z + Q_{2\ell})} \\ 
    & \leq \frac{C}{\ell} \left\| u - (u)_{z + Q_{2\ell}} \right\|_{\underline{L}^2(z + Q_{2\ell})}  + \frac{C}{\ell}\left\| \varphi_y \right\|_{\underline{L}^2(z + Q_{2\ell})}.
\end{align*}
Covering the cylinder $Q_{L/2}$ with cylinders of the form $z + Q_\ell$, we may choose the exponent $\nu$ small enough (depending on $\beta_0$) so that
\begin{equation*}
   \sum_{y \in \partial \mathcal{Y}} \left\| \chi_y \nabla w_y \right\|_{\underline{L}^{2 + \gamma_0} (Q_{y})}  \leq C L^{\beta_0/2} \left( \frac{1}{L}\| u - \left( u \right)_{Q_L} \|_{\underline{L}^2(Q_L)} + 1 \right).
\end{equation*}
A similar (and simpler) computation, involving regularity estimates for solution of parabolic equation (for instance Proposition~\ref{paraNash} provides $L^\infty$-regularity for solutions of parabolic equations) yields the bound
\begin{equation*}
    \sum_{y \in \partial \mathcal{Y}} \left\| \nabla \chi_y w_y \right\|_{\underline{L}^{2 + \gamma_0} (Q_{y})}  \leq C L^{\beta_0/2} \left( \frac{1}{L}\| u - \left( u \right)_{Q_L} \|_{\underline{L}^2(Q_L)} + 1 \right).
\end{equation*}
A combination of the two previous displays shows
\begin{equation*}
    \left\| \nabla w \right\|_{\underline{L}^{2+ \gamma_0} (Q_{L/2})} \leq C L^{\beta_0/2} \left( \frac{1}{L}\| u - \left( u \right)_{Q_L} \|_{\underline{L}^2(Q_L)} + 1 \right).
\end{equation*}
Since $\partial_t w = \sum_{y \in \partial \mathcal{Y}} \chi_y \nabla \cdot \a_y \nabla w_y$, we may use the properties of the maps $\left( \chi_y \right)_{y \in \mathcal{Y}_\kappa}$ and choose the exponent $\nu$ small enough so as to have
\begin{equation}  \label{eq:11322012}
    \left\| w \right\|_{\underline{W}^{1, 2 + \gamma_0}_{\mathrm{par}} (Q_{L/2})} \leq  C L^{\beta_0/2} \left( \frac{1}{L}\| u - \left( u \right)_{Q_L} \|_{\underline{L}^2(Q_L)} + 1 \right).
\end{equation}
We then let $\psi$ be the solution of the system of stochastic equations
\begin{equation*}
    \left\{ \begin{aligned}
    d \psi (t , x) & = \nabla \cdot V'(\nabla \psi) (t , x) dt + \sqrt{2} dB_{t}\left( x \right) & ~\mbox{for}~ & (t,x) \in  Q_{L/2}, \\
    \psi  & = w  & ~\mbox{on}~ & \partial_{\mathrm{par}} Q_{L/2}.
    \end{aligned} \right.
\end{equation*}
Using~\eqref{eq:11322012} and applying Proposition~\ref{prop4.4} (after suitable rescaling), we obtain that the solution $\bar u$ of the equation
\begin{equation*}
    \left\{ \begin{aligned}
    \partial_t \bar u - \nabla \cdot D_p \bar \sigma(\nabla \bar u) & = 0 &~\mbox{in} &~ \bar Q_{L/2}, \\
    \bar u &= w &~\mbox{on} &~ \partial_{\mathrm{par}} \bar Q_{L/2} \\
    \end{aligned} \right.
\end{equation*}
satisfies
\begin{equation} \label{eq:12292012}
    \left\| \psi - \bar u \right\|_{\underline{L}^2 \left( Q_{L/2} \right)} \leq C L^{- \beta_0/2} \| u - \left( u \right)_{Q_{L}} \|_{\underline{L}^2(Q_L)} + C L^{1- \beta_0/2}.
\end{equation}
From~\eqref{eq:12292012} and the triangle inequality, we see that to prove~\eqref{eq:19181912}, it is sufficient to show, for some exponent $\beta > 0$,
\begin{equation} \label{eq:12312012}
    \left\| \psi - u \right\|_{\underline{L}^2 \left( Q_{L/2} \right)} \leq C L^{1- \beta}.
\end{equation}

\medskip

\textit{Step 5. Proving the approximation estimate~\eqref{eq:19181912}: Proof of~\eqref{eq:12312012}.}
We note that the difference $h = u - \psi$ solves a parabolic equation of the form
\begin{equation} \label{eq:12362012}
    \left\{ \begin{aligned}
    \partial_t h - \nabla \cdot \a \nabla h &= 0 &~\mbox{in}~&Q_{L/2}, \\
    h &= \varphi &~\mbox{on}~&\partial_{\mathrm{par}} Q_{L/2},
    \end{aligned} \right.
\end{equation}
where $\a$ is a uniformly elliptic coefficient field. To prove~\eqref{eq:12312012}, we introduce a good test function which will then be tested in the parabolic equation: we let $h_0$ be the map defined by the formula
\begin{equation*}
    h_0 := \tilde \varphi \indc_{Q_{L/2}}+  \varphi \indc_{\partial_{\mathrm{par}} Q_{L/2}},
\end{equation*}
so that the map $H := h - h_0$ solves the parabolic equation
\begin{equation} \label{eq:17422212}
    \left\{ \begin{aligned}
     \partial_t H - \nabla \cdot \a \nabla H &= \partial_t h_0 - \nabla \cdot \a \nabla h_0  &~\mbox{in} &~Q_{L/2}, \\
    H &= 0 &~\mbox{on} &~\partial_{\mathrm{par}} Q_{L/2}.
    \end{aligned} \right.
\end{equation}
We next prove that there exists an exponent $\gamma := \gamma(d , s , c_+ , c_-)  > 0$ such that 
\begin{equation} \label{eq:18202212}
    \left\|  \nabla H \right\|_{\underline{L}^2 (Q_{L/2})} \leq C L^{-\gamma}.
\end{equation}
The inequality~\eqref{eq:18202212} is sufficient to conclude the proof of Proposition~\ref{lemma3.17}: indeed, using Poincar\'e's inequality (applied to the function $H$), the definition of the minimal scale $\mathcal{M}_1$ and the inequality $\ell \leq L$, we deduce that, if we set $\beta := \min (\gamma , \beta_0)$,
\begin{equation*}
    \left\| h \right\|_{\underline{L}^2(Q_{L/2})} \leq C L \left\|  \nabla H \right\|_{\underline{L}^2(Q_{L/2})} + \| h_0 \|_{\underline{L}^2 \left( Q_{L/2} \right)} \leq C L^{1 - \gamma} + C \ell^{1 - \beta_0} \leq C L^{1-\beta}.
\end{equation*}
We split the proof of the inequality~\eqref{eq:17422212} into two substeps: we first estimate the norm of the gradient of $h_0$ and then the norm of the time derivative of $h_0$ and prove that they are both small (in a suitable sense). Since $H$ solves the parabolic equation~\eqref{eq:17422212} this is enough to obtain~\eqref{eq:18202212} (using an energy estimate).

\medskip

\textit{Substep 5.1. Estimating the norm of the gradient of $h_0$.} To establish~\eqref{eq:18202212}, we estimate the $L^2$-norm of the gradient of the map $h_0$. Specifically, we prove the estimate
\begin{equation} \label{eq:15112312}
    \left\| \nabla h_0 \right\|_{\underline{L}^2(Q_{L/2})} \leq C L^{-\nu}.
\end{equation}
For any edge $e = (x , y)$ with $x , y \in \Lambda_L$, we have the identity
\begin{equation*}
     \nabla h_0(t, e) = \nabla \tilde \varphi(t,e),
\end{equation*}
and for any edge $e = (x , y)$ with $x \in \Lambda_L$ and $y \notin \Lambda_L$, we have the identity
\begin{equation} \label{eq:15062312}
    \nabla h_0(t,e) = \nabla \tilde \varphi(t , e) + \left( \tilde \varphi(t , y) - \varphi(t , y) \right).
\end{equation}
Consequently
\begin{equation} \label{eq:15072312}
    \left\| \nabla h_0 \right\|_{\underline{L}^2\left( Q_{L/2} \right)}^2 \leq \| \nabla \tilde \varphi \|_{\underline{L}^2\left( Q_{L/2} \right)}^2 +  \frac{C}{L} \| \varphi - \tilde \varphi \|_{\underline{L}^2\left( I_{L/2} \times \partial \Lambda_{L/2} \right)}^2.
\end{equation}
Using the definition of the minimal scale $\mathcal{M}_1$ and the fact that the map $\tilde \varphi$ is supported in a boundary layer of size $L^{(1-\nu)}$ around $I_{L/2} \times \partial \Lambda_{L/2}$, we have the upper bound
\begin{equation} \label{eq:09251412}
   \| \nabla \tilde \varphi \|_{\underline{L}^2\left( Q_{L/2} \right)}^2 \leq C L^{-\nu}.
\end{equation}
Additionally, if we let $\theta : \R \to \R$ be the map with bounded variation and compact support defined by the formula $\theta(t) = \int_{-\infty}^t \eta(s) ds - \indc_{\{t \geq 0\}}$, then we have, for any $x \in \partial \Lambda_{L/2}
$,
\begin{align} \label{eq:14482312}
    (\varphi - \tilde \varphi)(t,x) & = \varphi \star (\delta_0 - \eta) (t,x)  \\
    & = \underset{\eqref{eq:14482312}-(i)}{\underbrace{\sum_{y \in \partial \mathcal{Y}} \chi_y(x) \nabla \cdot \left( V'(\nabla \varphi_y) \star \theta\right)(t , x)}} + \underset{\eqref{eq:14482312}-(ii)}{\underbrace{ \int_{t-1}^{t+1} \theta(s) dB_s(x)}}. \notag
\end{align}
The term~\eqref{eq:14482312}-(i) can be estimated by ignoring the discrete divergence, using that the map $V'$ is Lipschitz and the definition of the minimal scale $\mathcal{M}_1$. We obtain
\begin{equation} \label{eq:15042312}
    \frac{1}{L} \left\|  \eqref{eq:14482312}-(i) \right\|_{\underline{L}^2(I_{L/2} \times \partial \Lambda_{L/2})}^2 \leq \frac{C}{L^{d+2}}  \sum_{y \in \partial \mathcal{Y}}  \left\|\nabla \varphi_y  \right\|_{L^2(Q_y)}^2 \leq C L^{-\nu}.
\end{equation}
The term~\eqref{eq:14482312}-(ii) is estimated by the minimal scale $\mathcal{M}_2$ and we have
\begin{equation} \label{eq:15052312}
    \frac{1}{L} \left\|  \eqref{eq:14482312}-(ii) \right\|_{\underline{L}^2(I_{L/2} \times \partial \Lambda_{L/2})}^2 \leq \frac{C}{L}.
\end{equation}
Combining the estimates~\eqref{eq:15062312},~\eqref{eq:15072312},\eqref{eq:09251412},~\eqref{eq:14482312},~\eqref{eq:15042312} and~\eqref{eq:15052312} completes the proof of~\eqref{eq:15112312}.

\medskip

\textit{Substep 5.2. Control of the time derivative $\partial_t h_0$.} We have the identity
\begin{equation} \label{eq:18502212}
    \partial_t h_0 (t , x) = \underset{\eqref{eq:18502212}-(i)}{\underbrace{\sum_{y\in \partial \mathcal{Y}} \chi_y(x) \nabla \cdot \left( V'(\nabla \varphi_y) \star \eta \right) (t , x)}}  +  \underset{\eqref{eq:18502212}-(ii)}{\underbrace{\left( \sum_{y \in \partial \mathcal{Y}} \chi_y(x) \right)  \int _{t-1}^{t+1} \eta(s) dB_s(x)}}.
\end{equation}
The $\underline{L}^{2}(I_{L/2}, \underline{H}^{-1} \left(  \Lambda_{L/2}\right))$-norm of the term~\eqref{eq:18502212}-(i) can be estimated as follows
\begin{equation*}
    \left\| \eqref{eq:18502212}-(i) \right\|_{\underline{L}^{2}\left(I_{L/2}, \underline{H}^{-1} \left(  \Lambda_{L/2}\right) \right)}^2 \leq  \frac{C}{L^{d+2}} \sum_{y \in \partial \mathcal{Y}} \left\| \nabla \varphi_y \right\|_{L^2 \left( Q_{y} \right)}^2 \leq C L^{-\nu}.
\end{equation*}
Using the definition of the minimal scale $\mathcal{M}_2$, we have the estimate
\begin{equation} \label{eq:16572312}
    \left\|  \eqref{eq:18502212}-(ii)\right\|_{\underline{H}^{-1}_{\mathrm{par}} \left( Q_{L/2} \right)}^2 \leq C L^{- \nu}.
\end{equation}
Combining the three previous displays with~\eqref{eq:15112312}, we have the decomposition
\begin{equation*}
    \partial_t h_0 - \nabla \cdot \a \nabla h_0 = \mathcal{E}_1 + \mathcal{E}_2,
\end{equation*}
with
\begin{equation*}
    \mathcal{E}_1 = \sum_{y\in \partial \mathcal{Y}} \chi_y \nabla \cdot \left( V'(\nabla \varphi_y) \star \eta \right) - \nabla \cdot \a \nabla h_0,
\end{equation*}
and, for any $(t , x) \in Q_{L/2}$,
\begin{equation*}
    \mathcal{E}_2(t , x) =  \left( \sum_{y \in \partial \mathcal{Y}} \chi_y(x) \right)  \int _{t-1}^{t+1} \eta(s) dB_s(x),
\end{equation*}
so that
\begin{equation*}
    \left\| \mathcal{E}_1  \right\|_{\underline{L}^2(I_{L/2} , \underline{H}^{-1} \left( \Lambda_{L/2}\right))}^2 \leq C L^{-\nu} \hspace{5mm} \mbox{and} \hspace{5mm} \left\| \mathcal{E}_2 \right\|_{\underline{H}^{-1}_{\mathrm{par}}(Q_{L/2})}^2 \leq C L^{-\nu}
\end{equation*}

\medskip

\textit{Substep 5.2. The conclusion.} The proof of the estimate~\eqref{eq:18202212} then follows from an energy estimate, using the identification of the space $\underline{H}^{-1}_{\mathrm{par}}$ stated in Lemma~\ref{lem.idenH-1par}. The proof is identical to the one of the term $v_1$ in~\eqref{eq:v1scexp} of Section~\ref{secTh.quantitativehydr}, we thus omit the technical details.
\end{proof}

\subsection{Proof of Theorem~\ref{theoremlargescale}} \label{section.proofoftheoremlargescale}
This section is devoted to the proof of Theorem~\ref{theoremlargescale}, building upon the results established in Sections~\ref{sec:reghomogeq} and~\ref{sectionapprox}.

\begin{proof}[Proof of Theorem~\ref{theoremlargescale}]
Fix $s \in (0 ,d)$, $L , M < \infty$, and let $u$ be a solution of the Langevin dynamic satisfying
\begin{equation*}
    \left\{ \begin{aligned}
    d u(t , x) = \nabla \cdot V'(\nabla u) &(t , x) dt + \sqrt{2} dB_{t}\left( x \right)~\mbox{for}~(t,x) \in  Q_L, \\
    \frac{1}{L} \left\| u - (u)_{Q_L} \right\|_{\underline{L}^2(Q_L)} &\leq M.
    \end{aligned} \right.
\end{equation*}
We let $C_0 := C_0 (d ,s, c_- , c_+ ) < \infty$ and $L_0 := L_0 (d ,s, c_- , c_+ ) < \infty$ be large constants to be selected later in the argument (see~\eqref{def.c_0induchyp} and~\eqref{eq:19502412}). We consider the minimal scale $\mathcal{M}_{\mathrm{hom}}^s$ introduced in Proposition~\ref{lemma3.17} with the constant $C_0 (M+1)$ (instead of $M$). We denote by
\begin{equation*}
    M_u := \frac{1}{L} \left\| u - (u)_{Q_L} \right\|_{\underline{L}^2(Q_L)}.
\end{equation*}
We then set $\mathcal{X} := \mathcal{M}_{\mathrm{hom}}^s \vee L_0$, and prove the two inequalities: for any $L \geq \mathcal{X}$,
\begin{equation} \label{eq:1217021000}
    \sup_{l \in [\mathcal{X} , L]} \frac 1l \left\| u - (u)_{Q_l} \right\|_{\underline{L}^2(Q_l)} \leq  C_0 (M_u +1),
\end{equation}
and, for any $l \in [\mathcal{X} , L]$, 
\begin{equation} \label{eq:1217021000bis}
    \inf_{\ell \in \mathcal{P}_1}\left\| u - \ell \right\|_{\underline{L}^2(Q_l)} \leq C \left( \frac{l}{L} \right)^{1+\alpha} \inf_{\ell \in \mathcal{P}_1}\left\| u - \ell \right\|_{\underline{L}^2(Q_L)} + C l^{-\beta} (M+1).
\end{equation} 
To prove the inequality~\eqref{eq:1217021000}, we first introduce a few parameters and definitions. We let $C_\alpha$ be the constant which appears in Proposition~\ref{C11minusregforthehydrodyn} and let 
\begin{equation*}
    \theta :=   \left( 4^{d/2 + 1}C_{\alpha} \right)^{- 1/(2\alpha)} \in (0 , 1).
\end{equation*}
We denote by $l_j := \theta^j L$ and let $J$ be the largest integer such that $\theta^J L \geq \mathcal{X}$.
We introduce the excess decay $E_1$ by the formula, for any $l > 0$,
\begin{equation} \label{eq:12170210}
    E_1(l) := \frac 1l \inf_{\ell \in \mathcal{P}_1} \left\| u - \ell \right\|_{\underline{L}^2(Q_l)}.
\end{equation}
For $j \in \{ 1 , \ldots , J\}$, denote by $\ell_j : x \mapsto p_j \cdot x + (u)_{Q_{lj}}$ the minimizing affine function in the definition of~$E_1(l_j)$.
We next record two preliminary estimates pertaining to the slopes $(p_j)_{j \in\{ 0 , \ldots , J\} }$. First, the slope $p_0$ can be bounded in terms of the constant $M_u$: we have
\begin{equation*}
    \left| p_0 \right| \leq C E_1 (L) + C \left\| u - (u)_{Q_L} \right\|_{\underline{L}^2(Q_L)} \leq C \left\| u - (u)_{Q_L} \right\|_{\underline{L}^2(Q_L)} \leq CM_u.
\end{equation*}
Second, the variation of the slope between two scales can be measured in terms of the excess decay: we have, for any $j \in \{ 0 , \ldots, J-1 \}$,
\begin{align*}
    \left| p_{j+1} - p_j \right| & \leq C \left( \frac{1}{l_{j+1}}\left\| u - \ell_{j+1} \right\|_{\underline{L}^2 \left(  Q_{l_{j+1}}\right)} + \frac{1}{l_{j+1}} \left\| u - \ell_j \right\|_{\underline{L}^2 \left(  Q_{l_{j+1}}\right)} \right) \\
    & \leq C \left( E_1(l_{j+1}) + \theta^{-d/2 -2} E_1( l_{j}) \right).
\end{align*}
Using the definition of the excess decay $E_1$, we additionally have
\begin{equation*}
    E_1 (l_{j+1}) = \frac{1}{l_{j+1}} \inf_{\ell \in \mathcal{P}_1}\left\| u - \ell \right\|_{\underline{L}^2 \left(  Q_{l_{j+1}}\right)} \leq \frac{\theta^{-d/2 -2}}{l_j} \inf_{\ell \in \mathcal{P}_1}\left\| u - \ell \right\|_{\underline{L}^2 \left(  Q_{l_{j}}\right)} \leq \theta^{-d/2 -2} E_1 (l_{j}).
\end{equation*}
As a consequence of the two previous displays, we obtain the inequality
\begin{equation*}
    \left| p_{j+1} - p_j \right| \leq C  \theta^{-d/2 -2} E_1( l_{j}).
\end{equation*}
We thus have, for any $j \in \{ 0 , \ldots, J-1 \}$,
\begin{align} \label{eq:14230210}
    \frac{1}{l_j}\left\| u - (u)_{Q_{l_j}} \right\|_{\underline{L}^2 \left( Q_{l_j}\right)} \leq  E_1(l_j) + C |p_j|
    & \leq  E_1(l_j) + C |p_{0}| + C \sum_{k = 0}^{j-1} |p_{k-1} - p_k|  \\
    & \leq   C \theta^{-d/2 -2} \sum_{k = 0}^{j-1} E_1(l_k) + C M_u. \notag
\end{align}
We next let $\beta$ be the exponent which appears in Proposition~\ref{lemma3.17} and define the constant $C_0$ according to the formula
\begin{equation} \label{def.c_0induchyp}
    C_0 := C  \theta^{-d -2} \left( \sum_{j = 0}^{\infty} \theta^{\alpha j/2} + \frac{\sum_{j = 0}^{\infty} \theta^{\beta j} }{1 - \theta^\beta} \right) + C 
    = C   \left( \frac{\theta^{-d -2}}{1-\theta^{\alpha/2}} + \frac{\theta^{-d -2}}{(1 - \theta^\beta)^2} +1\right), 
\end{equation}
where $C$ is the constant which appears in the right-hand side of~\eqref{eq:14230210}. The constant $C_0$ only depends on the parameters $d , s, c_+ , c_-$. 

We next prove, by an inductive argument, the following upper bound on the excess decay~$E_1$: for each $j \in \{0 , \ldots, J\}$,
\begin{equation} \label{eq:13590210}
    E_1 \left( l_{j} \right) \leq \theta^{\alpha j/2}  E_1 \left( L \right) +  \frac{\theta^{\beta (J-j+1)}}{1 - \theta^\beta} (M_u+1).
\end{equation}
and
\begin{equation} \label{eq:15040210}
    \frac{1}{l_j}\left\| u - (u)_{Q_{l_j}} \right\|_{\underline{L}^2 \left( Q_{l_j}\right)} \leq \theta^{d/2 + 2} C_0 (M_u+1).
\end{equation}

\textit{Initialization:} In the case $j = 0$, the inequalities~\eqref{eq:13590210} and~\eqref{eq:15040210} are clearly satisfied.

\vspace{2mm}

\textit{Induction:} We assume that the inequalities~\eqref{eq:13590210} and~\eqref{eq:15040210} are valid for the integers between $0$ and~$j$, and prove that they hold with the integer $j+1$. The strategy is to apply Proposition~\ref{lemma3.17} combined with the $C^{1 , \alpha}$-regularity for the solutions of the homogenized equation stated in Proposition~\ref{C11minusregforthehydrodyn}. By~\eqref{eq:15040210} and the inequality $\theta < 1$, we have the upper bound
\begin{equation} \label{eq14180210}
    \frac{1}{l_j}\left\| u - (u)_{Q_{l_j}} \right\|_{\underline{L}^2 ( Q_{l_j})} \leq C_0 (M_u+1).
\end{equation}
We may thus apply Proposition~\ref{lemma3.17} and obtain that there exists a function $\bar u_j : Q_{l_j /2} \to \R$ solution of the equation~\eqref{eqhomog.2612} such that
\begin{equation*}
    \frac{1}{l_j}\left\| \bar u_{j} - u \right\|_{\underline{L}^2(Q_{ l_{j/4}})} \leq C (M_u+1) l_j^{-\beta}.
\end{equation*}
Assuming, without loss of generality, that $\theta$ is smaller than $1/4$ and using the $C^{1,\alpha}$-regularity of the map~$\bar u,$ we can write
\begin{align} \label{eq:21472409}
    l_{j+1} E_1(l_{j+1}) & = \inf_{\ell \in \mathcal{P}_1} \left\| u - \ell \right\|_{\underline{L}^2(Q_{ l_{j+1}})} \\
    & \leq \inf_{\ell \in \mathcal{P}_1} \left\| \bar u_{j} - \ell \right\|_{\underline{L}^2(Q_{ l_{j+1}})} + \left\| \bar u_{j} - u \right\|_{\underline{L}^2(Q_{ l_{j+1}})} \notag \\
    & \leq  C_\alpha \theta^{1+\alpha}  \inf_{\ell \in \mathcal{P}_1}\left\| \bar u_j - \ell \right\|_{\underline{L}^2(Q_{l_j/4})} + \theta^{-\frac d2 - 1} C (M_u+1) l_j^{1-\beta} \notag \\
    & \leq   C_\alpha \theta^{1+\alpha} \inf_{\ell \in \mathcal{P}_1}\left\| u - \ell \right\|_{\underline{L}^2(Q_{l_j/4})} + C_\alpha \theta^{1+\alpha} \left\| \bar u_j - u \right\|_{\underline{L}^2(Q_{l_j/4})}  + C (M_u+1) l_j^{1-\beta} \notag \\
    & \leq 4^{d/2 + 1} C_\alpha \theta^{1+\alpha}  (l_j E_1(l_j)) + C \left( C_\alpha \theta^{1+\alpha}  + \theta^{-\frac d2 - 1} \right)  (M_u+1) l_j^{1-\beta}. \notag
\end{align}
Using the definition of $\theta$, we have that $4^{d/2 + 1} C_\alpha \theta^{\alpha} \leq \theta^{\alpha/2}.$ We additionally recall that $ l_j \geq  \theta^{j - J} L_0$, and select the constant $L_0$ large enough so that
\begin{equation} \label{eq:19502412}
      C \left( C_\alpha \theta^{\alpha}  + \theta^{-\frac d2 - 2} \right) L^{-\beta}_0 \leq 1.
\end{equation}
We note that the constant $L_0$ depends only on the parameters $d , s, c_+ , c_-$. Using the identity $l_{j+1} = \theta l_j$, the computation~\eqref{eq:21472409} can thus be simplified and becomes
\begin{equation*}
    E_1(l_{j+1}) \leq \theta^{\alpha/2} E_1(l_j) + \theta^{\beta (J-j)} (M_u+1).
\end{equation*}
Applying the induction hypothesis yields
\begin{align*}
    E_1(l_{j+1}) & \leq \theta^{\alpha/2} \left( \theta^{\alpha j/2}  E_1 \left( L \right) +  \frac{\theta^{\beta (J-j+1)}}{1 - \theta^\beta} (M_u+1) \right) + \theta^{\beta (J-j)} (M_u+1) \\
    & \leq \theta^{\alpha (j+1)/2}  E_1 \left( L \right) +  \frac{\theta^{\beta (J -j)}}{1 - \theta^\beta} (M_u+1).
\end{align*}
The proof of the inequality~\eqref{eq:13590210} is complete. The inequality~\eqref{eq:15040210} can be deduced from the inequality~\eqref{eq:14230210} with the induction hypothesis~\eqref{eq:13590210}, the inequality $E_1(L) \leq M_u$ and the definition of the constant $C_0$ stated in~\eqref{def.c_0induchyp}. We obtain
\begin{align*}
    \frac{1}{l_{j+1}}\left\| u - (u)_{Q_{l_{j+1}}} \right\|_{\underline{L}^2 \left( Q_{l_{j+1}}\right)} & \leq  C \theta^{-d/2 -2} \sum_{k = 0}^{j} E_1(l_j) + C M_u \\
    & \leq  C \theta^{-d/2 -2} \sum_{k = 0}^{j} \left( \theta^{\alpha k/2} M_u + \frac{\theta^{\beta k}}{1 - \theta^\beta}  (M_u+1) \right) + CM_u \\
    & \leq \theta^{d/2 + 2}  C_0 (M_u+1).
\end{align*}
To conclude the proof of~\eqref{eq:1217021000}, we note that, for any $l \in [\mathcal{X} , L]$, there exists an integer $j \in \{ 0 , \ldots , J\}$ such that $\theta l_j \leq l \leq l_j$, and consequently
\begin{equation*}
     \frac 1l \left\| u - (u)_{Q_l} \right\|_{\underline{L}^2(Q_l)} \leq \theta^{-d/2 - 2} \frac 1{l_j} \left\| u - (u)_{Q_{l_j}} \right\|_{\underline{L}^2(Q_{l_j})} \leq  C_0 (M_u+1).
\end{equation*}
The $C^{1,\alpha}$-large scale regularity estimate stated in~\eqref{eq:1217021000bis} can be deduced from~\eqref{eq:13590210} by a similar argument.
\end{proof}

\appendix

\section{Quantitative approximation scheme for nonlinear parabolic equation} \label{app.appendixA}

This appendix is devoted to the proofs of Proposition~\ref{prop.approx} and Proposition~\ref{prop6.8discschemeloc} and provides a quantitative approximation scheme for solutions of nonlinear parabolic equations.

\begin{proof}[Proof of Proposition~\ref{prop.approx}]
Fix $\ep > 0$, a boundary conditions $f \in H^2(\R \times \Rd)$, let $\bar u $ be the solution of the parabolic equation~\eqref{eq:defubarthmhydro} and extend it by the value $f$ outside of the set $Q$. We recall that the map $\bar u$ satisfies the following estimates:
\begin{itemize}
    \item The $H^2$-regularity estimate
    \begin{equation} \label{eq:19050501}
        \left\| \bar u \right\|_{L^2 \left( I , H^2(D) \right)} \leq C \left\| f \right\|_{ H^2(Q)};
    \end{equation}
    \item The $H^1$-regularity for the time derivative
    \begin{equation} \label{eq:19060501}
        \left\| \partial_t \bar u \right\|_{L^2 \left( I , H^1(D) \right)} \leq C \left\| f \right\|_{H^2(Q)}.
    \end{equation}
\end{itemize}
For $r > 0$, we denote by $D(r)$ and $D^\ep(r)$ the sets
\begin{equation*}
    D(r) := \left\{ x \in D \, : \, \dist \left( x , \partial D \right) \geq r \right\}, ~~ D^\ep(r) := D^\ep \cap  D(r),
\end{equation*}
In the rest of the proof, we set $r = \sqrt{\ep}$.
We let $\chi : \Rd \to \R$ be a smooth nonnegative cutoff function supported in $[-1 , 1]^d$ and satisfying $\int_{\Rd} \chi(x) \, dx = 1$. We then rescale the map $\chi$ by setting $\chi_{r} = r^{-d} \chi(\cdot/r)$ and define, for $(t , x) \in I \times D(r)$,
\begin{equation*} 
    \bar u \star \chi_{r} ( t , x) =  \int_{\Rd} \bar u(t , x - y) \chi_{r} (y) \, dy.
\end{equation*}
Let $\xi : \Rd \to \R$ be a cutoff function satisfying
\begin{equation*}
    \indc_{ D(2r)} \leq \xi \leq  \indc_{ D(r)}, ~\left| \nabla \xi \right|  \leq C r^{-1}.
\end{equation*}
We then denote by
\begin{equation} \label{def.mapU1}
    U = \xi \left( \bar u \star \chi_{r}\right) + (1 - \xi) (\bar u \star \chi_{\ep} ),
\end{equation}
that is, we mollify the function $\bar u$ on a scale $r = \sqrt{\ep}$ inside the cylinder $I \times D(2r)$ and on a scale $\ep$ in the boundary layer $Q \setminus (I \times D(r))$ (and use a smooth cutoff function to transition between $I \times D(2r)$ and $Q \setminus (I \times D(r))$). We then note that, with this definition, we have the identity $U = \tilde f_\ep ~\mbox{on}~ I \times \partial D^\ep$ (as $\partial D^\ep$ is defined to be the external vertex boundary) and, on the set $\{ 0 \} \times D^\ep$,
\begin{equation} \label{eq:13481801}
   \| U(0 , \cdot) - \tilde f_\ep(0 , \cdot) \|_{L^2 \left( D^\ep \right)} \leq C r \left\| \nabla f(0 , \cdot) \right\|_{L^2 \left( D \right)} \leq C r \left\| f \right\|_{H^2 \left( Q \right)},
\end{equation}
where the first inequality is a consequence of the definition of the function $U$ and the second one is a consequence of the existence of a trace for the gradient of a function in the Sobolev space $H^2(Q)$.

We will denote by~$\vec{\nabla}^\ep U$ the piecewise constant discrete gradient field defined by the formula
\begin{equation} \label{def.mapnablaUU}
    \vec{\nabla}^\ep U(t , x) := \sum_{y \in \ep \Zd} \vec{\nabla}^\ep U(t , y) \indc_{\{ y +  [-\ep, \ep]^d \}}(x).
\end{equation}
Using the $H^2$-regularity of the function $\bar u$, we have
\begin{equation*}
    \left\| \bar u - U  \right\|_{L^2(Q)} + \| \nabla \bar u - \vec{\nabla}^\ep U \|_{L^2(Q)} \leq C r \left\| f \right\|_{H^2(Q)}.
\end{equation*}
We next compute the time derivative of the map $U$ inside the set $I \times D(r)$. Using that the map $\bar u$ solves the parabolic equation~\eqref{eq:defubarthmhydro}, we have the identity
\begin{equation*}
    \partial_t \left( \bar u \star \chi_{r}\right) (t , x) = - \int_{\Rd} D_p \bar \sigma \left( \nabla \bar u(t , x - y)\right) \cdot \nabla \chi_{r} (y) \, dy.
\end{equation*}
For each $i \in \{ 1 , \ldots, d\}$, we let $\eta_{\ep, i}: \Rd \to \R$ be the indicator function of the straight line joining the vertices $0$ and $\ep e_i$ divided by $\ep$ so that we have, for any (sufficiently regular) function $v : \Rd \to \R$,
\begin{equation} \label{eq:11030601}
    \nabla_i^\ep v (x) = \frac{v (x + \ep e_i) - v (x)}{\ep} = \int_{0}^\ep \frac{\nabla_i v (x + t e_i)}{\ep} \, dt = \nabla_i v \star \eta_{ \ep, i}(x).
\end{equation}
Using that the map $D_p \bar \sigma$ is Lipschitz and the regularity estimate~\eqref{eq:19050501} on the map $\bar u$, we further obtain that
\begin{equation*}
    \left\| D_p \bar \sigma \left( \nabla \bar u\right) - D_p \bar \sigma  \left( \nabla \bar u\right)  \star \eta_{\ep} \right\|_{L^2(I \times D^\ep(r) )} \leq C \ep \left\| f \right\|_{H^2(Q)},
\end{equation*}
where we used the notation
\begin{equation*}
    D_p \bar \sigma  \left( \nabla \bar u\right)  \star \eta_{\ep} = \left( D_p \bar \sigma  \left( \nabla \bar u\right)_1 \star \eta_{\ep, 1} , \ldots, D_p \bar \sigma  \left( \nabla \bar u\right)_d \star \eta_{\ep, d} \right).
\end{equation*}
Consequently, we may write
\begin{equation} \label{eq:10590601}
    \partial_t \left( \bar u \star \chi_{r}\right) (t , x)  = - \int_{\Rd} D_p \bar \sigma \left( \nabla \bar u(t , x - y)\right) \star \eta_\ep \cdot \nabla \chi_{r} (y) \, dy + \mathcal{E}_0(t , x),
\end{equation}
where, by the identity $r = \ep^{1/2}$, $\mathcal{E}_0$ is an error term satisfying
\begin{align} \label{eqtE000.jan}
    \left\| \mathcal{E}_0 \right\|_{L^2 \left( I \times D^\ep(r) \right)}  \leq Cr^{-1} \left\| D_p \bar \sigma \left( \nabla \bar u\right) - D_p \bar \sigma  \left( \nabla \bar u\right)  \star \eta_{ \ep} \right\|_{L^2(I \times D^\ep(r))}  & \leq C \ep r^{-1} \left\| f \right\|_{H^2(Q)} \\
    & \leq C \ep^{1/2} \left\| f \right\|_{H^2(Q)} . \notag
\end{align}
The first term in the right-hand side of~\eqref{eq:10590601} can be rewritten using the properties of the map $\eta_\ep$ as follows
\begin{align*}
    \int_{\Rd} D_p \bar \sigma \left( \nabla \bar u(t , x - y)\right) \star \eta_\ep \cdot \nabla \chi_{r} (y) \, dy & = \int_{\Rd} D_p \bar \sigma \left( \nabla \bar u(t , x - y)\right) \cdot \nabla \chi_{r} (y) \star \eta_\ep \, dy \\
    & = \int_{\Rd} D_p \bar \sigma \left( \nabla \bar u(t , x - y)\right) \cdot \vec{\nabla}^\ep \chi_{r} (y) \, dy \\
    & = - \vec{\nabla}^\ep \cdot \left( D_p \bar \sigma \left( \nabla \bar u\right) \star \chi_{r} \right) (t , x).
\end{align*}
Using once again the regularity estimate~\eqref{eq:19050501} on the function $\bar u$, we have
\begin{equation*}
    \left\| D_p \bar \sigma \left( \nabla \bar u\right) \star \chi_{r} - D_p \bar \sigma ( \vec{\nabla}^\ep U )  \right\|_{L^2(I \times D^\ep(r))} \leq C r \left\| f \right\|_{H^2(Q)}.
\end{equation*}
A combination of the two previous displays shows the identity: for any $(t , x) \in I \times D(r)$,
\begin{equation*}
    \int_{\Rd} D_p \bar \sigma \left( \nabla \bar u(t , x - y)\right) \star \eta_\ep \cdot \nabla \chi_{r} (y) \, dy = \vec{\nabla}^\ep \cdot  D_p \bar \sigma ( \vec{\nabla}^\ep U ) + \vec{\nabla}^\ep \cdot \mathcal{E}_1, 
\end{equation*}
where $\mathcal{E}_1$ is an error term satisfying
\begin{equation} \label{eqtE001.jan}
    \left\| \mathcal{E}_1 \right\|_{L^2 \left(I \times D^\ep(r)\right)} \leq  C \ep^{1/2} \left\| f \right\|_{H^2(Q)}.
\end{equation}
We have thus proved the identity (inside the set $I \times D(r)$)
\begin{equation*}
    \partial_t \left( \bar u \star \chi_{r}\right) = \vec{\nabla}^\ep \cdot  D_p \bar \sigma ( \vec{\nabla}^\ep U ) + \mathcal{E}_0 + \vec{\nabla}^\ep \cdot \mathcal{E}_1.
\end{equation*}
Using the definition of the map $U$ stated in~\eqref{def.mapU1}, we see that it solves the discrete parabolic equation 
\begin{equation*}
\partial_t U - \vec{\nabla}^\ep \cdot  D_p \bar \sigma ( \vec{\nabla}^\ep U ) =  \mathcal{E} ~\mbox{in}~Q^\ep,
\end{equation*}
where the error term $\mathcal{E}$ takes the following form
\begin{equation*}
    \mathcal{E} :=   \xi \mathcal{E}_0 + \xi  \vec{\nabla}^\ep \cdot \mathcal{E}_1 + \mathcal{E}_2,
\end{equation*}
where the terms $\mathcal{E}_2$ take the following form
\begin{equation} \label{eq:09292801}
    \mathcal{E}_2 :=  (1 - \xi) \left(\left(\partial_t \bar u \right) \star \chi_{\ep} \right) + (1 - \xi) \vec{\nabla}^\ep \cdot  D_p \bar \sigma ( \vec{\nabla}^\ep U ).
\end{equation}
Let us note that the error term $\mathcal{E}_2$ is supported in the boundary layer $I \times (D \setminus D(2r))$, that, by the regularity estimates~\eqref{eq:19050501} and~\eqref{eq:19060501}, its $L^2(I \times (D \setminus D(2r)))$-norm is bounded by the $H^2(Q)$-norm of $f$. We thus obtain
\begin{equation} \label{est.termE3final}
\left\| \mathcal{E}_2 \right\|_{L^2(I \times (D^\ep \setminus D^\ep(2r)))} \leq  C \left\| f \right\|_{H^2(Q)}.
\end{equation}
We next note that the map $w = U - \bar u^\ep$ solves a linear parabolic equation of the form, for some uniformly elliptic environment $\a$
\begin{equation*}
\left\{ \begin{aligned}
 \partial_t w - \vec{\nabla}^\ep \cdot \a  \vec{\nabla}^\ep w & = \mathcal{E} &~\mbox{in} &~Q^\ep, \\
w & = 0 &~\mbox{on} &~ I \times \partial D^\ep, \\
w & = U - \tilde f  &~\mbox{on} &~ \{0 \} \times D^\ep.
\end{aligned} \right.
\end{equation*}
Using the properties of the cutoff function $\xi$ and an energy estimate, we obtain the upper bound
\begin{align*}
  \left\| \nabla w \right\|^2_{L^2(Q^\ep)} & \leq C \left\| \mathcal{E}_0 \right\|_{L^2(Q^\ep(r))} \left\| w \right\|_{L^2(Q^\ep)}  \\
   & \qquad + C \left\| \mathcal{E}_1 \right\|_{L^2(Q^\ep(r))} \left(  \left\| \nabla w \right\|_{L^2(I \times D^\ep(r))} + r^{-1} \left\| w \right\|_{L^2(I \times (D \setminus D(2r)))} \right) \\
   & \qquad + C \left\| \mathcal{E}_2 \right\|_{L^2(I \times (D^\ep \setminus D^\ep(2r)))}  \left\| w \right\|_{L^2(I \times (D^\ep \setminus D^\ep(2r)))} \\
   & \qquad + C \|U(0 , \cdot) - \tilde f_\ep (0 , \cdot)  \|_{L^2 (D^\ep)}^2.
\end{align*}
The Poincar\'e inequality applied to the map $w$ (which is equal to $0$ on the boundary of the set~$D^\ep$) then shows
\begin{equation*}
    \left\| w \right\|_{L^2(I \times (D^\ep \setminus D^\ep(2r)))} \leq C r \left\| \nabla w \right\|_{L^2(I \times (D^\ep \setminus D^\ep(2r)))} ~\mbox{and}~ \left\| w \right\|_{L^2(Q^\ep)} \leq C \left\| \nabla w \right\|_{L^2(Q^\ep)}.
\end{equation*}
A combination of the two previous displays gives
\begin{align*}
    \left\| \nabla w \right\|_{L^2(Q^\ep)} & \leq C \left\| \mathcal{E}_0 \right\|_{L^2(I \times D^\ep(r))}  + C \left\| \mathcal{E}_1 \right\|_{L^2(I \times D^\ep(r))} + C r \left\| \mathcal{E}_3 \right\|_{L^2(I \times (D^\ep \setminus D^\ep(2r)))}   \\
    & \qquad + C \|U(0 , \cdot) - \tilde f_\ep (0 , \cdot)  \|_{L^2 (D^\ep)}.
\end{align*}
The first two terms can be estimated thanks to~\eqref{eq:13481801},~\eqref{eqtE000.jan},~\eqref{eqtE001.jan} and~\eqref{est.termE3final}. The proof of Proposition~\ref{prop.approx} is complete.
\end{proof}

The proof of Proposition~\ref{prop6.8discschemeloc} follows similar lines with the following differences: we consider a boundary layer with respect to the space and time variables, and consider a mesoscopic scale and a boundary layer of different sizes (the parameters $r$ and $\kappa$ in the proof below). We thus only provide a detailed sketch of the argument.

\begin{proof}[Proof of Proposition~\ref{prop6.8discschemeloc}]
Fix $\ep > 0$, a boundary condition $f \in W^{1 , 2+\gamma_0}_{\mathrm{par}} (Q)$, let $\bar u $ be the solution of the parabolic equation~\eqref{eq:defubarthmhydro} and extend it by the value $f$ outside of the set $Q$. We recall the notation $Q(r)$ introduced in~\eqref{eq:14341801}. In this case, the map $\bar u$ satisfies the following estimates:
\begin{itemize}
\item The global Meyers estimate: there exists an exponent $\gamma_0 := \gamma_0(d , c_+ , c_-) > 0$ such that
\begin{equation*}
    \left\| \nabla \bar u \right\|_{L^{2 + \gamma_0} (Q)}  \leq C (\left\| f \right\|_{W^{1,2 + \gamma_0}_{\mathrm{par}}(Q)} +1).
\end{equation*}
\item The interior regularity: for any $r \in (0 , 1)$,
\begin{equation*}
     r^{(d+2)/2}\left\| \nabla \bar u \right\|_{L^\infty(Q(r))}  + r \left\| \nabla^{2} \bar u \right\|_{L^{2} (Q(r))} +  r^2 \left\| \partial_t \nabla \bar u \right\|_{L^2(Q(r))}  \leq C (\left\| f \right\|_{H^{1}_{\mathrm{par}}(Q)} +1). 
\end{equation*}
\end{itemize}
We fix a boundary layer $r = \ep^{\theta_0}$ and a mesoscopic scale $\kappa = \ep^{\theta_1}$ with two exponents $\theta_0, \theta_1$ satisfying $0 < \theta_0 \ll \theta_1 \ll 1$ whose values will be decided later in the argument. We let $\chi : \Rd \to \R$ be a smooth nonnegative cutoff function supported in $[-1 , 1]^d$ and satisfying $\int_{\Rd} \chi = 1$. We then rescale the map $\chi$ by setting $\chi_{\kappa} = \kappa^{-d} \chi(\cdot/\kappa)$ and $\chi_{\ep} = \ep^{-d} \chi(\cdot/\ep)$ and define, for $(t , x) \in Q(r)$,
\begin{equation*}
    \bar u \star \chi_{\kappa} ( t , x) =  \int_{\Rd} \bar u(t , x - y) \chi_{\kappa} (y) \, dy.
\end{equation*}
We then let $\xi : \Rd \to \R$ be a space-time cutoff function satisfying
\begin{equation*}
    \indc_{Q(2r)} \leq \xi \leq  \indc_{Q(r)}, ~ r \left| \nabla \xi \right| + r^2  \left| \partial_t \xi \right|  \leq C.
\end{equation*}
We denote by
\begin{equation} \label{def.mapU}
    U = \xi \left( \bar u \star \chi_{\kappa}\right) + (1 - \xi) (\bar u \star \chi_{\ep} ).
\end{equation}
Using the Meyers estimate, the interior regularity of the map $\bar u$ and H\"{o}lder's inequality, we obtain that there exists an exponent $\beta >0$ such that
\begin{align*}
    \left\| \bar u - U  \right\|_{L^2(Q)} + \| \nabla \bar u - \vec{\nabla}^\ep U \|_{L^2(Q)} & \leq C \kappa \left\| \nabla^{2} \bar u \right\|_{L^{2} (Q(r))} + \left\| \nabla \bar u \right\|_{L^{2} (Q\setminus Q(2r))}  \\
    & \leq C \left( \kappa r^{-1} + \left|Q\setminus Q(2r) \right|^{\frac{2}{2 + \gamma_0}} \right) (\left\| f \right\|_{W^{1,2 + \gamma_0}_{\mathrm{par}}(Q)} +1)\\
    & \leq C \ep^{\beta} (\left\| f \right\|_{W^{1,2 + \gamma_0}_{\mathrm{par}}(Q)} +1) .
\end{align*}
Using the same argument as in the proof of Proposition~\ref{prop.approx}, we obtain the identity, inside the cylinder $Q(r)$,
\begin{equation*}
    \partial_t \left( \bar u \star \chi_{\kappa}\right) = \vec{\nabla}^\ep \cdot  D_p \bar \sigma ( \vec{\nabla}^\ep U ) + \mathcal{E}_0 + \vec{\nabla}^\ep \cdot \mathcal{E}_1,
\end{equation*}
where the two error terms $\mathcal{E}_0$ and $\mathcal{E}_1$ satisfy the estimate, for some exponent $\beta > 0$,
\begin{equation} \label{eq:17572601}
    \left\| \mathcal{E}_0 \right\|_{L^2 \left( Q^\ep(r)\right)} + \left\| \mathcal{E}_1 \right\|_{L^2 \left( Q^\ep(r) \right)}  \leq  C \ep^{\beta} (\left\| f \right\|_{H^1_\mathrm{par}(Q)} +1).
\end{equation}
Using the definition of the map $U$ stated in~\eqref{def.mapU}, we see that it solves the discrete parabolic equation 
\begin{equation*}
\begin{aligned}
\partial_t U - \vec{\nabla}^\ep \cdot  D_p \bar \sigma ( \vec{\nabla}^\ep U ) =  \mathcal{E} ~\mbox{in}~Q^\ep
\end{aligned}
\end{equation*}
where the error term $\mathcal{E}$ takes the following form
\begin{equation*}
    \mathcal{E} :=   \xi \mathcal{E}_0 + \xi  \vec{\nabla}^\ep \cdot \mathcal{E}_1 + \mathcal{E}_2 +  \mathcal{E}_3,
\end{equation*}
with
\begin{equation*}
    \left\{ 
    \begin{aligned}
    \mathcal{E}_2 & :=  (1 - \xi) \left(\left(\partial_t \bar u \right) \star \chi_{\ep} \right) + (1 - \xi) \vec{\nabla}^\ep \cdot  D_p \bar \sigma ( \vec{\nabla}^\ep U ), \\
   \mathcal{E}_3 & := \partial_t \xi \left( \bar u \star \chi_\kappa - \bar u \star \chi_\ep  \right).
    \end{aligned}
    \right.
\end{equation*}
The $L^{ 2 + \gamma_0} \left( I , W^{-1 , 2 + \gamma_0}(\Lambda^\ep_1) \right)$-norm of the term $ \mathcal{E}_2$ can be estimated as follows (omitting some of the technical details)
\begin{equation} \label{17582601}
    \left\| \mathcal{E}_2 \right\|_{L^{ 2 + \gamma_0} \left( I , W^{-1 , 2 + \gamma_0}(\Lambda^\ep_1) \right)} \leq C  ( \left\| f \right\|_{W^{1 , 2 + \gamma_0}_\mathrm{par}(Q)}+1).
\end{equation}
Additionally, the $L^2(Q^\ep \setminus Q^\ep(2r))$-norm of the term $\mathcal{E}_3$ can be estimated as follows
\begin{equation*} 
     \left\|  \mathcal{E}_3  \right\|_{L^2(Q^\ep \setminus Q^\ep(2r))} \leq C r^{-2} \kappa \left\| \nabla \bar u \right\|_{L^2 \left(  Q \setminus Q(2r) \right)} \leq C \ep^{\theta_1 - 2 \theta_0}  (\left\| f \right\|_{W^{1 , 2 + \gamma_0}_\mathrm{par}(Q)} +1).
\end{equation*}
We then choose the exponents $\theta_0$ and $\theta_1$ so that $2\theta_0 < \theta_1$ and deduce that
\begin{equation} \label{17592601}
    \left\|  \mathcal{E}_3  \right\|_{L^2(Q^\ep \setminus Q^\ep(2r))}  \leq C \ep^\beta (\left\| f \right\|_{W^{1 , 2 + \gamma_0}_\mathrm{par}(Q)} +1).
\end{equation}
Thus, the map $w = U - \bar u^\ep$ solves a linear parabolic equation of the form, for some uniformly elliptic environment $\a$,
\begin{equation*}
\left\{ \begin{aligned}
 \partial_t w - \vec{\nabla}^\ep \cdot \a  \vec{\nabla}^\ep w & = \mathcal{E} &~\mbox{in} &~Q^\ep, \\
w & = 0 &~\mbox{on} &~ \partial_{\mathrm{par}} Q^\ep.
\end{aligned} \right.
\end{equation*}
Denoting by $q_0 := (2 + \gamma_0) / (1 + \gamma_0) < 2$ the conjugate exponent of $(2 + \gamma_0)$, using the properties of the cutoff function $\xi$ and an energy estimate, we obtain the upper bound
\begin{align*}
  \left\| \nabla w \right\|^2_{L^2(Q^\ep)} & \leq C \left\| \mathcal{E}_0 \right\|_{L^2(Q^\ep(r))} \left\| w \right\|_{L^2(Q^\ep)}  \\
   & \qquad + C \left\| \mathcal{E}_1 \right\|_{L^2(Q^\ep(r))} \left(  \left\| \nabla w \right\|_{L^2(Q^\ep)} + r^{-1} \left\| w \right\|_{L^2(Q^\ep \setminus Q^\ep(2r))} \right) \\
   & \qquad + C \left\| \mathcal{E}_2 \right\|_{L^{ 2 + \gamma_0} \left( I , W^{-1 , (2 + \gamma_0)}(\Lambda^\ep_1) \right)}  \left\| \nabla w \right\|_{L^{q_0}(Q^\ep \setminus Q^\ep(2r))} \\
   & \qquad + C \left\|  \mathcal{E}_3  \right\|_{L^2(Q^\ep \setminus Q^\ep(2r))}  \left\| w \right\|_{L^2(Q^\ep \setminus Q^\ep(2r))}.
\end{align*}
Combining the inequalities~\eqref{eq:17572601},~\eqref{17582601} and~\eqref{17592601} (and using H\"{o}lder's inequality to estimate the third term in the right-hand side), we deduce that there exists an exponent $\beta > 0$ such that
\begin{equation*}
    \left\| \nabla w \right\|_{L^2(Q^\ep)}  \leq C \ep^{\beta} (\left\| f \right\|_{W^{1 , 2 + \gamma_0}_\mathrm{par}(Q)} +1).
\end{equation*}
\end{proof}

{\small
\bibliographystyle{abbrv}
\bibliography{Langevin.bib}
}

\end{document}